\newtheorem{theorem}{Theorem}[section]
\newtheorem{claim}[theorem]{Claim}
\newtheorem{lemma}[theorem]{Lemma}
\newtheorem{corollary}[theorem]{Corollary}
\newtheorem{question}[theorem]{Question}
\newtheorem{observation}[theorem]{Observation}
\newenvironment{claimproof}[1][Proof of Claim]
  {\begin{proof}[#1]}
  {\end{proof}}
\theoremstyle{definition}
\newtheorem{problem}{Problem}
\newtheorem{definition}[theorem]{Definition}
\title{\LARGE Transversal Hamilton cycles in digraph collections}
\author{
Yangyang Cheng\thanks{Mathematical Institute, University of Oxford, Oxford, UK. Email: {\tt yangyang.cheng@maths.ox.ac.uk}. Supported by the PhD studentship of ERC Advanced Grant (883810).}
\and
Heng Li \thanks{School of Mathematics, Shandong University, Jinan, China. Email: {\tt heng.li@sdu.edu.cn}.  Supported by National Natural Science Foundation of China (12501487)}
\and
Wanting Sun \thanks{Data Science Institute, Shandong University, Jinan, China. Email: {\tt wtsun@sdu.edu.cn}. Supported by the Natural Science Foundation of China (12501488), the China Postdoctoral Science Foundation (2023M742092) and the Natural Science Foundation of Shandong Province (ZR2024QA023).}
\and
Guanghui Wang \thanks{School of Mathematics, and State Key Laboratory of Cryptography and Digital Economy Security, Shandong University, Jinan, China. Email: {\tt ghwang@sdu.edu.cn}. Supported by  the Natural Science Foundation of China (12231018).}
}
\date{}
\begin{document}
\maketitle

\begin{abstract}
Given a collection $\mathcal{D} =\{D_1,D_2,\ldots,D_m\}$ of digraphs on the common vertex set $V$, an $m$-edge digraph $H$ with vertices in $V$ is \textit{transversal} in $\mathcal{D}$ if there exists a bijection $\varphi :E(H)\rightarrow [m]$ such that 
$e \in E(D_{\varphi(e)})$ for all $e\in E(H)$.  Ghouila-Houri proved that any $n$-vertex digraph with minimum semi-degree at least $\frac{n}{2}$ contains a directed Hamilton cycle. In this paper, we provide a transversal generalization of Ghouila-Houri's theorem, thereby solving a problem proposed by Chakraborti, Kim, Lee and Seo \cite{2023Tournament}.  Our proof utilizes the absorption method for transversals, the regularity method for digraph collections, as well as the transversal blow-up lemma \cite{cheng2023transversals} and the related machinery. As an application, when $n$ is sufficiently large, our result implies the transversal version of Dirac's theorem, which was proved by Joos and Kim \cite{2021jooskim}.
\end{abstract}

\maketitle


\section{Introduction}
A transversal $X$ over a collection $\mathcal{F}=\{F_1,\ldots,F_m\}$ of objects represents an object that intersects every $F_i$. Here, the object can be any mathematical object, such as sets, spaces, set systems, matroids, and so forth. Several classical theorems, which guarantee the existence of certain objects, have been restated in the context of transversals, illustrating the feasibility or infeasibility of obtaining such an object as a transversal over a certain collection. To name a few, transversals are extensively studied for Carath\'eodory's theorem \cite{1982Barany}, Helly's theorem \cite{2005Kalai_Helly},  Erd\H{o}s-Ko-Rado theorem \cite{2017AhaEKR}, Rota's basis conjecture \cite{1994HuangLatin,2020PokrovskiyRota}, and others. 

Following this trend, there has been ample research studying transversal generalizations of classical results
in graph theory. These generalizations are not only interesting in their own right but also often provide a strengthening of the original results, reflecting the robustness of graph properties to some extent.  Transversals over a collection of graphs have been implicitly used in the literature under the name
of rainbow coloring and explicitly defined in \cite{2021jooskim} as follows.

\begin{definition}
     For a given collection $\mathcal{G}=\{G_1,\ldots,G_m\}$ of graphs/hypergraphs/digraphs with the same vertex set $V$, an $m$-edge graph/hypergraph/digraph $H$ with vertices in $V$ is  \textit{transversal} in $\mathcal{G}$ if there exists a bijection ${\varphi}:E(H)\to[m]$ such that $e\in E(G_{{\varphi}(e)})$ for all $e\in E(H).$
\end{definition}

By interpreting each $G_i$ as the set of edges colored with color $i$, the function ${\varphi}$ is often referred to as a coloring. We say that the coloring ${\varphi}$ is \textit{rainbow} if it is injective, and $H$ is a \textit{rainbow subgraph} if there exists a rainbow coloring on it. Many classical results in extremal graph theory have been extended to such transversal settings, exhibiting interesting phenomena. The central question in studying transversal generalizations can be stated as follows. 

\begin{problem}[\cite{2021jooskim}]
    Let $H$ be a graph/hypergraph/digraph with $m$ edges, $\mathcal{G}=\{G_1,G_2,\dots,G_m\}$ be a collection of graphs/hypergraphs/digraphs on a common {{vertex}} set of size $n$. Which property $\mathcal{P}$ imposed on $\mathcal{G}$ will guarantee the existence of a transversal copy of $H$?
\end{problem}

Since every object in the family $\mathcal{G}$ can be the same, a necessary condition for a positive answer to the
above question is that every $n$-vertex graph/hypergraph/digraph satisfying the property $\mathcal{P}$ contains a copy of $H$. However, this alone is not always sufficient. For example, Aharoni, DeVos, de la Maza, Montejano, and \v{S}\'{a}mal 
\cite{2019Aharoni} proved that if $\mathcal{G}=\{G_1, G_2,G_3\}$  is a collection of graphs on a common vertex set of size $n$ and $|E(G_i)|>(\frac{26-2\sqrt{7}}{81})n^2$ for all $i\in [3]$, then  $\mathcal{G}$ contains a rainbow triangle. Moreover,  the constant $\frac{26-2\sqrt{7}}{81}$ is optimal, while Mantel's theorem states that any $n$-vertex graph with more than $\lfloor\frac{n^2}{4}\rfloor$ edges must contain a triangle.

In 2020, Aharoni \cite{2019Aharoni} conjectured that Dirac's theorem can be extended to a transversal version. Cheng, Wang and Zhao \cite{2021ChengWangZhao} solved  this conjecture asymptotically, and it was completely confirmed by Joos and Kim \cite{2021jooskim}. 
\begin{theorem}[\cite{2021jooskim}]\label{Ahaconj}
Suppose $\mathcal{G} = \{G_1, \ldots, G_n\}$ is a collection of graphs with the same vertex set of size $n$. If $\delta(G_i)\geq \frac{n}{2}$ for all $i\in [n]$, then $\mathcal{G}$ contains a transversal Hamilton cycle.
\end{theorem}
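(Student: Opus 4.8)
\medskip
\noindent\textbf{Sketch of a proof of Theorem~\ref{Ahaconj}.}
The plan is to run the absorption method in its transversal form. Write $V$ for the common vertex set, $|V|=n$, and regard $[n]$ as a palette of $n$ colours, one per edge of the cycle we seek; a transversal Hamilton cycle is precisely a rainbow Hamilton cycle in $\mathcal G$ using every colour exactly once. Fix constants $0<\gamma\ll\varepsilon\ll 1$. A stability argument first reduces the problem to the case in which no $G_i$ is close, in edit distance, to either Dirac extremal configuration --- the balanced complete bipartite graph or a near-disjoint union of two cliques --- the near-extremal case being disposed of by a separate, more hands-on argument that exploits the shared near-bipartition or near-split structure to build the rainbow cycle essentially by hand. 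So assume henceforth that every $G_i$ is ``non-extremal'', so that in particular most pairs of vertices have $\Omega(n)$ common neighbours in most of the $G_i$, and any two vertices are joined by many short rainbow paths even after deleting any fixed set of $o(n)$ vertices and $o(n)$ colours (the \emph{connecting lemma}).

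The absorbing structure is assembled from \emph{vertex--colour absorbers}. Call a pair of distinct vertices $(a,b)$ an absorber for a pair $(v,c)\in V\times[n]$ if there is a colour $c'\ne c$ with $ab,av\in G_{c'}$ and $vb\in G_{c}$: then, wherever the edge $ab$ appears in a rainbow cycle coloured $c'$, it may be rerouted to the rainbow path $a\text{-}v\text{-}b$ with $av$ keeping colour $c'$ and $vb$ taking the fresh colour $c$, thereby inserting the vertex $v$ while spending exactly the one surplus colour $c$. A double count in the non-extremal regime produces abundantly many absorbers for every pair $(v,c)$, and the associated auxiliary bipartite graph is robust enough to feed into the distributive (template) absorption machinery in its transversal guise. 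This yields a rainbow path $P_{\mathrm{abs}}$ on a vertex set $A$ with $|A|\le\gamma n$ and colour set $C_A$ with $|C_A|=|A|-1$, such that for \emph{every} $W\subseteq V\setminus A$ and \emph{every} $C_W\subseteq[n]\setminus C_A$ with $|W|=|C_W|\le\gamma^{2}n$ there is a rainbow path on $V(P_{\mathrm{abs}})\cup W$, with the same two endpoints, using exactly the colours $C_A\cup C_W$. With the connecting lemma I also set aside a small reservoir $R\subseteq V\setminus A$ of vertices together with a small colour set $C_R$, to be used purely for stitching.

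Next I build an almost-spanning rainbow path $Q$ on $V\setminus(A\cup R)$, using colours from $[n]\setminus(C_A\cup C_R)$, that leaves only $\gamma^{3}n$ vertices uncovered and $\gamma^{3}n$ colours unused. Here I invoke a regularity lemma for graph collections: the min-degree hypothesis forces the reduced multigraph to contain a near-spanning structure along which a long rainbow path can be routed, and the transversal blow-up lemma \cite{cheng2023transversals} embeds it while allowing the colours to be \emph{rationed} so that $Q$ is nearly colour-spanning as well as nearly vertex-spanning. Finally I join the two ends of $Q$ to the two ends of $P_{\mathrm{abs}}$ by short rainbow paths through $R$, arranging that all of $R$ and all of $C_R$ get used; this produces a rainbow cycle covering all of $V$ except a leftover set $W$ with $|W|\le\gamma^{2}n$. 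Since the cycle is rainbow it uses exactly $n-|W|$ colours, so the set $C_W$ of still-unused colours automatically has $|C_W|=|W|$; the flexibility of $P_{\mathrm{abs}}$ then absorbs $W$ using precisely the colours of $C_W$, and the resulting rainbow Hamilton cycle is the desired transversal Hamilton cycle.

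The hard part is the global coupling imposed by the transversal constraint: on top of the usual vertex budget one must carry a colour budget through every stage, and the two budgets must reach zero simultaneously. Concretely, an absorber has to swallow a vertex and a surplus colour in exactly matched amounts --- this is what dictates the one-edge-to-two-edges gadget above, and forces $P_{\mathrm{abs}}$ to be flexible in both coordinates at once (any small $W$ paired with any small $C_W$); the reservoir slicing and the regularity/blow-up embedding must each respect a colour quota so that $Q$ comes out almost colour-spanning; and the random choices that define $A,R,C_A,C_R$ must be coupled so that all the colour classes end up the right size. A second genuine difficulty is that the hypothesis $\delta(G_i)\ge n/2$ is exact and leaves no room for the $\varepsilon n$ losses inherent in the regularity method, which is exactly why the near-extremal case cannot be folded into the generic argument and must be handled on its own.
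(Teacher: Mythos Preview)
The paper does not give an independent proof of Theorem~\ref{Ahaconj}: it is quoted from Joos--Kim \cite{2021jooskim}, and the only argument the paper offers is that, for large $n$, it follows from the main digraph result (Theorem~\ref{mainthm}) by replacing each undirected edge with two opposite arcs. Joos and Kim's own proof is entirely different again --- it proceeds via an auxiliary digraph and Hamiltonian rotation/extension, with no absorption or regularity at all. So your sketch is being compared to two very short derivations, neither of which resembles it.

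Your outline is instead a direct transversal-absorption argument in the spirit of Cheng--Staden \cite{cheng2024stability} (whose digraph analogue is what the present paper develops). As a plan it is broadly sound, but the stability dichotomy you set up is too coarse. You reduce to ``every $G_i$ is non-extremal'' versus ``the collection is collectively near-extremal with a shared (bi)partition''. This misses the intermediate regime that actually drives the absorber construction: almost all $G_i$ may be individually extremal yet have \emph{incompatible} characteristic partitions (what the paper calls the weakly stable case, governed by the cross graph $C^{\epsilon,\delta}_{\mathcal G}$). In that regime your ``non-extremal $\Rightarrow$ many common neighbours'' heuristic fails colour by colour, and the absorbers have to be built by playing two crossing partitions against each other (cf.\ Lemma~\ref{weakly}/Claim~\ref{weakly-absorbing} here, or their undirected analogues in \cite{cheng2024stability}). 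Without this case your connecting lemma and absorber counts do not go through.

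A second point: your single-edge absorber for a pair $(v,c)$ is the right local gadget, but to get flexibility over \emph{arbitrary} small $W$ and $C_W$ simultaneously you need an extra layer of indirection --- in the paper this is the auxiliary family $\mathcal Q$ of totally-absorbable tuples (Claim~\ref{claim:totally-absorbable}), which is what lets one route any leftover vertex/colour pair into the absorbing cycle. The template/distributive machinery alone does not give this two-coordinate flexibility for free.
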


There are numerous established results on transversal problems in graph collections.  For more results on transversals, we refer the reader to the survey \cite{surveytransversal}.

A natural next step is to study {{the rainbow analogues}} in digraph collections. Cheng, Han, Wang and Wang \cite{2023chengspan} studied the transversal version of Hajnal-Szemer\'{e}di theorem in digraph collections, which asymptotically generalized the corresponding results by  Czygrinow-DeBiasio-Kierstead-Molla \cite{Czygrinow} and Treglown \cite{Treglown}. 
It is well-known that every tournament contains a Hamilton path, and every strongly connected tournament contains a Hamilton cycle.  Chakraborti, Kim, Lee and Seo \cite{2023Tournament} proved the existence of a transversal directed Hamilton path and cycle in a family of tournaments. They \cite{2024KimTour} also proved that in a digraph collection consisting of tournaments, there exist transversal Hamilton cycles of all possible orientations possibly except the consistently oriented one. These results generalized the classical theorem of Thomason \cite{1986ThomasonTour}.  Babi{\'n}ski, Grzesik and Prorok \cite{2023BabinskiTria} studied the transversal Mantel's theorem in digraph and oriented graph collections. Gerbner, Grzesik, Palmer and  Prorok \cite{2024GerbnerDstar} determined the minimum number of edges in each color which guarantees the existence {{of}} any rainbow directed star. 

\subsection{Main result}
In this paper, we study the transversal problem in digraph collections. Let $D$ be a digraph with no loops and at most one edge in each direction between every pair of vertices. Denote the set of vertices of $D$ by $V(D)$. Define $\delta^{+}(D)$ and $\delta^{-}(D)$ as the minimum out-degree and in-degree of $D$, respectively, and let the \textit{minimum semi-degree} be $\delta^0(D) = \min{\delta^+(D), \delta^-(D)}$. For a collection $\mathcal{D} = \{D_1, \dots,  D_m\}$ of digraphs (not necessarily distinct) on a common vertex set $V$, define $\delta^0(\mathcal{D}) =\min\{\delta^0(D_i):i\in [m]\}$.


In 1960, Ghouila-Houri \cite{Houri} generalized  Dirac's theorem to digraphs.
\begin{theorem}[\cite{Houri}]\label{Ghouila-Houri}
    For each integer $n\geq 3$, every $n$-vertex digraph $D$ with $\delta^0(D)\geq \frac{n}{2}$ contains a directed Hamilton cycle.
\end{theorem}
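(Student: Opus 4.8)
The plan is a proof by contradiction organised around a longest directed cycle and a longest directed path in its complement --- the classical extremal-Hamiltonicity strategy. Suppose $D$ has $n$ vertices and $\delta^0(D)\ge n/2$ but no directed Hamilton cycle. Since $\delta^+(D)\ge 1$, $D$ contains a directed cycle; let $C=c_1c_2\cdots c_kc_1$ (indices modulo $k$) be a longest one, so $k<n$, and set $H:=D-V(C)\ne\varnothing$. Let $P=p_1p_2\cdots p_t$ be a longest directed path in $H$, so $1\le t\le n-k$. (Strong connectedness of $D$, which is immediate from $\delta^0(D)\ge n/2$ and is one of the classical Ghouila-Houri hypotheses, will not be needed here.)

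The first step is to locate where $p_1$ can receive arcs and where $p_t$ can send them. Maximality of $P$ in $H$ forces $N^-(p_1)\subseteq V(C)\cup V(P)$ and $N^+(p_t)\subseteq V(C)\cup V(P)$, while maximality of $C$ restricts the arcs within $P$: an arc $p_i\to p_1$ or $p_t\to p_i$ closes a sub-path of $P$ into a directed cycle, which must have length at most $k$. Encoding the traces on $C$ as $A:=\{i\in\mathbb{Z}_k: c_i\in N^-(p_1)\}$ and $B:=\{i\in\mathbb{Z}_k: c_i\in N^+(p_t)\}$, the hypothesis $\delta^0(D)\ge n/2$ gives $|A|\ge n/2-(t-1)$ and $|B|\ge n/2-(t-1)$ (and the stronger $|A|,|B|\ge n/2-(k-1)$ when $t\ge k$).

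The crux is a re-routing obstruction coming from maximality of $C$: for any $a\in A$ and $b\in B$, the concatenation $c_a\to p_1\to\cdots\to p_t\to c_b\to c_{b+1}\to\cdots\to c_a$ is a directed cycle on $t+1+((a-b)\bmod k)$ vertices, which exceeds $k$ whenever $(b-a)\bmod k\in\{1,\dots,t\}$. Hence, with $I:=\{1,\dots,t\}\subseteq\mathbb{Z}_k$, we must have $B\cap(A+I)=\varnothing$, so $|B|\le k-|A+I|$. Since $I$ is an interval, a Cauchy--Davenport-type estimate yields $|A+I|\ge\min\{k,|A|+t-1\}$ for nonempty $A$. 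Substituting this into $|B|\le k-|A+I|$ and using the lower bounds on $|A|$ and $|B|$, a short manipulation forces $t\ge n-k+1$, contradicting $t\le n-k$. The degenerate regimes --- $t\ge k$, or $|A|+t-1\ge k$ (so that $A+I$ exhausts $\mathbb{Z}_k$) --- are dispatched by the same inequalities, using $t\le n-k$ together with the case hypothesis to force $A,B\ne\varnothing$ and contradict $B\cap(A+I)=\varnothing$.

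I expect the main obstacle to be setting up and exploiting the re-routing obstruction: converting ``$C$ is a longest cycle'' into the clean additive statement $B\cap(A+I)=\varnothing$, keeping track of how arcs inside $P$ interact with the length-$k$ cap, and checking that the boundary cases ($A$ or $B$ empty, $t$ near or exceeding $k$) are genuinely closed rather than merely consistent. Once the obstruction and those cases are pinned down, the additive-combinatorial input $|A+I|\ge\min\{k,|A|+t-1\}$ and the final arithmetic are routine.
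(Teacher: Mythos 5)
Your argument is correct, and it takes a genuinely different route from the paper. The paper does not prove Theorem~\ref{Ghouila-Houri} at all: it cites \cite{Houri} as a classical result, and separately observes that Theorem~\ref{Ghouila-Houri} follows from Theorem~\ref{mainthm} by setting $D_1=\cdots=D_n$ --- a derivation that only works for $n$ sufficiently large and of course rests on the whole regularity--absorption machinery. Your proof is a short, self-contained extremal argument valid for every $n$, built around a longest cycle $C$, a longest path $P$ in $D-V(C)$, and the re-routing obstruction $B\cap(A+I)=\varnothing$. The steps all check out on inspection. The sumset bound $|A+I|\ge\min\{k,|A|+t-1\}$ is not Cauchy--Davenport verbatim (since $k$ need not be prime), but it does hold for an interval $I$ in $\mathbb{Z}_k$: if $A+I\ne\mathbb{Z}_k$, fix $z\notin A+I$ and rotate so $z=k-1$; each $a+I$ with $a\in A$ then becomes a genuine linear interval of length $t$ inside $\{0,\dots,k-2\}$ with a distinct left endpoint, so the union has at least $|A|+t-1$ elements. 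In the main case ($t<k$ and $|A|+t-1<k$), chaining $n/2-t+1\le|B|\le k-|A|-t+1\le k-n/2$ gives $t\ge n-k+1$, contradicting $t\le n-k$. If $|A|+t-1\ge k$, or if $t\ge k$, then $A+I=\mathbb{Z}_k$, so $B=\varnothing$. The nonemptiness of $A$ and $B$ is genuinely forced by $t\le n-k$, exactly as you claim: if $t<k$ then $2t<n$, so $|A|,|B|\ge n/2-t+1>1$; if $t\ge k$ then $k\le n/2$, and the refined bound $|A|,|B|\ge n/2-(k-1)\ge 1$ applies (using that in-neighbours of $p_1$ inside $P$ are confined to $p_2,\dots,p_k$ by the length-$k$ cap). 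In a full write-up you would want to spell out this last refined bound, the interval sumset estimate, and the nonemptiness check, all of which are stated but not verified in your sketch; the skeleton and the arithmetic are sound, and the result holds for all $n\ge 2$ rather than just asymptotically.
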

Chakraborti, Kim, Lee, and Seo \cite{2023Tournament} suggested that it would be interesting to consider a transversal version of Ghouila-Houri's theorem. Our main result solves this.
\begin{theorem}\label{mainthm}
 Let $n$ be a sufficiently large integer. Suppose $\mathcal{D} = \{D_1, \ldots, D_n\}$ is a collection of digraphs on a common vertex set of size $n$. If $\delta^0(\mathcal{D})\geq \frac{n}{2}$, then $\mathcal{D}$ contains a transversal directed Hamilton cycle.
\end{theorem}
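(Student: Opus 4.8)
The plan is to follow the now-standard absorption + regularity strategy for embedding spanning structures, adapted to the transversal setting exactly as developed in the works cited (Joos--Kim \cite{2021jooskim} for the undirected Dirac case, Cheng--Han--Wang--Wang \cite{2023chengspan} for digraph collections, and the transversal blow-up lemma of \cite{cheng2023transversals}). So the overall architecture is: build a small \emph{absorbing path} that can incorporate any tiny leftover vertex set, cover almost all remaining vertices by a long path using the regularity method, and then patch the two together into a transversal directed Hamilton cycle. The colors will be split up front into disjoint reservoirs for each task, and one needs to be careful that every structural step consumes only colors from its allotted reservoir so that the final bijection $\varphi : E(H) \to [n]$ is well-defined.

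First I would prove a \emph{transversal absorbing lemma}. The key object is, for each ordered pair $(x,y)$, an ``absorbing gadget'': a short directed path whose internal vertices can be rerouted so that the gadget, on the same color set, yields a directed path from $x$ to $y$ either skipping or absorbing a designated vertex $v$. Using $\delta^0(\mathcal{D}) \ge n/2$ one shows every vertex $v$ has many such gadgets (a supersaturation/counting argument in each $D_i$, combined over a reservoir of colors), and then a probabilistic selection (as in R\"odl--Ruci\'nski--Szemer\'edi-style absorption) produces a single short directed path $P_{\mathrm{abs}}$ on a set $A$ of $o(n)$ vertices and $|A|-1$ reserved colors such that for every set $W$ of vertices with $|W| \le \varepsilon n$, $P_{\mathrm{abs}}$ extends to a directed path on $V(P_{\mathrm{abs}}) \cup W$ with the same endpoints, using the same colors plus exactly $|W|$ further colors earmarked for absorption.

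Next I would handle the \emph{almost-spanning} part via the regularity method for digraph collections. Apply a directed/transversal version of Szemer\'edi's regularity lemma to $\mathcal{D}$ (restricted to the colors not used by the absorbing structure) to obtain a reduced digraph $R$ that is ``robustly'' Ghouila-Houri (minimum semi-degree close to $|R|/2$), hence by Ghouila-Houri's theorem (Theorem~\ref{Ghouila-Houri}) applied to $R$ one gets a directed Hamilton cycle in $R$; along this cycle, consecutive pairs of clusters form super-regular directed pairs after cleaning, and the transversal blow-up lemma \cite{cheng2023transversals} lets me embed an almost-spanning directed path in $\mathcal{D}$ that rainbow-uses a prescribed color set, with prescribed endpoints lying in chosen clusters. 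Connecting through the cycle of clusters and reserving a few ``connecting'' colors (again using $\delta^0 \ge n/2$ to find the connecting edges of the right colors) yields a directed path $P_{\mathrm{cov}}$ covering all but $\le \varepsilon n$ vertices and all but $\le \varepsilon n$ colors, with endpoints we can attach to $P_{\mathrm{abs}}$. Finally, glue $P_{\mathrm{abs}}$ and $P_{\mathrm{cov}}$ into one directed path by two connecting edges of reserved colors; absorb the $\le \varepsilon n$ uncovered vertices into $P_{\mathrm{abs}}$ using its absorption property and the earmarked colors; then close the resulting spanning directed path into a directed Hamilton cycle with the single last leftover color via one more connecting edge — a final accounting check confirms all $n$ colors are used exactly once.

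The main obstacle, I expect, is the color bookkeeping interacting with the structural flexibility — in the usual (non-transversal) proof one may reuse edges freely, but here each step locks a color, so the absorbing gadgets, the connecting edges, and the blow-up embedding must each be designed to consume a \emph{predetermined} number of colors from disjoint reservoirs, and one must prove that enough gadgets/edges of each needed color exist. Concretely, establishing the transversal absorbing lemma with this rigid color budget (showing the probabilistic argument survives the constraint that each gadget reserves its own colors and that the union is simultaneously flexible for every small $W$ with a matching-sized supply of absorption colors) is the delicate heart of the argument; secondarily, verifying that the reduced digraph inherits a minimum semi-degree strictly above $|R|/2$ after the regularity cleanup — so that Theorem~\ref{Ghouila-Houri} genuinely applies and the resulting cluster cycle can be made super-regular — requires the usual but careful $\varepsilon$-$d$ parameter chasing in the digraph collection setting.
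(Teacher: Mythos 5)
Your high-level plan is the right shape (absorption, regularity, blow-up, connect), and the first two stages roughly match what the paper does for the ``stable case.'' But there is a genuine gap in the middle, and you flag it as merely ``secondary'' when in fact it is the central difficulty: after passing to the reduced digraph collection $\mathcal{R}$, the minimum semi-degree does \emph{not} stay at or above $|R|/2$. Lemma~\ref{degree-inheritance} only gives $\delta^0$ about $(\tfrac12-\gamma)L$ for some small $\gamma>0$ — strictly below the Ghouila-Houri threshold. Since the hypothesis $\delta^0(\mathcal{D})\ge n/2$ is tight, the regularity cleanup necessarily pushes you under threshold, and Theorem~\ref{Ghouila-Houri} (or any transversal analogue at exactly $1/2$) simply cannot be applied to $\mathcal{R}$. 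No amount of $\varepsilon$-$d$ parameter chasing repairs this.

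The paper's response to this is the entire stable/extremal dichotomy, which is absent from your proposal. They first characterize the near-extremal single digraphs at semi-degree $(\tfrac12-\epsilon)n$ (Lemma~\ref{thm-single-graph}), then define when a \emph{collection} is strongly/weakly stable. In the stable case the degree loss is harmless: they prove a stability theorem for transversal perfect matchings in bipartite graph collections (Theorem~\ref{stable-matching}), translate ``rainbow cycle partition of $\mathcal{R}$'' into ``transversal perfect matching of the characteristic bipartite collection $\mathcal{B_R}$,'' and run the blow-up step on the resulting cycle partition rather than a single Hamilton cycle in $R$. In the extremal case — where almost all $D_i$ are close to one of the three extremal configurations EC1/EC2/EC3 with roughly aligned partitions — they abandon regularity altogether and do a direct structural argument (Theorems~\ref{theorem-EC12} and \ref{theorem-EC3}), balancing the two sides, covering bad vertices by short rainbow $P_3$'s, treating the leftover colors, and invoking the transversal blow-up lemma only on the very dense near-complete bipartite pieces. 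This extremal analysis is a substantial fraction of the paper and is not a routine appendix. A second, smaller discrepancy: the paper's absorber is a short rainbow directed \emph{cycle} together with an auxiliary family $\mathcal{Q}$ of color-color-vertex-vertex tuples used to stitch the embedded paths into the cycle; your single absorbing path is in principle workable but you would still have to route every leftover color (not just vertex) through it, which is exactly what $\mathcal{Q}$ is for. Until you replace ``apply Ghouila-Houri to $R$'' with a stability statement plus a separate extremal-case argument, the plan does not close.
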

In an undirected graph, one may replace each undirected edge by two arcs with opposite directions. Thus, for sufficiently large $n$, Theorem \ref{Ahaconj} is a direct consequence of Theorem \ref{mainthm}. Furthermore, by setting $D_1 = D_2 = \cdots = D_n$ in Theorem \ref{mainthm}, we immediately obtain Theorem \ref{Ghouila-Houri} for all sufficiently large $n$.


\subsection{Notation and Organization}
{\bf Notation}. Throughout the paper, we use standard graph theory notation and terminology. Let $D$ be a digraph with  vertex set $V(D)$ and edge set $E(D)$, and sometimes we write $e(D):=|E(D)|$. For $x,y\in V(D)$, denote by $xy$ the directed edge from $x$ to $y$ and we always write $xy\in D$. Denote by $P_n$ and $C_n$ the directed path and directed cycle on $n$ vertices, respectively. 

We use $H\subseteq D$ to denote that $H$ is a sub-digraph of $D$. For $S\subseteq V(D)$, we write $D[S]$ for the sub-digraph of $D$ induced by $S$ and $D-S$ for $D[V(D)\setminus S]$. For two not necessarily disjoint vertex subsets 
$A,B\subseteq V(D)$, let $D[A,B]$ be the sub-digraph of $D$ with vertex set $A\cup B$ and edge set $\{xy\in E(D):x\in A, y\in B\}$ and let $e_{D}(A, B):=|E(D[A,B])|$. Denote $D^{\pm}[A,B]=D[A,B]\cup D[B,A]$. When $A=B$, we often abbreviate $D[A,B]$ and $e_{D}(A, B)$ as $D[A]$ and $e_D(A)$  respectively.  

For a digraph $D$ and $v\in V(D)$, we denote the sets of out-neighbors and in-neighbors of $v$ by $N_D^+(v)$ and $N_D^-(v)$, respectively, with the corresponding out- and in-degrees given by $d_D^+(v) := |N_D^+(v)|$ and $d_D^-(v) := |N_D^-(v)|$. 
For $S\subseteq V(D)$, we write $d_D^+
(v,S):=|N_D^+(v)\cap S|$ and $d_D^-
(v,S):=|N_D^-(v)\cap S|$. 
We often omit subscripts when they are clear from the context.

Let $\mathcal{D}=\{D_i:i\in [m]\}$ be a collection of digraphs on a common vertex set $V$ and $H$ be a rainbow digraph inside $\mathcal{D}$.  For a vertex $v\in V$ and each $i\in [m]$, let $N_i^+(v):=N_{D_i}^+(v)$, $N_i^-(v):=N_{D_i}^-(v)$, {$d_i^+(v):=|N_i^+(v)|$ and $d_i^-(v):=|N_i^-(v)|$}. For any two vertex subsets $X,\,Y \subseteq V$, define $\mathcal{D}[X]:=\{D_i[X]:i\in [m]\}$, $\mathcal{D}[X,Y]:=\{D_i[X,Y]:i\in [m]\}$ and $\mathcal{D}^{\pm}[X,Y]:=\{D_i^{\pm}[X,Y]:i\in [m]\}$. If $E(D)=\emptyset$, then we simply write $D= \emptyset$. Similarly, if there is no edge in $D_i$ for all $i \in [m]$, then we write $\mathcal{D}= \emptyset$. Denote by ${\rm col}(H)$ the set of colors appearing in $H$.  Two rainbow graphs $H_1$ and $H_2$ are said to be \textit{disjoint} if $V(H_1)\cap V(H_2)=\emptyset$ and ${\rm col}(H_1)\cap {\rm col}(H_2)=\emptyset$.

The notation above extends to undirected graphs in the obvious ways. For $n\in \mathbb{N}$, we denote the set $\{1,2,\ldots,n\}$ by $[n]$. For any two constants $\alpha,\beta\in(0,1)$, we write $\alpha\ll\beta$ if there exists a
 function $f=f(\beta)$ such that the subsequent arguments hold for all $0<\alpha\leq f.$ 
 \medskip

\noindent{\bf Organization.} We conclude this section with a sketch of the proof of Theorem \ref{mainthm}. In Section~2, we introduce two key techniques that we will   need later, i.e., the regularity lemma in digraph collections and a stability result for transversal perfect matchings in bipartite graph collections. For readability, we postpone {their} proofs to the appendix. The proof of Theorem \ref{mainthm} will be divided into ``stable case'' and ``extremal case''. Section~3 introduces notions of extremality and stability for digraphs and digraph collections; we prove several results
about absorption that will be used in the proof of the stable case. In {Section 4}, we show that there is always a transversal directed Hamilton cycle in {the} stable case. {Sections 5-7} deal with the extremal case. We finish our paper with some concluding remarks in the final section. 

\subsection{Discussion of the strategy}
Joos and Kim \cite{2021jooskim}  proved the  transversal version of Dirac's theorem by introducing the auxiliary digraph technique, along with some rotations. However, this technique is not feasible in digraph collections because we cannot extend an existing directed cycle into a longer cycle by using the same rotation. In \cite{cheng2024stability}, the first author and Staden further developed the absorption method for graph collections, combined with the application of the transversal blow-up lemma, they finally proved a stability version of Joos and Kim's result \cite{2021jooskim}. The absorption technique in \cite{cheng2024stability} originates from \cite{2021ChengWangZhao} and \cite{2023chengspan}, which adapted the absorption technique first introduced in \cite{rodl2008approximate}. This approach has become a powerful tool for solving various transversal graph embedding problems.

 In this paper, we adapt the method developed in \cite{cheng2024stability} and generalize their stability from graph collections to digraph collections. By analyzing the extremal cases, we proved a tight bound for the transversal Ghouila-Houri's theorem. We believe that our method could be used to demonstrate stabilities or tight bounds for other transversal graph embedding {problems}  (see the survey \cite{surveytransversal}).

\subsection{Sketch of the proof of Theorem \ref{mainthm}}
Our proof utilizes the regularity blow-up method in digraph collections, and is divided into two parts: stable case and extremal case. 

\medskip
\noindent{\bf Stable case.} We first characterize the extremality of a single digraph, which forms the basis of our extremal case distinction. Let $0<\frac{1}{n}\ll\epsilon\ll 1$ and $D$ be an $n$-vertex digraph with minimum semi-degree at least $(\frac{1}{2}-\epsilon)n$. Suppose $D$ has no directed Hamilton cycle. Then $D$ contains a sub-digraph that is close to one of the digraphs in Figure \ref{fig0}. By Lemma~\ref{thm-single-graph}, such a sub-digraph induces a characteristic partition of $D$; we say that $D$ is $(\epsilon, \mathrm{EC}k)$-extremal for some $k \in [3]$.  Furthermore, this $k$ is unique. 

Assume that $\mathcal{D} = \{D_1,\ldots, D_n\}$ is a collection of digraphs with the same vertex set $V$ of size $n$. Suppose that $\delta^0(\mathcal{D})\geq (\frac{1}{2}-\epsilon)n$ and $\mathcal{D}$ contains no transversal directed Hamilton cycles. We suspect that either almost all digraphs in $\mathcal{D}$ are $(\epsilon,{\rm EC}k)$-extremal for some $k\in [2]$ (allowing $k=1$ for some digraphs and 
$k=2$ for others), or almost all digraphs in $\mathcal{D}$ are $(\epsilon,{\rm EC}3)$-extremal; moreover those digraphs have similar vertex partitions. Based on this observation, we define two kinds of stability for a digraph collection with minimum degree at
least $(\frac{1}{2}-\epsilon)n$. We say that
\begin{itemize}
    \item $\mathcal{D}$ is strongly stable if $\mathcal{D}$ contains many digraphs each of which is not $(\epsilon,{\rm EC}k)$-extremal for any $k\in [3]$,
    \item $\mathcal{D}$ is weakly stable if for almost all colors $i\in [n]$, each $D_i$ is $(\epsilon,{\rm EC}k)$-extremal for some $k\in [3]$, but their vertex partitions are not similar,
    \item $\mathcal{D}$ is stable if it is either strongly stable or weakly stable.
\end{itemize}
In the stable case, we are to show that if $\mathcal{D}$ is stable, then it contains a transversal directed Hamilton cycle. 

\medskip
{\bf Step 1. Build an absorbing cycle when $\mathcal{D}$ is stable. (Section 3)} 
\medskip

We build {a} ``directed absorbing rainbow cycle'' $C$ for $\mathcal{D}$ with the property that $C$ is very small and there is a set $A$ consisting of some color-vertex-vertex triples $(c,u,v)$ such that whenever $A_0\subseteq A$ is sufficiently small compared to $|C|$, $C$ can absorb all of its elements. Additionally, we construct  an additional auxiliary set $\mathcal{Q}$ consisting of color-color-vertex-vertex tuples $(c,c',v,v')$ to absorb those colors and vertices which $C$ cannot. Delete all colors and vertices in $C$ and $\mathcal{Q}$ from $\mathcal{D}$.

\medskip
{\bf Step 2. Use the regularity-blow-up method for transversals in digraph collections to cover almost all vertices and colors with long rainbow directed  paths.}
\medskip

We first prove a regularity lemma for digraph collections (Section 2.1), and then apply it to $\mathcal{D}$, yielding a reduced digraph collection $\mathcal{R}$ that inherits the minimum degree condition and  the stability property of $\mathcal{D}$. By considering the characteristic bipartite graph of each digraph, we get a bipartite graph collection $\mathcal{B_R}$ for the reduced digraph collection $\mathcal{R}$. Notice that $\mathcal{R}$ has a disjoint rainbow directed cycle partition if and only if $\mathcal{B_R}$ has a transversal perfect matching. We establish a stability result for transversal perfect matching in bipartite graph collections (Section 2.2), ensuring that the disjoint rainbow directed cycle partition of $\mathcal{R}$ exists. Using the blow-up method, we obtain a set of almost spanning disjoint rainbow directed paths within each directed cycle of $\mathcal{R}$, which covers almost all vertices outside the absorbing cycle $C$ and the auxiliary set $\mathcal{Q}$ (Section 4). 

\medskip
{\bf Step 3. Connect the obtained directed paths and cover the remaining vertices via the absorbing cycle.}
\medskip

In this step, we apply the absorbing property of $C$ as well as the auxiliary set $\mathcal{Q}$ to connect all the
directed rainbow paths and to cover the remaining vertices, {which ultimately forms} a transversal directed Hamilton cycle.
\medskip

\noindent{\bf Extremal case (Sections 5-7).} Assume that  $\delta^0(\mathcal{D}) \geq \frac{n}{2}$ and $\mathcal{D}$ contains no transversal Hamilton cycles. Let $0<\frac{1}{n}\ll\epsilon\ll \delta \ll \eta\ll 1$. Denote $\mathcal{C}_k:=\{i\in [n]:D_i\ {\text {is ($\epsilon$,EC$k$)-extremal}}\}$ for $k\in [3]$ and $\mathcal{C}_{\rm bad}:=[n]\setminus(\mathcal{C}_1\cup \mathcal{C}_2\cup \mathcal{C}_3)$. Notice that $\mathcal{C}_i\cap \mathcal{C}_j=\emptyset$ for $1\leq i<j\leq 3$. By ``stable case'', we know that almost all digraphs in $\mathcal{D}$ are extremal and $|\mathcal{C}_1| +|\mathcal{C}_2|\leq 2\sqrt{\delta}n$ if $|\mathcal{C}_3|\geq 2\sqrt{\delta}n$. In view of Lemma \ref{thm-single-graph}, we can fix a characteristic partition $({A_i},{B_i},{L_i})$ for each 
graph $D_i$ with $i\in \mathcal{C}_1\cup \mathcal{C}_2$, and a characteristic partition $({C_i^1},C_i^2,C_i^3,C_i^4,L_i)$ for each  graph $D_i$ with $i\in \mathcal{C}_3$. By swapping labels, we know that $|{A_1}\triangle {A_i}|,|{B_1}\triangle {B_i}|<\delta n$ for every $i\in (\mathcal{C}_1\cup \mathcal{C}_2)\setminus \{1\}$ and $|C_1^k\triangle C_i^k|<2\delta n$ for every $i\in \mathcal{C}_3\setminus \{1\}$. We proceed {with} our proof by considering the sizes of $|\mathcal{C}_1|$, $|\mathcal{C}_2|$ and $|\mathcal{C}_3|$. We will take the case $|\mathcal{C}_1|< \eta n$, $|\mathcal{C}_2|\geq  \eta n$ and $|\mathcal{C}_3|\leq 2\sqrt{\delta}n$ as an example to illustrate the general idea of our proof. 


Firstly, expand $A_1\cup B_1$ into an equitable partition $A\cup B$ of $V$. To apply the transversal blow-up lemma (Lemma~\ref{lemma4.1}) for finding long rainbow directed paths in $\mathcal{D}$, we need to ensure most vertices in the partition $A \cup B$ satisfy uniform neighborhood properties (for example, a  vertex $v\in A$ should satisfy $v\in A_i$ for the majority of colors $i\in \mathcal{C}_2$). We identify three vertex types in $A$ (and symmetrically in $B$) that may disrupt the desired uniformity: 

\begin{enumerate}
\addtolength{\itemindent}{1cm}
\item[{\rm Type 1.}] $v \in A$ belongs to $L_i$ for many $i \in \mathcal{C}_2$;
\item[{\rm Type 2.}] $v \in A$ belongs to $A_i$ for many $i \in \mathcal{C}_2$, and $v\in B_j$ for many other $j \in \mathcal{C}_2$;
    \item[{\rm {Type 3.}}] $v \in A$ belongs to nearly all $B_i$ for $i \in \mathcal{C}_2$.    
\end{enumerate}
We handle these cases as follows: For Type 3 vertices in $A$, we move them to $B$ (and perform the analogous operation for Type 3 vertices in $B$). We define $V_{\rm bad}$ as the union of vertices of Type~1 and Type 2. After this reallocation, every vertex in $A\setminus V_{\rm bad}$ (resp. $B\setminus V_{\rm bad}$) lies in $A_i$ (resp. $B_i$) for the majority of colors $i\in \mathcal{C}_2$. However, the partition sizes $|A|$ and $|B|$ may become unbalanced, with their difference bounded by $9\delta n$.  To balance them, we require the following four steps.

\medskip
{\bf Step 1. Balance the number of vertices in $A$ and $B$.}
\medskip

If there exists a set of  disjoint rainbow paths in $\mathcal{D}[B]$ such that after deleting their vertices, the number of vertices in $A$ and $B$ is balanced, then we are done. Otherwise, we will show that $\mathcal{D}$ must contain a transversal directed Hamilton cycle (see Lemma \ref{Y-large}). 

\medskip
{\bf Step 2. Cover vertices in  $V_{\rm bad}$.} 
\medskip

Using colors in $\mathcal{C}_2$, we are to find a sequence of disjoint rainbow directed paths $P_3$ such that their centers are all vertices in $V_{\rm bad}$ and endpoints are unused vertices in $(A\cup B)\setminus V_{\rm bad}$.

\medskip
{\bf Step 3. Deal with colors in  $\mathcal{C}_{\rm bad}\cup \mathcal{C}_3$.}
\medskip

Choose a maximal rainbow matching $M$ using colors in $\mathcal{C}_{\rm bad}$ and avoiding vertices used in Steps 1-2. {It is routine to verify that for each $j \in (\mathcal{C}_{\rm bad}\cup \mathcal{C}_3)\setminus {\rm col}(M)$, the graph $D_{j}$ is EC1-extremal.}

 \medskip
{\bf Step 4. Deal with colors in $(\mathcal{C}_{1}\cup \mathcal{C}_{\rm bad}\cup \mathcal{C}_3)\setminus {\rm col}(M)$.}
\medskip

We greedily select two rainbow directed paths using colors from  $(\mathcal{C}_{1}\cup \mathcal{C}_{\rm bad}\cup \mathcal{C}_3)\setminus {\rm col}(M)$ while avoiding the vertices used in the previous three steps. 

\medskip
Based on the minimum degree condition and the characteristic partition of extremal digraphs, one may use colors in $\mathcal{C}_2$ to connect all rainbow paths obtained in the above four steps into a single short  rainbow directed path, say $P$, such that $V_{\rm bad}\subseteq V(P)$,  $\mathcal{C}_1\cup \mathcal{C}_3\cup \mathcal{C}_{\rm bad}\subseteq {\rm col}(P)$ and $|A\setminus V(P)|$ and $|B\setminus V(P)|$ are nearly equal. By applying  
the transversal blow-up lemma and some structural analysis, we obtain that $\mathcal{D}$ contains a transversal directed Hamilton cycle. 

\section{Preliminaries}

\subsection{Regularity  for digraph collections}
We use the following version of the regularity lemma for digraph collections, which is obtained by
applying the degree version of the weak regularity lemma to the 4-graph of $\mathcal{D}$. The proofs of Lemma \ref{regularity-lemma} and Lemma  \ref{degree-inheritance} are postponed to the appendix. We begin by defining regularity for digraph collections.
\begin{definition}
  Suppose that $\mathcal{D}=\{D_c:c\in \mathcal{C}\}$ is a collection of bipartite digraphs on a common vertex partition $V_1\cup V_2$. We say that
    $\mathcal{D}$ is $(\epsilon,d)$-\textit{regular} if whenever $V_i'\subseteq V_i$ with $|V_i'|\geq \epsilon |V_i|$ for $i\in [2]$ and $\mathcal{C}'\subseteq \mathcal{C}$ with $|\mathcal{C}'|\geq \epsilon |\mathcal{C}|$ we have 
        $$
          \left|\frac{\sum_{c\in \mathcal{C}'}e_{D_c}(V_1',V_2')}{|\mathcal{C}'||V_1'||V_2'|}-\frac{\sum_{c\in \mathcal{C}}e_{D_c}(V_1,V_2)}{|\mathcal{C}||V_1||V_2|}\right|<\epsilon
        $$
        and $\sum_{c\in \mathcal{C}}e_{D_c}(V_1,V_2)\geq d|\mathcal{C}||V_1||V_2|$.
\end{definition}
\begin{lemma}[Regularity lemma for digraph collections]\label{regularity-lemma}
For every integer $L_0\geq 1$ and every $\epsilon,\delta>0$, there is an integer $n_0:=n_0(\epsilon,\delta,L_0)$  such that for every $d\in [0,1)$ and every digraph collection $\mathcal{D}=\{D_i:i\in \mathcal{C}\}$ on vertex set $V$ of size $n\geq n_0$ with $\delta n\leq |\mathcal{C}|\leq \frac{n}{\delta}$, there exists a partition of $V$ into $V_0,V_1,\ldots,V_L$, of $\mathcal{C}$ into $\mathcal{C}_0,\mathcal{C}_1,\ldots,\mathcal{C}_M$ and a spanning sub-digraph $D_c'$ of $D_c$ for each $c\in \mathcal{C}$ satisfying the following properties:
\begin{enumerate}
  \item[{\rm (i)}] $L_0<L,M<n_0$ and $|V_0|+|\mathcal{C}_0|\leq \epsilon n$,
  \item[{\rm (ii)}] $|V_1|=\cdots=|V_L|=|\mathcal{C}_1|=\cdots=|\mathcal{C}_M|=:m$,
  \item[{\rm (iii)}] $\sum_{c\in \mathcal{C}}d_{D_c'}^+(v)\geq \sum_{c\in \mathcal{C}}d_{D_c}^+(v)-(\frac{24d}{\delta^3}+\epsilon)n^2$ and  $\sum_{c\in \mathcal{C}}d_{D_c'}^-(v)\geq \sum_{c\in \mathcal{C}}d_{D_c}^-(v)-(\frac{24d}{\delta^3}+\epsilon)n^2$ for all $v\in V$, and $e(D_c')\geq e(D_c)-(\frac{24d}{\delta^3}+\epsilon)n^2$ for all $c\in \mathcal{C}$,
  \item[{\rm (iv)}] if  for each $c\in \mathcal{C}$, the digraph $D_c'$ has an edge with both end  vertices in a single cluster $V_i$ for some $i\in [L]$, then $c\in \mathcal{C}_0$,
  \item[{\rm (v)}] for all triples $\{(h,i),j\}\in \binom{[L]}{2}\times [M]$, we have either $D_c'[V_h,V_i]=\emptyset$ for all $c\in \mathcal{C}_j$, or $\mathcal{D}_{hi,j}':=\{D_c'[V_h,V_i]:c\in \mathcal{C}_j\}$ is $(\epsilon,d)$-regular.
\end{enumerate}
\end{lemma}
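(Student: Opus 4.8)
The plan is to deduce Lemma~\ref{regularity-lemma} from the degree form of the weak (oblivious/Chung-style) regularity lemma for uniform hypergraphs, applied to an auxiliary $4$-uniform hypergraph $\mathcal{H}=\mathcal{H}(\mathcal{D})$ that encodes the coloured arcs of $\mathcal{D}$. One natural choice is to take $\mathcal{H}$ on two disjoint copies $V^{-},V^{+}$ of $V$ (playing the roles of tails and heads) together with two disjoint copies $\mathcal{C}^{-},\mathcal{C}^{+}$ of the index set $\mathcal{C}$, and to put a hyperedge $\{x^{-},y^{+},c^{-},c^{+}\}$ exactly when $c^{-},c^{+}$ are the two copies of a common index $c\in\mathcal{C}$ and $xy\in E(D_c)$. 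The point of doubling each class is that every hyperedge joins the two copies of the \emph{same} colour while recording a genuinely ordered pair of vertices, so that after applying hypergraph regularity the resulting cell partitions of $V^{-}$ and $V^{+}$ can be amalgamated into one cluster partition of $V$, and likewise those of $\mathcal{C}^{-},\mathcal{C}^{+}$ into one colour partition of $\mathcal{C}$, after moving the few ``misaligned'' vertices and colours into the exceptional cells. This amalgamation is precisely the mechanism that produces condition~(iv): a colour retains an intra-cluster arc only if that arc is cut by the discrepancy between the $V^{-}$- and $V^{+}$-partitions, and all such colours are swept into $\mathcal{C}_0$.

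Concretely, I would run the degree version of the weak $4$-graph regularity lemma on $\mathcal{H}$ with a sufficiently small parameter $\epsilon'=\epsilon'(\epsilon,\delta,L_0)$ and with the cluster-number lower bound chosen to force $L,M>L_0$; since $\delta n\le|\mathcal{C}|\le n/\delta$, all four classes of $\mathcal{H}$ have size $\Theta(n)$, so the lemma applies once $n$ is large. This returns a spanning sub-$4$-graph $\mathcal{H}'$ (equivalently a spanning sub-digraph $D_c'\subseteq D_c$ for each $c$) together with cell partitions such that no surviving hyperedge lies inside a single cluster or uses an exceptional colour, every quadruple of non-exceptional cells that still carries edges is $\epsilon'$-regular in the density sense, and the per-vertex and per-edge degree losses in $\mathcal{H}$ are controlled. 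After equalising cell sizes (dumping leftovers into $V_0$ and $\mathcal{C}_0$), unpacking $\mathcal{H}'$ yields $V_0,\dots,V_L$, $\mathcal{C}_0,\dots,\mathcal{C}_M$ and the $D_c'$; translating the weak $4$-graph regularity of a surviving quadruple $(V_h^{-},V_i^{+},\mathcal{C}_j^{-},\mathcal{C}_j^{+})$ into the normalisation used in the Definition gives the $(\epsilon,d)$-regularity of $\mathcal{D}_{hi,j}'=\{D_c'[V_h,V_i]:c\in\mathcal{C}_j\}$ in~(v), and the degree-loss bounds give~(iii). For~(iii) one accounts separately for arcs deleted because they lie inside a cluster, arcs of a colour in $\mathcal{C}_0$ (at most $|\mathcal{C}_0|\,n\le\epsilon n^2$ at each vertex), arcs in irregular quadruples, and arcs in quadruples of density below $d$; each conversion between the $4$-graph density (normalised by a product of four cell sizes) and the digraph-collection density (normalised by $|\mathcal{C}|\,|V_1|\,|V_2|$), and between sums over $\mathcal{C}$ and averages over a single colour-cell, loses a factor $O(1/\delta)$ from $|\mathcal{C}|\ge\delta n$; carrying this through the out-degree, in-degree and edge counts (and the two vertex copies) accumulates to the stated $\tfrac{24d}{\delta^{3}}$.

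The routine parts are fixing the hierarchy of constants and checking~(i)--(ii), which follow mechanically from the output of the hypergraph lemma. The two points that carry the real work are: (1) the alignment/merging of the doubled partitions, i.e.\ showing that only $o(n)$ vertices and colours get cut by the discrepancy between the $V^{-}$ and $V^{+}$ (resp.\ $\mathcal{C}^{-}$ and $\mathcal{C}^{+}$) partitions, so that after moving them to $V_0,\mathcal{C}_0$ both~(i) and~(iv) hold; and (2) the bookkeeping of the degree loss in~(iii) so that it lands exactly on $\tfrac{24d}{\delta^{3}}$ rather than merely on $O_{\delta}(d)$. Both obstacles are of the same flavour as in the graph-collection regularity lemma of the first author and Staden~\cite{cheng2024stability}, whose template I would follow; accordingly, as the paper indicates, the full computation is deferred to the appendix.
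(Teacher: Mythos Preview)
Your encoding has a fatal density problem. By doubling the colour set into $\mathcal{C}^{-},\mathcal{C}^{+}$ and insisting that every hyperedge use the two copies of the \emph{same} colour, you force the $4$-graph $\mathcal{H}$ to have at most $\sum_{c}e(D_c)=O(n^{3})$ edges on a vertex set of size $N=\Theta(n)$. Hence every vertex degree in $\mathcal{H}$ is $O(n^{2})$, while the loss term $(d+\epsilon')N^{3}$ in the degree form is $\Theta(n^{3})$; the conclusion $\deg_{\mathcal{H}'}(v)\ge\deg_{\mathcal{H}}(v)-(d+\epsilon')N^{3}$ is vacuous. Equivalently, for any quadruple of clusters $(V_h^{-},V_i^{+},\mathcal{C}_j^{-},\mathcal{C}_k^{+})$ the number of hyperedges is at most $|\mathcal{C}_j^{-}\cap\mathcal{C}_k^{+}|\cdot m^{2}\le m^{3}$, so the $4$-graph density is $O(1/m)\to 0$, below any fixed $d$; after cleaning, nothing survives and every $D_c'$ is empty. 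Even dropping the colour doubling and using a $3$-graph on $V^{-}\cup V^{+}\cup\mathcal{C}$, your second ``obstacle'' is genuine and not resolvable as stated: the regularity lemma gives completely unrelated partitions of $V^{-}$ and $V^{+}$, and there is no reason only $o(n)$ vertices are misaligned. The Cheng--Staden template you cite concerns undirected graph collections, where no such vertex doubling (and hence no merging) occurs.

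The paper's encoding avoids both problems by keeping a \emph{single} copy of $V$ and of $\mathcal{C}$, and using two auxiliary sets $S_{1},S_{2}$ of size $n$ purely to record direction: the arc $i\to j$ in $D_{c}$ contributes the $n$ hyperedges $\{i,j,c,x\}$ with $x\in S_{1}$ if $i<j$ and $x\in S_{2}$ if $i>j$. This blows up the edge count to $\Theta(n^{4})$, putting the $4$-graph at the correct density scale. One pass of the degree-form weak $4$-graph lemma, followed by sub-partitioning each cluster so that it lies in one of $V,\mathcal{C},S_{1},S_{2}$, already yields (i)--(iv). Property~(v) does \emph{not} follow from this single pass (the unordered pair $\{i,j\}$ does not yet separate the two directions between a pair of vertex clusters), so the paper runs a second round: it defines two $3$-graphs $\mathscr{H}_{1},\mathscr{H}_{2}$ on $(V\cup\mathcal{C})\setminus(V_{0}\cup\mathcal{C}_{0})$ recording arcs from $V_{i}$ to $V_{j}$ with $i<j$ and $i>j$ respectively, and applies a multicoloured degree-form lemma that refines the existing partition simultaneously for both, which then gives~(v) directly.
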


The sets $V_i$ are called {\textit{vertex clusters}} and the sets $\mathcal{C}_i$ are called {\textit{color clusters}}, while $V_0$ and $\mathcal{C}_0$
are the exceptional vertex and color sets respectively. Now, we define the reduced digraph collection. 

\begin{definition}[Reduced digraph collection]
  Given a digraph collection $\mathcal{D}=\{D_i:i\in \mathcal{C}\}$ on $V$ and
parameters $\epsilon>0$, $d\in [0,1)$ and $L_0\geq 1$, the {\textit{reduced digraph collection}} $\mathcal{R}:=\mathcal{R}(\epsilon,d,L_0)$ of $\mathcal{D}$ is defined
as follows. Apply Lemma \ref{regularity-lemma} to $\mathcal{D}$ with parameters $\epsilon, \delta, d, L_0$ to obtain $\mathcal{D}'$, a partition $V_0,V_1,\ldots,V_L$ of $V$, and a partition $\mathcal{C}_0,\mathcal{C}_1,\ldots,\mathcal{C}_M$ of $\mathcal{C}$,  where $V_0, \mathcal{C}_0$ are the exceptional sets and $V_1,\ldots,V_L$ are the vertex clusters and $\mathcal{C}_1,\ldots,\mathcal{C}_M$ are the color clusters. Then $\mathcal{R}:=\{R_1,\ldots,R_M\}$ is a digraph collection of $M$ digraphs each on the same vertex set $[L]$, where, for $\{(h,i),j\}\in \binom{[L]}{2}\times [M]$, we have ${hi}\in R_j$ if and only if $\mathcal{D}_{hi,j}'$ is $(\epsilon,d)$-regular.
\end{definition}
The next lemma states that clusters inherit a minimum semi-degree bound in the reduced digraph collection.
\begin{lemma}[Degree inheritance]\label{degree-inheritance}
Suppose $L_0\geq 1$ and $0<\frac{1}{n}\ll \epsilon\leq d\ll\delta,\gamma,p\leq 1$. Let $\mathcal{D}=\{D_i:i\in \mathcal{C}\}$ be a digraph collection on vertex set $V$ of size $n$ with $\delta^0(\mathcal{D})\geq (p+\gamma)n$ and $\delta n\leq |\mathcal{C}|\leq\frac{n}{\delta}$. Let $\mathcal{R}:=\mathcal{R}(\epsilon,d,L_0)$ be the reduced digraph collection of $\mathcal{D}$ consisting of $M$ digraphs on a common  vertex set $[L]$. Then
\begin{enumerate}
  \item[{\rm (i)}] for every $i\in [L]$, there are at least $(1-d^{\frac{1}{4}})M$ colors $j\in [M]$ for which $d_{R_j}^+(i)\geq (p+\frac{\gamma}{2})L$ and $d_{R_j}^-(i)\geq (p+\frac{\gamma}{2})L$,
  \item[{\rm (ii)}] for every $j\in [M]$, there are at least $(1-d^{\frac{1}{4}})L$ vertices $i\in [L]$ for which $d_{R_j}^+(i)\geq (p+\frac{\gamma}{2})L$ and $d_{R_j}^-(i)\geq (p+\frac{\gamma}{2})L$.
\end{enumerate}
\end{lemma}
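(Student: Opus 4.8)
\textbf{Proof proposal for Lemma \ref{degree-inheritance} (Degree inheritance).}

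The plan is to derive the cluster degree bounds from the regularity partition of $\mathcal{D}$ by a double-counting argument, working separately for out-degrees and in-degrees (the two cases being symmetric, I will describe only the out-degree case). First I would fix $i \in [L]$ and estimate, for a suitable choice of constants $\epsilon \leq d \ll \delta, \gamma, p$, the total out-degree $\sum_{c \in \mathcal{C}} d_{D_c'}^+(v)$ over all $v \in V_i$. By part (iii) of Lemma \ref{regularity-lemma}, removing edges to pass from $D_c$ to $D_c'$ costs at most $(\frac{24d}{\delta^3} + \epsilon)n^2$ per vertex in total over all colors, so from $\delta^0(\mathcal{D}) \geq (p+\gamma)n$ we still have $\sum_{c\in\mathcal{C}} d_{D_c'}^+(v) \geq (p+\gamma)|\mathcal{C}|n - (\frac{24d}{\delta^3}+\epsilon)n^2 \geq (p+\gamma)|\mathcal{C}|n - \delta^{-1}(\frac{24d}{\delta^3}+\epsilon)|\mathcal{C}|n$ using $|\mathcal{C}| \geq \delta n$; since $d \ll \delta, \gamma$, this error is $\ll \gamma |\mathcal{C}| n$, so the bound is at least, say, $(p + \frac{3\gamma}{4})|\mathcal{C}|n$ for each $v \in V_i$. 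Summing over $v \in V_i$ and using $|V_i| = m$ gives $\sum_{v \in V_i}\sum_{c\in\mathcal{C}} d_{D_c'}^+(v) \geq (p+\frac{3\gamma}{4})|\mathcal{C}|nm$.

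Next I would redistribute this out-degree mass across the structure of the reduced collection. Each edge $vw \in D_c'$ with $v \in V_i$ contributes to this sum; I classify it by where $w$ lies and which color cluster $c$ belongs to. Edges with $w \in V_0$ contribute at most $|V_0| \cdot |\mathcal{C}| \cdot m \leq \epsilon n \cdot |\mathcal{C}| \cdot m$; edges with $c \in \mathcal{C}_0$ contribute at most $n \cdot |\mathcal{C}_0| \cdot m \leq n \cdot \epsilon n \cdot m$; edges with both endpoints in $V_i$ don't occur when $c \notin \mathcal{C}_0$ by part (iv); and edges landing in an irregular (empty) pair $\mathcal{D}'_{hi,j}$ don't occur by the definition of $\mathcal{R}$. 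So up to an error of $O(\epsilon n |\mathcal{C}| m)$, all the out-degree mass from $V_i$ is carried by pairs $(V_h, j)$ with $hi \in R_j$, and each such pair contributes at most $m \cdot m \cdot 1 = m^2$ edges per color times $|\mathcal{C}_j| = m$ colors, i.e.\ at most $m^3$; actually more carefully, a pair $(h,j)$ with $hi \in R_j$ contributes $\sum_{c \in \mathcal{C}_j} e_{D_c'}(V_h, V_i) \leq |\mathcal{C}_j| |V_h| |V_i| = m^3$. Writing $d_{R_j}^+(i)$ for the out-degree of $i$ in $R_j$, the number of pairs $(h,j)$ with $hi \in R_j$ is $\sum_{j\in[M]} d_{R_j}^+(i)$, so $\sum_{j}d_{R_j}^+(i) \cdot m^3 \geq (p+\frac{3\gamma}{4})|\mathcal{C}|nm - O(\epsilon n|\mathcal{C}|m)$. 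Using $|\mathcal{C}| = Mm + |\mathcal{C}_0| \geq Mm$ and $n = Lm + |V_0| \geq Lm$ and absorbing the $O(\epsilon)$ term into $\gamma$, this yields $\sum_{j\in[M]} d_{R_j}^+(i) \geq (p + \frac{2\gamma}{3}) ML$.

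Finally, a counting/averaging step converts this average bound into the ``most colors'' statement. Let $B_i^+ := \{j \in [M] : d_{R_j}^+(i) < (p+\frac{\gamma}{2})L\}$. Since trivially $d_{R_j}^+(i) \leq L$ for all $j$, we get $(p+\frac{2\gamma}{3})ML \leq \sum_j d_{R_j}^+(i) \leq |B_i^+|(p+\frac{\gamma}{2})L + (M - |B_i^+|)L = ML - |B_i^+|(1 - p - \frac{\gamma}{2})L$, hence $|B_i^+| \leq \frac{(1 - p - \frac{2\gamma}{3})ML}{(1-p-\frac{\gamma}{2})L} \cdot$ --- wait, rearranging: $|B_i^+|(1-p-\frac{\gamma}{2}) \leq M(1 - p - \frac{2\gamma}{3})$, so $|B_i^+| \leq M \cdot \frac{1-p-\frac{2\gamma}{3}}{1-p-\frac{\gamma}{2}} = M(1 - \frac{\gamma/6}{1-p-\gamma/2})$, giving $|B_i^+| \leq (1 - c\gamma)M$ for an absolute constant $c > 0$, which is comfortably $\leq (1 - d^{1/4})M$ since $d \ll \gamma$. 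Intersecting the bad sets for out- and in-degree still leaves at least $(1 - 2\cdot\text{(something)} \cdot)M \geq (1-d^{1/4})M$ good colors after adjusting constants; this gives part (i), and part (ii) follows by the identical argument with the roles of vertex clusters and color clusters exchanged (fixing $j \in [M]$ and summing over $c \in \mathcal{C}_j$). The main obstacle is purely bookkeeping: tracking that every error term --- the edge loss in (iii) amplified by the $1/\delta$ factor from $|\mathcal{C}| \geq \delta n$, the exceptional sets $V_0, \mathcal{C}_0$, and the cluster-size rounding --- is genuinely of lower order under the hierarchy $\epsilon \leq d \ll \delta, \gamma, p$, so that all slack can be absorbed into the gap between $\gamma$ and $\gamma/2$; there is no conceptual difficulty, but the constants must be chased consistently.
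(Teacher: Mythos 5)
Your double-counting plan up through the estimate $\sum_{j\in[M]}d_{R_j}^+(i)\geq(p+\tfrac{2\gamma}{3})ML$ is sound, but the final averaging/Markov step fails for a fundamental reason, and it also rests on a misreading of the lemma. The lemma asserts that for \emph{all but at most $d^{1/4}M$} colors $j$ we have $d_{R_j}^+(i)\geq(p+\tfrac{\gamma}{2})L$, i.e.\ with $B_i^+:=\{j:d_{R_j}^+(i)<(p+\tfrac{\gamma}{2})L\}$ you must show $|B_i^+|\leq d^{1/4}M$. Your Markov computation only yields $|B_i^+|\leq\bigl(1-\tfrac{\gamma/6}{1-p-\gamma/2}\bigr)M$, which is a bound of the form $(1-c\gamma)M$; you then compare this with $(1-d^{1/4})M$, but that quantity is the \emph{required number of good colors}, not the allowed number of bad ones. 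Since $d^{1/4}M\ll(1-c\gamma)M$, the Markov bound is far too weak. And this is not fixable by sharpening constants: knowing only that the average of $d_{R_j}^+(i)$ over $j$ is at least $(p+\tfrac{2\gamma}{3})L$ is consistent with, say, a $\gamma$-fraction of colors having $d_{R_j}^+(i)=L$ and the rest having $d_{R_j}^+(i)=pL$, in which case almost all colors would be bad. A first-moment bound simply cannot control the number of bad colors at the $d^{1/4}$ scale.

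The paper's proof gets the required concentration by a different mechanism: instead of summing degree mass over $v\in V_i$ and over $j$, it fixes a single $v\in V_i$ and works color-by-color with the cleaned digraphs $D_c'$. Defining $\mathcal{D}_v:=\{c\in\mathcal{C}\setminus\mathcal{C}_0: d_{D_c'-V_0}^+(v)\geq d_{D_c}^+(v)-\sqrt{d}n\}$, one compares the degree-loss bound from Lemma \ref{regularity-lemma}(iii), namely $\sum_c d_{D_c'}^+(v)\geq\sum_c d_{D_c}^+(v)-(\tfrac{24d}{\delta^3}+\epsilon)n^2$, against the threshold $\sqrt{d}n$ for each bad color, and concludes $|\mathcal{C}\setminus(\mathcal{C}_0\cup\mathcal{D}_v)|\leq\tfrac{26\sqrt{d}n}{\delta^3}\leq\tfrac{d^{1/3}n}{2}$. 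This is the crucial step: because the total degree loss over all colors is $O(dn^2)$ while each bad color individually loses at least $\sqrt{d}n$, the number of bad colors is $O(\sqrt{d}n)$, which is a vanishing fraction of $|\mathcal{C}|$. Then for each good color $c\in\mathcal{D}_v$ the out-neighbors of $v$ in $D_c'-V_0$ number at least $d_{D_c}^+(v)-\sqrt{d}n\geq(p+\gamma-\sqrt{d})n$ and hence meet at least $(p+\tfrac{\gamma}{2})L$ clusters $V_h$; since any edge from $V_i$ to $V_h$ in $D_{c}'$ makes $\mathcal{D}_{ih,j}'$ nonempty and therefore $(\epsilon,d)$-regular by Lemma \ref{regularity-lemma}(v), we get $ih\in R_j$ directly. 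The threshold-and-count device for $\mathcal{D}_v$ (a Markov argument applied to the \emph{per-color degree loss}, not to the reduced-digraph degree itself) is what you are missing; you would need to incorporate something of this kind, and then the subsequent cluster-spreading argument replaces your $m^3$-per-pair bound entirely.
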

\begin{proof}
  (i) Choose a vertex $v\in V\setminus V_0$. By Lemma \ref{regularity-lemma} (i) and (iii), we have
  $$
  \sum_{c\in \mathcal{C}}d_{D_c'-V_0}^+(v)\geq \sum_{c\in \mathcal{C}}d_{D_c}^+(v)-\left(\frac{24d}{\delta^3}+\epsilon\right)n^2-\epsilon n|\mathcal{C}|\geq \sum_{c\in \mathcal{C}}d_{D_c}^+(v)-\frac{26dn^2}{\delta^3}.
  $$
  Let $\mathcal{D}_v:=\left\{c\in \mathcal{C}\setminus \mathcal{C}_0:d_{D_c'-V_0}^+(v)\geq d_{D_c}^+(v)-\sqrt{d}n\right\}$. 
  Then
  $$
    \sum_{c\in \mathcal{C}\setminus \mathcal{C}_0}d_{D_c'-V_0}^+(v)\leq \sum_{c\in \mathcal{C}\setminus \mathcal{C}_0}d_{D_c}^+(v)-\sqrt{d}n |\mathcal{C}\setminus (\mathcal{C}_0\cup \mathcal{D}_v)|.
  $$
  Therefore, $|\mathcal{C}\setminus (\mathcal{C}_0\cup \mathcal{D}_v)|\leq \frac{d^{\frac{1}{3}}n}{2}$ since $d\ll \delta$. It follows that $|\mathcal{D}_v|\geq |\mathcal{C}|-\frac{d^{\frac{1}{3}}n}{2}-\epsilon n\geq |\mathcal{C}|-d^{\frac{1}{3}}n$. Recall that $mM\leq |\mathcal{C}|\leq mM+\epsilon n$ and $mL\leq n\leq mL+\epsilon n$. Together with $\delta n\leq |\mathcal{C}|\leq \frac{n}{\delta}$, one has $\delta L\leq 2(1-\epsilon)M$. Hence the number of color clusters $\mathcal{C}_j$ containing at least one color of $\mathcal{D}_v$ is at least 
  $$
    \frac{|\mathcal{D}_v|}{m}\geq M-\frac{d^{\frac{1}{3}}n}{m}\geq M-\frac{d^{\frac{1}{3}}L}{1-\epsilon}\geq M-\frac{2d^{\frac{1}{3}}M}{\delta}\geq \left(1-\frac{d^{\frac{1}{4}}}{2}\right)M.
  $$
  Now let $i\in [L]$ and $v\in V_i$. For each cluster $\mathcal{C}_j$ as above, choose an arbitrary color $c_j\in \mathcal{C}_j\cap \mathcal{D}_v$. Thus, the number of vertex clusters $V_h$ containing some vertices in $N_{D_{c_j}'}^+(v)$ is at least
  $$
    \frac{d_{D_{c_j}}^+(v)-\sqrt{d}n}{m}\geq \frac{(p+\gamma-\sqrt{d})n}{m}\geq \left(p+\frac{\gamma}{2}\right)L.
  $$
  Lemma \ref{regularity-lemma} (v) implies that $i$ is adjacent to each such $V_h$ in $R_j$. Therefore,  for every $i\in [L]$, $d_{R_j}^+(i)\geq (p+\frac{\gamma}{2})L$ for at least $(1-\frac{d^{\frac{1}{4}}}{2})M$ colors $j$. The proofs of other cases are similar and we omit the proof.
\end{proof}

\subsection{Transversal perfect matching in bipartite graph  collections}

 
{{In the study of digraphs, a natural approach is to consider  their characteristic bipartite graphs. This subsection introduces the corresponding characteristic bipartite graph collection associated with a digraph system, and studies the existence of transversal perfect matchings in this bipartite system.}}

Let $D$ be a digraph with vertex set $V$. We define $B_D$ to be the characteristic  bipartite graph of $D$ with bipartition $V_1\cup V_2$ where $V_1=V_2=V$; for each $u\in V_1$ and $v\in V_2$, $uv\in E(B_D)$ if and
only if $uv\in E(D)$. Let $\mathcal{D}=\{D_1,\ldots,D_n\}$ be a collection of digraphs on a common vertex set $V$. Define the characteristic  bipartite graph collection of $\mathcal{D}$ to be $\mathcal{B_D}=\{B_{D_1},\ldots,B_{D_n}\}$.  A \textit{matching} in a (di)graph $G$ is a collection of vertex-disjoint edges $M\subseteq E(G)$. We say that $M$ is a \textit{perfect matching} if $V(M) = V(G)$. 

It is straightforward  to check that $\mathcal{B_D}$ contains a transversal perfect matching if and only if the vertex set of $\mathcal{D}$ can be covered by a set of disjoint rainbow cycles inside $\mathcal{D}$. Therefore, in the remainder of this section, we are to find a transversal perfect matching in a bipartite graph collection. 

Joos and Kim \cite{2021jooskim} established a minimum degree condition that guarantees the existence of transversal perfect matchings in a graph collection.
Bradshaw \cite{bradshaw2021transversals} proved the analogue result in bipartite graph collections, which can be stated as follows. 
\begin{theorem}[\cite{bradshaw2021transversals}]\label{THEOREM:MATCHING-2021}
    Let $\mathcal{G}=\{G_1,\ldots,G_n\}$ be a collection of bipartite graphs on a common vertex bipartition $V_1\cup V_2$ with $|V_1|=|V_2|=n$. If for each $i\in [n]$ we have the following hold:
    \begin{itemize}
        \item [{\rm (i)}] $d_{G_i}(v)> \frac{n}{2}$ for each $v\in V_1$,
        \item [{\rm (ii)}] $d_{G_i}(v)\geq \frac{n}{2}$ for each $v\in V_2$,
    \end{itemize}
then $\mathcal{G}$ contains a transversal perfect matching.
\end{theorem}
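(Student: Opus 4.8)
The plan is to argue by contradiction using a rotation argument on maximum transversal matchings, in the spirit of the classical augmenting-path proof of K\H{o}nig's theorem but carrying the colours along. Suppose $\mathcal{G}$ has no transversal perfect matching, and let $M$ be a transversal matching of maximum size $k\le n-1$; write $U_1\subseteq V_1$ and $U_2\subseteq V_2$ for the sets of vertices missed by $M$ and $C_0\subseteq[n]$ for the set of colours not used by $M$, so $|U_1|=|U_2|=|C_0|=n-k\ge 1$. The first, trivial, observation is that no edge of an unused colour joins $U_1$ to $U_2$: if $u\in U_1$, $w\in U_2$ and $uw\in E(G_c)$ with $c\in C_0$, then $M\cup\{uw\}$, colouring $uw$ with $c$, is a larger transversal matching. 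Consequently, for $u\in U_1$ and $c\in C_0$ the more than $n/2$ neighbours of $u$ in $G_c$ all lie among the $V_2$-endpoints of $M$; in particular $k> n/2$, so $|U_1|=|U_2|<n/2$.

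Next I would introduce rotations. A \emph{left rotation} takes a reachable maximum transversal matching $M'$, an uncovered vertex $u\in U_1(M')$, an unused colour $c\in C_0(M')$ and an edge $uy\in E(G_c)$ with $xy\in M'$, and replaces $xy$ by $uy$ (now coloured $c$); since $c\notin C_0(M')^{c}$ the result is again a maximum transversal matching, with the same $V_2$-cover as $M'$, with $x$ now uncovered, and with unused-colour set $(C_0(M')\setminus\{c\})\cup\{\mathrm{colour}_{M'}(xy)\}$. A \emph{right rotation} is the mirror operation moving an uncovered vertex of $V_2$. Starting from $M$ and applying left rotations only, one builds the set $R_1\subseteq V_1$ of vertices that occur as an uncovered $V_1$-vertex of some reachable matching (all of which share the $V_2$-cover of $M$); the observation above, applied to each reachable matching, shows that $R_1$ contains, for every reachable triple $(M',u,c)$, all the $M'$-partners of $N_{G_c}(u)$, and since that neighbourhood has more than $n/2$ vertices we get $|R_1|>n/2$. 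Symmetrically, applying right rotations yields $R_2\subseteq V_2$ with $|R_2|>n/2$.

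The contradiction should then come from allowing \emph{alternating} left and right rotations and showing that the family of matchings so reachable must contain one with an uncovered $u\in V_1$, an uncovered $w\in V_2$ and an unused colour $c$ with $uw\in E(G_c)$ — which yields a transversal matching of size $k+1$, contradicting maximality. To produce such a configuration one runs rotations until no further progress is possible and then counts: the reachable uncovered vertices fill more than half of $V_1$ and, relative to any fixed reachable matching $M'$, more than half of $V_2$; two sets of size greater than $n/2$ on opposite sides, together with $d_{G_c}(v)\ge n/2$ for an unused colour $c$ of $M'$, cannot avoid an edge of $G_c$ between them, and that edge (joining a reachable uncovered $V_1$-vertex to a reachable uncovered $V_2$-vertex) is the sought augmenting edge, provided $c$ can be arranged to be free at both ends simultaneously.

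The part I expect to be the real work is exactly that last caveat: the colour bookkeeping. In the uncoloured K\H{o}nig argument reachability is a property of vertices alone, but here each rotation both relocates an uncovered vertex and changes which colour is free, so ``reachable'' must be understood for (matching, free-colour-set) states, and one must rule out that the reachable states are ``trapped'' — that the large reachable vertex sets on the two sides are never simultaneously witnessed by a matching that also leaves the connecting colour unused. Resolving this is where the degree hypotheses are used in full, including the asymmetry between the strict bound $d_{G_i}(v)>n/2$ on $V_1$ and the non-strict $d_{G_i}(v)\ge n/2$ on $V_2$, and it is also where a careful extremal analysis — or an appeal to the maximality of a suitably chosen potential over all maximum transversal matchings — has to replace the naive ``keep rotating'' heuristic.
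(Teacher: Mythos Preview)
The paper does not give its own proof of this theorem: it is quoted from \cite{bradshaw2021transversals} as a known result and used as a black box, so there is no in-paper argument to compare your proposal against.

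As for the proposal itself, you have set up the rotation framework correctly, but the part you flag as ``the real work'' is not a minor detail to be filled in --- it is the entire content of the proof, and what you have written does not supply it. The sets $R_1$ and $R_2$ you build each have size larger than $n/2$, but they are unions taken over \emph{different} reachable matchings; at any single reachable matching $M'$ only $n-k$ vertices on each side are uncovered and only $n-k$ colours are free. The sentence ``two sets of size greater than $n/2$ on opposite sides, together with $d_{G_c}(v)\ge n/2$ for an unused colour $c$ of $M'$, cannot avoid an edge of $G_c$ between them'' conflates these two levels: the large sets live across many matchings, while the augmenting edge must be found inside one matching with one specific free colour. There is a second synchronization issue you do not address: a left rotation changes the free-colour set, so the colour $c$ you intend to use for the augmenting edge may cease to be free precisely when the desired pair of uncovered vertices becomes available.

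Your closing suggestion --- ``maximality of a suitably chosen potential'' --- is not an argument until you name the potential and show it decreases. To turn this outline into a proof you would need a structured device that tracks vertex and colour states jointly, for instance the auxiliary-digraph approach of Joos and Kim, or the explicit combinatorial bookkeeping carried out in Bradshaw's original paper. As written, the proposal is a plausible opening move followed by an acknowledged gap, not a proof.
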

In this subsection, we establish a stability result for transversal perfect matching in bipartite graph collections, that is, the minimum degree required to guarantee a transversal
perfect matching can be below that of Theorem \ref{THEOREM:MATCHING-2021}, as long as the bipartite graph collection $\{G_1,\ldots,G_n\}$ is far in edit distance from several extremal cases. 
For this, we first define the extremality for a single bipartite graph.

\begin{definition}[nice, extremal]
    Let $\epsilon >0$ and $G$ be a balanced bipartite graph on the vertex set $V_1\cup V_2$ of size $2n$.  We say that
    \begin{itemize}
        \item $G$ is $\epsilon$-{\em nice} if for any two sets $A\subseteq V_1$ and $B\subseteq V_2$ of size at least $(\frac{1}{2}-\epsilon)n$ we have $e_G(A,B)\geq \epsilon n^2$;
        \item $G$ is $\epsilon$-{\em extremal} if it is not $\epsilon^5$-nice.
    \end{itemize}
\end{definition}
The following lemma is the basis for our proof. 
\begin{lemma}\label{LEMMA:MATCHING-CONS}
Suppose that $0<\frac{1}{n}\ll d\ll\epsilon \leq 1$. Let $G$ be a balanced bipartite graph on a vertex set $V=V_1\cup V_2$ of size $2n$ with $d_{G}(x)\geq (\frac{1}{2}-\epsilon^5)n$ for all but at most $dn$ vertices $x\in V$ which is $\epsilon$-extremal. Then there is a characteristic partition $(A_1, B_1, A_2, B_2, C_1, C_2)$ of $G$ such that the following hold:
\begin{itemize}
    \item[{\rm (i)}] $A_i, B_i, C_i\subseteq V_i$, $|A_i|=|B_i|=(\frac{1}{2}-\epsilon)n$ and $|C_i|=2\epsilon n$ for each $i\in [2]$,
    \item[{\rm (ii)}] $d_G(v, X_i)\geq (\frac{1}{2}-2\epsilon)n$ for $X\in \{A, B\}$ and $v\in X_{3-i}$ with  $i\in [2]$,
    \item[{\rm (iii)}] either $e_G(A_1, B_2)\leq \epsilon n^2$ or $e_G(A_2, B_1)\leq \epsilon n^2$.
\end{itemize}
\end{lemma}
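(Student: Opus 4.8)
The plan is to analyze what $\epsilon$-extremality (i.e., failure of $\epsilon^5$-niceness) forces structurally, and then to build the partition greedily from the witnessing sets. First I would fix, for the bipartite graph $G$, two sets $A^* \subseteq V_1$ and $B^* \subseteq V_2$ with $|A^*|, |B^*| \geq (\frac12 - \epsilon^5)n$ witnessing extremality, so $e_G(A^*, B^*) < \epsilon^5 n^2$. The key observation is that because all but at most $dn$ vertices of $V$ have degree at least $(\frac12 - \epsilon^5)n$ and $d \ll \epsilon$, the complements $V_1 \setminus A^*$ and $V_2 \setminus B^*$ are not much larger than $\epsilon^5 n$, and these small complements must absorb almost all the edges incident to $A^*$ (respectively $B^*$): a typical vertex $v \in A^*$ sends at least $(\frac12 - \epsilon^5)n$ edges to $V_2$ but fewer than, say, $\sqrt{\epsilon^5}\, n$ of them into $B^*$ (by a Markov-type count on $e_G(A^*,B^*)$), hence nearly all of its neighbourhood lies in the tiny set $V_2 \setminus B^*$ — which is impossible unless $|V_2 \setminus B^*|$ is actually of order at least $(\frac12 - O(\epsilon^5))n$. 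So I would argue $|V_i \setminus (\text{witnessing set})| \geq (\frac12 - O(\epsilon^5))n$, i.e., the witnessing sets and their complements both have size $(\frac12 \pm O(\epsilon^5))n$, giving a near-balanced split of each side.

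With that near-balanced split in hand, I would define a provisional partition: put the witnessing set $A^*$ (trimmed or padded to exactly $(\frac12 - \epsilon)n$ vertices, choosing which vertices to drop using the degree condition) on the $V_1$ side as $A_1$, and its near-complement, also trimmed to $(\frac12-\epsilon)n$, as $B_1$; symmetrically set $B_2$ to be (a trimmed copy of) $B^*$ on the $V_2$ side and $A_2$ its near-complement. The leftover $2\epsilon n$ vertices on each side form $C_1$ and $C_2$. The labelling is chosen so that the ``sparse'' pair is $A_1$ versus $B_2$, which immediately gives (iii) in the form $e_G(A_1, B_2) \leq \epsilon n^2$ (we have a much better bound $O(\epsilon^5 n^2)$, and $\epsilon^5 n^2 \leq \epsilon n^2$). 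For (ii), I would show that because $e_G(A_1, B_2)$ is tiny, a vertex $v \in A_2$ — being in the complement of $B^*$ on the $V_1$... wait, on the $V_2$ side — has almost all of its neighbourhood in $A_1$: indeed $v$ has degree at least $(\frac12 - \epsilon^5)n$ (it is not one of the $dn$ bad vertices, or we reassign such vertices to $C$), and only $o(n)$ of its neighbours can lie in $B_1$ since $e_G(A_2, B_1) $ is small (this is where we use the *other* half of extremality: if $e_G(A_2,B_1)$ were large we would instead have chosen the opposite labelling in (iii), and in that case we swap the roles throughout). Hence $d_G(v, A_1) \geq (\frac12 - 2\epsilon)n$, and symmetrically for $v \in A_1$, $v \in B_1$, $v \in B_2$. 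Care is needed to route the small number of bad-degree vertices and the ``ambiguous'' vertices into $C_1 \cup C_2$ so that the remaining $A_i, B_i$ genuinely satisfy the degree bound in (ii); since there are only $O(dn + \epsilon^5 n) \ll \epsilon n$ such vertices, the slack in the $2\epsilon n$ budget for $C_i$ comfortably accommodates them, and we fill $A_i, B_i$ back up to exact size $(\frac12 - \epsilon)n$ from the remaining good vertices.

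The main obstacle I anticipate is the bookkeeping around making all three cardinality constraints in (i) hold \emph{exactly} while simultaneously guaranteeing the one-sided degree bound (ii) for every retained vertex and pinning down which of the two options in (iii) occurs. The structural heart — that extremality forces a near-perfect near-balanced bipartition with one sparse cross-pair — is short; the delicate part is the order of operations (first discard low-degree and ambiguous vertices into a pool, then greedily complete $A_i$ and $B_i$ to the exact target sizes from high-degree vertices, then let $C_i$ be whatever remains and check $|C_i| = 2\epsilon n$), and verifying that no circular dependency arises between the trimming choices on the two sides. A secondary subtlety is handling the boundary case where \emph{both} $e_G(A_1,B_2)$ and $e_G(A_2,B_1)$ could plausibly be made small or where the witnessing sets overlap badly in their complements; I would resolve this by always fixing the labelling from a single witnessing pair and proving the complementary density $e_G(A_2, B_1)$ is automatically controlled (it is not tiny in general, but it does not need to be — only (iii) with one of the two options, plus (ii), is required, and (ii) for vertices of $A_2$ only needs $e_G(A_2, B_1)$ to be $o(n^2)$-ish *per vertex*, which follows from the degree condition combined with $|B_1| \approx \frac{n}{2}$ and $|A_1| \approx \frac n2$ forcing most neighbours of an $A_2$-vertex into $A_1$ once we know the $A_1$-side is large).
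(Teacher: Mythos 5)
Your overall skeleton matches the paper's: take a witnessing pair $A^*\subseteq V_1$, $B^*\subseteq V_2$ with $e_G(A^*,B^*)\le \epsilon^5 n^2$, place (a trimmed copy of) $A^*$ as $A_1$, its near-complement as $B_1$, symmetrically $B^*$ as $B_2$ and its near-complement as $A_2$, let $C_i$ absorb the low-degree vertices and the $O(\epsilon^{5/2}n)$ vertices of $A^*,B^*$ with too many edges into the sparse pair, and read off (iii) from the witnessing pair. That part is sound.

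The gap is in your argument for (ii) for vertices of the \emph{complement} sets $A_2$ and $B_1$. You write that for $v\in A_2$ one gets $d_G(v,A_1)\ge(\tfrac12-2\epsilon)n$ because ``the degree condition combined with $|B_1|\approx\tfrac n2$ and $|A_1|\approx\tfrac n2$ forces most neighbours of an $A_2$-vertex into $A_1$''; this does not follow. A vertex $v\in A_2\subseteq V_2$ with $d_G(v)\ge(\tfrac12-\epsilon^5)n$ could a priori send all of its $V_1$-neighbourhood into $B_1$, since $|B_1|\approx\tfrac n2$ is large enough to receive it; the degree condition says nothing about which half of $V_1$ the edges land in. Your earlier attempt, invoking ``$e_G(A_2,B_1)$ is small since otherwise we would have chosen the opposite labelling in (iii),'' is also invalid: extremality hands you exactly one sparse pair, and there is no reason the cross pair $A_2$--$B_1$ should also be sparse (in general it is not — think of the EC1 picture where both $A_1$--$A_2$ and $B_1$--$B_2$ are dense and both diagonal pairs are sparse is a special case; in general only one diagonal is forced to be sparse).

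The missing idea is a double-counting step. You already know (from the trimming) that every retained $v\in A_1$ sends all but $O(\epsilon^{5/2}n)$ of its $\ge(\tfrac12-\epsilon^5)n$ edges into $A_2\cup C_2$, so $e_G(A_1,A_2)\ge |A_1|\bigl(\tfrac12-O(\epsilon^{5/2})\bigr)n$. Since each $v\in A_2$ has $d_G(v,A_1)\le|A_1|$ and $|A_2|\approx\tfrac n2$, this total is within $O(\epsilon^{5/2})n^2$ of the maximum $|A_1||A_2|$, so a Markov-type count shows that all but $O(\epsilon^{5/4})n$ vertices of a slightly-enlarged candidate set for $A_2$ already satisfy $d_G(v,A_1)\ge(\tfrac12-2\epsilon)n$; you then pick the final $A_2$ of exact size $(\tfrac12-\epsilon)n$ from these good vertices (the slack, $\epsilon n$ versus $O(\epsilon^{5/4})n$, is ample). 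The same argument gives (ii) for $B_1$. This averaging step cannot be replaced by a per-vertex appeal to the minimum degree. A minor additional point: your opening assertion that ``the complements $V_1\setminus A^*$ and $V_2\setminus B^*$ are not much larger than $\epsilon^5 n$'' is wrong as stated (they have size $\approx\tfrac n2$); you recover from it, but the reductio framing obscures what is actually a one-line size computation, and in fact you can skip it entirely — as the paper does — by choosing the near-complements $Z_1,Z_2$ directly of the desired size $(\tfrac12-O(\epsilon^{5/2}))n$ inside $V_i\setminus(X_i\cup C_i)$ without ever bounding $|A^*|$ from above.
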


\begin{definition}[crossing, cross graph]
  Let $0<\frac{1}{n},\epsilon,\delta<1$ where $n\in \mathbb{N}$ and let $\mathcal{G}=\{G_1,\ldots,G_n\}$ be a balanced bipartite graph collection on a common vertex set $V$ of size $2n$. Given $i,j\in [n]$ such that $G_i$ and  $G_j$ are both $\epsilon$-extremal, assume that the characteristic partition corresponding to $G_{\ell}$ is $(A_1^{\ell},B_1^{\ell},C_1^{\ell},A_2^{\ell},B_2^{\ell},C_2^{\ell})$ for $\ell\in \{i, j\}$. We say that they are $\delta$-\textit{crossing} if $|A^i_1\triangle A^j_1|\geq \delta n$ and $|A^i_1\triangle B^j_1|\geq \delta n$.
  
We define the \textit{cross graph}  $C_{\mathcal{G}}^{\epsilon,\delta}$ to be the graph with vertex set $[n]$ where $i$ is adjacent to $j$ if and only if $G_i$ and $G_j$ are both $\epsilon$-extremal and $\delta$-crossing.
\end{definition}

\begin{definition}[strongly stable, weakly stable\footnote{To avoid introducing too many terms, we have similarly defined the concepts of ``nice, extremal, crossing, cross graph, strongly stable, weakly stable, stable'' in the context of digraph collections. We should note that, unless explicitly stated otherwise, those concepts in bipartite graph collections are only discussed in Section 2.2 and the appendix.}]
  Let $0<\gamma,\alpha,\epsilon,\delta<1$. Suppose that $\mathcal{G}=\{G_1,\ldots, G_n\}$ is a collection of bipartite graphs on a common vertex partition $V_1\cup V_2$ with $|V_1|=|V_2|=n$. We say that 
  \begin{itemize}
    \item $\mathcal{G}$ is $(\gamma,\alpha)$-\textit{strongly stable} if $G_i$ is $\alpha$-nice for at least $\gamma n$ colors $i\in [n]$;
    \item $\mathcal{G}$ is $(\epsilon,\delta)$-\textit{weakly stable} if $e(C_{\mathcal{G}}^{\epsilon,\delta})\geq \delta n^2$;
    \item $\mathcal{G}$ is $(\gamma,\alpha,\epsilon,\delta)$-\textit{stable} if  it is either $(\gamma,\alpha)$-strongly stable or $(\epsilon,\delta)$-weakly stable.
  \end{itemize} 
  
\end{definition}

The following is a stability result for transversal perfect matching in bipartite graph collections. 
\begin{theorem}\label{stable-matching}
Let $0<\frac{1}{n}\ll d\ll \mu\ll \alpha\ll \gamma,\epsilon\ll \delta\ll1$. Suppose that $\mathcal{G}=\{G_1,\ldots,G_n\}$ is a collection of bipartite graphs on a common vertex partition $V=V_1\cup V_2$ with $|V_1|=|V_2|=n$. 
If the following hold:
\begin{itemize}
    \item $\mathcal{G}$ is $(\gamma,\alpha,\epsilon,\delta)$-stable, 
    \item for every $i\in [n]$, $d_{G_i}(x)\geq \left(\frac{1}{2}-\mu\right)n$ for all but at most $dn$ vertices $x\in V_1\cup V_2$, 
    \item for every $x\in V_1\cup V_2$, $d_{G_i}(x)\geq \left(\frac{1}{2}-\mu\right)n$ for all but at most $dn$ colors $i\in [n]$, 
\end{itemize}
then $\mathcal{G}$ contains a transversal perfect matching.
\end{theorem}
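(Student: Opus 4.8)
The plan is to absorb the handful of low-degree vertices and then reduce to the non-extremal (``nice'') setting, where Theorem \ref{THEOREM:MATCHING-2021} can be applied after a small perturbation. First I would prepare for the bad vertices: let $W\subseteq V_1\cup V_2$ be the set of vertices $x$ with $d_{G_i}(x)<(\frac12-\mu)n$ for some $i$; by the second hypothesis $|W\cap V_1|,|W\cap V_2|\le dn$ (summing the per-color bad sets and using the third hypothesis to control the count), and by the third hypothesis each vertex is good in all but $dn$ colors. The idea is to build a small rainbow matching $M_0$ of size $O(dn)$ that saturates $W$: greedily match each bad vertex of $V_1$ to a private good partner in $V_2$ using a fresh color, then each bad vertex of $V_2$ similarly; since $d\ll\mu$ and each relevant vertex has degree $\ge(\frac12-\mu)n$ in all but $dn$ colors, at each step there are many available color-partner pairs disjoint from what we have used, so $M_0$ exists. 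Delete $V(M_0)$ from the vertex side and $\mathrm{col}(M_0)$ from the color side; write $n'=n-|M_0|$, so $n-n'\le 3dn$, say.

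Next I would case on the stability hypothesis. \textbf{Strongly stable case.} Here $\gamma n$ of the $G_i$ are $\alpha$-nice, and after removing $V(M_0)$ and $O(dn)$ colors a linear fraction remain nice on the reduced ground set (with a slightly worse niceness parameter, still $\gg$ the degree deficiency $\mu$, since $\mu\ll\alpha$). The point of niceness is exactly that for any $A\subseteq V_1$, $B\subseteq V_2$ of size $\ge(\frac12-\alpha)n'$ we have $e_{G_i}(A,B)\ge \alpha n'^2$; combined with the degree condition $d_{G_i}(x)\ge(\frac12-\mu)n\ge(\frac12-\mu)n'$ for every surviving vertex, each surviving $G_i$ in fact satisfies $d_{G_i}(x)\ge(\frac12-\mu)n'$ for all $x$, so the graphs are ``$(\frac12-\mu)$-almost-Dirac'' on the bipartite side. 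A direct adaptation of the Joos--Kim / Bradshaw argument then gives a transversal perfect matching of the reduced collection: either directly via Theorem \ref{THEOREM:MATCHING-2021} applied to slightly enlarged graphs if the deficiency can be patched, or by re-running its proof (a random greedy / switching argument) where the $\mu n$ slack in degrees is absorbed using the many nice colors to guarantee the required expansion at every step. Concatenating with $M_0$ yields a transversal perfect matching of $\mathcal{G}$.

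\textbf{Weakly stable case.} Now $e(C_{\mathcal{G}}^{\epsilon,\delta})\ge\delta n^2$, so there are $\ge\delta n^2$ pairs $\{i,j\}$ of $\epsilon$-extremal, $\delta$-crossing colors. For each extremal $G_i$ fix its characteristic partition $(A_1^i,B_1^i,C_1^i,A_2^i,B_2^i,C_2^i)$ from Lemma \ref{LEMMA:MATCHING-CONS}; by (iii) one of $e_{G_i}(A_1^i,B_2^i)$, $e_{G_i}(A_2^i,B_1^i)$ is $\le\epsilon n^2$, i.e.\ $G_i$ is essentially two near-complete bipartite ``halves'' $A_1^i\cup A_2^i$ and $B_1^i\cup B_2^i$ (after relabeling), each of size about $\frac n2$, with few edges between the two crossing quadrants. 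The crossing condition guarantees that the bipartitions-into-halves of different extremal colors are genuinely non-aligned. I would then show a transversal perfect matching exists by a direct combinatorial construction: pick a crossing pair $\{i,j\}$; because their halves are misaligned ($|A_1^i\Delta A_1^j|\ge\delta n$ and $|A_1^i\Delta B_1^j|\ge\delta n$), the sets $A_1^i\cap A_1^j$, $A_1^i\cap B_1^j$, etc., partition $V_1$ (and similarly $V_2$) into pieces that let us route a small number of ``crossing'' edges using colors $i$ and $j$, thereby rebalancing so that the remaining ground set can be matched monochromatic-half-to-half greedily with the remaining (extremal or not) colors; the standard point is that within one half $G_k[A_1^k\cup A_2^k,\,\cdot\,]$ is near-complete bipartite, so Hall's condition holds with room to spare as long as the two sides stay balanced, and each non-crossing color contributes one edge inside a half. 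The bookkeeping—verifying that after using the crossing pairs every remaining color can still place its one edge while keeping both sides balanced—is the technical heart here, but it is an edit-distance argument: extremal means $\le\epsilon n^2$ missing edges, so each color fails on $\le O(\sqrt{\epsilon})n$ vertices, and $\epsilon\ll\delta$ gives the needed slack.

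\textbf{Main obstacle.} The hard part will be the weakly stable case: unlike the strongly stable case, one cannot appeal to a black-box Dirac-type matching theorem, because the individual graphs are genuinely extremal and a transversal matching need not exist if the halves were all aligned—it is precisely the crossing hypothesis that must be exploited, and doing so requires carefully choosing \emph{which} crossing pairs to use and in \emph{which order}, so that the rebalancing they provide is compatible with greedily matching the $(1-o(1))n$ remaining colors each of which contributes essentially a forced edge inside one half. Making this robust (and in particular handling colors that are extremal but not part of any crossing pair, plus the interaction with the exceptional vertices in the $C_i^k$'s and the bad-vertex matching $M_0$) is where the bulk of the work lies; I expect the argument to mirror the stability analysis for the digraph collection ``extremal case'' sketched in the introduction, applied here to the bipartite ground graph.
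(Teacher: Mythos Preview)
Your proposal has genuine gaps and takes a fundamentally different route from the paper.

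\textbf{The bound on $W$ is wrong.} You define $W$ as the set of vertices bad for \emph{some} colour and then claim $|W\cap V_j|\le dn$. The second hypothesis only bounds the bad vertices \emph{per colour} by $dn$; the union over all $n$ colours can have size up to $dn^2$, not $dn$. The third hypothesis bounds the bad colours per vertex, which controls the number of bad \emph{pairs} but not $|W|$. So the initial greedy matching $M_0$ of size $O(dn)$ saturating $W$ need not exist.

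\textbf{The strongly stable case does not reduce to Bradshaw.} Even with $W=\emptyset$, after deleting $M_0$ each surviving graph has minimum degree $\ge(\tfrac12-\mu)n\ge(\tfrac12-\mu-O(d))n'$, which is strictly below the threshold of Theorem~\ref{THEOREM:MATCHING-2021}. The $\alpha$-niceness of $\gamma n$ colours does not raise degrees; it only forces many edges between any two large sets. ``Re-running the proof with $\mu n$ slack absorbed by nice colours'' is exactly the difficulty, not a step one can wave through: a transversal matching argument at sub-Dirac degree needs a mechanism for covering the last few vertices, and you have not supplied one.

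\textbf{The weakly stable case is a hope, not a proof.} You explicitly defer the ``technical heart'' and say you ``expect the argument to mirror'' the digraph extremal analysis. But that analysis is itself lengthy and does not transfer automatically; in particular your rebalancing sketch does not explain how a single crossing pair $\{i,j\}$ rebalances a problem that involves all $n$ colours simultaneously, nor why the remaining extremal colours (whose halves may be pairwise aligned) can be matched greedily once two of them are used.

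\textbf{What the paper actually does.} The paper handles both stability cases uniformly via absorption. It first builds a small \emph{absorbing matching} $\mathcal{M}$ (Lemmas~\ref{strongly-matching} and~\ref{weakly-matching}, both driven by Lemma~\ref{LEMMA:directed-k-graph-matching}) together with a reservoir of absorbable $6$-tuples (Claim~\ref{claim:totally-absorbable-matching}). It then shows that stability implies the leftover collection $\mathcal{J}$ is $\mu'$-nice (Lemma~\ref{LEMMA:stable-nice-matching}), which yields a rainbow matching missing at most $dn+2$ vertices and colours (Lemma~\ref{LEMMA:N-2-MATCHING}). Finally each leftover vertex/colour is routed through a reserved $6$-tuple and absorbed into $\mathcal{M}$. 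The absorbing structure is precisely the missing ``mechanism for covering the last few vertices'' that your direct approach lacks.
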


For convenience, we postpone the proofs of Lemma \ref{LEMMA:MATCHING-CONS} and Theorem \ref{stable-matching} to the appendix.
\subsection{Probabilistic tool}
We use the following version of Chernoff's bound.
\begin{lemma}
    Let $X$ be a random variable with binomial or hypergeometric distribution, and let $0<\epsilon<\frac{3}{2}$. Then 
    $$
    \mathbb{P}[|X-\mathbb{E}(X)|\geq \epsilon \mathbb{E}(X)]\leq 2e^{-\frac{\epsilon^2}{3}\mathbb{E}(X)}.
      $$
\end{lemma}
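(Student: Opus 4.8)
The plan is to bound the upper and lower tails of $X$ separately and combine them by a union bound; write $\mu:=\mathbb{E}(X)$ and assume $\mu>0$ (if $\mu=0$ then $X\equiv 0$ and the inequality is trivial). I would first settle the binomial case and then reduce the hypergeometric case to it.

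For $X$ binomial, write $X=\sum_{i=1}^{N}Y_i$ with the $Y_i$ independent Bernoulli of means $p_i$, so that $\sum_i p_i=\mu$, and run the standard exponential-moment (Chernoff) argument. For the upper tail, for every $t>0$ one has $\mathbb{P}[X\geq(1+\epsilon)\mu]\leq e^{-t(1+\epsilon)\mu}\prod_{i}\bigl(1+p_i(e^t-1)\bigr)\leq e^{-t(1+\epsilon)\mu}e^{\mu(e^t-1)}$ using $1+x\leq e^x$, and choosing $t=\ln(1+\epsilon)$ gives $\mathbb{P}[X\geq(1+\epsilon)\mu]\leq e^{-\mu g(\epsilon)}$ with $g(\epsilon):=(1+\epsilon)\ln(1+\epsilon)-\epsilon$. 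Symmetrically, for $0<\epsilon<1$ the choice $t=\ln(1-\epsilon)<0$ yields $\mathbb{P}[X\leq(1-\epsilon)\mu]\leq e^{-\mu h(\epsilon)}$ with $h(\epsilon):=(1-\epsilon)\ln(1-\epsilon)+\epsilon$, while for $1\leq\epsilon<\tfrac32$ the lower-tail event forces $X=0$ (or is empty), so its probability is at most $\prod_i(1-p_i)\leq e^{-\mu}$. It then remains to verify the elementary inequalities $g(\epsilon)\geq\epsilon^2/3$ on $(0,\tfrac32)$, $h(\epsilon)\geq\epsilon^2/2$ on $(0,1)$, and $e^{-\mu}\leq e^{-\epsilon^2\mu/3}$ (immediate, since $\epsilon^2/3<\tfrac34<1$): for the first, $\psi(\epsilon):=g(\epsilon)-\epsilon^2/3$ satisfies $\psi(0)=\psi'(0)=0$ while $\psi''$ changes sign exactly once (at $\epsilon=\tfrac12$), so $\psi$ is increasing then decreasing on $[0,\tfrac32]$ and, being nonnegative at both endpoints, is nonnegative throughout; for the second, $h'(\epsilon)=-\ln(1-\epsilon)\geq\epsilon=(\epsilon^2/2)'$ and $h(0)=0$. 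Hence each tail is at most $e^{-\epsilon^2\mu/3}$, and the union bound finishes this case.

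For $X$ hypergeometric I would invoke the classical comparison of Hoeffding between sampling without and with replacement: if $X$ is hypergeometric, then $\mathbb{E}[\varphi(X)]\leq\mathbb{E}[\varphi(X')]$ for every convex $\varphi$, where $X'$ is the binomial variable with the same number of draws and the same mean. Taking $\varphi(x)=e^{tx}$ shows the moment-generating function of $X$ is dominated by that of the matching binomial, so the exponential-moment estimates above — and therefore both tail bounds — transfer verbatim. The whole argument is routine; the only mildly delicate point is the inequality $g(\epsilon)\geq\epsilon^2/3$, which is precisely where the hypothesis $\epsilon<\tfrac32$ is used (it fails near $\epsilon=2$), and one could equally well just cite a standard reference for the statement.
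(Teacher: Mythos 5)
Your proof is correct. The paper itself offers no proof of this lemma: it is stated as a standard probabilistic tool (it is the usual form of Chernoff's bound, e.g.\ Corollary 2.3 in Janson--{\L}uczak--Ruci\'nski), so there is no argument in the paper to compare against. Your write-up is exactly the standard exponential-moment proof: the union bound over the two tails, the optimized bounds $e^{-\mu g(\epsilon)}$ and $e^{-\mu h(\epsilon)}$, the elementary verifications $g(\epsilon)\geq\epsilon^2/3$ on $(0,\tfrac32)$ and $h(\epsilon)\geq\epsilon^2/2$ on $(0,1)$ (your sign analysis of $\psi''$ and the endpoint check at $\epsilon=\tfrac32$ are right, and this is indeed the only place the hypothesis $\epsilon<\tfrac32$ is needed), the separate treatment of the lower tail when $\epsilon\geq 1$, and Hoeffding's convex-ordering comparison to transfer everything to the hypergeometric case. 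Nothing is missing.
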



\section{Absorbing}

{{In this section, we first define the extremality for a single digraph and distinguish the characteristic partition of such digraphs. We then introduce our absorbing structure and prove that every digraph collection contains a directed  absorbing cycle whenever either a large proportion of its members are non-extremal, or almost all of them are extremal while their characteristic partitions differ significantly.}} 

\begin{definition}[nice, extremal]\label{nice}
Let $\epsilon>0$ and $D$ be a digraph on  vertex set $V$ of size $n$. We say that
\begin{itemize}
    \item $D$ is $\epsilon$-\textit{nice} if for any two sets $A,B\subseteq V$ of size at least $(\frac{1}{2}-\epsilon)n$, we have $e_D(A,B)\geq \epsilon n^2$;
    \item $D$ is $\epsilon$-\textit{extremal} if it is not $\epsilon$-nice.
\end{itemize}
\end{definition}
Cheng, Wang and Yan \cite{wang2024}  characterized the structure of $\epsilon$-extremal digraphs,  which
forms the basis of our extremal case distinction. 

\begin{lemma}[\cite{wang2024}]\label{thm-single-graph}
    Suppose that $0<\frac{1}{n}\ll d\ll\mu\ll\epsilon\leq 1$.  Let $D$ be an $\epsilon$-extremal digraph  on vertex set $V$ of size $n$ with $\min\{d^+(x),d^-(x)\}\geq (\frac{1}{2}-\mu)n$ for all but at most $dn$ vertices $x\in V$. Then exactly one of the following holds: 
\begin{enumerate}
    \item [{\rm (i)}] there is a partition $A\cup B\cup L$ of $V$ such that $|A|=|B|=(\frac{1}{2}-\epsilon)n$,  
$d^+(a,{A}),d^-(a,{A})\geq (\frac{1}{2}-2\epsilon)n$ for all $a\in {A}$, $d^+(b,{B}),d^-(b,B)\geq (\frac{1}{2}-2\epsilon)n$ for all $b\in {B}$ and either $e(A,B)\leq \epsilon n^2$ or $e(B,A)\leq \epsilon n^2$; here we say $D$ is $(\epsilon,{\rm EC}1)$-extremal,
\item [{\rm (ii)}] 
there is a partition $A\cup B\cup L$ of $V$ such that $|A|=|B|=(\frac{1}{2}-\epsilon)n$, 
$d^+(a,B),d^-(a,B)\geq (\frac{1}{2}-2\epsilon)n$ for all $a\in {A}$, $d^+(b,A),d^-(b,A)\geq (\frac{1}{2}-2\epsilon)n$ for all $b\in {B}$ and either $e(A)\leq \epsilon n^2$ or $e(B)\leq \epsilon n^2$; here we say $D$ is $(\epsilon,{\rm EC}2)$-extremal,
\item [{\rm (iii)}] 
there is a constant $\zeta$ with $\epsilon^{1/3}\leq  \zeta\leq\frac{1}{2}-\epsilon-\epsilon^{1/3}$ and a partition $C_1\cup C_2\cup C_3\cup C_4\cup L$ of $V$ such that $|C_1|=|C_3|=\zeta n$, $|C_2|=|C_4|=(\frac{1}{2}-\zeta-\epsilon)n$,  for all $a\in C_i$ we have  $d^+(a,C_i)\geq (\zeta-\epsilon)n$ if  $i\in \{1,3\}$, $d^+(a,C_{6-i})\geq (\frac{1}{2}-\zeta-2\epsilon)n$ if $i\in \{2,4\}$, and $d^+(a,C_{i+1})\geq (|C_{i+1}|-\epsilon)n$ for all $i\in [4]$ 
{(here we identify $C_5$ with $C_1$)}, and 
either $e(C_1,C_3)\leq \epsilon n^2$ or $e(C_3,C_1)\leq \epsilon n^2$, either  $e(C_2)\leq \epsilon n^2$ or $e(C_4)\leq \epsilon n^2$; here we say $D$ is $(\epsilon,{\rm EC}3)$-extremal, (see Figure \ref{fig0} for the structures of EC1, EC2 and EC3).
\end{enumerate}
\end{lemma}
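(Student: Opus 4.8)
The plan is to leverage the known structure theorem for extremal \emph{undirected} graphs via the characteristic bipartite graph $B_D$, and then convert the undirected partition back into a digraph partition by analyzing which of the three directed obstructions it corresponds to. First I would observe that if $D$ is $\epsilon$-extremal, then by Definition \ref{nice} there exist sets $A_0, B_0 \subseteq V$ of size at least $(\frac{1}{2}-\epsilon)n$ with $e_D(A_0,B_0) < \epsilon n^2$; by trimming, one may take $|A_0| = |B_0| = (\frac{1}{2}-\epsilon)n$. The almost-everywhere semidegree hypothesis $\min\{d^+(x),d^-(x)\} \ge (\frac12 - \mu)n$ for all but $dn$ vertices (with $d \ll \mu \ll \epsilon$) will be used throughout to control degrees: for a vertex $v$ of high semidegree, $d^+(v, A_0) + d^+(v, B_0) \ge (\frac12 - \mu)n - |L_0|$ where $L_0 = V \setminus (A_0 \cup B_0)$ has size $2\epsilon n$, so most out-neighbours of $v$ land in $A_0 \cup B_0$, and similarly for in-neighbours.

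The main case distinction I would set up is on where the ``missing'' edges between $A_0$ and $B_0$ force the bulk of the edges to go. Roughly: since almost every vertex has out-degree and in-degree about $\frac{n}{2}$ but sends almost nothing across the cut $(A_0, B_0)$ in at least one direction, each such vertex must have essentially all its out-neighbours (and all its in-neighbours) concentrated in one side. This partitions $A_0 \cup B_0$ (up to $O(\mu n + \epsilon n)$ errors) into vertices that ``point inward to their own side'' versus vertices that ``point across to the other side'' --- this dichotomy is exactly what separates EC1 from EC2. The delicate subcase is when \emph{both} behaviours occur on a non-negligible scale within the same side: then I would iterate the argument inside that side, which is where the nested four-part partition $C_1 \cup C_2 \cup C_3 \cup C_4$ of EC3 emerges (with $C_1, C_3$ the small ``inward-pointing cores'' of size $\zeta n$ and $C_2, C_4$ the large ``cross-pointing'' parts). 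To pin down the constant $\zeta$ and verify the degree conditions $d^+(a, C_i) \ge (\zeta - \epsilon)n$, $d^+(a, C_{6-i}) \ge (\frac12 - \zeta - 2\epsilon)n$, and $d^+(a, C_{i+1}) \ge (|C_{i+1}| - \epsilon)n$, I would use a clean-up step: discard the $O((d+\mu)n)$ low-semidegree or misbehaving vertices into the leftover set $L$, re-balance $|A| = |B| = (\frac12 - \epsilon)n$ (resp. the four $C_i$'s) exactly, and then double-count edges to confirm the one-directional sparsity conditions ($e(A,B) \le \epsilon n^2$ or $e(B,A) \le \epsilon n^2$, etc.) survive the re-balancing since $d, \mu \ll \epsilon$.

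Finally I would argue \emph{exactly one} of the three holds. Mutual exclusivity follows by a counting contradiction: if, say, a partition witnessing EC1 and one witnessing EC2 both existed, the near-regularity of semidegrees together with the respective sparsity conditions would force a vertex in one side to have too few out- or in-neighbours overall, contradicting $\delta^0 \ge (\frac12 - \mu)n$ off the exceptional set. The hardest part, I expect, is \textbf{the EC3 extraction}: isolating the threshold $\zeta$ so that it satisfies $\epsilon \ll \zeta \ll \frac12 - 2\epsilon$, and simultaneously verifying all four families of degree inequalities plus the two pairs of sparsity conditions --- this requires carefully tracking that the errors from (a) the initial trimming, (b) the exceptional vertex set of size $dn$, and (c) the $\mu$-slack in semidegrees all stay below the $\epsilon$-scale, and that the ``inward core'' inside each side is genuinely of linear size $\zeta n$ rather than sublinear (otherwise one is back in EC1 or EC2). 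Everything else is a routine, if lengthy, degree-counting exercise that I would defer; indeed, since this lemma is cited from \cite{wang2024}, the cleanest route in the present paper is simply to invoke it, but the sketch above is how the proof goes.
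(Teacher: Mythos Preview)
The paper does not supply its own proof of this lemma; it is quoted from \cite{wang2024} and used as a black box, so there is nothing in the present paper to compare your argument against. Your closing remark already anticipates this.

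As a standalone sketch, your outline is broadly reasonable: starting from a sparse pair $(A_0,B_0)$ and using the near-complete semidegrees to force each high-degree vertex to concentrate its out- and in-neighbourhoods on one side is the natural opening, and the EC1/EC2 dichotomy you describe (``inward-pointing'' versus ``cross-pointing'') is the right split. Two places where the sketch would need real work before it becomes a proof: first, the mutual-exclusivity claim is not as immediate as you suggest, since the three cases are \emph{existential} (some partition witnesses EC$k$) and two different partitions witnessing two different EC$k$'s are not a priori incompatible with the degree hypothesis---one has to argue that the two partitions are forced to align and then derive a contradiction from the combined edge-count constraints. Second, your EC3 extraction is underspecified: you say one ``iterates the argument inside that side'' when both behaviours coexist, but the four-part cyclic structure with the specific sizes $\zeta n$ and $(\tfrac12-\zeta-\epsilon)n$ and the chain of conditions $d^+(a,C_{i+1}) \ge |C_{i+1}| - \epsilon n$ does not fall out of a single iteration; one needs to show that the ``cores'' on the two sides have matching sizes and that the cross-edges between them respect the cyclic pattern. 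The opening reference to the characteristic bipartite graph $B_D$ is also a bit of a red herring---your actual argument never uses it, and the bipartite extremality of $B_D$ is not obviously equivalent to the digraph extremality of $D$.
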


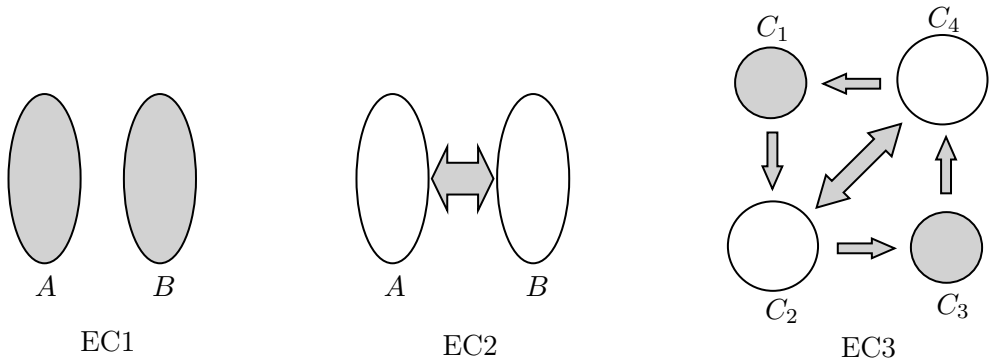
\begin{figure}[ht!]
    \centering

\tikzset{every picture/.style={line width=0.75pt}} 

\begin{tikzpicture}[x=0.75pt,y=0.75pt,yscale=-0.90,xscale=0.90]

\draw  [fill={rgb, 255:red, 155; green, 155; blue, 155 }  ,fill opacity=0.45 ] (77,162.3) .. controls (77,136.18) and (85.95,115) .. (97,115) .. controls (108.05,115) and (117,136.18) .. (117,162.3) .. controls (117,188.42) and (108.05,209.6) .. (97,209.6) .. controls (85.95,209.6) and (77,188.42) .. (77,162.3) -- cycle ;
\draw  [fill={rgb, 255:red, 155; green, 155; blue, 155 }  ,fill opacity=0.45 ] (141,162.3) .. controls (141,136.18) and (149.95,115) .. (161,115) .. controls (172.05,115) and (181,136.18) .. (181,162.3) .. controls (181,188.42) and (172.05,209.6) .. (161,209.6) .. controls (149.95,209.6) and (141,188.42) .. (141,162.3) -- cycle ;
\draw  [fill={rgb, 255:red, 255; green, 255; blue, 255 }  ,fill opacity=1 ] (270,162.3) .. controls (270,136.18) and (278.95,115) .. (290,115) .. controls (301.05,115) and (310,136.18) .. (310,162.3) .. controls (310,188.42) and (301.05,209.6) .. (290,209.6) .. controls (278.95,209.6) and (270,188.42) .. (270,162.3) -- cycle ;
\draw  [fill={rgb, 255:red, 255; green, 255; blue, 255 }  ,fill opacity=1 ] (348,162.3) .. controls (348,136.18) and (356.95,115) .. (368,115) .. controls (379.05,115) and (388,136.18) .. (388,162.3) .. controls (388,188.42) and (379.05,209.6) .. (368,209.6) .. controls (356.95,209.6) and (348,188.42) .. (348,162.3) -- cycle ;
\draw  [fill={rgb, 255:red, 155; green, 155; blue, 155 }  ,fill opacity=0.45 ] (311.8,162.3) -- (320.35,144.55) -- (320.35,153.42) -- (337.45,153.42) -- (337.45,144.55) -- (346,162.3) -- (337.45,180.05) -- (337.45,171.17) -- (320.35,171.17) -- (320.35,180.05) -- cycle ;
\draw  [fill={rgb, 255:red, 155; green, 155; blue, 155 }  ,fill opacity=0.45 ] (480,108.75) .. controls (480,97.84) and (488.84,89) .. (499.75,89) .. controls (510.66,89) and (519.5,97.84) .. (519.5,108.75) .. controls (519.5,119.66) and (510.66,128.5) .. (499.75,128.5) .. controls (488.84,128.5) and (480,119.66) .. (480,108.75) -- cycle ;
\draw  [fill={rgb, 255:red, 255; green, 255; blue, 255 }  ,fill opacity=1 ] (476,200) .. controls (476,186.19) and (487.19,175) .. (501,175) .. controls (514.81,175) and (526,186.19) .. (526,200) .. controls (526,213.81) and (514.81,225) .. (501,225) .. controls (487.19,225) and (476,213.81) .. (476,200) -- cycle ;
\draw  [fill={rgb, 255:red, 255; green, 255; blue, 255 }  ,fill opacity=1 ] (570,107) .. controls (570,93.19) and (581.19,82) .. (595,82) .. controls (608.81,82) and (620,93.19) .. (620,107) .. controls (620,120.81) and (608.81,132) .. (595,132) .. controls (581.19,132) and (570,120.81) .. (570,107) -- cycle ;
\draw  [fill={rgb, 255:red, 155; green, 155; blue, 155 }  ,fill opacity=0.45 ] (577.5,200.75) .. controls (577.5,189.84) and (586.34,181) .. (597.25,181) .. controls (608.16,181) and (617,189.84) .. (617,200.75) .. controls (617,211.66) and (608.16,220.5) .. (597.25,220.5) .. controls (586.34,220.5) and (577.5,211.66) .. (577.5,200.75) -- cycle ;
\draw  [fill={rgb, 255:red, 155; green, 155; blue, 155 }  ,fill opacity=0.45 ] (495.2,155.4) -- (497.95,155.4) -- (497.95,136.5) -- (503.45,136.5) -- (503.45,155.4) -- (506.2,155.4) -- (500.7,168) -- cycle ;
\draw  [fill={rgb, 255:red, 155; green, 155; blue, 155 }  ,fill opacity=0.45 ] (602.5,151.6) -- (599.75,151.6) -- (599.75,170.5) -- (594.25,170.5) -- (594.25,151.6) -- (591.5,151.6) -- (597,139) -- cycle ;
\draw  [fill={rgb, 255:red, 155; green, 155; blue, 155 }  ,fill opacity=0.45 ] (541.55,103.75) -- (541.55,106.5) -- (560.45,106.5) -- (560.45,112) -- (541.55,112) -- (541.55,114.75) -- (528.95,109.25) -- cycle ;
\draw  [fill={rgb, 255:red, 155; green, 155; blue, 155 }  ,fill opacity=0.45 ] (555.85,206.75) -- (555.85,204) -- (536.95,204) -- (536.95,198.5) -- (555.85,198.5) -- (555.85,195.75) -- (568.45,201.25) -- cycle ;
\draw  [fill={rgb, 255:red, 155; green, 155; blue, 155 }  ,fill opacity=0.45 ] (553.83,136.87) -- (572.04,131.66) -- (566.83,149.87) -- (563.58,146.62) -- (540.16,170.04) -- (543.41,173.29) -- (525.2,178.5) -- (530.41,160.29) -- (533.66,163.54) -- (557.08,140.12) -- cycle ;

\draw (155,215) node [anchor=north west][inner sep=0.75pt]   [align=left] {$B$};
\draw (316,248) node [anchor=north west][inner sep=0.75pt]   [align=left] {EC2};
\draw (537,249) node [anchor=north west][inner sep=0.75pt]   [align=left] {EC3};
\draw (90,215) node [anchor=north west][inner sep=0.75pt]   [align=left] {$A$};
\draw (115,246) node [anchor=north west][inner sep=0.75pt]   [align=left] {EC1};
\draw (362,215) node [anchor=north west][inner sep=0.75pt]   [align=left] {$B$};
\draw (283.5,215) node [anchor=north west][inner sep=0.75pt]   [align=left] {$A$};
\draw (590,225) node [anchor=north west][inner sep=0.75pt]   [align=left] {$C_3$};
\draw (495,228) node [anchor=north west][inner sep=0.75pt]   [align=left] {$C_2$};
\draw (585,65) node [anchor=north west][inner sep=0.75pt]   [align=left] {$C_4$};
\draw (490,70) node [anchor=north west][inner sep=0.75pt]   [align=left] {$C_1$};

\end{tikzpicture}
    \caption{{\small Extremal digraphs EC1, EC2, and EC3. The gray shaded elliptical indicates that the digraph induced by this vertex set is complete, the gray shaded arrow between two vertex sets indicates that the induced digraph by them is   complete in this direction.}}
    \label{fig0}
\end{figure}

We call the partition obtained  in Lemma \ref{thm-single-graph} a \textit{characteristic partition} of $D$. Assume that  $0<\frac{1}{n}\ll \mu\ll \epsilon\leq 1$ and  $\mathcal{D}=\{D_1,\ldots,D_n\}$ is a collection of digraphs on a common vertex set $V$ of size $n$ with $\delta^0(\mathcal{D})\geq (\frac{1}{2}-\mu)n$. In view of Lemma \ref{thm-single-graph},  whenever $D_i$ is $\epsilon$-extremal, one may fix a characteristic partition $(A_i,B_i,L_i)$ of $D_i$ if it is $(\epsilon,{\rm EC}k)$-extremal for some $k\in [2]$,  a characteristic partition $(C_i^1,C_i^2,C_i^3,C_i^4,L_i)$ of $D_i$ if it is $(\epsilon,{\rm EC}3)$-extremal. For convenience, define $W_i^j:=C_i^j\cup C_{i}^{j+1}$ for $j\in [4]$ {(here we identify $C_{i}^{5}$ with $C_{i}^{1}$)}. We say that a vertex $v\in V$ is \textit{$D_i$-good} if either $D_i$ is not $\epsilon$-extremal or $D_i$ is $\epsilon$-extremal and $v\in V\setminus L_i$. 
For every pair of distinct vertices $x,y\in V$, we define $L({xy}):=\{i\in [n]:{xy}\in E(D_i)\}$ to be the set of colors appearing on ${xy}$. 

\begin{definition}[directed absorbing path and directed absorbing cycle]\label{def-abaosbing}
Given any two not necessarily distinct vertices $u,v\in V$ and a rainbow directed path $P=v_1v_2v_3v_4$ with $u,v\not\in V(P)$, we call $P$ a {\textit{Type-{\rm I} directed $c$-absorbing path}} of $(v,u)$ if $c\in L({v_2v})$ and ${\rm col}({v_2v_3})\in L({uv_3})$ (see Figure~\ref{fig1}), a {\textit{Type-{\rm II} directed $c$-absorbing path}} of $(v,u)$ if $c\in L({vv_3})$ and ${\rm col}({v_2v_3})\in L({v_2u})$ (see Figure \ref{fig2}).  

    Given $\delta,\delta',\gamma,\gamma'>0$ and ${\rm K}\in \{{\rm I,II}\}$, a rainbow directed cycle $C=v_1v_2\ldots v_tv_1$ is a {\textit{Type-{\rm K} directed absorbing  cycle}} with parameters $(\delta,\delta',\gamma,\gamma')$ if $t\leq \gamma n$ and there exists a color set $\mathcal{C}$ of size at least $\delta n$ such that 
    \begin{itemize}
        \item given any color $c\in \mathcal{C}$ and any $D_c$-good vertex $v$, for all but at most $\delta'n$ vertices $u\in V$, there are at least $\gamma' n$ disjoint Type-K directed  $c$-absorbing paths of $(v,u)$ inside $C$,
        \item given any color $c\in \mathcal{C}$, for all but at most $\delta'n$ $D_c$-good vertices $v$, there are at least $\gamma' n$ disjoint Type-K directed $c$-absorbing paths of $(v,v)$ inside $C$.
    \end{itemize}
    \end{definition} 
If there is no ambiguity, we typically refer to the Type-I and Type-II directed $c$-absorbing path (cycle) simply as a directed $c$-absorbing path (cycle). 

\begin{figure}[ht!]
  \centering
  \psfrag{a}{$v_1$}\psfrag{b}{$v_2$}\psfrag{c}{$v_3$}\psfrag{d}{$v_4$}
  \psfrag{e}{$v$}
  \psfrag{f}{$u$}
  \psfrag{g}{$c$}
  \includegraphics[width=100mm]{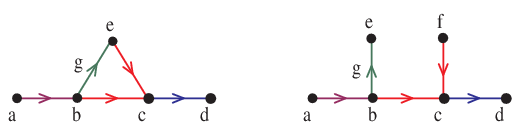}\\
  \caption{A Type-I directed  $c$-absorbing path of $(v,v)$ and a Type-I 
 directed $c$-absorbing path of $(v,u)$ with $v\neq u$.}\label{fig1}
\end{figure}

\begin{figure}[ht!]
  \centering
  \psfrag{a}{$v_1$}\psfrag{b}{$v_2$}\psfrag{c}{$v_3$}\psfrag{d}{$v_4$}
  \psfrag{e}{$v$}
  \psfrag{f}{$u$}
  \psfrag{g}{$c$}
  \includegraphics[width=100mm]{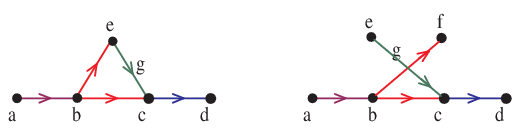}\\
  \caption{A Type-II directed  $c$-absorbing path of $(v,v)$ and a Type-II 
 directed $c$-absorbing path of $(v,u)$ with $v\neq u$.}\label{fig2}
\end{figure}

Denote by $V^k$ the set of all $k$-tuples of distinct elements of $V$. A pair $H=(V,E)$ where
$V$ is a set and $E\subseteq V^k$ is called a \textit{directed $k$-graph}, while if we allow $E$ to be a multiset, it is a
\textit{directed multi-$k$-graph}. We use the same notation for directed $k$-graphs as for graphs. The following 
lemma was obtained by Cheng and Staden \cite{cheng2024stability}, which is the key tool that we will use to build an absorbing structure. 

\begin{lemma}[\cite{cheng2024stability}]\label{LEMMA:directed-k-graph-matching}
Suppose that $k, C, n \in \mathbb{N}$, $0 <\frac{1}{n} \ll \gamma\ll \epsilon \ll \frac{1}{k}, \frac{1}{C} \leq 1$, $m \in [n^C]$ and $t := \gamma n$. Let $\mathbf{H} = \{H_1, \ldots , H_t\}$ be a collection of directed $k$-graphs and let $\mathbf{Z}= \{Z_1, \ldots , Z_m\}$ be a collection of directed multi-$k$-graphs all defined on a common vertex set $V$ of size $n$. Suppose that $|E(H_i)| \geq \epsilon n^k$ for all $i \in [t]$, and for each $j \in [m]$ we have $|E(Z_j ) \cap E(H_i)|\geq \epsilon n^k$ for at least $\epsilon t$ indices $i\in [t]$. Then there is a rainbow matching $M$ in $\mathbf{H}$ of size at least $(1 - \frac{\epsilon^2}{4})t$ and
$|E(Z_j ) \cap E(M)| \geq \frac{\epsilon^2 t}{4}$ for each $j \in [m]$.
\end{lemma}

{We first consider the case where a digraph collection contains many non-extremal digraphs; such a collection is said to be strongly stable.}
\begin{definition}[strongly stable]
    Let $0<\gamma,\alpha<1$. Suppose that $\mathcal{D}=\{D_1,\ldots,D_n\}$ is a collection of digraphs  on a common vertex set $V$ of size $n$. We say that $\mathcal{D}$ is $(\gamma,\alpha)$-\textit{strongly stable} if $D_i$ is $\alpha$-nice for at least $\gamma n$ colors $i\in [n]$.
\end{definition}

By applying Lemma \ref{LEMMA:directed-k-graph-matching}, we establish the following lemma, which yields a directed absorbing cycle when  $\mathcal{D}$ is strongly stable.

\begin{lemma}\label{strongly}
    Let $0<\frac{1}{n}\ll \lambda ,\mu\ll \gamma,\alpha\ll 1$. Assume that $\mathcal{D}=\{D_1,\ldots,D_n\}$ is a collection of digraphs on a common vertex set $V$ of size $n$ with $\delta^0(\mathcal{D})\geq \frac{1-\mu}{2}n$. If $\mathcal{D}$ is $(\gamma,\alpha)$-strongly stable, then there exists a Type-{\rm I} directed absorbing  cycle with parameters $(1,0,\lambda,\lambda^2)$.
\end{lemma}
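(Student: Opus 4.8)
\textbf{Strategy.} The plan is to apply Lemma \ref{LEMMA:directed-k-graph-matching} to a suitable collection of directed $4$-graphs, where each $H_i$ encodes ``candidate absorbing paths available from color $i$'', and each $Z_j$ encodes ``absorbing paths that help the pair (or vertex) indexed by $j$''. Concretely, I would set $k=4$, fix the constant $\gamma$ from the hypothesis, and let the index set of $\mathbf{H}$ be a set $I$ of $\gamma n$ colors $i$ for which $D_i$ is $\alpha$-nice (such a set exists since $\mathcal{D}$ is $(\gamma,\alpha)$-strongly stable). For each such $i$, define $H_i$ on vertex set $V$ to be the directed $4$-graph whose edges are exactly the $4$-tuples $(v_1,v_2,v_3,v_4)$ that form a rainbow directed path $v_1v_2v_3v_4$ inside $\mathcal{D}$ which is a Type-I directed $i$-absorbing path of \emph{some} pair $(v,u)$. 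The key point is to show $|E(H_i)|\geq \epsilon' n^4$ for an appropriate $\epsilon'$ depending only on $\alpha,\mu$: since $D_i$ is $\alpha$-nice and $\delta^0(\mathcal{D})\geq\frac{1-\mu}{2}n$, the $\alpha$-niceness gives $\epsilon' n^2$ choices for the ``absorbing pair'' and the minimum semi-degree condition then gives linearly many ways to extend into a genuine directed path with distinct colors; a short counting argument (choosing $v_2$, then $v$ with $i\in L(v_2v)$, then $v_3$ with $\mathrm{col}(v_2v_3)\in L(uv_3)$ for many $u$, then $v_1,v_4$) yields the bound.

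\textbf{The multigraphs $Z_j$.} The index set of $\mathbf{Z}$ should enumerate all the ``demands'' the absorbing cycle must satisfy, namely: (a) for each ordered pair $(v,u)$ of (not necessarily distinct) vertices with $v$ not in too many $L_c$'s, a multigraph $Z_{(v,u)}$ whose edge multiset is the set of $4$-tuples that are Type-I directed $c$-absorbing paths of $(v,u)$ for many colors $c$; (b) the degenerate pairs $(v,v)$. Since there are at most $n^2$ such demands, we have $m\leq n^2\in[n^C]$ with $C=2$. The hypothesis of Lemma \ref{LEMMA:directed-k-graph-matching} we must verify is: for each demand $j$, $|E(Z_j)\cap E(H_i)|\geq \epsilon' n^4$ for at least $\epsilon'|I|$ colors $i\in I$. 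This again follows from a counting argument using $\alpha$-niceness: for a fixed pair $(v,u)$, the number of colors $i$ for which there are $\geq\epsilon' n^4$ rainbow directed paths that are Type-I $i$-absorbing paths of $(v,u)$ is linear in $n$ — one counts, over all middle-edges $v_2v_3$ with $\mathrm{col}(v_2v_3)\in L(uv_3)$ and over all $i\in L(v_2v)$, and uses that the relevant neighbourhoods each have size $\geq(\frac12-\mu)n$, together with $\alpha$-niceness to control the ``bad'' vertices. One must be slightly careful to route the argument so that the colors $i$ counted lie in $I$ (i.e.\ correspond to $\alpha$-nice $D_i$) and to exclude the $o(n)$ vertices $v$ that lie in $L_c$ for too many $c$; these are absorbed into the $\delta'n$ slack of Definition \ref{def-abaosbing}.

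\textbf{Assembling the cycle.} Lemma \ref{LEMMA:directed-k-graph-matching} outputs a rainbow matching $M$ in $\mathbf{H}$ of size $\geq(1-\frac{(\epsilon')^2}{4})|I|\geq \lambda^2 n$-ish (after adjusting constants so $\lambda\ll\gamma,\alpha$), with $|E(Z_j)\cap E(M)|\geq\frac{(\epsilon')^2|I|}{4}\geq \lambda^2 n$ for every demand $j$. The matching $M$ is a set of $\leq\lambda n$ vertex-disjoint, color-disjoint rainbow directed $4$-paths, each using one ``new'' color from $I$; I would then stitch these $4$-paths into one rainbow directed cycle $C$ of length $\leq\gamma n$ by greedily joining consecutive paths with short rainbow connectors (using the minimum semi-degree $\frac{1-\mu}{2}n$ and fresh colors — this is the standard connecting step and uses that only $O(\lambda n)$ vertices/colors have been touched). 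Finally I verify $C$ meets Definition \ref{def-abaosbing}: the color set $\mathcal{C}$ is taken to be (essentially) $I$, of size $\geq\gamma n\geq \lambda n=1\cdot n$... here one uses $\delta=\lambda$, and $|\mathcal{C}|\geq\delta n$ holds since $\gamma\gg\lambda$; the two bullet points of Definition \ref{def-abaosbing} translate exactly into the two families (a), (b) of demands $Z_j$, and the bound $|E(Z_j)\cap E(M)|\geq\lambda^2 n$ gives $\gamma'n=\lambda^2 n$ disjoint absorbing paths for each demand, while the $\delta'n=0$ slack is consumed by the good-vertex exclusions — which forces us to set up the demand set so that \emph{every} $D_c$-good $v$ and \emph{all but no} $u$ appear, i.e.\ we actually need $\delta'=0$, meaning the counting in the previous paragraph must hold for \emph{all} $D_c$-good $v$ and all but $0$ vertices $u$; achievable because Type-I absorption of $(v,u)$ only needs $u\in N^+_{\mathrm{col}(v_2v_3)}$-type conditions which, combined with niceness, hold for all $u$ outside a set that we simply fold into the definition of which $v$ are excluded. \emph{The main obstacle} is precisely this bookkeeping: making the $\delta'=0$ in the target parameters consistent with the unavoidable $o(n)$ exceptional vertices, which I expect to handle by choosing the demand set $\mathbf{Z}$ to range only over pairs $(v,u)$ with $v$ a $D_c$-good vertex for the relevant $c$ and showing the absorbing-path count is then genuinely uniform in $u$.
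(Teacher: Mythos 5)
Your high-level template is right: apply Lemma~\ref{LEMMA:directed-k-graph-matching} to a family $\mathbf H$ of directed $4$-graphs encoding candidate absorbing paths, a family $\mathbf Z$ encoding demands, then stitch the resulting matching into a cycle by short connectors. But the way you set up $\mathbf H$ has a gap that the paper's construction is specifically designed to avoid.

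You index $H_i$ by the absorption color $i$: $H_i$ consists of $4$-tuples $v_1v_2v_3v_4$ that are Type-I $i$-absorbing paths of some pair. The trouble is that Lemma~\ref{LEMMA:directed-k-graph-matching} only guarantees a rainbow matching in the sense of $\mathbf H$, i.e.\ a set of pairwise \emph{vertex-disjoint} $4$-tuples, one from each of many $H_i$. Nothing constrains the colors of the actual edges $v_1v_2,v_2v_3,v_3v_4$ of those paths: two matching edges from $H_{i}$ and $H_{i'}$ could easily assign the same color to, say, both middle edges. So the object you extract is not a rainbow collection of directed paths in $\mathcal D$, and you cannot stitch it into a rainbow cycle. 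The paper sidesteps this by defining $\mathcal F_i$ so that the colors of the three edges of the path are \emph{fixed}: $\mathcal F_i$ uses exactly the color triple $\{3(i-1)+1,3(i-1)+2,3(i-1)+3\}$ from a block $[\lambda n/2]$. Different $\mathcal F_i$ use disjoint color triples, so vertex-disjointness of the matching automatically forces color-disjointness of the underlying rainbow paths. The absorption color $c$ is left completely free and appears only in the demand sets $Z(c,vv')$, indexed over all $c\in[n]$ and all $v,v'\in V$.

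This also fixes the misattributed role of $\alpha$-niceness and resolves your $\delta'=0$ worry. The niceness used in the paper is that of $D_{3(i-1)+2}$, the color of the \emph{middle edge} $v_2v_3$ of the path, not of the absorbed color: one picks $v_2\in N_c^-(v)$ and $v_3\in N_{3(i-1)+3}^+(v')$ with $v_2v_3$ in the nice graph $D_{3(i-1)+2}$, which gives $\alpha n^2$ choices of $(v_2,v_3)$ for \emph{every} $c\in[n]$ and \emph{every} pair $(v,v')$, with no exception. That is precisely why the demand family $\mathbf Z=\{Z(c,vv'):c\in[n],\,v,v'\in V\}$ needs no good-vertex restriction and why $\delta'=0$ comes for free — there is no exceptional set to fold anywhere. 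Your argument instead tries to invoke $\alpha$-niceness of $D_i$ where $i$ is the absorbed color, and you then have to fight the $\delta'=0$ target at the end; both difficulties disappear once the roles are assigned as above. To repair the proposal, replace your $H_i$ with the paper's $\mathcal F_i$ (index by disjoint color triples, with the middle color drawn from the $\gamma n$ $\alpha$-nice colors), let the demand set range over all $(c,v,v')$, and keep your connecting step.
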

\begin{proof}
Since $\mathcal{D}$ is $(\gamma,\alpha)$-strongly stable, one may assume that $D_1,\ldots,D_{\gamma n}$ are $\alpha$-nice and $6| \lambda n$. We divide the color set $[\frac{\lambda n}{2}]$ into consecutive sets $\{1,2,3\},\{4,5,6\},\ldots,\{\frac{\lambda n}{2}-2,\frac{\lambda n}{2}-1,\frac{\lambda n}{2}\}$. For each $i\in I:=[\frac{\lambda n}{6}]$, define $\mathcal{F}_i$ to be a directed $4$-graph with vertex $V$ and edge set 
$$
\left\{(v_1,v_2,v_3,v_4)\in V^4:3(i-1)+\ell\in L({v_{\ell}v_{\ell+1}})\ \text{for each}\ \ell\in [3]\right\}.
$$
The minimum semi-degree condition implies that $e(\mathcal{F}_i)\geq n\left(\frac{1}{2}-\mu\right)^3n^3\geq \frac{n^4}{9}$.

Given a color $c\in [n]$ and two not necessarily distinct vertices $v,v'\in V$ (which are not required to be $D_c$-good), let  $Z_i(c,vv')$ be the collection of $(v_1,v_2,v_3,v_4)$ for which $v_1v_2v_3v_4$ is a Type-I directed $c$-absorbing path of $(v,v')$. Since $D_{3i+2}$ is $\alpha$-nice, there are at least $\alpha n^2$ ways to choose $v_2,v_3$ with $v_2\in N_{c}^-(v), v_3\in N_{{3i+2}}^+(v')$ and ${v_2v_3}\in E(D_{3i+2})$. When $v_2,v_3$ are fixed, there are at least $\left(\frac{1}{2}-\mu\right)^2n^2$ ways to choose $v_1$ and $v_4$. Thus $|Z_i(c,vv')|\geq \alpha\left(\frac{1}{2}-\mu\right)^2n^4\geq \frac{\alpha n^4}{5}$.

Applying Lemma \ref{LEMMA:directed-k-graph-matching} with 
\begin{align*}
  &t:=|I|,\ \epsilon:=\frac{\alpha}{5},\ \mathbf{H}:=\{\mathcal{F}_i:i\in I\},\ \text{and}\\
  &\mathbf{Z}:=\left\{Z(c,vv'):=\bigcup_{i\in I}Z_i(c,vv'):c\in [n],v,v'\in V\right\},
\end{align*}
we obtain a rainbow  matching $M$ inside $\mathbf{H}$ of size at least $(1-\frac{\alpha^2}{100})\frac{\lambda n}{6}$ such that $|E(Z(c,vv'))\cap E(M)|\geq \frac{\alpha^2\lambda n}{600}$ for all $c\in [n]$ and $v,v'\in V$. That is, there is a set $I'\subseteq I$ with $|I'|\geq (1-\frac{\alpha^2}{100})\frac{\lambda n}{6}$ such that there is a rainbow directed path $P^i=v_1^iv_2^iv_3^iv_4^i$ with $(v_1^i,v_2^i,v_3^i,v_4^i)\in \mathcal{F}_i$ for each $i\in I'$, and for every $c\in [n]$ and $v,v'\in V$, there are at least $\frac{\alpha^2\lambda n}{600}$ directed $c$-absorbing paths $P^i$ of $(v,v')$.

By relabeling indices, one may assume $I'=:[s]$ with $s=(1-\frac{\alpha^2}{100})\frac{\lambda n}{6}$. We are to connect the directed paths $P^1,\ldots,P^s$ one by one into a rainbow directed cycle $C$. Firstly, we connect $P^1$ and $P^2$ into a single rainbow directed path
$P^1xyP^2$. For this, choose distinct unused colors $c_1,c_2\in [\lambda n+1,n]$ and $c_3\in [\frac{\lambda n}{2}+1,\lambda n]$. Let
$U:=V\setminus V(\bigcup_{i\in [s]}P^i)$. Clearly, $|U|\geq n-\lambda n$. It follows from $\delta^0(\mathcal{D})\geq \frac{1-\mu}{2}n$ that 
\begin{align}\label{eq:degree}
  d_{{c_1}}^+(v_4^1,U),\,d_{{c_2}}^-(v_1^2,U)\geq \left(\frac{1-\mu}{2}-\lambda\right)n\geq \left(\frac{1}{2}-\alpha\right)n.
\end{align}
Notice that $D_{c_3}$ is $\alpha$-nice. Therefore, $|E(D_{c_3}[N_{{c_1}}^+(v_4^1,U),N_{{c_2}}^-(v_1^2,U)])|\geq \alpha n^2$. 
Hence, there is an edge ${xy}\in E(D_{c_3})$ with $x\in N_{{c_1}}^+(v_4^1,U)$ and $y\in N_{{c_2}}^-(v_1^2,U)$ such that $P^1xyP^2$ is a rainbow directed path with color set ${\rm col}(P^1\cup P^2)\cup \{c_1, c_2, c_3\}$. 

We apply the above process for each pair $(P^i,P^{i+1})$ for each $i\in [s]$ with $P^{s+1}:=P^1$. In each process, we need two unused colors from $[\lambda n+1,n]$ and one unused color from $[\frac{\lambda n}{2}+1,\lambda n]$. This is feasible since $2s\leq \frac{\lambda n}{3}$. The total number of used vertices at each step is at most $6s\leq {\lambda n}$, so we have the
same degree bounds as \eqref{eq:degree}. It is straightforward to verify that the resulting directed cycle $C$ is a Type-I directed absorbing  cycle with parameters $(1,0,\frac{6s}{n},\frac{\alpha^2\lambda}{600})$ and hence with parameters $(1,0,\lambda,\lambda^2)$ since $\lambda\ll \alpha$.
\end{proof}

The extremality of a digraph collection also depends crucially on the relationships between different digraphs. This leads us to introduce the following definitions.


\begin{definition}[crossing, cross graph, weakly stable] 
Let $0<\frac{1}{n}\ll \epsilon,\delta<1$ with $n\in  \mathbb{N}$. Let $\mathcal{D}=\{D_1,\ldots,D_n\}$ be a collection of digraphs on a common vertex set $V$ of size $n$. Choose $i,j\in [n]$ such that $D_i$ is $(\epsilon,{\rm EC}t)$-extremal and $D_j$ is $(\epsilon,{\rm EC}s)$-extremal with $t,s\in [3]$. We say that $D_i$ and $D_j$ are $\delta$-\textit{crossing} if one of the following holds:
\begin{enumerate}
  \item[{\rm \bf (A1)}]  $t,s\in \{1,2\}$, $|A_i\triangle A_j|\geq \delta n$ and  $|A_i\triangle B_j|\geq \delta n$;
  \item[{\rm \bf (A2)}]  $t=s=3$, either $|W_i^1\triangle W_j^1|\geq \delta n$ and $|W_i^1\triangle W_j^3|\geq \delta n$, or $|W_i^2\triangle W_j^2|\geq \delta n$ and $|W_i^2\triangle W_j^4|\geq \delta n$;
  \item[{\rm \bf (A3)}]  $t\in \{1,2\},s=3$, either $|A_i\triangle W_j^1|\geq \delta n$ and $|A_i\triangle W_j^3|\geq \delta n$, or $|A_i\triangle W_j^2|\geq \delta n$ and $|A_i\triangle W_j^4|\geq \delta n$.
\end{enumerate}

For $k\in [3]$, define the \textit{cross graph} $C_{\mathcal{D},k}^{\epsilon,\delta}$ to be the graph with vertex set $[n]$ where $i$ and $j$ are adjacent if and only if $D_i,D_j$ are satisfying {\bf (Ak)}. 
We say that $\mathcal{D}$ is $(\epsilon,\delta)$-\textit{weakly stable} if $e(C_{\mathcal{D},k}^{\epsilon,\delta})\geq \delta n^2$ for some $k\in [3]$.

\end{definition}
The subsequent observation is an immediate consequence of the above definition. 
\begin{observation}
  Assume that $0<\epsilon\leq \frac{\delta}{8}$. 
  \begin{itemize}
    \item If $D_i$ and $D_j$ are satisfying {\bf (A1)}, then $|X_i\cap Y_j|\geq \frac{\delta n}{4}$, where $X,Y\in \{A,B\}$.
    \item If $D_i$ and $D_j$ are satisfying {\bf (A2)}, then $|W_i^x\cap W_j^y|\geq \frac{\delta n}{4}$, where either $x,y\in \{1,3\}$ or $x,y\in \{2,4\}$.
    \item If $D_i$ and $D_j$ are satisfying {\bf (A3)}, then $|X_i\cap W_j^y|\geq \frac{\delta n}{4}$, where $X\in \{A,B\}$ and either $y\in \{1,3\}$ or $y\in \{2,4\}$.
  \end{itemize}
\end{observation}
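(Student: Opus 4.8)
Since the three items are instances of a single counting statement, the plan is to isolate that statement and apply it three times. The abstract claim I would prove is: if $\{P_1,P_2,L\}$ and $\{Q_1,Q_2,L'\}$ are partitions of $V$ with $|P_1|=|P_2|=|Q_1|=|Q_2|=(\frac12-\epsilon)n$ (hence $|L|=|L'|=2\epsilon n$), and if $|P_1\Delta Q_1|\geq\delta n$ and $|P_1\Delta Q_2|\geq\delta n$, then $|P_a\cap Q_b|\geq\frac{\delta n}{4}$ for all $a,b\in\{1,2\}$, provided $\epsilon\leq\delta/8$. First I would record the identity $|P_1\Delta Q_b|=|P_1|+|Q_b|-2|P_1\cap Q_b|=(1-2\epsilon)n-2|P_1\cap Q_b|$, which turns each hypothesis into the upper bound $|P_1\cap Q_b|\leq(\frac12-\epsilon-\frac{\delta}{2})n$ for $b\in\{1,2\}$. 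Next, since $P_1=(P_1\cap Q_1)\cup(P_1\cap Q_2)\cup(P_1\cap L')$ is a disjoint union and $|P_1\cap L'|\leq 2\epsilon n$, we get $|P_1\cap Q_1|+|P_1\cap Q_2|\geq(\frac12-3\epsilon)n$. Combining the lower bound on the sum with the upper bound on the other term gives $|P_1\cap Q_b|\geq(\frac{\delta}{2}-2\epsilon)n\geq\frac{\delta n}{4}$ for $b\in\{1,2\}$, using $\epsilon\leq\delta/8$.

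It remains to transfer these bounds to $P_2$. For this I would use the complementary decomposition $Q_b=(Q_b\cap P_1)\cup(Q_b\cap P_2)\cup(Q_b\cap L)$: since $|Q_b|=(\frac12-\epsilon)n$, $|Q_b\cap L|\leq 2\epsilon n$, and $|Q_b\cap P_1|\leq(\frac12-\epsilon-\frac{\delta}{2})n$ by the first step, we obtain $|Q_b\cap P_2|\geq(\frac{\delta}{2}-2\epsilon)n\geq\frac{\delta n}{4}$ for $b\in\{1,2\}$. This establishes the abstract claim.

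Finally I would deduce the three items by choosing the partitions appropriately and invoking Lemma~\ref{thm-single-graph} for the size data. For \textbf{(A1)} take $(P_1,P_2,L)=(A_i,B_i,L_i)$ and $(Q_1,Q_2,L')=(A_j,B_j,L_j)$; the conditions in (A1) are precisely $|P_1\Delta Q_1|\geq\delta n$ and $|P_1\Delta Q_2|\geq\delta n$, and the claim gives $|X_i\cap Y_j|\geq\frac{\delta n}{4}$ for all $X,Y\in\{A,B\}$. For \textbf{(A2)}, observe that $\{W_i^1,W_i^3\}$ and $\{W_i^2,W_i^4\}$ are each a partition of $V\setminus L_i$ into two sets of size $(\frac12-\epsilon)n$ (using $W_i^k=C_i^k\cup C_i^{k+1}$ with cyclic indices together with the EC3 sizes $|C_i^1|=|C_i^3|=\zeta_i n$, $|C_i^2|=|C_i^4|=(\frac12-\zeta_i-\epsilon)n$); applying the claim with $(P_1,P_2)=(W_i^1,W_i^3)$, $(Q_1,Q_2)=(W_j^1,W_j^3)$ in the first sub-case of (A2) and with $(W_i^2,W_i^4)$, $(W_j^2,W_j^4)$ in the second sub-case yields $|W_i^x\cap W_j^y|\geq\frac{\delta n}{4}$ for $x,y\in\{1,3\}$, respectively $x,y\in\{2,4\}$. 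For \textbf{(A3)} take $(P_1,P_2,L)=(A_i,B_i,L_i)$ and $(Q_1,Q_2,L')=(W_j^1,W_j^3,L_j)$ in the first sub-case, respectively $(W_j^2,W_j^4,L_j)$ in the second.

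There is no genuine obstacle here; the only point requiring care is the second step of the abstract claim, where the hypothesis controls only $P_1$ while the conclusion also concerns $P_2$, so one must pass through the $Q_b$-side decomposition rather than a $P$-side one. Everything else is elementary inclusion-exclusion, and the hypothesis $\epsilon\leq\delta/8$ is exactly what is needed to absorb the $2\epsilon n$ error terms coming from the exceptional sets $L$ and $L'$.
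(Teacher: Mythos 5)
Your proof is correct, and it is the natural argument: the paper states this as an unproved Observation, so there is no official proof to compare against, but the symmetric-difference identity $|P_1\Delta Q_b|=|P_1|+|Q_b|-2|P_1\cap Q_b|$ together with the two partition decompositions $P_1=(P_1\cap Q_1)\sqcup(P_1\cap Q_2)\sqcup(P_1\cap L')$ and $Q_b=(Q_b\cap P_1)\sqcup(Q_b\cap P_2)\sqcup(Q_b\cap L)$ is exactly what is needed, and your arithmetic checks out ($|P_1\cap Q_b|\le(\tfrac12-\epsilon-\tfrac{\delta}{2})n$ from the hypothesis, lower bound $(\tfrac{\delta}{2}-2\epsilon)n\ge\tfrac{\delta n}{4}$ for all four intersections using $\epsilon\le\delta/8$). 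You are also right to flag the one non-obvious point: the $P_2$-bounds cannot be read off from a $P$-side decomposition, since the hypotheses constrain only $P_1$, and the detour through the $Q_b$-side decomposition is necessary. The verification that $\{W_i^1,W_i^3,L_i\}$ and $\{W_i^2,W_i^4,L_i\}$ are genuine partitions of $V$ into two parts of size $(\tfrac12-\epsilon)n$ plus $L_i$ (using the cyclic indexing $W_i^4=C_i^4\cup C_i^1$ and the EC3 size constraints, noting that the constant $\zeta$ may differ between $D_i$ and $D_j$ but the part sizes $(\tfrac12-\epsilon)n$ do not) is the right way to make (A2) and (A3) fit the abstract claim.
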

    
Our following lemma finds a directed absorbing cycle when $\mathcal{D}$ is weakly stable.
\begin{lemma}\label{weakly}
    Let $0<\frac{1}{n}\ll \lambda,\mu \ll 8\epsilon^{1/2}\leq  \delta\leq \epsilon^{1/3}\ll 1$.  
    Assume that $\mathcal{D}=\{D_1,\ldots,D_n\}$ is a collection of  digraphs on a common vertex set $V$ of size $n$ with $\delta^0(\mathcal{D})\geq \frac{1-\mu}{2}n$. If $\mathcal{D}$ is $(\epsilon,\delta)$-weakly stable, then there exists a directed  absorbing  cycle with parameters $(\frac{\delta}{3},\sqrt{\epsilon},\lambda,\lambda^2)$.
\end{lemma}
\begin{proof}
Since $\mathcal{D}$ is $(\epsilon,\delta)$-weakly stable, one has $e(C_{\mathcal{D},k}^{\epsilon,\delta})\geq \delta n^2$ for some $k\in [3]$. In particular, if $k=2$, then we consider only $|W_i^1\triangle W_j^1|\geq \delta n$ and $|W_i^1\triangle W_j^3|\geq \delta n$ for all $ij\in E(C_{\mathcal{D},2}^{\epsilon,\delta})$; if $k=3$, then we consider only $|A_i\triangle W_j^1|\geq \delta n$ and $|A_i\triangle W_j^3|\geq \delta n$ for all $ij\in E(C_{\mathcal{D},3}^{\epsilon,\delta})$ (since the other case can be discussed similarly).

For each $i\in [\lfloor\frac{n}{3}\rfloor]$, define $\mathcal{F}_i$ to be a directed $4$-graph with vertex set $V$ and edge set
$$
\left\{(v_1,v_2,v_3,v_4)\in V^4:3(i-1)+\ell\in L(v_{\ell}v_{\ell+1})\ \text{for}\ \ell\in [3]\right\}.
$$
If follows from $\delta^0(\mathcal{D})\geq \frac{1-\mu}{2}n$ that $e(\mathcal{F}_i)\geq n\left(\left(\frac{1}{2}-\mu\right)n\right)^3\geq \frac{n^4}{9}$.  

In order to apply Lemma \ref{LEMMA:directed-k-graph-matching}, we first give the following claim, which requires only the minimum semi-degree condition.
\begin{claim}\label{weakly-absorbing}
Assume that $ij\in E(C_{\mathcal{D},k}^{\epsilon,\delta})$ for some $k\in [3]$. Let $u$ be a $D_j$-good vertex and $v$ be a $D_i$-good vertex with $j+1<3n$.
For ${\rm K}\in \{{\rm I,II}\}$, let $Z_j^{\rm K}(i,vu)$ be the collection of $(v_1,v_2,v_3,v_4)\in \mathcal{F}_{\frac{j+1}{3}}$ where $v_1v_2v_3v_4$ is a Type-{\rm K} directed $i$-absorbing path
of $(v,u)$. Then $|Z_j^{\rm K}(i,vu)|\geq 2^{-7}\delta n^4$.
\end{claim}

\begin{claimproof}[Proof of Claim \ref{weakly-absorbing}]
    Fix any such $i,j,u,v$.
 We proceed by considering the  following cases.

    \medskip
      {\bf Case 1.  $k=1$.}
    \medskip
    
    In this case, we prove $|Z_j^{\rm I}(i,vu)|\geq 2^{-7}\delta n^4$. By Lemma \ref{thm-single-graph} (i) and (ii), we have either $d_{i}^-(v,A_i)\geq (\frac{1}{2}-2\epsilon)n$ or $d_{i}^-(v,B_i)\geq (\frac{1}{2}-2\epsilon)n$. Without loss of generality, we assume that the former case holds since the latter case can be proved similarly. Note that $D_i$ and $D_j$ are $\delta$-crossing. Then $|A_i\cap  A_j|\geq \frac{\delta n}{4}$ and  $|A_i\cap B_j|\geq \frac{\delta n}{4}$. Therefore,  
    $$
    d_{i}^-(v,A_j)\geq |A_i\cap  A_j|-(|A_i|-d_{i}^-(v,A_i))\geq \frac{\delta n}{4}-\epsilon n\geq \frac{\delta n}{5}\ (\text{since}\ \epsilon\leq  \delta^2).
    $$
    Similarly, we have $d_{i}^-(v,B_j)\geq\frac{\delta n}{5}$. 
    
    Since $u$ is $D_j$-good, one has $u\in Y_j$ for some $Y\in \{A,B\}$. Let $Z=Y$ if $D_j$ is $(\epsilon,{\rm EC}1)$-extremal, and let $Z\in \{A,B\}\setminus \{Y\}$ if $D_j$  is $(\epsilon,{\rm EC}2)$-extremal. Notice that $|N_{j}^+(w,Z_j)|\geq (\frac{1}{2}-2\epsilon)n$ for each vertex $w\in Y_j$.     Choose $x\in N_{i}^-(v,Y_j)$. Then 
$$|N_{j}^+(u,Z_j)\cap N_{j}^+(x,Z_j)|\geq |N_{j}^+(u,Z_j)|+|N_{j}^+(x,Z_j)|-|Z_j|\geq \left(\frac{1}{2}-3\epsilon\right)n.$$
Any choice  
of $y\in N_{j}^+(u,Z_j)\cap N_{j}^+(x,Z_j),\ x'\in N_{{j-1}}^-(x)$ and $y'\in N_{{j+1}}^+(y)$ yields a directed $i$-absorbing path $x'xyy'$ of $(v,u)$ in $\mathcal{F}_{\frac{j+1}{3}}$. The
number of such paths is therefore at least $\frac{\delta n}{5}(\frac{1}{2}-3\epsilon)n\left(\frac{1}{2}-\mu\right)^2n^2\geq 2^{-7}\delta n^4$.







\medskip
{\bf Case 2. $k=2$.}
\medskip

In this case, we prove $|Z_j^{\rm II}(i,vu)|\geq 2^{-7}{{\delta}}n^4$. By Lemma \ref{thm-single-graph} (iii), we have either $d_{D_i}^+(v,W_i^1)\geq (\frac{1}{2}-3\epsilon)n$ or $d_{D_i}^+(v,W_i^3)\geq (\frac{1}{2}-3\epsilon)n$. Without loss of generality, we assume that the former case holds since the latter case can be proved similarly. Note that $D_i$ and $D_j$ are $\delta$-crossing. Then $|W_i^1\cap  W_j^1|\geq \frac{\delta n}{4}$ and  $|W_i^1\cap W_j^3|\geq \frac{\delta n}{4}$. Therefore,  
$$
    d_{i}^+(v,W_j^1)\geq |W_i^1\cap  W_j^1|-\left(|W_i^1|-d_{i}^+(v,W_i^1)\right)\geq \frac{\delta n}{4}-2\epsilon n\geq \frac{\delta n}{5}\ (\text{since}\ \epsilon\leq  \delta^2).
$$
Similarly, we have $d_{i}^+(v,W_j^3)\geq\frac{\delta n}{5}$. 

Since $u$ is $D_j$-good, one has $u\in Y_j$ for some $Y\in \{W^1,W^3\}$. Let $Z=W^4$ if $Y=W^1$, and let $Z=W^2$ if $Y=W^3$.  
Note that $|N_{j}^-(w,Z_j)|\geq (\frac{1}{2}-3\epsilon)n$ for each vertex $w\in Y_j$. Choose $x\in N_{i}^+(v,Y_j)$. Then 
$$
|N_{j}^-(u,Z_j)\cap N_{j}^-(x,Z_j)|\geq |N_{j}^-(u,Z_j)|+|N_{j}^-(x,Z_j)|-|Z_j|\geq \left(\frac{1}{2}-5\epsilon\right)n.
$$
Any choice  
of $y\in N_{j}^-(u,Z_j)\cap N_{j}^-(x,Z_j),\ x'\in N_{{j+1}}^+(x)$ and $y'\in N_{{j-1}}^-(y)$ yields a Type-II directed  $i$-absorbing  path $y'yxx'$ of $(v,u)$ in $\mathcal{F}_{\frac{j+1}{3}}$. Therefore, the number of such paths is at least $\frac{\delta n}{5}(\frac{1}{2}-5\epsilon)n\left(\frac{1}{2}-\mu\right)^2n^2\geq 2^{-7}\delta n^4$.

\medskip
{\bf Case 3. $k=3$.}
\medskip

This case can be proved by a similar discussion, so we omit the proof.
\end{claimproof}

For each vertex $v$, let
$$
\mathcal{C}_v:=\{i\in [n]:v\ \text{is}\ D_i\text{-good}\}.
$$
Since $e(C_{\mathcal{D},k}^{\epsilon,\delta})\geq \delta n^2$, there is a subgraph $H$ of $C_{\mathcal{D},k}^{\epsilon,\delta}$ such that $|V(H)|\geq \delta n$ and $\delta(H)\geq \delta n$. For each $i\in V(H)$, define 
$$
T_i:=\left\{u\in V: |N_H(i)\setminus \mathcal{C}_u|\geq \frac{\delta n}{2}\right\}.
$$
Notice that $|L_i|=2\epsilon n$ when $D_i$ is $\epsilon$-extremal. Hence  
$|T_i|\frac{\delta n}{2}\leq 2\epsilon n^2$, that is, 
$|T_i|\leq \frac{\sqrt{\epsilon}}{2}n$ since $8\epsilon^{1/2}\leq\delta$. 

For each $i\in V(H)$, let $\overline{T_i}:=V\setminus T_i$. Then $|\overline{T_i}|\geq (1-\frac{\sqrt{\epsilon}}{2})n$.  For each $u\in \overline{T_i}$ with $i\in V(H)$, one has $|N_H(i)\cap \mathcal{C}_u|\geq \frac{\delta n}{2}$. 
Now we independently and randomly select vertices from $V(H)$ with probability $\kappa:=\frac{\lambda}{14}$ to obtain a color set $\mathcal{U}$. Using Chernoff's bound, with high probability the following hold:
\begin{enumerate}
    \item[{\rm ($*$)}] $\frac{\kappa |V(H)|}{2}\leq t:=|\mathcal{U}|\leq 2\kappa |V(H)|$,
    \item[{\rm ($**$)}] for every $i\in V(H)$ and $u\in \overline{T_i}$, we have $|N_H(i,\mathcal{U})\cap \mathcal{C}_u|\geq \frac{\delta \kappa n}{4}$.
\end{enumerate}
Fix such a color set $\mathcal{U}$ and one may assume $\mathcal{U}=\{3j-1:j\in [t]\}$. Let $\overline{\mathcal{U}}:=V(H)\cap [3t+1,n]$. It is easy to see that $\mathcal{U}\cap \overline{\mathcal{U}}=\emptyset$, $|\overline{\mathcal{U}}|\geq \frac{\delta n}{2}$ and $\delta(H[\overline{\mathcal{U}}])\geq \frac{\delta n}{2}$. 

Given $i\in \overline{\mathcal{U}}$ and $u\in \overline{T_i}$, choose $j\in \mathcal{C}_u\cap N_H(i,\mathcal{U})$. Then $\frac{j+1}{3}\in [t]$ and the number of such choices of $j$ is at least $\frac{\delta \kappa n}{4}\geq \frac{\delta t}{8}$ by ($**$). Let $v$ be a $D_i$-good vertex. It follows from Claim \ref{weakly-absorbing} that $|Z_j^{\rm K}(i,vu)|\geq 2^{-7}\delta n^4$ for some ${\rm K}\in \{{\rm I,II}\}$. 

By ($*$) and the fact that $\lambda\ll \delta$, one has $\frac{t}{n}\leq 2\kappa=\frac{\lambda}{7}\ll 2^{-7}\delta$. Applying Lemma \ref{LEMMA:directed-k-graph-matching} with 
\begin{align*}
   &\gamma:=\frac{\lambda}{7},\ \epsilon:=2^{-7}\delta,\ \mathbf{H}:=\{\mathcal{F}_j:j\in [t]\}\ \  \text{and} \\
  & \mathbf{Z}:=\left\{Z(i,vu):=\bigcup_{j\in \mathcal{C}_u\cap N_H(i,\mathcal{U})}Z_j^{\rm K}(i,vu):i\in \overline{\mathcal{U}},\ u\in \overline{T_i}\ \text{and}\ v\ \text{is}\ D_i\text{-good}\right\},
\end{align*}
we obtain that there is a rainbow matching $M$ of $\mathbf{H}$ with size at least $(1-2^{-16}\delta^2)t$ such that $|E(Z(i,vu))\cap E(M)|\geq 2^{-16}\delta^2t$ for all $Z(i,vu)\in \mathbf{Z}$. That is, there is a subset $I\subseteq [t]$ with $|I|\geq (1-2^{-16}\delta^2)t$ such that there is a Type-K directed path $P^j=v_1^jv_2^jv_3^jv_4^j\in\mathcal{F}_j$ for each $j\in I$, and for every $i\in \overline{\mathcal{U}}$, $u\in \overline{T_i}$ and a $D_i$-good vertex $v$, there are at least $2^{-16}\delta^2 t$ Type-K directed $i$-absorbing paths $P^j$ of $(v,u)$.

Relabel indices such that $I=:[s]$ with $s\geq (1-2^{-16}\delta^2)t$. We are to connect those disjoint rainbow  directed paths $P^1,\ldots,P^s$ into a directed  absorbing  cycle $C$. Notice that  $\overline{\mathcal{U}}\cap {\rm col}(\bigcup_{i\in I}P^i)=\emptyset$. We first connect $P^1$ and $P^2$ into a single rainbow directed path $P^1v_4^1xzyv_1^2P^2$ or $P^1v_4^1xzv_1^2P^2$. For this, choose distinct colors $c_1,c_2,c_3,c_4\in \overline{\mathcal{U}}$ such that $c_1c_2\in E(H)$. Clearly, there are at least $|\overline{\mathcal{U}}|\geq \frac{\delta n}{2}$ choices for $c_1$, and given this at least $\delta(H[\overline{\mathcal{U}}])\geq \frac{\delta n}{2}$ choices for $c_2$, and at least  $|\overline{\mathcal{U}}|-3>\frac{\delta n}{3}$ choices for each of  $c_3,c_4$. Next, choose an unused  $D_{c_1}$-good vertex $x\in N^+_{{c_3}}(v_4^1)$ and an unused $D_{c_2}$-good vertex $y\in N^-_{{c_4}}(v_1^2)$. The following claim considers the number of choices for $x,y$. 

\begin{claim}\label{numberz}
  One of the following holds:
  \begin{itemize}
      \item there exists a rainbow directed path $P^1v_4^1xzyv_1^2P^2$ with colors ${\rm col}(P^1\cup P^2)\cup \{c_1,c_2,c_3,c_4\}$ and $z\in N_{{c_1}}^+(x)\cap N_{{c_2}}^-(y)$, each of $x,y,z$ has at least $\frac{\delta n}{10}$ choices; 
      \item there exists a rainbow directed path $P^1v_4^1xyv_1^2P^2$ with colors ${\rm col}(P^1\cup P^2)\cup \{c_1,c_3,c_4\}$ and $y\in N_{{c_1}}^+(x)\cap N_{{c_4}}^-(v_1^2)$, each of $x,y$ has at least $\frac{\delta n}{10}$ choices.
  \end{itemize}
\end{claim}
\begin{claimproof}[Proof of Claim \ref{numberz}] 
We proceed by considering the following three cases. 

\medskip
\textbf{Case 1. $k=1$.} 
\medskip

In this case, $D_{c_1}$ and $D_{c_2}$ are either $(\epsilon,{\rm EC}1)$-extremal or $(\epsilon,{\rm EC}2)$-extremal. Then $x\in A_{c_1}\cup B_{c_1}$ and $y\in A_{c_2}\cup B_{c_2}$. Clearly, each of $x,y$ has at least 
$\frac{n}{4}$ choices. Recall that $D_{c_1}$ and $D_{c_2}$ are $\delta$-crossing. Then, $|X_{c_1}\cap Y_{c_2}|\geq \frac{\delta n}{4}$ whenever $X,Y\in \{A,B\}$. By Lemma~\ref{thm-single-graph}, there are $Z,W\in \{A,B\}$ such that $d_{{c_1}}^+(x,Z_{c_1})\geq |Z_{c_1}|-\epsilon n$ and $d_{{c_2}}^-(y,W_{c_2})\geq |W_{c_2}|-\epsilon n$. Choose $z\in N_{{c_1}}^+(x)\cap N_{{c_2}}^-(y)$ and the number of choices for $z$ is  at least 
$$
|N_{{c_1}}^+(x)\cap N_{{c_2}}^-(y)|-7e(M)\geq |N_{{c_1}}^+(x,Z_{c_1})\cap N_{{c_2}}^-(y,W_{c_2})|-14\kappa n\geq |Z_{c_1}\cap W_{c_2}|-2\epsilon n-12\kappa n\geq \frac{\delta n}{5},
$$
as desired.

\medskip
\textbf{Case 2. $k=2$.} 
\medskip

In this case, both $D_{c_1}$ and $D_{c_2}$ are $(\epsilon,{\rm EC}3)$-extremal. Then $|W_{c_1}^i\cap W_{c_2}^j|\geq \frac{\delta n}{4}$ for   $i,j\in \{1,3\}$. Notice that either $|W_{c_1}^1\cap W_{c_2}^2|\geq \frac{n}{5}$ or $|W_{c_1}^3\cap W_{c_2}^2|\geq \frac{n}{5}$ holds. Without loss of generality, assume the latter case hold. Choose $z\in N_{{c_1}}^+(x)\cap N_{{c_2}}^-(y)$. Next, we consider the size of $|N_{{c_1}}^+(x)\cap N_{{c_2}}^-(y)|$. 

If $|W_{c_1}^2\cap N^+_{{c_3}}(v_4^1)|\geq \delta n$ and $|W_{c_2}^3\cap N^-_{{c_4}}(v_1^2)|\geq \delta n$, then choose $x\in W_{c_1}^2\cap N^+_{{c_3}}(v_4^1)$ and $y\in W_{c_2}^3\cap N^-_{{c_4}}(v_1^2)$.  Hence, there are at least $\frac{\delta n}{2}$ choices for each of $x$ and $y$. Furthermore, $|N_{{c_1}}^+(x)\cap N_{{c_2}}^-(y)|\geq |W_{c_1}^3\cap W_{c_2}^2|-4\epsilon n\geq \frac{\delta n}{5}$, as desired.

If $|W_{c_1}^2\cap N^+_{{c_3}}(v_4^1)|< \delta n$, 
then $|W_{c_1}^4\cap N^+_{{c_3}}(v_4^1)|\geq (\frac{1}{2}-\mu-\delta-2\epsilon) n$. Choose $x\in W_{c_1}^4\cap N^+_{{c_3}}(v_4^1)$. Then $d_{{c_1}}^+(x,W_{c_1}^1)\geq |W_{c_1}^1|-2\epsilon n$. Notice that either $|W_{c_2}^1\cap N^-_{{c_4}}(v_1^2)|\geq \delta n$ or $|W_{c_2}^3\cap N^-_{{c_4}}(v_1^2)|\geq \delta n$. We may assume the former case holds. Choose $y\in W_{c_2}^1\cap N^-_{{c_4}}(v_1^2)$. Then $d_{{c_2}}^-(y,W_{c_2}^4)\geq |W_{c_2}^4|-2\epsilon n$. 
If $|W_{c_1}^1\cap W_{c_2}^4|\geq \delta n$, then $|N_{{c_1}}^+(x)\cap N_{{c_2}}^-(y)|\geq |W_{c_1}^1\cap W_{c_2}^4|-4\epsilon n\geq \frac{\delta n}{5}$, as desired. 
If $|W_{c_1}^1\cap W_{c_2}^4|< \delta n$ and $|W_{c_2}^3\cap N^-_{{c_4}}(v_1^2)|\geq \frac{\delta n}{9}$, then $|W_{c_1}^1\cap W_{c_2}^2|\geq (\frac{1}{2}-\delta-3\epsilon)n$ and choose $y\in W_{c_2}^3\cap N^-_{{c_4}}(v_1^2)$. Similarly, $|N_{{c_1}}^+(x)\cap N_{{c_2}}^-(y)|\geq |W_{c_1}^1\cap W_{c_2}^2|-4\epsilon n\geq \frac{\delta n}{5}$, as desired. 
If $|W_{c_1}^1\cap W_{c_2}^4|< \delta n$ and $|W_{c_2}^3\cap N^-_{{c_4}}(v_1^2)|<\frac{\delta n}{9}$, then 
$d^-_{{c_4}}(v_1^2,W_{c_2}^1)\geq |W_{c_2}^1|-(\mu+2\epsilon+\frac{\delta}{9})n$. 
Therefore, 
$$
|N_{{c_1}}^+(x)\cap N_{{c_4}}^-(v_1^2)|\geq |N_{{c_1}}^+(x,W_{c_1}^1)\cap N_{{c_4}}^-(v_1^2,W_{c_2}^1)|\geq |W_{c_1}^1\cap W_{c_2}^1|-\left(\mu+4\epsilon+\frac{\delta}{9}\right) n\geq \frac{\delta n}{9}.
$$
Now, we connect $P^1$ and $P^2$ by using a rainbow directed  path $P^1v_4^1xyv_1^2P^2$ with colors ${\rm col}(P^1\cup P^2)\cup \{c_1,c_3,c_4\}$, as desired.

\medskip
\textbf{Case 3. $k=3$.}
\medskip

In this case, $D_{c_1}$ is either  $(\epsilon,{\rm EC}1)$-extremal or  $(\epsilon,{\rm EC}2)$-extremal, and $D_{c_2}$ is $(\epsilon,{\rm EC}3)$-extremal. 
Hence $x\in A_{c_1}\cup B_{c_1}$ and $y\in W_{c_2}^1\cup W_{c_2}^3$.  Choose $z\in N_{{c_1}}^+(x)\cap N_{{c_2}}^-(y)$.  Note that $D_{c_1}$ and $D_{c_2}$ are $\delta$-crossing. Then $|X_{c_1}\cap Y_{c_2}|\geq \frac{\delta n}{4}$ whenever $X\in \{A,B\}$ and $Y\in \{W^1,W^3\}$. By Lemma \ref{thm-single-graph}, there are $Z\in \{A,B\}$ and $W\in \{W^2,W^4\}$ such that $d_{{c_1}}^+(x,Z_{c_1})\geq |Z_{c_1}|-\epsilon n$ and $d_{{c_2}}^-(y,W_{c_2})\geq |W_{c_2}|-2\epsilon n$. If $|Z_{c_1}\cap W_{c_2}|\geq \frac{\delta n}{4}$, then $|N_{{c_1}}^+(x)\cap N_{{c_2}}^-(y)|\geq |Z_{c_1}\cap W_{c_2}|-3\epsilon n\geq \frac{\delta n}{5}$, as desired. 

Now, we consider $|Z_{c_1}\cap W_{c_2}|< \frac{\delta n}{4}$. Without loss of generality, assume that $Z=A$ and $W=W^2$. It follows that $|A_{c_1}\cap W_{c_2}^4|\geq |W_{c_2}^4|-(\frac{\delta}{4}+2\epsilon)n$. This implies $|A_{c_1}\cap W_{c_2}^3|\geq |C_{c_2}^4|-(\frac{\delta}{4}+2\epsilon)n\geq \frac{\delta n}{2}$ since $|C^2_4|\geq \delta n$. If $|W_{c_2}^1\cap N_{{c_4}}^-(v_2^1)|\geq \frac{\delta n}{9}$, then choose $y\in W_{c_2}^1\cap N_{{c_4}}^-(v_2^1)$. Hence $|N_{{c_1}}^+(x)\cap N_{{c_2}}^-(y)|\geq |A_{c_1}\cap W_{c_2}^4|-3\epsilon n\geq \frac{\delta n}{5}$. If $|W_{c_2}^1\cap N_{{c_4}}^-(v_2^1)|< \frac{\delta n}{9}$, then $|W_{c_2}^3\cap N_{{c_4}}^-(v_2^1)|\geq  |W_{c_2}^3|-(\frac{\delta}{9}+2\epsilon)n$. Thus, $|N_{{c_1}}^+(x)\cap N_{{c_4}}^-(v_2^1)|\geq |A_{c_1}\cap W_{c_2}^3|-(3\epsilon+\frac{\delta}{9}) n\geq \frac{\delta n}{9}$. Now, we connect $P^1$ and $P^2$ by using a rainbow directed  path $P^1v_4^1xyv_1^2P^2$ with colors ${\rm col}(P^1\cup P^2)\cup \{c_1,c_3,c_4\}$, as desired.

 This completes the proof of Claim \ref{numberz}.
 \end{claimproof}

In view of Claim \ref{numberz},  there are at least $\frac{\delta n}{10}$ choices for each of $c_1,c_2,c_3,c_4,x,y,(z)$ given any previous choices. Thus, we obtain the desired rainbow directed  path $P^1v_4^1xzyv_1^2P^2$ or $P^1v_4^1xyv_1^2P^2$. Applying the above process for each pair $(P^j,P^{j+1})$ with $j\in [s]$ where $P^{s+1}:=P^1$. Every time we use at most four unused colors in $\overline{\mathcal{U}}$ and at most three unused vertices.  {Therefore}, 
 for each choice, at most $4s\leq 4t\leq 8\kappa n\ll\frac{\delta n}{10}$ colors are forbidden and $3s\ll\frac{\delta n}{10}$ vertices are forbidden. Thus we obtain a rainbow directed  cycle $C$ of length at most $7s$ which contains every $P^j$ with $j\in [s]$ as a segment. 

By the construction of $C$, for every color $i\in \overline{\mathcal{U}}$, any $D_i$-good vertex $v$ and  for every vertex $u\in \overline{T_i}$, there are at least $2^{-16}\delta^2t\geq 2^{-17}\kappa \delta^3n$ disjoint directed $i$-absorbing paths of $(v,u)$. Moreover, the number of $D_i$-good vertices inside  $\overline{T_i}$ is at least $|\overline{T_i}|-2\epsilon n\geq (1-\frac{\sqrt{\epsilon}}{2})n-2\epsilon n\geq (1-{\sqrt{\epsilon}})n$. Thus $C$ is a directed  absorbing  cycle with parameters $(\frac{\delta}{3},\sqrt{\epsilon},\lambda,\lambda^2)$.
\end{proof}

\section{Stable case}

{{In this section, we employ the regularity-blow-up method for digraph collections $\mathcal{D}$ to prove the existence of a transversal directed Hamilton cycle when $\mathcal{D}$ is either strongly or weakly stable, combining the results from previous sections.}} Let $0<\gamma,\alpha,\epsilon,\delta<1$. We say that $\mathcal{D}$ is $(\gamma,\alpha,\epsilon,\delta)$-\textit{stable} if it is either $(\gamma,\alpha)$-strongly stable or $(\epsilon,\delta)$-weakly {stable}. We first show that the reduced digraph collection inherits stability.

\begin{lemma}\label{lem-reduced-stable}
   Let ${0<}\frac{1}{n}\ll \frac{1}{L_0}\ll \epsilon_0\ll d\ll \mu,\alpha\ll\gamma,\epsilon\leq \delta^2\ll1$. Assume that $\mathcal{D}=\{D_1,\ldots,D_n\}$ is a collection of digraphs on a common vertex set $V$ of size $n$ and $\delta^0(\mathcal{D})\geq \left(\frac{1}{2}-\mu\right)n$. Let $\mathcal{R}=\mathcal{R}(\epsilon_0,d,L_0)$ be the reduced digraph collection of {$\mathcal{D}$}. If {$\mathcal{D}$} is $(\gamma,\alpha,\epsilon,\delta)$-stable, then $\mathcal{R}$ is  $(\frac{\gamma}{2},\alpha^2,\epsilon,\frac{\delta}{2})$-stable.
\end{lemma}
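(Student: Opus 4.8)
\textbf{Proof proposal for Lemma~\ref{lem-reduced-stable}.}
The plan is to split into the two cases prescribed by the definition of stability of $\mathcal{D}$, and in each case transfer the relevant structural property to the reduced digraph collection $\mathcal{R}$, using the degree inheritance lemma (Lemma~\ref{degree-inheritance}) together with the property that $(\epsilon,d)$-regular pairs have density close to the average and hence carry many edges between any linear-sized subsets.

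\emph{Strongly stable case.} Suppose $\mathcal{D}$ is $(\gamma,\alpha)$-strongly stable, so that at least $\gamma n$ digraphs $D_i$ are $\alpha$-nice. First I would observe that, by Lemma~\ref{degree-inheritance} and a counting argument, for all but $o(1)$ proportion of the colour clusters $\mathcal{C}_j$, a $(1-o(1))$-fraction of the colours $i\in\mathcal{C}_j$ are $\alpha$-nice; hence there are at least $\frac{\gamma}{2}M$ colour clusters $\mathcal{C}_j$ containing some $\alpha$-nice colour $i$. For such a $j$, I would show that $R_j$ is $\alpha^2$-nice: given $A,B\subseteq[L]$ with $|A|,|B|\ge(\frac12-\alpha^2)L$, lift them to the union of the corresponding clusters $A^*=\bigcup_{h\in A}V_h$ and $B^*=\bigcup_{h\in B}V_h$ in $V$, which have size at least $(\frac12-\alpha^2)L\cdot m\ge(\frac12-2\alpha^2)n\ge(\frac12-\alpha)n$ (absorbing the $|V_0|$ loss). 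Since $D_i$ is $\alpha$-nice (we may pass to the spanning subgraph $D_i'$ from Lemma~\ref{regularity-lemma}, at the cost of at most $(\frac{24d}{\delta^3}+\epsilon_0)n^2\ll\alpha n^2$ edges, so $D_i'$ is still, say, $\frac{\alpha}{2}$-nice), we get $e_{D_i'}(A^*,B^*)\ge\frac{\alpha}{2}n^2$. Now at most $\frac{\alpha}{4}n^2$ of these edges can lie inside a single cluster or in a non-regular pair (by Lemma~\ref{regularity-lemma}(iv),(v) and $\epsilon_0$ small), so there are at least $\frac{\alpha}{4}n^2/m^2\ge\frac{\alpha}{8}L^2$ pairs $(h,h')\in A\times B$ with $D_i'[V_h,V_{h'}]$ being $(\epsilon_0,d)$-regular, i.e.\ $hh'\in R_i$ where $R_i$ is the reduced graph of colour $i$; since $i\in\mathcal{C}_j$ this gives $e_{R_j}(A,B)\ge\frac{\alpha}{8}L^2\ge\alpha^2 L^2$. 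Thus $R_j$ is $\alpha^2$-nice for at least $\frac{\gamma}{2}M$ indices $j$, so $\mathcal{R}$ is $(\frac{\gamma}{2},\alpha^2)$-strongly stable.

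\emph{Weakly stable case.} Suppose instead $e(C_{\mathcal{D},k}^{\epsilon,\delta})\ge\delta n^2$ for some $k\in[3]$. I would fix characteristic partitions of each $\epsilon$-extremal $D_i$ (via Lemma~\ref{thm-single-graph}, which applies to $D_i'$ since it inherits the degree condition up to $dn$ exceptional vertices), and then push these partitions down to $[L]$ by a majority vote: for each cluster $V_h$ and each $\epsilon$-extremal colour $i$, assign $h$ to whichever part of $D_i$'s characteristic partition contains the majority of $V_h$; one checks using the regularity/degree properties that (i) the resulting partition of $[L]$ is a valid characteristic partition witnessing that $R_i$ is $O(\epsilon)$-extremal of the same EC-type, and (ii) symmetric differences are essentially preserved up to scaling by $1/m$ and a $\sqrt{\epsilon_0}$-type error, so that if $D_i,D_j$ are $\delta$-crossing then $R_i,R_j$ are (say) $\frac{\delta}{2}$-crossing in $C_{\mathcal{R},k}^{\epsilon,\delta/2}$. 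To get the required $\frac{\delta}{2}M^2$ edges I would partition the vertex set $[n]=\mathcal{C}$ of $C_{\mathcal{D},k}^{\epsilon,\delta}$ according to the colour clusters and note that the $\delta n^2$ edges, after discarding those incident to $\mathcal{C}_0$ (at most $\epsilon_0 n\cdot n\ll\frac{\delta}{100}n^2$), distribute among the $\binom{M}{2}$ cluster-pairs so that at least $\frac{\delta}{4}M^2$ pairs $(j,j')$ contain a crossing edge $ij'$ with $i\in\mathcal{C}_j$, $j'\in\mathcal{C}_{j'}$, giving $R_jR_{j'}$ adjacent in $C_{\mathcal{R},k}^{\epsilon,\delta/2}$; hence $e(C_{\mathcal{R},k}^{\epsilon,\delta/2})\ge\frac{\delta}{2}M^2$ and $\mathcal{R}$ is $(\epsilon,\frac{\delta}{2})$-weakly stable. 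In either case $\mathcal{R}$ is $(\frac{\gamma}{2},\alpha^2,\epsilon,\frac{\delta}{2})$-stable, as claimed.

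\emph{Main obstacle.} The routine parts are the edge-accounting and the degree bookkeeping. The genuinely delicate step is the weakly stable case: one must verify that the majority-vote projection of a characteristic partition is again a \emph{characteristic} partition in the precise sense of Lemma~\ref{thm-single-graph} (all the one-sided degree inequalities $d^+(a,\cdot)\ge(\tfrac12-2\epsilon)L$ etc.\ must survive), and crucially that $\delta$-crossing is preserved under projection — i.e.\ that a linear symmetric difference in $V$ does not shrink below $\frac{\delta}{2}n$ worth of clusters after rounding, while simultaneously the EC-type cannot flip. Since Lemma~\ref{thm-single-graph} guarantees a \emph{unique} EC-type for each extremal digraph with the stated degree condition, one has to argue that $R_i$ still satisfies that degree condition (which follows from Lemma~\ref{degree-inheritance}, discarding the $d^{1/4}M$ bad clusters into the exceptional part and re-balancing), so that its type is well-defined and agrees with that of $D_i$; handling the interaction of these two roundings (cluster-level and part-level) is where the care is needed.
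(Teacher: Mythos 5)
Your strongly stable case is essentially sound, and it takes a slightly different (direct, rather than contrapositive) route from the paper: you argue that an $\alpha$-nice colour $i\in\mathcal{C}_j$ forces $R_j$ to be $\alpha^2$-nice, whereas the paper assumes $R_j$ is not $\alpha^2$-nice and concludes via an edge-counting claim (their Claim \ref{claim-reduced}) that most $c\in\mathcal{C}_j$ are not $\alpha$-nice, then counts a contradiction. The core mechanism — that every edge of $D_i'[\hat{A},\hat{B}]$ between distinct clusters lives in a pair $h\ell\in R_j[A,B]$, so $e_{D_i'}(\hat{A},\hat{B})\le m^2 e_{R_j}(A,B)$ — is the same in both. (Your intermediate sentence that a $(1-o(1))$-fraction of colours in most clusters are $\alpha$-nice is false and unnecessary; the pigeonhole on the $\gamma n$ nice colours already gives the $\tfrac{\gamma}{2}M$ clusters. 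Also note that by Lemma \ref{regularity-lemma}(iv),(v), $D_i'$ has no inside-cluster edges for $i\notin\mathcal{C}_0$ and every nonempty cross-cluster pair is automatically regular, so there is nothing to discard.)

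The weakly stable case has a genuine gap, and it is not just the bookkeeping you flag. Your plan projects each $D_i$'s characteristic partition down to $[L]$ by majority vote and claims this witnesses that $R_j$ is extremal of the same type. But the implication ``$D_i$ is $\epsilon$-extremal $\Rightarrow$ $R_j$ is extremal'' is simply not true: the regularity of $\mathcal{D}_{h\ell,j}'=\{D_c'[V_h,V_\ell]:c\in\mathcal{C}_j\}$ is a statement about the \emph{collection}, not about the single graph $D_i'$, so $D_i$ can be sparse between $\hat{A},\hat{B}$ while the other colours in $\mathcal{C}_j$ are dense there, making $R_j$ perfectly nice. Thus the projection cannot, by itself, establish extremality of $R_j$, and without that one cannot even speak of $R_j$'s fixed characteristic partition, which the cross-graph $C_{\mathcal{R},k}^{\epsilon,\delta/2}$ requires. (Your notation $R_i$ for $i\in[n]$, rather than $R_a$ for the cluster $a$ containing $i$, reflects this confusion of where the partitions live.) The paper avoids the problem by first \emph{supposing} $\mathcal{R}$ is not strongly stable (otherwise we are done), so that $R_j$ is $\alpha^2$-extremal for most $j$; it then runs the comparison in the opposite direction via Claim \ref{claim-reduced}, lifting $R_j$'s characteristic partition to $\hat{Z}_j',\hat{Y}_j'$ and deducing that for all but $2\alpha m$ colours $c\in\mathcal{C}_j$, the sparse pair $(\hat{Z}_j',\hat{Y}_j')$ is also sparse in $D_c$, which pins $D_c$'s characteristic partition (up to relabelling) to the lifted one. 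That downward transfer is what makes the $\delta$-crossing property survive. To salvage your outline you would need to add this reduction to the not-strongly-stable case and replace the majority-vote projection with the Claim-\ref{claim-reduced}-style lift; as written, the step you isolate as ``the genuinely delicate step'' is not merely delicate but, in the direction you propose to run it, false.
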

\begin{proof}
We may assume that $\frac{1}{n}\le \frac{1}{n_0}$ where $n_0=n_0(\epsilon,1,L_0)$ is the constant from Lemma \ref{regularity-lemma}. Write $[L]$ for the common vertex set of digraphs in  $\mathcal{R}$ where $L_0\leq L\leq n_0$, and $[M]$ for the set of color clusters. Thus there is a partition $V_0,V_1,\ldots,V_L$ of $V$ and $\mathcal{C}_0,\mathcal{C}_1,\ldots,\mathcal{C}_M$ of $[n]$ and a digraph collection $\mathcal{D}'$ satisfying (i)-(v) of Lemma \ref{regularity-lemma}. 
Therefore, for each $\{(h,i),j\}\in {[L]\choose 2}\times [M]$, we have ${hi}\in R_j$ if and only if $\mathcal{D}_{hi,j}':=\{D_c'[V_h,V_i]:c\in \mathcal{C}_j\}$ is $(\epsilon_0,d)$-regular. Furthermore, $Lm\leq n\leq Mm+\epsilon_0n$ and $Mm\leq n\leq Lm+\epsilon_0n$, so $|L-M|\leq \frac{\epsilon_0n}{m}\leq \frac{\epsilon_0L}{1-\epsilon_0}$. Thus we may assume that $M=L$ at the expense of assuming the slightly worse bound $|V_0|+|\mathcal{C}_0|\leq 3\epsilon_0n$. Given $X\subset [L]$, we write $\hat{X}:=\bigcup_{j\in X}V_j$. Hence $|\hat{X}|=m|X|$.

\begin{claim}\label{claim-reduced}
Let $d\ll \alpha'$ and {$i\in [M]$}. Assume that there are sets $A,B\subset [L]$ of size at least
$(\frac{1}{2}-\alpha')L$ such that $e_{R_i}(A,B)\leq \alpha' L^2$. Then $e_{D_c}(\hat{A},\hat{B})\leq \sqrt{\alpha'}n^2$ for all but at most $2\sqrt{\alpha'}m$ colors $c\in \mathcal{C}_i$.
\end{claim}
\begin{claimproof}[Proof of Claim \ref{claim-reduced}]
  Let $t$ be the number of colors $c\in \mathcal{C}_i$ such that $e_{D_c}(\hat{A},\hat{B})> \sqrt{\alpha'}n^2$. Since $e(D_c)-e(D_c')\leq (24d+\epsilon_0)n^2$ for all $c\in [n]$, we have
  \begin{align*}
    \sqrt{\alpha'}n^2 t&\leq \sum_{c\in \mathcal{C}_i}e_{D_c}(\hat{A},\hat{B})\leq \sum_{c\in \mathcal{C}_i}\left(e_{D_c'}(\hat{A},\hat{B})+(24d+\epsilon_0)n^2\right)\\
    &\leq |\mathcal{C}_i|e_{R_i}(A,B)m^2+|\mathcal{C}_i|(24d+\epsilon_0)n^2\\
    &\leq \alpha'L^2m^3+25dmn^2\leq 2\alpha'mn^2.
\end{align*}
This implies that $t\leq 2\sqrt{\alpha'}m$.
\end{claimproof}

We consider the cases where $\mathcal{D}$ is strongly stable and weakly stable, respectively.
\medskip

{\bf Case 1.} $\mathcal{D}$ is $(\gamma,\alpha)$-strongly stable.
\medskip

We prove that $\mathcal{R}$ is $(\frac{\gamma}{2},\alpha^2)$-strongly stable. Suppose that there exists a subset $I\subseteq [L]$ with $|I|\geq (1-\frac{\gamma}{2})L$ such that $R_i$ is not $\alpha^2$-nice for any  $i\in I$. By Definition \ref{nice}, for each $i\in I$, there are two sets $A^i,B^i\subseteq [L]$ of size at least $(\frac{1}{2}-\alpha^2)L$ such that $e_{R_{i}}(A^i,B^i)<\alpha^2 L^2$. Applying Claim \ref{claim-reduced} with $\alpha':=\alpha^2$, we get that there are at least $(1-2\alpha)m$ colors {$c\in \mathcal{C}_{i}$} such that $e_{D_c}(\hat{A^i},\hat{B^i})\leq \alpha n^2$. Notice that  $|\hat{A^i}|,|\hat{B^i}|\geq (\frac{1}{2}-\alpha^2)Lm>(\frac{1}{2}-\alpha)n$. Then there are at least $(1-2\alpha)m$ colors $c\in \mathcal{C}_{i}$ such that $D_c$ is not $\alpha$-nice. Thus the number of colors $c\in [n]$ for which $D_c$ is not $\alpha$-nice is at least $(1-2\alpha)m|I|>(1-\gamma)n$ since $\alpha\ll \gamma$. This
contradicts the fact that $\mathcal{D}$ is $(\gamma,\alpha)$-{strongly stable}.

\medskip
{\bf Case 2.} $\mathcal{D}$ is $(\epsilon,\delta)$-weakly stable.
\medskip

Suppose that $\mathcal{R}$ is not $(\frac{\gamma}{2},\alpha^2)$-strongly stable. Then there is a subset $I\subseteq [L]$ with $|I|\geq (1-\frac{\gamma}{2})L$ such that $R_i$ is not $\alpha^2$-nice for any $i\in I$.  It suffices to
show that $\mathcal{R}$ is $(\epsilon,\frac{\delta}{2})$-weakly stable. Choose $i\in I$, then $R_i$ is $\alpha^2$-extremal. Based on Lemma \ref{degree-inheritance} (i), one has $d_{R_i}^+(j),d_{R_i}^-(j)\geq (\frac{1}{2}-2\mu)L$ for all but at most $d^{\frac{1}{4}}L$ vertices $j\in [L]$. Together with Lemma \ref{thm-single-graph}, we obtain that 
\begin{itemize}
  \item if $R_i$ is either $(\alpha^2,{\rm EC}1)$-extremal or $(\alpha^2,{\rm EC}2)$-extremal, then $R_i$ has a characteristic partition $(A_i',B_i',L_i')$ such that there are $Z,Y\in \{A,B\}$ satisfying  $e_{R_i}(Z_i',Y_i')\leq \alpha^2L^2$;
  \item if $R_i$ is $(\alpha^2,{\rm EC}3)$-extremal, then $R_i$ has a characteristic partition $({C_1^i}',{C_2^i}',{C_3^i}',{C_4^i}',L_i')$ such that $e_{R_i}(Z_i',Y_i')\leq \alpha^2L^2$ with $(Z_i',Y_i')\in\{({W_i^4}',{W_i^3}'),({W_i^2}',{W_i^1}')\}$.
\end{itemize}
Note that $|\hat{Z}_i'|,|\hat{Y}_i'|=(\frac{1}{2}-\alpha^2)Lm\geq (\frac{1}{2}-2\alpha)n$. In view of Claim \ref{claim-reduced}, we know that there is a subset $\mathcal{B}_i\subseteq \mathcal{C}_i$ with $|\mathcal{C}_i\setminus \mathcal{B}_i|\leq 2\alpha m$ such that for all $c\in \mathcal{B}_i$, we have $e_{D_c}(\hat{Z}_i',\hat{Y}_i')\leq \alpha n^2$.  It follows that $D_c$ is $2\alpha$-extremal for each $c\in \mathcal{B}_i$. 

If $R_i$ is either $(\alpha^2,{\rm EC}1)$-extremal or $(\alpha^2,{\rm EC}2)$-extremal, then each $D_c$ with $c\in \mathcal{B}_i$ admits a characteristic partition $(A_c,B_c,L_c)$, and there are $W\in \{A,B\}$ and $\{Z,Y\}=\{A,B\}$ such that $e_{D_c}(Z_c,W_c)\leq \sqrt{\alpha}n^2$ and $e_{D_c}(Y_c,W_c)\geq (\frac{1}{4}-\sqrt{\alpha})n^2$. Hence, we must have either $|\hat{Z}_i'\triangle Z_c|, |\hat{Y}_i'\triangle Y_c|\leq \epsilon n$ or $|\hat{Z}_i'\triangle Y_c|, |\hat{Y}_i'\triangle Z_c|\leq \epsilon n$. So, either $|\hat{A}_i'\triangle A_c|, |\hat{B}_i'\triangle B_c|\leq \epsilon n$ or $|\hat{A}_i'\triangle B_c|, |\hat{B}_i'\triangle A_c|\leq \epsilon n$. That is, for all $c\in \mathcal{B}_i$, the characteristic partition of $D_c$ is almost the same as the union of clusters corresponding to the characteristic partition of $R_i$.

If $R_i$ is $(\alpha^2,{\rm EC}3)$-extremal, then each $D_c$ admits a characteristic partition $(C_c^1, C_c^2, C_c^3, C_c^4, L_c)$ for $c\in \mathcal{B}_i$, and it satisfies $e_{D_c}(W_c^2,W_c^1),e_{D_c}(W_c^4,W_c^3)\leq \sqrt{\alpha}n^2$, $e_{D_c}(C_c^1),e_{D_c}(C_c^3)\geq |C_c^1|^2-\sqrt{\alpha}n^2$ and $e_{D_c}(C_c^2, C_c^4),\,e_{D_c}(C_c^4, C_c^2)\geq |C_c^2|^2-\sqrt{\alpha}n^2$. Therefore,  either $|\hat{Z}_i'\triangle W_c^2|, |\hat{Y}_i'\triangle W_c^1|\leq \epsilon n$ or $|\hat{Z}_i'\triangle W_c^4|, |\hat{Y}_i'\triangle W_c^3|\leq \epsilon n$. By relabeling, one may assume that $|{\hat{W_i^4}'}\triangle W_c^4|, |{\hat{W_i^3}'}\triangle W_c^3|\leq \epsilon n$ for each $c\in \mathcal{B}_i$. That is, for all $c\in \mathcal{B}_i$, the characteristic partition of $D_c$ is almost the same as the union of clusters corresponding to the characteristic partition of $R_i$.

Choose $c\in \mathcal{B}_i, c'\in \mathcal{B}_j$ for some $i,j\in [L]$. Suppose that $D_c$ and $D_{c'}$ are $\delta$-crossing.  We must have $i\neq j$. We consider the following possible cases. 

\medskip
$\bullet$ Suppose that $R_i$ and $R_j$ are  either $(\alpha^2,{\rm EC}1)$-extremal or $(\alpha^2,{\rm EC}2)$-extremal. 
If $|\hat{A}_i'\triangle A_c|\leq \epsilon n$ and $|\hat{A}_j'\triangle A_{c'}|\leq \epsilon n$, then 
$$
|A_i'\triangle A_j'|m=|\hat{A}_i'\triangle \hat{A}_j'|\geq |A_c\triangle A_{c'}|-|\hat{A}_i'\triangle A_c|-|\hat{A}_j'\triangle A_{c'}|\geq \delta n-2\epsilon n\geq \frac{\delta n}{2}\geq \frac{\delta mL}{2}.
$$
The other cases are almost identical. Thus $R_i$ and $R_j$ are $\frac{\delta}{2}$-crossing.
\medskip

$\bullet$ 
Suppose that $R_i$ and $R_j$ are $(\alpha^2,{\rm EC}3)$-extremal. Recall that $|{\hat{W_i^4}'}\triangle W_c^4|, |{\hat{W_i^3}'}\triangle W_c^3|\leq \epsilon n$. Then $|\hat{W_i^1}'\triangle W_c^1|\leq 10\epsilon n$ and $|\hat{W_i^2}'\triangle W_c^2|\leq 10\epsilon n$. Since $D_c$ and $D_{c'}$ are $\delta$-crossing, one may assume, without loss of generality, that $|W_c^1\triangle W_{c'}^1|\geq \delta n$ and $|W_c^1\triangle W_{c'}^3|\geq \delta n$. Hence 
$$
|{W_i^1}'\triangle {W_j^1}'|m=|{\hat{W_i^1}'}\triangle {\hat{W_j^1}'}|\geq |W_c^1\triangle W_{c'}^1|-|{\hat{W_i^1}'}\triangle W_c^1|-|{\hat{W_j^1}'}\triangle W_{c'}^1|\geq \delta n-21\epsilon n\geq \frac{\delta n}{2}\geq \frac{\delta mL}{2}.
$$
and similarly, $|{W_i^1}'\triangle {W_j^3}'|\geq \frac{\delta L}{2}$. Thus $R_i$ and $R_j$ are $\frac{\delta}{2}$-crossing.


\medskip

$\bullet$ Suppose that $R_i$ is either $(\alpha^2,{\rm EC}1)$-extremal or $(\alpha^2,{\rm EC}2)$-extremal, and $R_j$ is $(\alpha^2,{\rm EC}3)$-extremal. If $|\hat{A}_i'\triangle A_c|\leq \epsilon n$ and $|\hat{{W_j^3}'}\triangle W_c^3|\leq \epsilon n$, then 
$$
|A_i'\triangle {W_j^3}'|m= |\hat{A}_i'\triangle \hat{{W_j^3}'}|\geq |A_c\triangle W_{c'}^3|-|\hat{A}_i'\triangle A_c|-|\hat{{W_j^3}'}\triangle W_{c'}^3|\geq \delta n-2\epsilon n\geq \frac{\delta n}{2}\geq \frac{\delta mL}{2}.
$$ 
Similarly, $|B_i'\triangle {W_j^3}'|\geq \frac{\delta L}{2}$. 
The other cases are almost identical. 
Thus $R_i$ and $R_j$ are $\frac{\delta}{2}$-crossing. 
\medskip

Since $\mathcal{D}$ is $(\epsilon,\delta)$-weakly stable, we have $e(C_{\mathcal{D},k}^{\epsilon,\delta})\geq \delta n^2$ for some $k\in [3]$. The number of pairs $i,j\in [L]$ such that $R_i$ and $R_j$ are $\frac{\delta}{2}$-crossing is at least $\frac{\delta n^2}{m^2}\geq \frac{\delta L^2}{2}$. 
Thus $e(C_{\mathcal{R},k}^{\epsilon,\frac{\delta}{2}})\geq \frac{\delta L^2}{2}$, and hence $\mathcal{R}$ is $(\epsilon,\frac{\delta}{2})$-weakly stable.
\end{proof}

{{For a digraph collection $\mathcal{D}$  that is either strongly stable or weakly stable, Lemma~\ref{strongly} and Lemma~\ref{weakly} guarantee the existence of a directed absorbing cycle in 
$\mathcal{D}$. We shall then employ this absorbing property to prove that $\mathcal{D}$ contains a transversal directed Hamilton cycle.}}

\begin{lemma}\label{stable}
Let $0<\frac{1}{n}\ll \mu\ll \alpha\ll \gamma,8\epsilon^{1/2}\leq \delta\leq \epsilon^{1/3}\ll1$. Assume that $\mathcal{D}=\{D_1,\ldots,D_n\}$ is a collection of digraphs on a common vertex set $V$ of size $n$ with $\delta^0(\mathcal{D})\geq \left(\frac{1}{2}-\mu\right)n$. If $\mathcal{D}$ is $(\gamma,\alpha,\epsilon,\delta)$-stable, then $\mathcal{D}$ contains a transversal directed Hamilton cycle.
\end{lemma}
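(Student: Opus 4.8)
\textbf{Proof proposal for Lemma~\ref{stable}.}
The plan is to realise the three-step regularity--absorption strategy outlined in the introduction. First I would apply Lemma~\ref{strongly} (if $\mathcal{D}$ is $(\gamma,\alpha)$-strongly stable) or Lemma~\ref{weakly} (if $\mathcal{D}$ is $(\epsilon,\delta)$-weakly stable) with a tiny parameter $\lambda$ to obtain a directed absorbing cycle $C$ with parameters of the form $(\delta^*,\sqrt{\epsilon},\lambda,\lambda^2)$, where $\delta^*\in\{1,\tfrac{\delta}{3}\}$; then supplement $C$ with an auxiliary tuple set $\mathcal{Q}$ built greedily from the minimum semi-degree condition, designed to handle the colours and vertices that $C$ is not guaranteed to absorb (precisely, colours $c$ for which $v$ fails to be $D_c$-good, or vertices outside the ``good'' set of $\overline{T_i}$). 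Delete all vertices and colours used by $C$ and $\mathcal{Q}$; what remains is a digraph collection $\mathcal{D}_1$ on $n_1=(1-o(1))n$ vertices and colours, still with minimum semi-degree $\ge(\tfrac12-2\mu)n_1$, and — by a short inheritance argument — still $(\gamma',\alpha',\epsilon,\delta')$-stable for slightly worse constants (this uses that $C,\mathcal{Q}$ are small, of size $O(\lambda n)$).

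Next I would run the regularity machinery on $\mathcal{D}_1$: apply Lemma~\ref{regularity-lemma} to obtain the reduced digraph collection $\mathcal{R}$, which by Lemma~\ref{degree-inheritance} has $\delta^0(\mathcal{R})\ge(\tfrac12-2\mu)L$ on almost all vertices/colours, and by Lemma~\ref{lem-reduced-stable} is $(\tfrac{\gamma}{2},\alpha^2,\epsilon,\tfrac{\delta}{2})$-stable. Passing to the characteristic bipartite graph collection $\mathcal{B}_{\mathcal{R}}$, the stability and degree conditions transfer, so Theorem~\ref{stable-matching} yields a transversal perfect matching in $\mathcal{B}_{\mathcal{R}}$; equivalently, $\mathcal{R}$ has a spanning set of vertex- and colour-disjoint rainbow directed cycles $\mathcal{O}=\{O_1,\dots,O_r\}$ partitioning $[L]$ (after discarding the $o(L)$ exceptional clusters/colours into $V_0,\mathcal{C}_0$). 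Within each directed cycle $O_s$ of $\mathcal{R}$, the clusters form a closed walk of $(\epsilon_0,d)$-regular pairs with a compatible colour assignment; I would apply the transversal blow-up lemma (from \cite{cheng2023transversals}, invoked here as a black box) to embed, in each $O_s$, a family of vertex- and colour-disjoint rainbow directed paths that together cover all but $o(n)$ of the vertices and colours of $\mathcal{D}_1$. Handle $V_0$ and leftover colours separately: greedily absorb them into short rainbow paths using the minimum semi-degree condition, so that after Step~2 we are left with a collection $\mathcal{P}$ of $O(1/\epsilon_0)$ rainbow directed paths whose union misses only a set of $\le\sqrt{\epsilon}n$ vertices and colours.

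Finally, in Step~3 I would close everything up. Using the minimum semi-degree bound I would connect the paths in $\mathcal{P}$ and the leftover material into a single rainbow directed cycle that passes through every vertex of $C$ and uses every colour of $C$: first stitch the $O(1/\epsilon_0)$ paths of $\mathcal{P}$ together end-to-end with fresh connecting edges/colours (routine since only boundedly many connections are needed and each degree is $\ge(\tfrac12-o(1))n$), splice in the auxiliary absorber $\mathcal{Q}$ to swallow the colours/vertices outside the scope of $C$, and then insert $C$ itself into the cycle. At this point the uncovered set $A_0$ of colour-vertex-vertex triples is small compared with $|C|$, so the absorbing property of $C$ (Definition~\ref{def-abaosbing}) lets $C$ absorb all of $A_0$, producing a transversal directed Hamilton cycle of $\mathcal{D}$.

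The main obstacle I expect is Step~2: one must make the bipartite-matching reformulation genuinely produce a \emph{cycle} partition of the reduced collection that is simultaneously compatible with the blow-up lemma (so that the colour classes on each regular pair are large enough and the ``super-regular'' restriction can be arranged by moving a few vertices), and one must control the exceptional sets $V_0,\mathcal{C}_0$ and the vertices deleted by $C,\mathcal{Q}$ so that balancedness is preserved along every directed cycle $O_s$ — any cluster whose colour budget or vertex count is off by more than $o(m)$ will break the embedding. Verifying that the stability hypothesis survives both the deletion of $C\cup\mathcal{Q}$ and the reduction (already done in Lemma~\ref{lem-reduced-stable}, but needing the bipartite analogue for $\mathcal{B}_{\mathcal{R}}$) is the other delicate point; everything else is the standard, if lengthy, bookkeeping of the regularity--blow-up--absorption framework.
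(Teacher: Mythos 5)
Your overall strategy — absorbing cycle plus auxiliary tuple set $\mathcal{Q}$, then regularity on the remainder, stability inheritance, transversal perfect matching in $\mathcal{B}_{\mathcal{R}}$, embed almost-spanning rainbow paths, close up with the absorber — is exactly the paper's strategy, and the ingredients you invoke (Lemmas~\ref{strongly}, \ref{weakly}, \ref{regularity-lemma}, \ref{degree-inheritance}, \ref{lem-reduced-stable}, Theorem~\ref{stable-matching}, Lemma~\ref{LEMMA:directed-k-graph-matching}) are the ones the paper uses.

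There is, however, one substantive methodological divergence, and it is precisely at the point you flag as the ``main obstacle.'' You propose to treat each cycle $O_s$ of the reduced cover with the transversal blow-up lemma, which forces you to engineer super-regularity and exact balance along every $O_s$ and to worry about the exceptional clusters $V_0,\mathcal{C}_0$ throwing off the cluster sizes. The paper sidesteps this entirely: Claim~\ref{embedding} is a short greedy embedding that uses only $(\epsilon_0,d)$-regularity of each pair along the cycle to build a rainbow directed path covering all but a $\sqrt{\epsilon_0}$-fraction of each cluster, with no super-regularity, no balancing, and no invocation of the transversal blow-up lemma at all (that lemma is reserved for the extremal case, Claim~\ref{lemma4.1}). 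Since the absorber is already set up to swallow $\Theta(\sqrt{\epsilon_0}n)$ leftover vertices and colours, there is no need for a spanning embedding, and the greedy argument is both lighter and correct by design. Your plan would also work once the super-regularity issues were resolved, but it trades a two-paragraph greedy argument for a heavier black box and the bookkeeping you rightly identify as delicate. One small slip in your write-up: after deleting $C$ and $\mathcal{Q}$, the surviving semi-degree drops to roughly $(\tfrac12-3\lambda)n$, not $(\tfrac12-2\mu)n_1$, since $\mu\ll\lambda$ in the hierarchy; this does not break anything, but the bound you state is unjustifiably strong. Also note that the missed material after Step~2 is of order $\sqrt{\epsilon_0}\,n$ \emph{paths} (each stray vertex counts as a length-zero path), so the ``boundedly many connections'' phrase only applies to the non-trivial paths; the bulk of the stitching is done through $\mathcal{Q}$, exactly as in the paper's final argument.
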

\begin{proof}
Choose additional parameters $n_0, L_0,\epsilon_0,d,\beta,\lambda$, where $n_0=n_0(\epsilon,1,L_0)$ is obtained from Lemma \ref{regularity-lemma}, 
such that 
$$
  1<\frac{1}{n}\ll \frac{1}{n_0}\ll \frac{1}{L_0}\ll \epsilon_0\ll d\ll \mu\ll \beta\ll\lambda\ll \alpha\ll \gamma,8\epsilon^{1/2}\leq \delta\leq \epsilon^{1/3}\ll1. 
$$

Since $\mathcal{D}$ is $(\gamma,\alpha,\epsilon,\delta)$-stable, in view of Lemma \ref{strongly} and Lemma \ref{weakly}, we know that $\mathcal{D}$ has a directed  absorbing  cycle $C$ with parameters $(\frac{\delta}{3},\sqrt{\epsilon},\lambda,\lambda^2)$. Fix any two colors $c,c'\in [n]$, two vertices $x,y\in V$ and ${\rm{K\in \{I,II\}}}$, we say the triple $(c,x,y)$ is absorbable if there are at least $\lambda^2n$ disjoint Type-K directed $c$-absorbing paths of $(x,y)$ inside $C$; we say the pair $(c,x)$ is absorbable if there are at least $\lambda^2n$ disjoint Type-K directed $c$-absorbing paths of $(x,x)$ inside $C$; we say $(c,c',x,y)$ is totally absorbable if $(c,x),\ (c',y)$ and $(c,x,y)$ are all absorbable.  

By Definition \ref{def-abaosbing}, $C$ has length at most $\lambda n$, and there exists a color set $\mathcal{C}\subseteq [n]\setminus  {\rm col}(C)$ of
size at least $\frac{\delta n}{3}$ such that 
\begin{enumerate}
  \item[{\rm (a)}] given any color $c\in \mathcal{C}$ and any $D_c$-good vertex $v$, the triple $(c,v,u)$ is absorbable for all but at most $\sqrt{\epsilon}n$ vertices $u$,
  \item[{\rm (b)}] given any color $c\in \mathcal{C}$, for all but at most $\sqrt{\epsilon}n$ $D_c$-good vertices $v$, the pair $(c,v)$ is absorbable.
\end{enumerate}

\begin{claim}\label{claim:totally-absorbable}
  There is an integer $r\in[(2-2^{-10})\beta n,2\beta n]$ and for each $i\in [r]$, disjoint vertex pairs $(v_i,v_i')\in V^2$ and disjoint color pairs $(c_i,c_i')\in \mathcal{C}^2$ such that the family $\mathcal{Q}:=\{(c_i,c_i',v_i,v_i'):i\in [r]\}$ has the following properties:
  \begin{enumerate}
    \item[{\rm (i)}] $(c_i,c_i',v_i,v_i')$ is totally absorbable for all $i\in [r]$,
    \item[{\rm (ii)}] for every pair $(u_1,u_2)\in V^2$ and $c\in [n]$, there are at least $2^{-9}\beta n$ values $i\in [r]$ such that $c\in L({v_iu_1})$ and $c_i'\in L({u_2v_i'})$ (see Figure \ref{figabs}).
  \end{enumerate}
\end{claim}



\begin{figure}[ht!]
\centering

\tikzset{every picture/.style={line width=0.75pt}} 

\begin{tikzpicture}[x=0.75pt,y=0.75pt,yscale=-1,xscale=1]

\draw [color={rgb, 255:red, 248; green, 231; blue, 28 }  ,draw opacity=1 ]   (406,167.5) -- (494,171.5) ;
\draw [shift={(443.01,169.18)}, rotate = 2.6] [color={rgb, 255:red, 248; green, 231; blue, 28 }  ,draw opacity=1 ][line width=0.75]    (10.93,-3.29) .. controls (6.95,-1.4) and (3.31,-0.3) .. (0,0) .. controls (3.31,0.3) and (6.95,1.4) .. (10.93,3.29)   ;
\draw [color={rgb, 255:red, 155; green, 155; blue, 155 }  ,draw opacity=1 ]   (494,70.5) -- (406,73.5) ;
\draw [shift={(457,71.76)}, rotate = 178.05] [color={rgb, 255:red, 155; green, 155; blue, 155 }  ,draw opacity=1 ][line width=0.75]    (10.93,-3.29) .. controls (6.95,-1.4) and (3.31,-0.3) .. (0,0) .. controls (3.31,0.3) and (6.95,1.4) .. (10.93,3.29)   ;
\draw [color={rgb, 255:red, 208; green, 2; blue, 27 }  ,draw opacity=1 ]   (339.5,93) .. controls (332.11,77.75) and (314.61,65.75) .. (297.75,62) ;
\draw [shift={(327.29,77.3)}, rotate = 219.05] [color={rgb, 255:red, 208; green, 2; blue, 27 }  ,draw opacity=1 ][line width=0.75]    (10.93,-3.29) .. controls (6.95,-1.4) and (3.31,-0.3) .. (0,0) .. controls (3.31,0.3) and (6.95,1.4) .. (10.93,3.29)   ;
\draw [color={rgb, 255:red, 144; green, 19; blue, 254 }  ,draw opacity=1 ]   (342,147.25) -- (405,166.5) ;
\draw [shift={(366.81,154.83)}, rotate = 16.99] [color={rgb, 255:red, 144; green, 19; blue, 254 }  ,draw opacity=1 ][line width=0.75]    (10.93,-3.29) .. controls (6.95,-1.4) and (3.31,-0.3) .. (0,0) .. controls (3.31,0.3) and (6.95,1.4) .. (10.93,3.29)   ;
\draw [color={rgb, 255:red, 245; green, 166; blue, 35 }  ,draw opacity=1 ]   (305.5,181) .. controls (331.61,171.75) and (336.61,158.25) .. (342,147.25) ;
\draw [shift={(322.46,172.71)}, rotate = 322.59] [color={rgb, 255:red, 245; green, 166; blue, 35 }  ,draw opacity=1 ][line width=0.75]    (10.93,-3.29) .. controls (6.95,-1.4) and (3.31,-0.3) .. (0,0) .. controls (3.31,0.3) and (6.95,1.4) .. (10.93,3.29)   ;
\draw [color={rgb, 255:red, 144; green, 19; blue, 254 }  ,draw opacity=1 ]   (342,147.25) .. controls (350.11,130.75) and (349.61,105.75) .. (339.5,93) ;
\draw [shift={(347.42,126.5)}, rotate = 272.01] [color={rgb, 255:red, 144; green, 19; blue, 254 }  ,draw opacity=1 ][line width=0.75]    (10.93,-3.29) .. controls (6.95,-1.4) and (3.31,-0.3) .. (0,0) .. controls (3.31,0.3) and (6.95,1.4) .. (10.93,3.29)   ;
\draw  [fill={rgb, 255:red, 0; green, 0; blue, 0 }  ,fill opacity=1 ] (336.5,93) .. controls (336.5,91.34) and (337.84,90) .. (339.5,90) .. controls (341.16,90) and (342.5,91.34) .. (342.5,93) .. controls (342.5,94.66) and (341.16,96) .. (339.5,96) .. controls (337.84,96) and (336.5,94.66) .. (336.5,93) -- cycle ;
\draw  [fill={rgb, 255:red, 0; green, 0; blue, 0 }  ,fill opacity=1 ] (339,147.25) .. controls (339,145.59) and (340.34,144.25) .. (342,144.25) .. controls (343.66,144.25) and (345,145.59) .. (345,147.25) .. controls (345,148.91) and (343.66,150.25) .. (342,150.25) .. controls (340.34,150.25) and (339,148.91) .. (339,147.25) -- cycle ;
\draw  [fill={rgb, 255:red, 0; green, 0; blue, 0 }  ,fill opacity=1 ] (294.75,62) .. controls (294.75,60.34) and (296.09,59) .. (297.75,59) .. controls (299.41,59) and (300.75,60.34) .. (300.75,62) .. controls (300.75,63.66) and (299.41,65) .. (297.75,65) .. controls (296.09,65) and (294.75,63.66) .. (294.75,62) -- cycle ;
\draw  [fill={rgb, 255:red, 0; green, 0; blue, 0 }  ,fill opacity=1 ] (302.5,181) .. controls (302.5,179.34) and (303.84,178) .. (305.5,178) .. controls (307.16,178) and (308.5,179.34) .. (308.5,181) .. controls (308.5,182.66) and (307.16,184) .. (305.5,184) .. controls (303.84,184) and (302.5,182.66) .. (302.5,181) -- cycle ;
\draw  [dash pattern={on 4.5pt off 4.5pt}]  (302.5,181) .. controls (195.7,198.41) and (196.37,50.41) .. (297.75,62) ;
\draw [shift={(222.25,117.67)}, rotate = 91.8] [color={rgb, 255:red, 0; green, 0; blue, 0 }  ][line width=0.75]    (10.93,-4.9) .. controls (6.95,-2.3) and (3.31,-0.67) .. (0,0) .. controls (3.31,0.67) and (6.95,2.3) .. (10.93,4.9)   ;
\draw  [fill={rgb, 255:red, 0; green, 0; blue, 0 }  ,fill opacity=1 ] (403,167.5) .. controls (403,165.84) and (404.34,164.5) .. (406,164.5) .. controls (407.66,164.5) and (409,165.84) .. (409,167.5) .. controls (409,169.16) and (407.66,170.5) .. (406,170.5) .. controls (404.34,170.5) and (403,169.16) .. (403,167.5) -- cycle ;
\draw  [fill={rgb, 255:red, 0; green, 0; blue, 0 }  ,fill opacity=1 ] (403,73.5) .. controls (403,71.84) and (404.34,70.5) .. (406,70.5) .. controls (407.66,70.5) and (409,71.84) .. (409,73.5) .. controls (409,75.16) and (407.66,76.5) .. (406,76.5) .. controls (404.34,76.5) and (403,75.16) .. (403,73.5) -- cycle ;
\draw  [fill={rgb, 255:red, 0; green, 0; blue, 0 }  ,fill opacity=1 ] (491,171.5) .. controls (491,169.84) and (492.34,168.5) .. (494,168.5) .. controls (495.66,168.5) and (497,169.84) .. (497,171.5) .. controls (497,173.16) and (495.66,174.5) .. (494,174.5) .. controls (492.34,174.5) and (491,173.16) .. (491,171.5) -- cycle ;
\draw [color={rgb, 255:red, 126; green, 211; blue, 33 }  ,draw opacity=1 ]   (403,73.5) -- (342.5,92) ;
\draw [shift={(379.44,80.7)}, rotate = 163] [color={rgb, 255:red, 126; green, 211; blue, 33 }  ,draw opacity=1 ][line width=0.75]    (10.93,-3.29) .. controls (6.95,-1.4) and (3.31,-0.3) .. (0,0) .. controls (3.31,0.3) and (6.95,1.4) .. (10.93,3.29)   ;
\draw  [fill={rgb, 255:red, 0; green, 0; blue, 0 }  ,fill opacity=1 ] (491,70.5) .. controls (491,68.84) and (492.34,67.5) .. (494,67.5) .. controls (495.66,67.5) and (497,68.84) .. (497,70.5) .. controls (497,72.16) and (495.66,73.5) .. (494,73.5) .. controls (492.34,73.5) and (491,72.16) .. (491,70.5) -- cycle ;
\draw  [dash pattern={on 4.5pt off 4.5pt}]  (494,70.5) .. controls (447.11,122.75) and (540.11,112.25) .. (494,171.5) ;
\draw [shift={(499.79,124.67)}, rotate = 226.55] [color={rgb, 255:red, 0; green, 0; blue, 0 }  ][line width=0.75]    (10.93,-4.9) .. controls (6.95,-2.3) and (3.31,-0.67) .. (0,0) .. controls (3.31,0.67) and (6.95,2.3) .. (10.93,4.9)   ;
\draw  [fill={rgb, 255:red, 0; green, 0; blue, 0 }  ,fill opacity=1 ] (157.75,58) .. controls (157.75,56.34) and (159.09,55) .. (160.75,55) .. controls (162.41,55) and (163.75,56.34) .. (163.75,58) .. controls (163.75,59.66) and (162.41,61) .. (160.75,61) .. controls (159.09,61) and (157.75,59.66) .. (157.75,58) -- cycle ;
\draw  [fill={rgb, 255:red, 0; green, 0; blue, 0 }  ,fill opacity=1 ] (157.75,87) .. controls (157.75,85.34) and (159.09,84) .. (160.75,84) .. controls (162.41,84) and (163.75,85.34) .. (163.75,87) .. controls (163.75,88.66) and (162.41,90) .. (160.75,90) .. controls (159.09,90) and (157.75,88.66) .. (157.75,87) -- cycle ;
\draw  [fill={rgb, 255:red, 0; green, 0; blue, 0 }  ,fill opacity=1 ] (157.75,153) .. controls (157.75,151.34) and (159.09,150) .. (160.75,150) .. controls (162.41,150) and (163.75,151.34) .. (163.75,153) .. controls (163.75,154.66) and (162.41,156) .. (160.75,156) .. controls (159.09,156) and (157.75,154.66) .. (157.75,153) -- cycle ;
\draw  [fill={rgb, 255:red, 0; green, 0; blue, 0 }  ,fill opacity=1 ] (157.75,183) .. controls (157.75,181.34) and (159.09,180) .. (160.75,180) .. controls (162.41,180) and (163.75,181.34) .. (163.75,183) .. controls (163.75,184.66) and (162.41,186) .. (160.75,186) .. controls (159.09,186) and (157.75,184.66) .. (157.75,183) -- cycle ;
\draw  [fill={rgb, 255:red, 0; green, 0; blue, 0 }  ,fill opacity=1 ] (159,106.5) .. controls (159,105.67) and (159.67,105) .. (160.5,105) .. controls (161.33,105) and (162,105.67) .. (162,106.5) .. controls (162,107.33) and (161.33,108) .. (160.5,108) .. controls (159.67,108) and (159,107.33) .. (159,106.5) -- cycle ;
\draw  [fill={rgb, 255:red, 0; green, 0; blue, 0 }  ,fill opacity=1 ] (159,121.5) .. controls (159,120.67) and (159.67,120) .. (160.5,120) .. controls (161.33,120) and (162,120.67) .. (162,121.5) .. controls (162,122.33) and (161.33,123) .. (160.5,123) .. controls (159.67,123) and (159,122.33) .. (159,121.5) -- cycle ;
\draw  [fill={rgb, 255:red, 0; green, 0; blue, 0 }  ,fill opacity=1 ] (159,134.5) .. controls (159,133.67) and (159.67,133) .. (160.5,133) .. controls (161.33,133) and (162,133.67) .. (162,134.5) .. controls (162,135.33) and (161.33,136) .. (160.5,136) .. controls (159.67,136) and (159,135.33) .. (159,134.5) -- cycle ;

\draw (352,63) node [anchor=north west][inner sep=0.75pt]   [align=left] {$c_i$};
\draw (395,49) node [anchor=north west][inner sep=0.75pt]   [align=left] {$v_i$};
\draw (445,52) node [anchor=north west][inner sep=0.75pt]   [align=left] {$c$};
\draw (491,48) node [anchor=north west][inner sep=0.75pt]   [align=left] {$u_1$};
\draw (491,182) node [anchor=north west][inner sep=0.75pt]   [align=left] {$u_2$};
\draw (395,183) node [anchor=north west][inner sep=0.75pt]   [align=left] {$v_i'$};
\draw (445,182) node [anchor=north west][inner sep=0.75pt]   [align=left] {$c_i'$};
\draw (135,50) node [anchor=north west][inner sep=0.75pt]   [align=left] {$v_1$};
\draw (135,79) node [anchor=north west][inner sep=0.75pt]   [align=left] {$v_1'$};
\draw (135,147) node [anchor=north west][inner sep=0.75pt]   [align=left] {$v_r$};
\draw (135,180) node [anchor=north west][inner sep=0.75pt]   [align=left] {$v_r'$};
\draw (274.78,116) node [anchor=north west][inner sep=0.75pt]   [align=left] {$C$};

\end{tikzpicture}
\caption{Absorbing.}\label{figabs}
\end{figure}
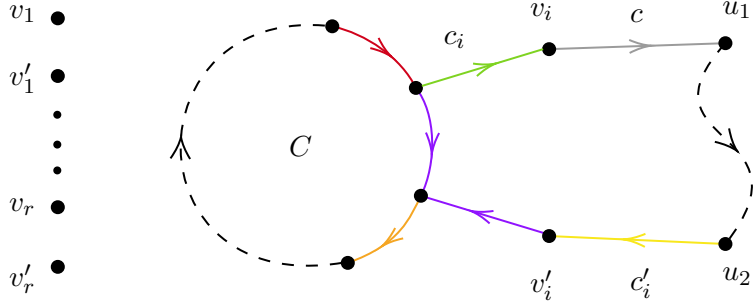

\begin{claimproof}[Proof of Claim \ref{claim:totally-absorbable}]
  For every color pair $(b_1,b_2)\in \mathcal{C}^2$, every vertex pair $(u_1,u_2)\in V^2$ and color $c\in \mathcal{C}$, let $S(b_1,b_2,u_1,u_2,c)$ be
the collection of pairs $(v_1,v_2)\in V^2$ such that $c\in L({v_1u_1})$, $b_2\in L({u_2v_2})$ and $(b_1,b_2,v_1,v_2)$ is totally
absorbable. We will prove that $|S(b_1,b_2,u_1,u_2,c)|\geq 2^{-4}n^2$.

For this, we first count the number of choices for $v_1$. Let $N_1:=\{x\in N_{{c}}^-(u_1):x\ \text{is}\ D_{b_1}\text{-good}\}$. Hence, $|N_1|\geq \left(\frac{1}{2}-\mu\right)n-2\epsilon n$. By (b), we know that all but at most $\sqrt{\epsilon}n$ vertices $v_1\in N_1$ satisfying that $(b_1,v_1)$ is absorbable. Hence there are at least $\frac{n}{4}$ choices of $v_1$ such that $v_1\in N_1$ and $(b_1,v_1)$ is absorbable. 


Now fix $v_1$. Let $N_2:=\{x\in N_{{b_2}}^+(u_2):x\ \text{is}\ D_{b_2}\text{-good}\}$. Hence, $|N_2|\geq \left(\frac{1}{2}-\mu\right)n-2\epsilon n$. 
By (a) and (b), we know that all but at most $2\sqrt{\epsilon}n+1$ vertices $v_2\in N_2$ 
such that $v_2\neq v_1$ and $(b_1,b_2,v_1,v_2)$ is totally absorbable. Therefore, $|S(b_1,b_2,u_1,u_2,c)|\geq 2^{-4}n^2$, as desired. 

Let $\{c_i,c_i':i\in [2\beta n]\}\subseteq \mathcal{C}$ be a collection of distinct colors. For each $i\in [2\beta n]$, let $\mathcal{F}_i$ be the directed 2-graph on $V$ with 
edge set $\{(v_1,v_2)\in V^2:(c_i,c_i',v_1,v_2)\ \text{is totally absorbable}\}$, and define $\mathbf{H}:=\{\mathcal{F}_i:i\in [2\beta n]\}$.  Define the collection of directed multi-graphs:
$$
  \mathbf{Z}:=\left\{S(u_1,u_2,c):=\bigcup_{i\in [2\beta n]}S(c_i,c_i',u_1,u_2,c):(u_1,u_2)\in V^2,\ c\in [n]\right\}.
$$
For every $S:=S(u_1, u_2, c)\in \mathbf{Z}$ and every $i\in [2\beta n]$, we have $|E(S)\cap E(\mathcal{F}_i)|\geq 2^{-4}n^2$. Applying Lemma \ref{LEMMA:directed-k-graph-matching} with $t:=2\beta n$ and $\epsilon:=2^{-4}$, we obtain that there is a rainbow matching $M$ of $\mathbf{H}$ with size at least $(2-2^{-10})\beta n$ (and at most $2\beta n$) and $|E(S)\cap E(M)|\geq 2^{-9}\beta n$ for all $S\in \mathbf{Z}$, as desired.
\end{claimproof}

Let 
\begin{align*}
  &V_{\rm abs}:=\bigcup_{i\in [r]}\{v_i,v_i'\},\ \ U:=V\setminus (V(C)\cup V_{\rm abs}),\ \ 
  \mathcal{C}_{\rm abs}:=\bigcup_{i\in [r]}\{c_i,c_i'\},\\
 &\mathcal{C}_{\rm rem}=[n]\setminus ({\rm col}(C)\cup \mathcal{C}_{\rm abs}),\ \ \mathcal{J}:=\{J_i:i\in \mathcal{C}_{\rm rem}\}\ \ \text{where} \ \ J_i=D_i[U].
\end{align*}
Thus, $\big|[n]\setminus \mathcal{C}_{\rm rem}\big|=|V\setminus U|\leq \lambda n+4\beta n<2\lambda n$ since $\beta\ll \lambda$, and $\delta^0(\mathcal{J})\geq (\frac{1}{2}-\mu-2\lambda)n\geq (\frac{1}{2}-3\lambda)n$.

Since $\lambda\ll \gamma,\alpha,\epsilon,\delta$ and $\mathcal{D}$ is $(\gamma,\alpha,\epsilon,\delta)$-stable, it is easy to see that $\mathcal{J}$ is $(\frac{\gamma}{2},\frac{\alpha}{2},2\epsilon,\frac{\delta}{2})$-stable. Applying Lemma \ref{regularity-lemma} to $\mathcal{J}$ with parameters $(\epsilon_0, 1, d, L_0)$, we obtain a reduced digraph collection $\mathcal{R}$. Denote by $[L]$ the common vertex set of the digraphs in $\mathcal{R}$, where $L_0 \le L \le n_0$, and by $[M]$ the set of color clusters. Thus there is a partition $V_0,V_1,\ldots,V_L$ of $U$ and $\mathcal{C}_0,\mathcal{C}_1,\ldots,\mathcal{C}_M$ of $\mathcal{C}_{\rm rem}$ and a digraph collection $\mathcal{J}'$ satisfying  (i)-(v) of Lemma \ref{regularity-lemma}. Therefore, for $\{(h,i),j\}\in {[L]\choose 2}\times [M]$, we have that  ${hi}\in R_j$ if and only if $\mathcal{J}_{hi,j}':=\{J_c'[V_h,V_i]:c\in \mathcal{C}_j\}$ is $(\epsilon_0,d)$-regular. Furthermore, $Lm\leq |U|\leq Mm+\epsilon_0n$ and $Mm\leq |\mathcal{C}_{\rm rem}|\leq Lm+\epsilon_0n$, so $|L-M|\leq \frac{\epsilon_0n}{m}\leq \frac{\epsilon_0L}{1-\epsilon_0}$. Thus, we may assume that $M=L$ at the expense of assuming the slightly worse bound $|V_0|+|\mathcal{C}_0|\leq 3\epsilon_0n$. Based on Lemma \ref{degree-inheritance}, we obtain that, for each vertex $i\in [L]$, there are at least $(1-d^{1/4})M$ colors $j\in [M]$ such that $d_{R_j}^+(i),d_{R_j}^-(i)\geq (\frac{1}{2}-4\lambda)L$; for each color $j\in [M]$, there are at least $(1-d^{1/4})L$ vertices $i\in [L]$ such that $d_{R_j}^+(i),d_{R_j}^-(i)\geq (\frac{1}{2}-4\lambda)L$. 

It follows from Lemma \ref{lem-reduced-stable} that $\mathcal{R}$ is $(\frac{\gamma}{4},\frac{\alpha^2}{4},2\epsilon,\frac{\delta}{4})$-stable. It is routine to check that the bipartite graph collection corresponding to $\mathcal{R}$, say $\mathcal{B_R}$, is $(\frac{\gamma}{4},\frac{\alpha^2}{4},2\epsilon,\frac{\delta}{4})$-stable. 
Together with Theorem \ref{stable-matching}, there is a  transversal perfect matching inside $\mathcal{B_R}$. Therefore,  there exists a set of  disjoint rainbow directed cycles inside $\mathcal{R}$, say $C^1,C^2,\ldots,C^t$, such that $|V(C^1)|+\cdots+|V(C^t)|=L$. For each rainbow directed cycle $C^i$ in $\mathcal{R}$, by using colors in $\bigcup_{j\in {\rm col}(C^i)}\mathcal{C}_j$, we are to find a rainbow  directed path $P^i$ in $\mathcal{J}'$ with length at least $(1-\sqrt{\epsilon_0})|C^i|m$.

\begin{claim}\label{embedding}
  Let $C=v_1v_2\ldots v_sv_1$ be a rainbow directed cycle in $\mathcal{R}$ with $v_{s+1}:=v_1$ and  $v_iv_{i+1}\in E(R_i)$ for each $i\in [s]$. Then $\{J_i'[V_1,V_2,\cdots,V_s]:i\in \bigcup_{j\in [s]}\mathcal{C}_j\}$ contains a rainbow directed path with order at least $(1-\sqrt{\epsilon_0})sm$.
\end{claim}
\begin{claimproof}[Proof of Claim \ref{embedding}]
  We construct a rainbow directed path greedily. Assume that we already have a rainbow directed path  $P=x_1\ldots x_i$ with $x_j\in V_{j_0}$ and $j_0\equiv j\pmod{s}$ for each $j\in [i]$. If $i\geq (1-\sqrt{\epsilon_0})sm$, then we are done. Otherwise, $|V_j\setminus V(P)|\geq \sqrt{\epsilon_0} m-1$ for each $j\in [s]$. We claim that $\max\{|N_{J'_{c}}^+(x_i)\cap (V_{i_0+1}\setminus V(P))|:c\in \mathcal{C}_{i_0}\setminus {\rm col}(P)\}\geq 2\epsilon_0 m$. If so, then choose $c\in \mathcal{C}_{i_0}\setminus {\rm col}(P)$ such that $|N_{J'_{c}}^+(x_i)\cap (V_{i_0+1}\setminus V(P))|$ is maximum and  choose $x_{i+1}\in N_{J_{c}'}^+(x_i)\cap (V_{i_0+1}\setminus V(P))$ such that $\max\{|N_{J_{c}'}^+(x_{i+1})\cap (V_{i_0+2}\setminus V(P))|:c\in \mathcal{C}_{i_0+1}\setminus {\rm col}(P)\}$ is maximum. 
  
  
  Suppose that $\max\{|N_{J_{c}'}^+(x_i)\cap (V_{i_0+1}\setminus V(P))|:c\in \mathcal{C}_{i_0}\setminus {\rm col}(P)\}<2\epsilon_0 m$. Clearly, $i\geq 2$. Let $W_{i_0}:=N_{J'_{c(x_{i-1}x_i)}}^+(x_{i-1})\cap (V_{i_0}\setminus V(P))$ and $W_{i_0+1}$ be a subset of $V_{i_0+1}\setminus V(P)$ with $|W_{i_0+1}|\geq \frac{\sqrt{\epsilon_0}}{2}m$. By the choice of each $x_{j}$ with $j\leq i$, we have $|W_{i_0}|\geq 2\epsilon_0 m$ and 
 each $y\in W_{i_0}$ satisfies $|N_{J_c'}^+(y)\cap (V_{i_0+1}\setminus V(P))|< 2\epsilon_0 m$ for each color $c\in \mathcal{C}_{i_0}\setminus {\rm col}(P)$. Then 
  $$
  \frac{\sum_{c\in \mathcal{C}_{i_0}\setminus {\rm col}(P)}e_{J'_c}(W_{i_0},W_{i_0+1})}{|\mathcal{C}_{i_0}\setminus {\rm col}(P)||W_{i_0}||W_{i_0+1}|}\leq 2\sqrt{\epsilon_0}\ll d,
  $$
  which contradicts the fact that $\{J'_i[V_{i_0},V_{i_0+1}]:i\in \mathcal{C}_{i_0}\}$ is $(\epsilon_0,d)$-regular.
\end{claimproof}

Based on Claim \ref{embedding}, we know that there exists a set of disjoint rainbow directed paths $P^i$ in $\mathcal{J}$ with order at least $(1-\sqrt{\epsilon_0})|C^i|m$ and colors from $\bigcup_{j\in {\rm col}(C^i)}\mathcal{C}_j$ for all $i\in [t]$. 
The number of vertices or colors not in any $P^i$ is at most $\sqrt{\epsilon_0} n+3\epsilon_0n\leq 2\sqrt{\epsilon_0} n$. We consider each vertex in $U$ but not in any $P^i$ to be a single path (of length 0).

Without loss of generality, assume that $P^1,P^2,\ldots,P^s$ are all the disjoint rainbow directed paths obtained in the above. Then $s\leq 2\sqrt{\epsilon_0} n+L<2^{-10}\beta n$. Let $x_i, y_i$ be the start vertex and end vertex of $P^i$ for each $i\in [s]$ (where $x_i = y_i$ if $P^i$ has length 0). Clearly, $U=\bigcup_{i\in [s]}V(P^i)$. It follows that $\big|\mathcal{C}_{\rm rem}\setminus {\rm col}(\bigcup_{i\in [s]}P^i)\big|=s$.

Next, we do the following operation for each $i\in [s]$ in turn. Let $a_i\in \mathcal{C}_{\rm rem}$ be an arbitrary unused color. Choose an unused 4-tuple $Q_i:=(c_{j_i},c_{j_i}',v_{j_i},v_{j_i}')\in \mathcal{Q}$ where $j_i\in [r]$, $a_i\in L({v_{j_i}x_i})$ and $c_{j_i}'\in L({y_iv_{j_i}'})$. This is feasible since Claim \ref{claim:totally-absorbable} implies that there are at least $2^{-9}\beta n$ choices for $Q_i$, of which at most $s\leq 2^{-10}\beta n$ have been used. Now, $(c_{j_i},c_{j_i}',v_{j_i},v_{j_i}')$ is totally absorbable, so there are at least $\lambda^2n$ disjoint rainbow directed $c_{j_i}$-absorbing path of $(v_{j_i},v_{j_i}')$ inside ${C}$. Then choose one of them, say $S_i:=x_1^ix_2^ix_3^ix_4^i$, whose vertices have not been previously chosen since $s<2^{-10}\beta n<2^{-10}\lambda^2 n$, and whose colors are $b_1^i,b_2^i,b_3^i$ in order.

Notice that there remains $I\subseteq [r]$ such that the $(c_{j},c_{j}',v_{j},v_{j}')\in \mathcal{Q}$ with $j\in I$ are precisely the $4$-tuples which were not chosen to be some $Q_i$. For each one, there are at least $\lambda^2n$ disjoint rainbow directed $c_j$-absorbing paths of $(v_j,v_j)$ and disjoint rainbow directed $c_j'$-absorbing paths of $(v_j',v_j')$ inside $C$. Since $\beta< \lambda^2$, there are two such rainbow directed paths $T_i,T_i'$ for each $(c_j,v_j)$ and $(c_j',v_j')$, which are disjoint and whose vertices have not previously been chosen.

At the end of this process, we have a collection $\{S_i:i\in [s]\},\ \{T_i:i\in I\},\ \{T_i':i\in I\}$ of disjoint rainbow directed paths inside $C$ satisfying
\begin{itemize}
  \item for each $i\in [s]$, $S_i$ is a directed $c_{j_i}$-absorbing  path of $(v_{j_i},v_{j_i}')$, 
  \item for each $i\in I$, $T_i$ is a directed $c_i$-absorbing  path of $(v_i,v_i)$ and $T_i'$ is a $c_i'$-directed absorbing  path of $(v_i',v_i')$.
\end{itemize}
 For each $i\in [s]$, we replace $S_i$ by $x_1^ix_2^iv_{j_i}x_iP^iy_iv_{j_i}'x_3^ix_4^i$ with colors $b_1^i, c_{j_i}, a_i$ followed by the colors inherited from $P^i$, followed by $c_{j_i}',b_2^i,b_3^i$.  
That is, we have replaced $S_i$ by a rainbow directed  path with the same endpoints, vertices $V(S_i)\cup V(P^i)\cup \{v_{j_i},v_{j_i}'\}$, and colors ${\rm col}(S_i)\cup {\rm col}(P^i)\cup \{c_{j_i},c_{j_i}',a_i\}$. For each $i\in I$, we replace $T_i:=y_1^iy_2^iy_3^iy_4^i$ by $y_1^iy_2^iv_iy_3^iy_4^i$  where colors are inherited except ${\rm col}(y_2^iv_i)=c_i$ and ${\rm col}(v_iy_3^i)={\rm col}(y_2^iy_3^i)$. We do a similar replacement of $T_i'$, using new vertex $v_i'$ 
and new color $c_i'$. So we have replaced $T_i$ (resp. $T_i'$) by a rainbow directed path with the same endpoints, vertices $V(T_i)\cup \{v_i\}$ (resp. $V(T_i')\cup \{v_i'\}$) and colors ${\rm col}(T_i)\cup \{c_i\}$ (resp. ${\rm col}(T_i')\cup \{c_i'\}$). Thus we have obtained a  rainbow directed   cycle using vertices $V(C)\cup \bigcup_{i\in [s]}V(P^i)\cup \{v_i,v_i':i\in [r]\}=V$ and colors ${\rm col}(C)\cup \bigcup_{i\in [s]}{\rm col}(P^i)\cup \{c_i,c_i':i\in [r]\}=[n]$ where each color is used at most once (and hence exactly once). That is, $\mathcal{D}$ contains a transversal directed Hamilton cycle, as desired.
\end{proof}
\section{Extremal case}

In this section, we consider the extremal case, in which most digraphs in the collection are $(\epsilon, \mathrm{ECk})$-extremal for some $k \in [3]$. The following result follows directly from Lemma~\ref{stable}. 
\begin{lemma}\label{lemma-new1}
Assume that $0<\frac{1}{n}\ll 8\epsilon^{1/2}\leq  \delta\leq \epsilon^{1/3}\ll1$ and $m=(1-4\sqrt{\delta})n$.  
Let $\mathcal{D}=\{D_1,\ldots,D_n\}$ be a digraph collection on a common vertex set $V$ of size $n$ and $\delta^0(\mathcal{D})\geq \frac{n}{2}$. If $\mathcal{D}$ contains no transversal directed Hamilton cycles, then by swapping indices if necessary, one of the following holds: 
\begin{enumerate}
    \item[{\rm \bf (B1)}] either $D_i$ is $(\epsilon,{\rm EC1})$-extremal or $(\epsilon,{\rm EC2})$-extremal  and it admits an $\epsilon$-characteristic partition $(\Tilde{A}_i,\Tilde{B}_i,\Tilde{L}_i)$ for each $i\in [m]$,  
    \item[{\rm \bf (B2)}] or $D_i$ is $(\epsilon,{\rm EC3})$-extremal  and it admits an $\epsilon$-characteristic partition $(C_i^1,C_i^2,C_i^3,C_i^4,L_i)$ for each $i\in [m]$.
\end{enumerate}
In {\bf (B1)}, for every $i\in [m]$, $D_i$ also admits a partition $(A_i,B_i,L_i)$ satisfying 
\allowdisplaybreaks
\begin{itemize}
\item[{\rm \bf (C1)}] if $i=1$ then   $(A_1,B_1,L_1):=(\Tilde{A}_1,\Tilde{B}_1,\Tilde{L}_1)$; for $i\geq 2$, 
    \item[{\rm \bf (C2)}] if $D_i$ is $(\epsilon,K_{\lceil\frac{n}{2}\rceil}\cup K_{\lfloor\frac{n}{2}\rfloor})$-extremal, then for 
$Y\in \{A,B\}$ we have  $\Tilde{Y_i}\subseteq Y_i$ and  $d_{D_i}(v,Y_i)\geq (\frac{1}{2}-3\sqrt{\delta})n$ for each vertex $v\in Y_i$,
\item[{\rm \bf (C3)}] if $D_i$ is $(\epsilon,K_{\lceil\frac{n}{2}\rceil,\lfloor\frac{n}{2}\rfloor})$-extremal, then for {$\{Y,Z\}= \{A,B\}$} we have  $\Tilde{Y_i}\subseteq Y_i$ and  $d_{D_i}(v,Z_i)\geq (\frac{1}{2}-3\sqrt{\delta})n$ for each vertex $v\in Y_i$,
\item[{\rm \bf (C4)}] subject to {\bf (C2)-(C3)}, {the partition $(A_i,B_i,L_i)$ is chosen such that $|A_i\cup B_i|$ is maximized for each  $i\in [2,m]$.} 
\end{itemize}
Furthermore, for every  $i\in [m]$ and 
$Y\in \{A,B\}$, we have 
\begin{align*}
    |L_i|\leq 2\epsilon n\ \ \text{and}\ \   |{Y}_1{\triangle} Y_{i}|<2\delta n.
\end{align*}
\end{lemma}

\begin{proof}
Choose constants $\alpha,\gamma,\epsilon,\delta$ such that  
\begin{align*}
0<\frac{1}{n}\ll \alpha\ll \gamma,8\epsilon^{1/2}\leq  \delta\leq \epsilon^{1/3} \ll 1. 
\end{align*}
Let $\mathcal{D}=\{D_1,\ldots,D_n\}$ be a digraph collection on a common vertex set $V$ of size $n$ and  $\delta^0(\mathcal{D})\geq \frac{n}{2}$. Suppose that $\mathcal{D}$ does not contain transversal directed Hamilton cycles. By Lemma \ref{stable}, we know that  $\mathcal{D}$ is not $(\gamma,\alpha)$-strongly stable. Hence, there are at least $(1-\gamma)n$ digraphs in $\mathcal{D}$ such that each of them is $\alpha$-extremal. 
Without loss of generality, assume that $D_i$ is $\alpha$-extremal for each $i\in [(1-\gamma)n]$. Note that every $\alpha$-extremal digraph is also $\epsilon$-extremal, since $\alpha<\epsilon$. Denote 
\begin{align*}
  \tilde{\mathcal{C}}_1 &:=\{i\in [n]:D_i\ \text{is}\ (\epsilon,{\rm EC}1)\text{-extremal}\}, \\
  \tilde{\mathcal{C}}_2 &:=\{i\in [n]:D_i\ \text{is}\ (\epsilon,{\rm EC}2)\text{-extremal}\}, \\
  \tilde{\mathcal{C}}_3 &:=\{i\in [n]:D_i\ \text{is}\ (\epsilon,{\rm EC}3)\text{-extremal}\}.
\end{align*} 
Hence $|\tilde{\mathcal{C}}_1|+|\tilde{\mathcal{C}}_2|+|\tilde{\mathcal{C}}_3|\geq (1-\gamma)n$. By Lemma \ref{thm-single-graph}, we know that $D_i$ has a characteristic partition $(\Tilde{A_i},\Tilde{B_i},\Tilde{L_i})$ for each $i\in \tilde{\mathcal{C}}_1\cup \tilde{\mathcal{C}}_2$, or a characteristic partition  $(C_i^1,C_i^2,C_i^3,C_i^4,L_i)$ for each $i\in \tilde{\mathcal{C}}_3$. 

Under the above characteristic partition, applying  Lemma \ref{stable} again yields that $\mathcal{D}$ is not 
$(\epsilon,\delta)$-{weakly stable}. 
For each $i\in \tilde{\mathcal{C}}_1\cup \tilde{\mathcal{C}}_2$, define $I^1_i$ to be the set of $j\in \tilde{\mathcal{C}}_1\cup \tilde{\mathcal{C}}_2$ such that $D_i$ and $D_j$ are $\delta$-{crossing}. For each $i\in \tilde{\mathcal{C}}_3$, define $I^2_i$ to be the set of $j\in \tilde{\mathcal{C}}_3$ such that $D_i$ and $D_j$ are $\delta$-{crossing}. For each $i\in \tilde{\mathcal{C}}_1\cup \tilde{\mathcal{C}}_2$, define $I^3_i$ to be the set of $j\in \tilde{\mathcal{C}}_3$ such that $D_i$ and $D_j$ are $\delta$-{crossing}. Then we have 
$$
\sum_{i\in \tilde{\mathcal{C}}_1\cup \tilde{\mathcal{C}}_2}|I^1_i|< 2\delta n^2,\ \sum_{i\in \tilde{\mathcal{C}}_3}|I^2_i|< 2\delta n^2,\ \ \text{and}\ \  \sum_{i\in \tilde{\mathcal{C}}_1\cup \tilde{\mathcal{C}}_2}|I^3_i|< \delta n^2.
$$
Hence there exists a color set, say $I^k$ (where $I^k\subseteq \tilde{\mathcal{C}}_1\cup \tilde{\mathcal{C}}_2$ if $k=1$ and $I^k\subseteq \tilde{\mathcal{C}}_3$ if $k\in \{2,3\}$), with size at most $\sqrt{\delta}n$ such that the following hold:  
\begin{itemize}
  \item if $|\tilde{\mathcal{C}}_1|+|\tilde{\mathcal{C}}_2|\geq2\sqrt{\delta}n$, then  by swapping labels one has $|\tilde{A}_1\triangle \tilde{A}_i|< \delta n$ and $|\tilde{B}_1\triangle \tilde{B}_i|<\delta n$ for all $i\in (\tilde{\mathcal{C}}_1\cup \tilde{\mathcal{C}}_2)\setminus I^1$,
 \item if $|\tilde{\mathcal{C}}_3|\geq2\sqrt{\delta}n$, then by swapping labels, for all $k\in [4]$ and all $i\in  \tilde{\mathcal{C}}_3\setminus I^2$ we have $|W_1^k\triangle W_i^k|< \delta n$, in particular $|C_1^k\triangle C_i^k|< 2\delta n$,
  \item if $|\tilde{\mathcal{C}}_1|+|\tilde{\mathcal{C}}_2|\geq2\sqrt{\delta}n$ and $|\tilde{\mathcal{C}}_3|\geq2\sqrt{\delta}n$, then by swapping labels one has  $|\tilde{A}_1\triangle W_i^1|<\delta n$ and either $|\tilde{A}_1\triangle W_i^2|<\delta n$ or $|\tilde{A}_1\triangle W_i^4|<\delta n$ for all $i\in \tilde{\mathcal{C}}_3\setminus I^3$. 
\end{itemize}
Note that if last itemize holds, then for each $i\in \tilde{\mathcal{C}}_3\setminus I^3$, there is an $i_0\in \{2,4\}$ such that 
$$2\delta n>|\tilde{A}_1\triangle W_i^1|+|\tilde{A}_1\triangle W_i^{i_0}|\geq |W_i^1\triangle W_i^{i_0}|>2\delta n,$$ a contradiction. This implies that exactly one of  $|\tilde{\mathcal{C}}_1|+|\tilde{\mathcal{C}}_2|\geq 2\sqrt{\delta}n$ and $|\tilde{\mathcal{C}}_3|\geq 2\sqrt{\delta}n$ holds. 

Suppose that $|\tilde{\mathcal{C}}_1|+|\tilde{\mathcal{C}}_2|\geq 2\sqrt{\delta}n$.  Denote $[m]:=(\tilde{\mathcal{C}}_1\cup \tilde{\mathcal{C}}_2)\setminus I^1$ and $\mathcal{C}_{\rm bad}:=[m+1,n]$. By adding colors to $\mathcal{C}_{\rm bad}$ if necessary we may assume $m=(1-4\sqrt{\delta})n$. Recall that $\epsilon<\delta$ and $D_i$ has a characteristic partition $(\Tilde{A_i},\Tilde{B_i},\Tilde{L_i})$ for each $i\in [m]$. Hence for each $i\in [2,m]$, there exists a new partition $(A_i,B_i,L_i)$ of $D_i$ such that \allowdisplaybreaks
\begin{itemize}
    \item if $D_i$ is $(\epsilon,{\rm EC}1)$-extremal, then for 
$Y\in \{A,B\}$ we have  $\Tilde{Y_i}\subseteq Y_i$ and  $d_{i}^+(v,Y_i)\geq (\frac{1}{2}-3\sqrt{\delta})n$ for each vertex $v\in Y_i$,
\item if $D_i$ is $(\epsilon,{\rm EC}2)$-extremal, then for 
$Y\in \{A,B\}$ we have  $\Tilde{Y_i}\subseteq Y_i$ and  $d_{i}^+(v,Z_i)\geq (\frac{1}{2}-3\sqrt{\delta})n$ for each vertex $v\in Y_i$,
\item subject to the above two conditions, {the partition $(A_i,B_i,L_i)$ is chosen such that $|A_i\cup B_i|$ is maximized for each  $i\in [2,m]$.}
\end{itemize}
  
Denote $(A_1,B_1,L_1):=(\Tilde{A}_1,\Tilde{B}_1,\Tilde{L}_1)$. 
Hence for every  $i\in [m]$ and 
$Y\in \{A,B\}$, we have $|Y_1\triangle Y_{i}|\leq |\Tilde{Y_1}\triangle \Tilde{Y_i}|+|\Tilde{L}_i|\leq  \delta n+2{\epsilon}n<2\delta n$, as desired. 
\end{proof}
Based on Lemma \ref{lemma-new1}, we proceed {with the} proof by considering the following two theorems. 
\begin{theorem}\label{theorem-EC12}
Assume that $0<\frac{1}{n}\ll \epsilon\leq  \delta^2\ll1$ and $m=(1-4\sqrt{\delta})n$.  Let $\mathcal{D}=\{D_1,\ldots,D_n\}$ be a collection of digraphs on a common vertex set $V$ of size $n$ and $\delta^0(\mathcal{D})\geq \frac{n}{2}$. If $D_i$ is $(\epsilon,{\rm EC1})$-extremal or $(\epsilon,{\rm EC2})$-extremal for each $i\in [m]$, then $\mathcal{D}$ contains a transversal directed Hamilton cycle. 
\end{theorem}
\begin{theorem}\label{theorem-EC3}
Assume that $0<\frac{1}{n}\ll \epsilon\leq  \delta^2\ll1$ and $m=(1-4\sqrt{\delta})n$.  Let $\mathcal{D}=\{D_1,\ldots,D_n\}$ be a collection of digraphs on a common vertex set $V$ of size $n$ and $\delta^0(\mathcal{D})\geq \frac{n}{2}$. If $D_i$ is $(\epsilon,{\rm EC3})$-extremal for each $i\in [m]$, then $\mathcal{D}$ contains a transversal directed Hamilton cycle. 
\end{theorem}
In the following two sections, we prove the above two theorems respectively. 

\section{Proof of Theorem \ref{theorem-EC12}}

{{In this section we prove Theorem \ref{theorem-EC12}. Following the outline presented in Section 1.4,  we first identify several types of ``bad'' vertices and cover them by short rainbow directed paths whose endpoints possess additional properties. Next we handle the ``bad'' colors by choosing  matchings. We then connect  all these short rainbow directed paths via a connecting lemma. Finally, by the transversal blow-up lemma,  we construct a long transversal directed path, which can be closed to form the desired transversal directed cycle.}}

For the sake of clarity in presenting the following lemmas, we  summarize some  properties of the digraph collection $\mathcal{D}$. 

\begin{enumerate}[label=$(\dagger)$]
    \item\label{P1} Let $0<\frac{1}{n}\ll \epsilon\leq  \delta^2\ll \eta \ll1$ and $m=(1-4\sqrt{\delta})n$. Let $\mathcal{D}=\{D_1,\ldots,D_n\}$ be a collection of digraphs on a common vertex set $V$ of size $n$ and $\delta(\mathcal{D})\geq \frac{n}{2}$. We assume the following conditions hold. 
\begin{enumerate}
    \item For every $i\in [m]$, $D_i$ is either  $(\epsilon,{\rm EC1})$-extremal or $(\epsilon,{\rm EC2})$-extremal and it admits a partition $({A_i},{B_i},{L_i})$ satisfying {\bf (C1)-(C4)}. We extend $A_1\cup B_1$ to an equitable partition $A\cup B$ of $V$. 
    \item Define \begin{align*}
    &\mathcal{C}_1=\{i\in[m]:D_i ~\textrm{is}~(\epsilon,{\rm EC1})\textrm{-extremal}\},\\
    &\mathcal{C}_2=\{i\in[m]:D_i ~\textrm{is}~(\epsilon,{\rm EC2})\textrm{-extremal}\},\ \ \text{and}\ \ \mathcal{C}_{\rm bad}=\mathcal{C}\setminus (\mathcal{C}_1\cup \mathcal{C}_2).
    \end{align*}
    Let 
$
\hat{\mathcal{C}}:=\bigcup_{k\in [2]}\psi(\mathcal{C}_k),
$ 
where  $\psi(\mathcal{C}_k)=\mathcal{C}_k$ if $|\mathcal{C}_k|\geq \eta n$ and $\psi(\mathcal{C}_k)=\emptyset$ otherwise.
    \item After moving at most $4\delta n$ vertices from $A$ to $B$ (resp. from $B$ to $A$) and deleting at most $\frac{1}{3}\sqrt{\epsilon}n$ vertices in $A\cup B$, there exists a set $V_{\rm bad}\subseteq V$ with size at most $(\sqrt{\epsilon}+\frac{1}{2}\sqrt{\delta})n$ such that each vertex in $Y\setminus V_{\rm bad}$ lies in $Y_i$ for at least $(1-13\sqrt{\delta})|\hat{\mathcal{C}}|$ colors $i\in \hat{\mathcal{C}}$, where $Y\in \{A,B\}$.
    \item Define $$
    X:=\{x\in V: x\not\in A_i\cup B_i~\textrm{for at least}~ 6\sqrt{\delta}|\hat{\mathcal{C}}|~\textrm{colors}~i\in \hat{\mathcal{C}}\}.
    $$
    Let $X'$ be a subset of $X$  consisting of vertices $x$ such that $d_{i}^{+}(x,Y_i)\geq (1-3\sqrt{\delta})n$ (resp. $d_{i}^{-}(x,Y_i)\geq (1-3\sqrt{\delta})n$) for at least $(1-3\sqrt{\delta})|\hat{\mathcal{C}}|$ colors $i\in \hat{\mathcal{C}}$ and $d_{i}^{-}(x,Z_i)\geq \frac{5}{2}\sqrt{\delta}n$ (resp. $d_{i}^{+}(x,Z_i)\geq \frac{5}{2}\sqrt{\delta}n$) for at least $3\sqrt{\delta}|\hat{\mathcal{C}}|$ colors $i\in \hat{\mathcal{C}}$.
\end{enumerate}
\end{enumerate}

The next lemma shows that any vertex contained in $L_i$ for many colors $i\in \mathcal{C}$ can be covered by a rainbow directed path $P_3$ with endpoints not in $V_{\rm bad}$. To enhance flexibility, we select a rainbow star $S_5$ (instead of a simple directed  $P_3$), which allows each endpoint of the resulting path to be replaced by an alternative available vertex when needed.

\begin{lemma}\label{claim5}
    Suppose that \ref{P1} holds and $\{Y,Z\}=\{A,B\}$.  Let $P$ be a rainbow directed path in $\mathcal{D}$ with length at most $\frac{7}{3}\sqrt{\delta}n$.   
    Assume $X\setminus  (X'\cup V(P))=\{x_1,\ldots,x_s\}$ with $s\leq \sqrt{\epsilon}n$. If 
$x_i\in X\cap Y$, then there exist  colors $c_i^1,c_i^2,c_i^3,c_i^4\in \mathcal{C}_j\setminus ({\rm col}(P)\cup \{c_{\ell}^1,c_{\ell}^2,c_{\ell}^3,c_{\ell}^4:{\ell}\in  [i-1]\})$ and vertices $x_i^1,\,x_i^2,\,x_i^3,x_i^4\in W\setminus (V(P)\cup  \{x_{\ell}^1,\,x_{\ell}^2,\,x_{\ell}^3,\,x_{\ell}^4:{\ell}\in  [i-1]\})$ such that ${x_i^kx_i}\in E(D_{c_i^k})$ for $k\in[2]$ and ${x_ix_i^k}\in E(D_{c_i^k})$ for $k\in\{3,4\}$, where 
    \begin{itemize}
    \item $j=2$ and $W=Z$ if $|\mathcal{C}_1|<\eta n$,
        \item $j=1$ and $W=Y$ if $|\mathcal{C}_2|<\eta n$,
        \item either $j=1$ and $W=Y$, or $j=2$ and $W=Z$ otherwise.
        
    \end{itemize}
\end{lemma}
\begin{proof}
    We only prove the case that $|\mathcal{C}_1|< \eta n$, and the other cases can be proved by similar  arguments. Suppose there exists an $i_0\in [s]$ such that  Lemma~\ref{claim5} holds for all $i\in [i_0-1]$ but does not hold for $i_0$. Assume for convenience that $x_{i_0}\in X\cap A$.  Denote $\mathcal{C}_2':=\mathcal{C}_2\setminus ({\rm col}(P)\cup \{c_{\ell}^1,c_{\ell}^2,c_{\ell}^3,c_{\ell}^4:{\ell}\in  [i_0-1]\})$. Let
    \begin{align*}
        &\mathcal{C}_2^1:=\big\{j\in \mathcal{C}_2':d_{j}^+(x_{i_0},B)\geq 4(i_0-1)+4+|V(P)|\big\}\ \text{and}\\
        &\mathcal{C}_2^2:=\big\{j\in \mathcal{C}_2':d_{j}^-(x_{i_0},B)\geq 4(i_0-1)+4+|V(P)|\big\}.      
    \end{align*}
    Hence either $|\mathcal{C}_2^1|\leq 3$ or $|\mathcal{C}_2^2|\leq 3$. Without loss of generality, assume that $|\mathcal{C}_2^1|\leq 3$. Thus, for each $j\in \mathcal{C}_2'\setminus \mathcal{C}_2^1$, we have 
    \allowdisplaybreaks
    \begin{align*}
        d_{j}^+(x_{i_0},A_j)&\geq d_{j}^+(x_{i_0},A)-|A\setminus A_1|-|A_1\triangle A_j|-|X_A\setminus X_A^2|-|X'|\\
        &\geq \delta^0(D_j)-d_{j}^+(x_{i_0},B)-|A\setminus A_1|-|A_1\triangle A_j|-|X_A\setminus X_A^2|-|X'|
        \\
        &\geq \frac{n}{2}-\left(4(i_0-1)+4+|V(P)|\right)-2{\epsilon}n-2\delta n-4\delta n-\sqrt{\epsilon}n\\
        &\geq \left(\frac{1}{2}-3\sqrt{\delta}\right)n.
    \end{align*}   
Furthermore, $|\mathcal{C}_2'\setminus \mathcal{C}_2^1|\geq (1-3\sqrt{\delta})|\mathcal{C}_2|$.
     
Notice that $x_{i_0}\notin X'$. Then, for at least $(1-3\sqrt{\delta})|\mathcal{C}_2|$ colors $j\in \mathcal{C}_2$, $d_j^-(x_{i_0},B_j)< \frac{5}{2}\sqrt{\delta}n$, and consequently, $d_j^-(x_{i_0},A_j)\geq (\frac{1}{2}-3\sqrt{\delta})n$ for the same set of colors. By {\bf (C4)}, we know that  $x_{i_0}\in A_j\cup B_j$ for all but at most $6\sqrt{\delta}|\mathcal{C}_2|$ colors $j\in \mathcal{C}_2$,  
    which leads to $x_{i_0}\not\in X$, a contradiction. 
     \end{proof}
     
Next, we give an application of the transversal blow-up lemma (see \cite{cheng2023transversals}) for embedding transversal undirected Hamilton paths inside very dense bipartite graph collections, which can be proved by minor modifications to the proof of \cite[Lemma 6.1]{cheng2024stability}, and we omit the proof here. We say that $P$ is an undirected subgraph of a digraph collection $\mathcal{D}=\{{D_1},\ldots,D_n\}$ if, for every edge $uv\in E(P)\cap E(D_i)$, the reverse arc $vu\in E(D_i)$. 
\begin{lemma}\label{lemma4.1}
   Suppose \ref{P1} holds and $\{W,Z\}=\{A,B\}$. Let $W^*\subseteq W\setminus V_{\rm bad}$ and $Z^*\subseteq Z\setminus V_{\rm bad}$, where $|W^*|,|Z^*|\geq \eta n$, $W^*\cap Z^*=\emptyset$ and $|W^*|-|Z^*|=t\in \{0,1\}$. Let $T^*=Z^*$ if $t=0$ and $T^*=W^*$ if $t=1$. Let $\mathcal{C^*}\subseteq \mathcal{C}$ satisfy $|\mathcal{C^*}|=|W^*|+|Z^*|-1$, where $\mathcal{C^*}\subseteq \mathcal{C}_1$ if $W=Z$ and $\mathcal{C^*}\subseteq \mathcal{C}_2$ if $W\neq Z$. Let $W^-\subseteq W^*$ and $T^+\subseteq T^*$ with $|W^-|,|T^+|\geq \frac{\eta n}{8}$. Then there is a transversal undirected Hamilton path in $\{D_i^{\pm}[W^*,Z^*]:i\in \mathcal{C}^*\}$ starting at $W^-$ and ending   at $T^+$.
    \end{lemma}
    
The subsequent result can be used to connect two disjoint  short rainbow paths into a single short rainbow path. 
\begin{lemma}
  [Connecting tool]\label{conn}
    Suppose that \ref{P1} holds and $\{Y,Z\}=\{A,B\}$. Assume $P=u_1u_2\ldots u_s$ and $Q=v_1v_2\ldots v_t$ are two disjoint rainbow directed paths inside $\mathcal{D}$ with $u_s,v_1\notin V_{\rm bad}\cup X'$ and   $s+t\leq 5\eta n$.  
    \begin{enumerate}
        \item[{\rm (i)}] If $u_s\in Y$, $v_1\in Z$ and $|\mathcal{C}_2\setminus {\rm col}(P\cup Q)|\geq 14\sqrt{\delta} n$, then there are three colors  $c_1,c_2,c_3\in\mathcal{C}_2\setminus {\rm col}(P\cup Q)$ and two vertices $w_1\in Z\setminus (V(P\cup Q)\cup V_{\rm bad})$,  $w_1'\in Y\setminus (V(P\cup Q)\cup V_{\rm bad})$ such that $u_1Pu_sw_1w_1'v_1Qv_t$ is a rainbow directed path with colors ${\rm col}(P\cup Q)\cup \{c_1,c_2,c_3\}$.
        \item[{\rm (ii)}]  If $u_s,v_1\in Y$ and $|\mathcal{C}_2\setminus {\rm col}(P\cup Q)|\geq 14\sqrt{\delta}n$, then there are two colors  $c_1,c_2\in\mathcal{C}_2\setminus {\rm col}(P\cup Q)$ and a vertex $w_1\in Z\setminus (V(P\cup Q)\cup V_{\rm bad})$ such that $u_1Pu_sw_1v_1Qv_t$ is a rainbow directed path with colors ${\rm col}(P\cup Q)\cup \{c_1,c_2\}$.
        \item[{\rm (iii)}] If $u_s,v_1\in Y$ and $|\mathcal{C}_1\setminus {\rm col}(P\cup Q)|\geq 14\sqrt{\delta}n$, then there are two colors  $c_1,c_2\in \mathcal{C}_1\setminus {\rm col}(P\cup Q)$ and a vertex $w_1\in Y\setminus (V(P\cup Q)\cup V_{\rm bad})$ such that $u_1Pu_sw_1v_1Qv_t$ is a rainbow directed path with  colors ${\rm col}(P\cup Q)\cup \{c_1,c_2\}$. 
 \end{enumerate}
\end{lemma}
\begin{proof}
We only give the proof of (i), the other two statements can be proved by similar discussions, whose procedures are omitted.

Note that $u_s,v_1\not\in V_{\rm bad}\cup X'$. Then $u_s$ (resp. $v_1$) lies in $Y_i$ (resp. $Z_i$) for at least $(1-13\sqrt{\delta})|\hat{\mathcal{C}}|$ colors $i\in \hat{\mathcal{C}}$.  Since $|\mathcal{C}_2\setminus {\rm col}(P\cup Q)|\geq 14\sqrt{\delta}n$, we obtain that $u_s$ (resp. $v_1$) lies in $Y_i$ (resp. $Z_i$) for at least $\sqrt{\delta}n$ colors $i\in \mathcal{C}_2\setminus {\rm col}(P\cup Q)$. Hence there are two distinct colors $c_1,c_2\in \mathcal{C}_2\setminus {\rm col}(P\cup Q)$ such that $u_s\in Y_{c_1}$ and $v_1\in Z_{c_2}$. It is routine to check that 
\begin{align*}
    |N_{{c_1}}^+(u_s)\cap (Z\setminus (V_{\rm bad}\cup X'))|\geq &|N_{{c_1}}^+(u_s)\cap Z_{c_1}|-|Z_1\triangle Z_{c_1}|-|X\cup X_A\cup X_B|\\
\geq& \left(\frac{1}{2}-3\sqrt{\delta}\right)n-\left(2\delta+\sqrt{\epsilon}+\frac{1}{2}\sqrt{\delta}\right)n\geq \left(\frac{1}{2}-4\sqrt{\delta}\right)n.
\end{align*}
Hence there exists a vertex $w_1\in N_{{c_1}}^+(u_s)\cap Z$ 
that avoids $V(P\cup Q)\cup V_{\rm bad}\cup X'$. Similarly, since $w_1\notin V_{\rm bad}\cup X'$, there is a color $c_3\in \mathcal{C}_2\setminus ({\rm col}(P\cup Q)\cup \{c_1,c_2\})$ such that $w_1\in B_{c_3}$ and $$|N_{{c_2}}^-(v_1)\cap N_{{c_3}}^+(w_1)\cap  (Y\setminus (V_{\rm bad}\cup X'))|\geq \left(\frac{1}{2}-10\sqrt{\delta}\right)n.$$ 
Therefore, there is a vertex $w_1'\in N_{{c_2}}^-(v_1)\cap N_{{c_3}}^+(w_1)\cap  A$ that avoids $V(P\cup Q)\cup V_{\rm bad}\cup X'$. It follows that  $u_1Pu_sw_1w_1'v_1Qv_t$ is a rainbow directed path inside $\mathcal{D}$ with colors ${\rm col}(P\cup Q)\cup \{c_1,c_2,c_3\}$.
\end{proof}

To prove Theorem \ref{theorem-EC12}, we need the following lemma. This establishes that every digraph collection $\mathcal{D}$ contains a transversal directed {Hamilton} cycle if it admits an almost balanced partition, with the small parts inducing an empty graph for almost all colors. 
We first state a preliminary result from \cite{chengsun}.
\begin{lemma}[\cite{chengsun}]\label{claim4.2}
     Let $\mathcal{C}$ be a set of colors, and $\mathcal{G}=\{G_i[Y,B]:i\in \mathcal{C}\}$ be a collection of bipartite graphs with the common bipartition $Y\cup B$  such that $7|Y|<|B|\leq\frac{3}{5}|\mathcal{C}|$. If $\sum_{i\in \mathcal{C}}|E(D_i[Y,B])|\geq t|B||\mathcal{C}|$ for some integer $t$ with $1\leq t\leq |Y|$, then $\mathcal{G}$ contains $t$ disjoint rainbow stars on $5$ vertices, and
each of them has its center in $Y$ and other vertices in $B$.
\end{lemma}

\begin{lemma}\label{Y-large}
Assume $0<\frac{1}{n}\ll\delta\ll1$ and $0\leq \gamma\leq3{\delta}$. Let $\mathcal{C}$ be a set of $n$ colors, and let  $\mathcal{D}=\{D_i:i\in \mathcal{C}\}$ be a collection of digraphs with the common vertex set $V$ of size $n$ such that  $\delta^0(\mathcal{D})\geq \left\lceil\frac{n}{2}\right\rceil$. Let $A\cup B$ be a partition of $V$ with  $|A|=\left\lceil\frac{n}{2}\right\rceil+\gamma n$, and let $\mathcal{C}'\cup \mathcal{C}''$ be a partition of $\mathcal{C}$ with  $|\mathcal{C}''|\leq \delta n$. Assume that $D_i[B]=\emptyset$ for all $i\in \mathcal{C}'$. Then $\mathcal{D}$ contains a transversal directed Hamilton cycle. 
\end{lemma}
\begin{proof}
Define $Y_1:=\{v\in A: d_{i}^+(v,B)\leq (1-\delta^{\frac{1}{4}})|B|\ \textrm{for at least}\ \delta^{\frac{1}{4}}|\mathcal{C}'|\ \textrm{colors}\  i\in \mathcal{C}'\}$, $Y_2:=\{v\in A: d_{i}^-(v,B)\leq (1-\delta^{\frac{1}{4}})|B|\ \textrm{for at least}\ \delta^{\frac{1}{4}}|\mathcal{C}'|\ \textrm{colors}\  i\in \mathcal{C}'\}$.  
    Since $D_i[B]=\emptyset$ for all $i\in \mathcal{C}'$, it is routine to check that 
\begin{align}\notag
    \left\lceil\frac{n}{2}\right\rceil|B||\mathcal{C}'|&\leq \sum_{i\in \mathcal{C}'}|E({D_i}[A,B])|\\\notag
    &\leq |Y_1|(1-\delta^{\frac{1}{4}})|B|\delta^{\frac{1}{4}}|\mathcal{C}'|+|Y_1||B|(1-\delta^{\frac{1}{4}})|\mathcal{C}'|+(|A|-|Y_1|)|B||\mathcal{C}'|\\\notag
    &=(|A|-\delta^{\frac{1}{2}}|Y_1|)|B||\mathcal{C}'|.
\end{align}
It implies that $|Y_1|\leq \frac{\gamma n}{\sqrt{\delta}}\leq 3\sqrt{\delta}n$. Similarly, $|Y_2|\leq \frac{\gamma n}{\sqrt{\delta}}\leq 3\sqrt{\delta}n$. For a vertex $v\in Y_1\setminus Y_2$ (resp. $v\in Y_2\setminus Y_1$), we have $d_{i}^-(v,B)>(1-\delta^{\frac{1}{4}})|B|$ (resp. $d_{i}^+(v,B)> (1-\delta^{\frac{1}{4}})|B|$) for at least $(1-\delta^{\frac{1}{4}})|\mathcal{C}'|$ colors $i\in \mathcal{C}'$. 
{Notice that Lemma~\ref{lemma4.1} and Lemma~\ref{conn} (i)-(ii) hold by setting  $V_{\rm bad}:=Y_1\cup Y_2$ and $\mathcal{C}_2=\mathcal{C}'$.}

Since $\delta^0(\mathcal{D})\geq \left\lceil\frac{n}{2}\right\rceil$, we have $|E(D_i[A,B])|,|E(D_i[B,A])| \geq |B|\left\lceil\frac{n}{2}\right\rceil$ for each $i\in \mathcal{C}'$, i.e., there are at most $|B|(\left\lceil\frac{n}{2}\right\rceil+\gamma n)-|B|\left\lceil\frac{n}{2}\right\rceil=|B|\gamma n$ non-edges from $A$ to $B$ (resp., from $B$ to $A$) in $D_i$. Hence 
$$
\sum_{i\in \mathcal{C}'}|E(D_i[Y_1,B])|\geq (|Y_1|-\gamma n)|B||\mathcal{C}'|\ \  \text{and}\ \ \sum_{i\in \mathcal{C}'}|E(D_i[B,Y_2])|\geq (|Y_2|-\gamma n)|B||\mathcal{C}'|.
$$ 
According to the sizes of $|Y_1|$ and $|Y_2|$, we proceed by considering the following three cases.

\medskip
{\bf Case 1. $|Y_1|, |Y_2|\geq \gamma n$.}
\medskip

In view of Lemma \ref{claim4.2}, there exists a subset $Y_1'\subseteq Y_1$ with size $\gamma n$ such that vertices in $Y_1\setminus Y_1'$ can be covered by  disjoint rainbow $P_3$ copies inside $\{D_i[Y_1,B]:i\in \mathcal{C}'\}$ with centers in $Y_1\setminus Y_1'$; and a subset $Y_2'\subseteq Y_2$ with size $\gamma n$ such that vertices in $Y_2\setminus Y_2'$ can be covered by  disjoint rainbow $P_3$ copies inside $\{D_i[B,Y_2]:i\in \mathcal{C}'\}$ with centers in $Y_2\setminus Y_2'$. Recall that each vertex in $Y_1$ (resp. $Y_2$) has at least $\left\lceil\frac{n}{2}\right\rceil-(1-\delta^{\frac{1}{4}})|B|>4|Y|$ out-neighbors (resp. in-neighbors) in $A$ for at least $\delta^{\frac{1}{4}}|\mathcal{C}'|>\delta^{\frac{1}{4}}(1-\delta)n>4|Y|$ digraphs $D_i$ with $i\in\mathcal{C}'$. Therefore, by using colors in $\mathcal{C}'$,  
\begin{itemize}
  \item vertices in $(Y_1\cup Y_2)\setminus (Y_1'\cup Y_2')$ can be covered by disjoint rainbow directed $P_3$ copies with centers in $(Y_1\cup Y_2)\setminus (Y_1'\cup Y_2')$ and endpoints in $B$,
  \item vertices in $Y_1'\setminus Y_2'$ can be covered by disjoint rainbow directed $P_3$ copies starting at $B$ and ending   at $A$ with centers in $Y_1'\setminus Y_2'$,
  \item vertices in $Y_2'\setminus Y_1'$ can be covered by disjoint rainbow directed $P_3$ copies starting at $A$ and ending   at $B$ with centers in $Y_2'\setminus Y_1'$,
  \item vertices in $Y_1'\cap Y_2'$ can be covered by disjoint rainbow directed $P_3$ copies with centers in $Y_1'\cap Y_2'$  and endpoints in $A$.
\end{itemize}
For each $y\in Y_1\cup Y_2$, let $P_y:=y^1yy^2$  denote the rainbow directed $P_3$ with center $y$, where $P_y$ has colors $c_y^1$ and $c_y^2$. 
Let $\mathbf{P}=\{P_y:y\in Y_1\cup Y_2\}$. By using Lemma~\ref{conn}, one may connect all of the above rainbow directed $P_3$ copies into a single rainbow directed path $P^1$  that starts at $A$ and ends  at $B$. Then 
$$
|V(P^1)\cap A|-|V(P^1)\cap B|=|Y_1'\setminus Y_2'|+|Y_2'\setminus Y_1'|+2|Y_1'\cap Y_2'|=|Y_1'|+|Y_2'|=2\gamma n.
$$
Hence, $|A\setminus V(P^1)|=|B\setminus V(P^1)|+\sigma$, where $\sigma=0$ if $n$ is odd and $\sigma=1$ otherwise. 




It is routine to check that there exists a rainbow matching inside $\{D_i^{\pm}[A\setminus V(P^1),B\setminus V(P^1)]: i\in \mathcal{C}''\}$, say $M$, such that $D_j[A\setminus V(P^1\cup M),B\setminus V(P^1\cup M)]\neq \emptyset$ for each $j\in {\rm col}(M)$. 
Furthermore, $D_{j}$ is $(19\sqrt{\delta},{\rm EC}1)$-extremal for all $j\in \mathcal{C}''\setminus {\rm col}(M)$. 

\medskip
{\bf Subcase 1.1.} $n$ and $|\mathcal{C}''\setminus {\rm col}(M)|$ have the same parity. 
\medskip

We greedily choose two disjoint  rainbow directed paths $P_A$ and $P_B$ inside $\{D_i[A\setminus V(P^1\cup M)]\cup D_i[B\setminus V(P^1\cup M)]:i\in \mathcal{C}''\setminus {\rm col}(M)\}$ with lengths $\lceil\frac{|\mathcal{C}''\setminus {\rm col}(M)|}{2}\rceil$ and
$\lfloor\frac{|\mathcal{C}''\setminus {\rm col}(M)|}{2}\rfloor$ respectively, such that $V(P_A)\subseteq A$ and $V(P_B)\subseteq B$. 
Applying Lemma~\ref{conn}~(i), we successively  connect $P^1$, all rainbow edges in $M$, $P_A$ and $P_B$ into a single rainbow path $P^2$ whose endpoints lie in different parts. Clearly, $|A\setminus V(P^2)|=|B\setminus V(P^2)|$. 
Together with Lemma~\ref{lemma4.1}, we know that  $\mathcal{D}$ contains a transversal directed Hamilton cycle, as desired.   

\medskip
{\bf Subcase 1.2.} $n$ and $|\mathcal{C}''\setminus {\rm col}(M)|$ have { different} parity.
\medskip

Notice that $|B|=\lfloor\frac{n}{2}\rfloor-\gamma n$. Hence for each $D_i$ with $i\in \mathcal{C}$, each vertex in $B$ has at least $\gamma n+1$ in-neighbors in $A$. Therefore, each vertex in $B$ has at least one in-neighbor outside $Y_1'$. 

We first assume  $|\mathcal{C}''\setminus {\rm col}(M)|\geq 1$. Choose an unused vertex $y\in B$ and a color $c\in \mathcal{C}''\setminus {\rm col}(M)$. Assume that ${xy}\in E(D_c[A,B])$ with $x\notin Y_2'$. 
If $x\in A\setminus (Y_1\cup Y_2)$, then $x$ can be chosen outside $V(P^1\cup M)$. Using Lemma~\ref{conn}~(i)-(ii), we connect $xy$ and $P^1$ into a single rainbow directed path $P$. If $x\in (Y_1\cup Y_2)\setminus Y_1'$, then we replace the end vertex of $P_x$ by $y$. In either situation the problem reduces to Subcase 1.1,  yielding a transversal directed Hamilton cycle in  $\mathcal{D}$, as desired.



Now, we consider that $n$ is odd and  $|\mathcal{C}''\setminus {\rm col}(M)|=0$. For each $D_i$ with $i\in \mathcal{C}$, each vertex in $A$ has at least one in-neighbor outside $Y_1'$. Choose an unused vertex $y\in A\setminus (Y_1\cup Y_2)$ and a color $c\in \mathcal{C}$. Then there exists a vertex $x\in A\setminus Y_1'$ such that ${xy}\in E(D_c[A])$.  By a similar discussion as above, we obtain a transversal directed Hamilton cycle inside $\mathcal{D}$, as desired.

\medskip
{\bf Case 2.} At least one of $Y_1$ and $Y_2$ has size less than $\gamma n$.
\medskip

In this case, we consider only $|Y_1|\geq \gamma n$ and $|Y_2|<\gamma n$; the remaining two cases are analogous.  Following the argument of Case 1, we know that all vertices in $Y_1$ can be covered by a set of disjoint rainbow directed copies of $P_3$ with centers in $Y_1$ and endpoints in $A\setminus (Y_1\cup Y_2)$. Furthermore, each vertex in $Y_2\setminus Y_1$ can be covered by a rainbow directed $P_3$ with its center in  $Y_2\setminus Y_1$,  starting from   $A\setminus (Y_1\cup Y_2)$ and using colors in $\mathcal{C}'$. (In this process, for the choice of the other endpoint of each rainbow directed $P_3$, we prefer using vertices in $A$ over vertices in $B$.) 
By using unused colors in $\mathcal{C}$ and unused vertices in $A\setminus Y$, one may extend those rainbow directed $P_3$ as long as possible or choose all other disjoint rainbow directed paths. Let $\mathbf{P}=\{Q_1,Q_2,\ldots,Q_t\}$ be a set consisting of all disjoint rainbow directed paths in the above, where each $Q_i$ has at least one endpoint in $A\setminus Y$ and length $s_i\ (1\leq i\leq t)$ inside $A$. We next consider the following two possible cases. 

\medskip
{\bf Subcase 2.1.}  $|B|\geq |A|-(s_1+\cdots+s_t)-\sigma$. 
\medskip

In this subcase, there exists a set $\mathbf{P}'=\{Q_1',Q_2',\ldots,Q_{\ell}'\}$ such that $|B|=|A|-(s_1'+\cdots+s_{\ell}')-\sigma$, the endpoints of each  $Q_i'$ lie outside  $Y$ and $Y_2\cup Y_1'\subseteq V(Q_1'\cup \ldots\cup Q_{\ell}')$, where $s_i'=|E(Q_i'[A])|$ for each $i\in [\ell]$. 
By Lemma~\ref{conn} (i)-(ii), we connect all rainbow directed paths in $\mathbf{P}'$ into a single rainbow directed path $P^1$, whose endpoints are in different parts. 
Since $|A|-|B|=2\gamma n+\sigma$, we have $\ell\leq s_1'+\cdots+s_{\ell}'= 2\gamma n$. Therefore, 
$|E(P^1)|\leq 8\gamma n$ and $|A\setminus V(P^1)|-\sigma=|B\setminus V(P^1)|$.

Recall that all vertices in $Y_1\setminus V(\mathbf{P}')$ can also be covered by a set of disjoint rainbow directed copies of $P_3$ with centers in $Y_1\setminus V(\mathbf{P}')$ and both endpoints in $B$. Applying Lemma~\ref{conn} (i) again to connect those rainbow directed paths with $P^1$, we get a rainbow directed path $P^2$ with length at most $8\gamma n+12\sqrt{\delta}n$, and with endpoints in different parts.

Choose a maximum rainbow matching inside $\{D_i^{\pm}[A\setminus V(P^2),B\setminus V(P^2)]: i\in \mathcal{C}''\setminus {\rm col}(P^2)\}$, say $M$, such that $D_j^{\pm}[A\setminus V(P^2\cup M),B\setminus V(P^2\cup M)]\neq\emptyset$  for each $j\in {\rm col}(M)$. 
Clearly, $D_{j}$ is $(13\sqrt{\delta},{\rm EC}1)$-extremal for all $j\in \mathcal{C}''\setminus {\rm col}(P^1\cup M)$.  In fact, when constructing  $P^1-B-Y$,  we may prioritize the use of colors from  $\mathcal{C}''\setminus {\rm col}(P^1\cup M)$ over other colors. Hence either  ${\rm col}(P^1-B-Y)\subseteq \mathcal{C}''\setminus {\rm col}(M)$ or $\mathcal{C}''\setminus {\rm col}(M) \subseteq {\rm col}(P^1-B-Y)$. 

Based on Lemma~\ref{conn} (i), one may  connect $P^1$ and all edges of $M$ into a single rainbow directed path $P^2$, whose endpoints are in different parts and length is at most $8\gamma n+4{\delta} n$. Next, we are to choose two disjoint  rainbow paths $P_A$  and $P_B$ inside $\{D_i[A\setminus V(P^1\cup M)]\cup D_i[B\setminus V(P^1\cup M)]:i\in \mathcal{C}''\setminus {\rm col}(P^1\cup M)\}$ such that $V(P_A)\subseteq A$ and $V(P_B)\subseteq B$ respectively, whose lengths are determined by the parity of $|\mathcal{C}''\setminus {\rm col}(P^1\cup M)|$. Clearly, if $n$ and $|\mathcal{C}''\setminus {\rm col}(P^1\cup M)|$ have the same parity, then by Lemma~\ref{lemma4.1}, we know that  $\mathcal{D}$  contains a transversal directed Hamilton cycle, as desired. Hence $n$ and $|\mathcal{C}''\setminus {\rm col}(P^1\cup M)|$ have {different} parity. By an argument analogous to that in the proof of Step 4 in Case 1, we can obtain the desired transversal Hamilton cycle in $\mathcal{D}$. 

\medskip
{\bf Subcase 2.2.} $|B|<|A|-(s_1+\cdots+s_t)-\sigma$. 
\medskip

In this case, $\sum_{i=1}^t s_i<2\gamma n$. 
Let $w$ be an arbitrary vertex in $A\setminus V(\mathbf{P})$ and $c_1,c_2$ be two colors  in $\mathcal{C}\setminus {\rm col}(\mathbf{P})$. 
Clearly, in $D_{c_1}$ and $D_{c_2}$, $w$ has at least $\left\lceil\frac{n}{2}\right\rceil-(\lfloor\frac{n}{2}\rfloor-\gamma n)=\gamma n+\sigma$ out-neighbors (resp. in-neighbors) in $V(\mathbf{P})$ and it cannot be adjacent to the start vertex of $Q_i[A]$ (resp. from the end   vertex of $Q_i[A]$) for all $i\in [t]$ (by the maximality of $\mathbf{P}$). For convenience, assume that  $s_0=0$ and  $V(Q_i)\cap A:=\{v_{s_1+\cdots+s_{i-1}+i},\ldots,v_{s_1+\cdots+s_{i}+i}\}$ for each $i\in [t]$. 
Define
$$
  I_1:=\left\{i:v_i\in N_{{c_1}}^-(w)\right\}\ \ \text{and}\ \ I_2:=\left\{i:v_{i+1}\in N_{{c_2}}^+(w)\right\}.
$$
Hence $
  I_1,I_2\subseteq \bigcup_{i\in [t]}\left[\sum_{j\in [i-1]}s_{j}+i,\sum_{j\in [i]}s_j+i-1\right]$. 
Therefore, $I_1\cap I_2\neq \emptyset$, which contradicts the maximality of $\mathbf{P}$. 
\end{proof}

Now, we are ready to give the proof of Theorem \ref{theorem-EC12}. 
\begin{proof}[Proof of Theorem \ref{theorem-EC12}]
Choose constant $\epsilon,\delta,\eta$ such that
$$\frac{1}{n}\ll \epsilon\leq \delta^2\ll \eta\ll 1,$$
let $\mathcal{D}=\{D_1,\ldots,D_n\}$ be a collection of digraphs on a common vertex set $V$ of size $n$ and $\delta^0(\mathcal{D})\geq \frac{n}{2}$. 
Suppose that $\mathcal{D}$ does not contain transversal directed Hamilton cycles. By Lemma \ref{lemma-new1}, for each $i\in [m]$, 
  $D_i$ has a characteristic partition $({A_i},{B_i},{L_i})$ satisfying {\bf (C1)}-{\bf (C4)}. Moreover, for every  $i\in [m]$ and 
$Y\in \{A,B\}$, we have 
\begin{align}\label{difference}
    |L_i|\leq 2\epsilon n\ \text{and}\  |{Y}_1{\triangle} Y_{i}|<2\delta n.
\end{align}
  
Expand $A_1\cup B_1$ to an equitable partition $A\cup B$ of $V$. For simplicity, we assume $Y\in\{A,B\}$ and $Z\in \{A,B\}\setminus \{Y\}$ in the following.  Let $\mathcal{C}_1\cup \mathcal{C}_2$ be a partition  of $[m]$ with 
\begin{align*}
    &\mathcal{C}_1=\{i\in[m]:  D_i ~\textrm{is}~(\epsilon,{\rm EC}1){\text{-extremal}}\},\\
    &\mathcal{C}_2=\{i\in[m]: D_i ~\textrm{is}~(\epsilon,{\rm EC}2){\text{-extremal}}\}.
\end{align*}
 Let 
$
\hat{\mathcal{C}}:=\bigcup_{k\in [2]}\psi(\mathcal{C}_k),
$ 
where  $\psi(\mathcal{C}_k)=\mathcal{C}_k$ if $|\mathcal{C}_k|\geq \eta n$ and $\psi(\mathcal{C}_k)=\emptyset$ otherwise. Clearly, $|\hat{\mathcal{C}}|\geq (1-4\sqrt{\delta}-\eta)n$. 
Define 
\begin{align*}
  X:=&\{x\in V: x\not\in A_i\cup B_i~\textrm{for at least}~ 6\sqrt{\delta}|\hat{\mathcal{C}}|~\textrm{colors}~i\in \hat{\mathcal{C}}\}, \\
  X_Y:=&\{x\in Y\setminus X:x\not\in Y_{i}~\textrm{for at least}~ 10\sqrt{\delta}|\hat{\mathcal{C}}|~\textrm{colors}~i\in \hat{\mathcal{C}}\}.
\end{align*}
By \eqref{difference}, one has  $6\sqrt{\delta}|\hat{\mathcal{C}}||X|\leq 2{\epsilon}n|\hat{\mathcal{C}}|$ and
$$ 10\sqrt{\delta}|\hat{\mathcal{C}}||X_Y|\leq \sum_{i\in \hat{\mathcal{C}}}|Y\setminus Y_i|\leq \sum_{i\in \hat{\mathcal{C}}}(|Y\setminus Y_1|+|Y_1\triangle Y_i|)\leq (2\delta+2{\epsilon}) n|\hat{\mathcal{C}}|.
$$
It follows that $|X|< \frac{1}{3}\sqrt{\epsilon}n$ and $|X_Y|< \frac{1}{4}\sqrt{\delta} n$. For $k\in [2]$, define 
\begin{equation*}
    X_Y^k:=
    \left\{
    \begin{array}{ll}
        \{x\in X_Y:x\in Y_{i}~\textrm{for at least}~ 3\sqrt{\delta}n~\textrm{colors}~i\in {\mathcal{C}_k}\}, &\textrm{if $|\mathcal{C}_k|\geq \eta n$,}\\[5pt]
        \emptyset, &\textrm{otherwise.}
    \end{array}
    \right.
\end{equation*}
For a vertex $x\in X_Y\setminus (\bigcup_{k\in [2]}X_Y^k)$, we know $x\in Y_{i}$  for at most $6\sqrt{\delta}n$ colors $i\in \hat{\mathcal{C}}$ and $x\in A_i\cup B_i$ for at least $(1-6\sqrt{\delta})|\hat{\mathcal{C}}|$ colors $i\in \hat{\mathcal{C}}$. Thus, $x\in Y\setminus Y_{i}$ for at least $|\hat{\mathcal{C}}|-6\sqrt{\delta}n$ colors $i\in \hat{\mathcal{C}}$ and $x\in Z_{i}$ for at least $(1-6\sqrt{\delta})|\hat{\mathcal{C}}|-6\sqrt{\delta}n\geq (1-13\sqrt{\delta})|\hat{\mathcal{C}}|$ colors $i\in \hat{\mathcal{C}}$. 
Hence  
$$
\Big|X_Y\setminus \Big(\bigcup_{k\in [2]}X_Y^k\Big)\Big|\big(|\hat{\mathcal{C}}|-6\sqrt{\delta}n\big)\leq \sum_{i\in \hat{\mathcal{C}}}|Y\setminus Y_i|\leq (2\delta +2\epsilon)n|\hat{\mathcal{C}}|.
$$
It follows that $|X_Y\setminus (\bigcup_{k\in [2]}X_Y^k)|<4\delta n$. Then  we move vertices in $X_A\setminus (\bigcup_{k\in [2]}X_A^k)$ to $B$ and vertices in $X_B\setminus (\bigcup_{k\in [2]}X_B^k)$ to $A$.

Let $X'$ be a subset of $X$  consisting of vertices $x$ such that $d_{i}^{+}(x,Y_i)\geq (1-3\sqrt{\delta})n$ (resp. $d_{i}^{-}(x,Y_i)\geq (1-3\sqrt{\delta})n$) for at least $(1-3\sqrt{\delta})|\hat{\mathcal{C}}|$ colors $i\in \hat{\mathcal{C}}$ and $d_{i}^{-}(x,Z_i)\geq \frac{5}{2}\sqrt{\delta}n$ (resp. $d_{i}^{+}(x,Z_i)\geq \frac{5}{2}\sqrt{\delta}n$) for at least $3\sqrt{\delta}|\hat{\mathcal{C}}|$ colors $i\in \hat{\mathcal{C}}$. {Delete vertices in $X'$ from $A\cup B$.}  Without loss of generality, assume that $|B|-|A|=r$ with  $0\leq r\leq 8{\delta}n+\sqrt{\epsilon}n$. 
Define {$V_{\rm bad}:=\bigcup_{k\in [2]}(X^k_A\cup X^k_B)\cup (X\setminus X')$}. Obviously, $|V_{\rm bad}|< (\sqrt{\epsilon}+\frac{1}{2}\sqrt{\delta})n$. Moreover, each vertex in $Y\setminus V_{\rm bad}$ lies in $Y_i$ for at least $(1-13\sqrt{\delta})|\hat{\mathcal{C}}|$ colors $i\in \hat{\mathcal{C}}$.


Next, based on the sizes of 
$|\mathcal{C}_1|$ and $|\mathcal{C}_2|$, we distinguish our proof into the following three cases. 

\medskip
{\bf Case 1}. $|\mathcal{C}_1|< \eta n$. 
\medskip

In this case, each vertex in $Y\setminus (X\cup X_Y^2)$ belongs to $Y_i$ for at least $(1-13\sqrt{\delta})|\mathcal{C}_2|$ colors $i\in \mathcal{C}_2$. Recall that $|B|-|A|=r$ with  $0\leq r\leq (8{\delta}+\sqrt{\epsilon})n$. The proof is divided into four steps.

\medskip
{\bf Step 1. Balance the number of vertices in $A$ and $B$.}
\medskip

In this step, we claim that by moving vertices in $B\cap V_{\rm bad}$ or deleting rainbow directed paths inside $\mathcal{D}[B]$, we can make the number of remaining vertices in $B$ and $A$ differ by  $\sigma$, where $\sigma=1$ if $|A|+|B|$ is odd and $\sigma=0$ otherwise. Denote $s:=|V_{\rm bad}\cap B|$. If $|B|-|A|-\sigma\leq 2s$, then move $\frac{|B|-|A|-\sigma}{2}$ vertices in $V_{\rm bad}\cap B$ to $A$, and our desired result holds. Hence, it suffices to consider $|B|-|A|-\sigma>2s$. Now, we move all vertices in $V_{\rm bad}\cap B$ to $A$. Hence, $B\cap V_{\rm bad}=\emptyset$. Assume that $\{Q_1,\ldots,Q_t\}$ is a set of disjoint maximal rainbow directed paths in $\mathcal{D}[B]$. 

On the one hand, suppose that  $|E(Q_1)|+\cdots+|E(Q_{t})|\geq |B|-|A|-\sigma$. Then there must exist a set of disjoint rainbow directed paths, say $\{Q_1',\ldots,Q_{t'}'\}$, such that  $|E(Q_1')|+\cdots+|E(Q_{t'}')|= |B|-|A|-\sigma$. That is,  $|A|-t'=|B|-(|V(Q_1')|+\cdots+|V(Q_{t'}')|)-\sigma$. Using  Lemma~\ref{conn} (ii), we connect $Q_1',\ldots,Q_{t'}'$ into a single rainbow directed path $P^1$ with length at most $27\delta n$, starting at $A$ and ending at $B$. (In fact, by using Lemma~\ref{conn} (ii), we only get a rainbow path with two endpoints in $B$. But we usually want to find a rainbow directed path with endpoints in different parts. Hence one may extend it by using an unused color of $\mathcal{C}_2$ and an unused vertex in $A$). Therefore, $|A\setminus V(P^1)|=|B\setminus V(P^1)|-\sigma$, as desired. 

On the other hand, suppose that $|E(Q_1)|+\cdots+|E(Q_{t})|< |B|-|A|-\sigma= r-2s-\sigma$. In this case, we move all vertices of $Q_1,\ldots,Q_t$ to $A$.  
Let $\mathcal{C}':=\mathcal{C}\setminus {\rm col}(\bigcup_{i\in [t]}Q_i)$. 
Hence \allowdisplaybreaks
\begin{align*}
    |A|
    &\leq \frac{n-|X'|-r}{2}+s+2(r-2s-\sigma)\leq \frac{n}{2}+18{\delta}n,  \\
|B|&\geq \frac{n-|X'|+r}{2}-s-2(r-2s-\sigma)\geq \frac{n}{2}-18{\delta}n,\\
|\mathcal{C}'|&=|\mathcal{C}|-(|E(Q_1)|+\cdots+|E(Q_{t})|)\geq (1-9\delta)n.
\end{align*}

By the maximality of $\{Q_1,\ldots,Q_t\}$, one has  $D_i[B]=\emptyset$ for all  $i\in \mathcal{C}'$. It follows from  $\delta^0(\mathcal{D})\geq \left\lceil\frac{n}{2}\right\rceil$ that $\left\lceil\frac{n}{2}\right\rceil\leq|A|+|X'|\leq \frac{n}{2}+19{\delta}n$. Now, by Lemma \ref{Y-large}, 
we  obtain that $\mathcal{D}$ contains a transversal directed Hamilton cycle, a contradiction.
 
In this step, if we only move vertices in $V_{\rm bad}$ from $B$ to $A$, then let $P^1$ be a null digraph (i.e., no vertices); otherwise, assume that $P^1$ is a rainbow directed path obtained in the above, whose length is at most $27\delta n$ and endpoints are  $u_1\in A\setminus V_{\rm bad},\,v_1\in B\setminus V_{\rm bad}$. Hence $|A\setminus V(P^1)|=|B\setminus V(P^1)|-\sigma$. 

\medskip
{\bf Step 2. Construct a sequence of disjoint rainbow  directed $P_3$ copies such that all centers of them are exactly all vertices in $V_{\rm bad}\cup X'$.}
\medskip

Recall that $|X_A^2\cup X_B^2|\leq |X_A\cup X_B|<\frac{1}{2}\sqrt{\delta}n$.  Let $v\in X_Y^2$. If $v\in Z$, then $v$ is moved from $B$ to $A$ in Step 1, so $v\in X_B$. Therefore, $v\in A_i\cup B_i$ for at least $(1-6\sqrt{\delta})|\mathcal{C}_2|$ colors $i\in \mathcal{C}_2$ and $v\not\in B_i$ for at least $10\sqrt{\delta}|\mathcal{C}_2|$ colors $i\in \mathcal{C}_2$. Hence, $v\in A_i$ for at least $4\sqrt{\delta}|\mathcal{C}_2|-27\delta n>4|V_{\rm bad}\cup X'|$ colors $i\in \mathcal{C}_2\setminus {\rm col}(P^1)$. If $v\in Y$, then $v$ belongs to $Y_i$ for at least $(3\sqrt{\delta}-27\delta)n>4|V_{\rm bad}\cup X'|$ colors $i\in \mathcal{C}_2\setminus {\rm col}(P^1)$. Hence, using colors in $\mathcal{C}_2\setminus {\rm col}(P^1)$, we can greedily choose 
$|X_A^2\cup X_B^2|$  disjoint rainbow directed $P_3$ copies, such that their centers correspond to all vertices in $X_A^2\cup X_B^2$. Moreover, if $v\in (X_A^2\cup X_B^2)\cap Y$, then the rainbow directed  $P_3$ with center $v$ has endpoints in $Z\setminus (V_{\rm bad}\cup X'\cup V(P^1))$. 

For vertices in $X'$, we know from its definition that by avoiding vertices and colors in $P^1$ and all rainbow directed paths chosen for vertices in  $X_A^2\cup X_B^2$, there exists a set of disjoint  rainbow directed $P_3$ copies with centers in $X'$ and endpoints in different parts. 

By using Lemma~\ref{conn} (i)-(ii), we connect $P^1$, all rainbow directed $P_3$ copies with centers in $X_A^2\cup X_B^2\cup X'$ (as obtained above) into a single rainbow directed path.  
This results in a rainbow directed path $P^2=u_1\ldots v_2$ with $v_2\in B\setminus V_{\rm bad}$ and length at most 
$$
|E(P^1)|+4|X_A^2\cup X_B^2|+5|X'|+6\leq 27\delta n+2\sqrt{\delta}n+5\sqrt{\epsilon}n+6.
$$

For vertices in $X\setminus X'$, we consider the following claim. Notice that the position (in $A$ or $B$) of vertices in $X\setminus X'$ does not affect our proof, so if $v\in X$ is a vertex moved from $B$ to $A$, then we only need consider $v\in A$. By Lemma~\ref{claim5}, for each $x_i\in (X\setminus X')\cap Y$, there exists a  rainbow directed path $Q_{x_i}=x_i^1x_ix_i^2$ with ${\rm col}(Q_{x_i})=\{c_i^1,c_i^2\}\subseteq \mathcal{C}_2$ and $x_i^1,x_i^2\in Z\setminus V_{\rm bad}$. 
Moreover, each color or endpoint in $Q_{x_i}$ can be replaced by an unused color or vertex (if needed). Using Lemma~\ref{conn} (i)-(ii), one may connect $P^2$ and all rainbow directed paths in $\{Q_{x_i}:x_i\in X\setminus X'\}$ into a single rainbow directed path $P^3$ with endpoints $u_1\in A\setminus V_{\rm bad}$ and $v_3\in B\setminus V_{\rm bad}$, whose length is at most $27\delta n+2\sqrt{\delta}n+5\sqrt{\epsilon}n+10.$

\medskip
{\bf Step 3. Select a rainbow matching  with colors in $\mathcal{C}_{\rm bad}\setminus {\rm col}(P^3)$.} 
\medskip

Choose a maximum rainbow matching  in $\{D_i^{\pm}[A\setminus V(P^3),B\setminus V(P^3)]:i\in \mathcal{C}_{\rm bad}\setminus {\rm col}(P^3)\}$, say $\Tilde{M}$. If there exists an edge  with color $j$ in $\Tilde{M}$  such that  $D_j^{\pm}[A\setminus V(P^3\cup \Tilde{M}),B\setminus V(P^3\cup \Tilde{M})]$ contains no $2$-matching, then delete all such edges from $\Tilde{M}$ and denote the resulting rainbow directed  matching by $M$.  {Connecting} $P^3$ and all rainbow directed edges in $M$ by Lemma~\ref{conn} (i), we obtain a rainbow directed path $P^4$ with endpoints $u_1\in A\setminus V_{\rm bad}$ and $v_4\in B\setminus V_{\rm bad}$, whose length is at most $|E(P^3)|+16\sqrt{\delta} n\leq 18\sqrt{\delta}n+5\sqrt{\epsilon}n+27\delta n+10$. 

Denote $\mathcal{C}_{\rm bad}':=\mathcal{C}_{\rm bad}\setminus {\rm col}(P^4)$ and let  $j\in \mathcal{C}_{\rm bad}'$. Hence in $D_j$, all but at most one vertex in  $Y\setminus V(P^3\cup \Tilde{M})$ has at least $\frac{n}{2}-|V(P^3\cup \Tilde{M})|-1\geq (\frac{1}{2}-19\sqrt{\delta})n$ out-neighbors in $Y$. Therefore, $D_j$ is $(19\sqrt{\delta},{\rm EC}1)$-extremal for all $j\in \mathcal{C}_{\rm bad}'$.

\medskip
{\bf Step 4. Construct two rainbow directed paths inside $\mathcal{D}[A\setminus V(P^4)]$ and $\mathcal{D}[B\setminus V(P^4)]$ by using unused colors in $\mathcal{C}_1\cup \mathcal{C}_{\rm bad}'$}.
\medskip

It is straightforward to check that 
$$
 |B\setminus V(P^4)|-|A\setminus V(P^4)|=|B\setminus V(P^1)|-|A\setminus V(P^1)|=\sigma.
$$
By the construction of $P^4$, we know that if $w\in V(P^4)$ is a vertex not in $(V(P^1)\cap B)\cup V_{\rm bad}\cup X$, then it is possible to avoid $w$ when constructing $P^4$; if $i\in {\rm col}(P^4)$ is a color not in ${\rm col}(P^1-A)\cup {\rm col}(M)$, then it is possible to avoid $i$ when constructing $P^4$. In other words, we may  assume that $w\not\in V(P^4)$ unless $w\in (V(P^1)\cap B)\cup V_{\rm bad}\cup X'$, and $i\not\in {\rm col}(P^4)$ unless $i\in {\rm col}(P^1-A)\cup {\rm col}(M)$.

Furthermore, while selecting rainbow directed paths inside $\mathcal{D}[B]$ (in Step 1), one may give priority to  {using} colors from $\mathcal{C}_1\cup \mathcal{C}_{\rm bad}'$ and avoid vertices in $X'\cup V_{\rm bad}\cup V(M)$. This implies that either $\mathcal{C}_1\cup \mathcal{C}_{\rm bad}'\subseteq {\rm col}(P^1)$ or ${\rm col}(P^1-A)\subseteq \mathcal{C}_1\cup \mathcal{C}_{\rm bad}'$. Denote $\Tilde{\mathcal{C}}:=(\mathcal{C}_1\cup \mathcal{C}_{\rm bad}')\setminus {\rm col}(P^1)$. 

Notice that for each color $i\in \mathcal{C}_2\setminus {\rm col}(P^4)$, every vertex $v\in A\setminus V(P^4)$ has at least $(\frac{1}{2}-3\sqrt{\delta})n-|V(P^4)|-|B\triangle B_i|$ out-neighbors (resp. in-neighbors) in $B\setminus V(P^4)$ in $D_i$. 
Hence,  
$$
\left|N_{i}^+(v)\cap N_{i}^-(v)\cap (B\setminus V(P^4))\right|\geq \left(\frac{1}{2}-45\sqrt{\delta}\right)n.
$$
Therefore,  
by replacing every pair of arcs with the same endpoints and  opposite directions by an undirected edge, the digraph $D_i^{\pm}[A\setminus V(P^4),B\setminus V(P^4)]$ can be viewed as an undirected graph $G_i$ with vertex set $(A\cup B)\setminus V(P^4)$ and  minimum degree at least $(\frac{1}{2}-45\sqrt{\delta})n$. 

    The next claim employs Lemma~\ref{lemma4.1} to construct a transversal directed Hamilton cycle. 
\begin{claim}\label{parity}
Let $Q=x_1\ldots x_s$ be a rainbow directed path inside $\mathcal{D}$ with $s\leq 19\sqrt{\delta}n$, $V_{\rm bad}\subseteq V(Q)$, $(\mathcal{C}_1\cup \mathcal{C}_{\rm bad})\setminus \Tilde{\mathcal{C}}\subseteq {\rm col}(Q)$, $x_1\in A\setminus V_{\rm bad}$ and $x_s\in B\setminus V_{\rm bad}$. 
If $|\Tilde{\mathcal{C}}|-\big||B\setminus V(Q)|-|A\setminus V(Q)|\big|$ is a nonnegative even integer, then $\mathcal{D}$ has a transversal directed Hamilton cycle.
\end{claim}
\begin{claimproof}[Proof of Claim \ref{parity}]
    Using colors in $\Tilde{\mathcal{C}}$, one may greedily choose two disjoint  rainbow directed paths $P^5$ and $P^6$ in $\mathcal{D}[A\setminus V(Q)]$ and $\mathcal{D}[B\setminus V(Q)]$ with lengths  $$\frac{|\Tilde{\mathcal{C}}|-(|B\setminus V(Q)|-|A\setminus V(Q)|)}{2}\ \ \text{and}\ \ \frac{|\Tilde{\mathcal{C}}|+(|B\setminus V(Q)|-|A\setminus V(Q)|)}{2}$$ respectively, whose endpoints are $u_5,v_5\in A\setminus V_{\rm bad}$ and  $u_6,v_6\in B\setminus V_{\rm bad}$. (Note that if a rainbow path has length $0$, then it contains one vertex.) Applying Lemma~\ref{conn} (i)-(ii) to connect $Q,P^5$ and $P^6$ in turn, we get a rainbow directed  path $P$ with endpoints $x_1\in A\setminus V_{\rm bad}$ and $v_6\in B\setminus V_{\rm bad}$. Clearly, $|V(P)|\leq 19\sqrt{\delta}n+4\sqrt{\delta} n+\eta n+4$ and $|A\setminus V(P)|=|B\setminus V(P)|$. 

Notice that $x_1,v_6\not\in V_{\rm bad}$. Hence there exist $i_1,i_2\in \mathcal{C}_2\setminus {\rm col}(P)$ such that $x_1\in A_{i_1}$ and $v_6\in B_{i_2}$. Let  $W^*:=A\setminus V(P),\,T^*:=B\setminus V(P)$, $\mathcal{C}^*:=\mathcal{C}\setminus ({\rm col}(P)\cup \{i_1,i_2\})$, $W^-:=N_{G_{i_2}}(v_6)\cap W^*$ and $T^+:=N_{G_{i_1}}(x_1)\cap T^*$. It is routine to check that $|W^*|=|T^*|$, $|\mathcal{C}^*|=|W^*|+|T^*|-1$, 
\begin{align*}
    |W^-|&\geq d_{G_{i_2}}(v_6,A)-|V(P)|\geq d_{G_{i_2}}(v_6,A_{i_2})-|A_1\triangle A_{i_2}|-|X_A\setminus X_A^2|-|X'|-|V(P)|\\
    &\geq \left(\frac{1}{2}-45\sqrt{\delta}\right)n-2\delta n-4{\delta}n-\sqrt{\epsilon}n-|V(P)|> \left(\frac{1}{2}-2\eta\right)n,
\end{align*}
and similarly $|T^+|>\left(\frac{1}{2}-2\eta\right)n$.  Applying Lemma~\ref{lemma4.1} yields that $\{G_i[W^*,T^*]:i\in \mathcal{C}^*\}$ contains a transversal undirected path $P'$ starting at $v'\in W^-$ and ending   at $u'\in T^+$. Thus, $x_1Pv_6v'P'u'x_1$ is a transversal directed Hamilton cycle inside $\mathcal{D}$.  
\end{claimproof}

If $n$ and $|\Tilde{\mathcal{C}}|$ have the same parity, 
then by Claim \ref{parity}, there exists a transversal directed Hamilton cycle inside $\mathcal{D}$, a contradiction. Hence, it suffices to consider that $n$ and $|\Tilde{\mathcal{C}}|$ have { different} parity. 
Therefore, either $|\Tilde{\mathcal{C}}|-(|B\setminus V(P^4)|-|A\setminus V(P^4)|)$ is a positive odd integer or $|\Tilde{\mathcal{C}}|=0$ and $n$ is odd.  

Based on the definition of $X'$, we move vertices in $X'$  to $Y$ if it is covered by a rainbow directed path from $Z$ to $Y$ (in Step 2). The following claim  considers the local structure of   $D_i$. 
\begin{claim}\label{claim4.7}
\begin{enumerate}
   \item[{\rm (i)}] 
       If $|\Tilde{\mathcal{C}}|=0$, then $D_{i}[B\setminus (X'\cup V(P^1))]=\emptyset$ for all $i\in \mathcal{C}\setminus {\rm col}(P^1-A)$.
    \item[{\rm (ii)}] For all $i\in \Tilde{\mathcal{C}}$, $D_{i}[A\setminus  (V_{\rm bad}\cup X'),B]=\emptyset$ and $D_{i}[(B\setminus (V_{\rm bad}\cup X'\cup V(P^1)),A]=\emptyset$.
\end{enumerate}

\end{claim}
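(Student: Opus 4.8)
The plan is to prove both parts by contradiction with the standing hypothesis that $\mathcal{D}$ has no transversal directed Hamilton cycle, via one common device: a ``surplus'' colored edge, once spliced into one of the short rainbow paths $P^1,\dots,P^4$ using the connecting tool of Claim~\ref{conn}, shifts by exactly one the quantity $|\tilde{\mathcal{C}}|-||B\setminus V(Q)|-|A\setminus V(Q)||$ that is controlled in Claim~\ref{parity}, turning an odd value into an even one and so letting Claim~\ref{parity} deliver a transversal directed Hamilton cycle. We treat~(ii) first, since its argument is the template for~(i).

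For part~(ii), fix $i\in\tilde{\mathcal{C}}$; recall that we are in the subcase where $|\tilde{\mathcal{C}}|-\sigma$ is a positive odd integer, with $\sigma:=|B\setminus V(P^4)|-|A\setminus V(P^4)|\in\{0,1\}$. Suppose toward a contradiction that $D_i$ has an edge $xy$ with $x\in A\setminus(V_{\rm bad}\cup X')$ and $y\in B$ (the case $x\in B\setminus(V_{\rm bad}\cup X'\cup V(P^1))$, $y\in A$, being symmetric, now using that such a tail is free). First I would observe that Steps~1--4 can be rerun so that $x,y\notin V(P^4)$ and $i\notin{\rm col}(P^4)$: every application of Claim~\ref{conn} left at least $\tfrac{\delta n}{10}$ free choices for each new vertex and color and $|V_{\rm bad}\cup X|<\sqrt{\epsilon}\,n\ll\delta n$, so the short paths can be steered away from $\{x,y\}$ and from $i$; if $y$ happens to be the center of a $P_3$ covering a vertex of $V_{\rm bad}\cup X'$, I would instead swap the edge $xy$ of color $i$ into that $P_3$ in place of its incoming $\mathcal{C}_2$-edge, which has the same effect. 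Then, since $P^4$ ends in $B$ and the length-one path $xy$ starts in $A$, Claim~\ref{conn}(i) joins them into a short rainbow path ${P^4}'$ with $V_{\rm bad}\subseteq V({P^4}')$, endpoints in $A\setminus V_{\rm bad}$ and $B\setminus V_{\rm bad}$, ${\rm col}({P^4}')={\rm col}(P^4)\cup\{i,c_1,c_2,c_3\}$ for some $c_1,c_2,c_3\in\mathcal{C}_2$, and — because Claim~\ref{conn}(i) and the edge $xy$ each consume exactly one vertex of $A$ and one of $B$ — with $|B\setminus V({P^4}')|-|A\setminus V({P^4}')|=\sigma$. Since $(\mathcal{C}_1\cup\mathcal{C}_{\rm bad})\setminus\tilde{\mathcal{C}}\subseteq{\rm col}(P^4)$, we get $(\mathcal{C}_1\cup\mathcal{C}_{\rm bad})\setminus(\tilde{\mathcal{C}}\setminus\{i\})\subseteq{\rm col}({P^4}')$ and $(\tilde{\mathcal{C}}\setminus\{i\})\cap{\rm col}({P^4}')=\emptyset$, so Claim~\ref{parity}, applied with $Q:={P^4}'$ and with $\tilde{\mathcal{C}}$ replaced throughout by $\tilde{\mathcal{C}}\setminus\{i\}$ (still a set indexing EC1-digraphs), applies, and $|\tilde{\mathcal{C}}\setminus\{i\}|-\sigma=(|\tilde{\mathcal{C}}|-\sigma)-1$ is a nonnegative even integer. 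Thus $\mathcal{D}$ has a transversal directed Hamilton cycle, a contradiction, so no such $xy$ exists.

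For part~(i), assume $|\tilde{\mathcal{C}}|=0$, so $\mathcal{C}_1\cup\mathcal{C}_{\rm bad}'\subseteq{\rm col}(P^1)$ and hence $\mathcal{C}_1\cup\mathcal{C}_{\rm bad}\subseteq{\rm col}(P^4)$; note that $\sigma=1$, since $\sigma=0$ would make $|\tilde{\mathcal{C}}|-\sigma=0$ and Claim~\ref{parity} with $Q:=P^4$ would already conclude. Fix $i\in\mathcal{C}\setminus{\rm col}(P^1-A)$ and suppose $D_i[B\setminus(X'\cup V(P^1))]\neq\emptyset$, say $xy\in E(D_i)$ with $x,y\in B\setminus(X'\cup V(P^1))$; rerouting the $\mathcal{C}_2$-connectors of $P^1$ if necessary I may assume $i\notin{\rm col}(P^1)$ and $x,y\notin V(P^1)$. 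I would extend $P^1$ at its $B$-endpoint, via Claim~\ref{conn}(ii) through one auxiliary vertex of $A\setminus V_{\rm bad}$, to a short rainbow path ${P^1}'$ that in addition uses the color $i$ and two colors of $\mathcal{C}_2$, with $V({P^1}')=V(P^1)\cup\{w_1,x,y\}$; this removes two vertices from the $B$-side and one from the $A$-side, so $|B\setminus V({P^1}')|-|A\setminus V({P^1}')|=\sigma-1=0$. Running Steps~2--4 from ${P^1}'$, arranged so that the resulting path $Q$ uses at least the colors used before (hence $\mathcal{C}_1\cup\mathcal{C}_{\rm bad}\subseteq{\rm col}(Q)$ and the new leftover set is empty), yields $Q$ with $V_{\rm bad}\subseteq V(Q)$, endpoints in $A\setminus V_{\rm bad}$ and $B\setminus V_{\rm bad}$, and $|B\setminus V(Q)|=|A\setminus V(Q)|$; Claim~\ref{parity} with leftover set $\emptyset$ then produces a transversal directed Hamilton cycle, a contradiction. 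Hence $D_i[B\setminus(X'\cup V(P^1))]=\emptyset$.

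The hard part is entirely bookkeeping: one must verify that the re-run of Steps~1--4 can always be arranged to avoid a prescribed pair of vertices and a prescribed color (this is exactly where the slack of $\geq\tfrac{\delta n}{10}$ choices recorded at each connecting step and the smallness of $V_{\rm bad}\cup X$ are used), that it can be arranged to use at least the previously used colors (so the leftover set does not grow), and that after the splice every downstream numerical hypothesis still holds — the length bound $s\leq 19\sqrt{\delta}\,n$ for $Q$ in Claim~\ref{parity}, the endpoints lying in $A\setminus V_{\rm bad}$ and $B\setminus V_{\rm bad}$, and, when Claim~\ref{lemma4.1} is invoked inside Claim~\ref{parity}, the exact color count $|\mathcal{C}^*|=|W^*|+|Z^*|-1$ together with the degree bounds $|W^-|,|T^+|>(\tfrac12-2\eta)n$. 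The parity arithmetic itself is immediate once the imbalance $\sigma$ and the number of unused colors are tracked through the single insertion.
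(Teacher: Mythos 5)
Your overall strategy — splice the surplus colored edge into the short path so that the quantity $|\tilde{\mathcal{C}}|-\bigl||B\setminus V(Q)|-|A\setminus V(Q)|\bigr|$ drops by one (odd to even) and then invoke Claim~\ref{parity} — is exactly the paper's, and the parity bookkeeping you do in the ``clean'' case is correct, including the observation that $\sigma=1$ is forced when $|\tilde{\mathcal{C}}|=0$. But the core of the paper's proof is a case analysis on where the endpoints of the new edge sit, and your proposal omits the difficult cases.

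In part~(ii), you claim Steps~1--4 can be rerun to avoid $y$, handling $y\in V_{\rm bad}$ by swapping $xy$ into the $P_3$ centered at $y$. This leaves the case $y\in (V(P^1)\cap B)\setminus V_{\rm bad}$ untreated, and that case is genuinely unavoidable: the vertices of $P^1\cap B$ come from the maximal rainbow paths in $\mathcal{D}[B]$ chosen in Step~1 to balance $|A|$ and $|B|$, and there is no slack to steer them away from a prescribed $y$ (in the boundary situation the $Q_i'$ are forced to use all of the $Q_i$). The paper handles this by locating $z_2$'s predecessor $z_2'$ in $P^4$, replacing the edge $z_2'z_2$ by $z_1z_2$, and then reconnecting; crucially the parity outcome depends on whether $z_2'\in A$ (the freed color is in $\mathcal{C}_2$, $\tilde{\mathcal{C}}$ shrinks by one, vertex difference unchanged) or $z_2'\in B$ (the freed color $i_2\in(\mathcal{C}_1\cup\mathcal{C}_{\rm bad}')\setminus\tilde{\mathcal{C}}$ is added to $\tilde{\mathcal{C}}$, so $|\tilde{\mathcal{C}}|$ is unchanged while the vertex difference shifts by one). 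Your proposal has no analogue of this dichotomy.

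In part~(i), the same difficulty recurs in a different guise. You extend $P^1$ at its $B$-endpoint $v_1$ by joining it to $xy$ via Claim~\ref{conn}(ii). But Claim~\ref{conn} explicitly requires both endpoints being joined to lie outside $V_{\rm bad}\cup X'$, and while $x,y\notin X'\cup V(P^1)$ is guaranteed, $x$ or $y$ may well lie in $V_{\rm bad}$. The paper therefore splits into three subcases --- $w_1,w_2\notin V_{\rm bad}$; exactly one in $V_{\rm bad}$; both in $V_{\rm bad}$ --- and in the latter two replaces a length-$2$ or length-$4$ segment of $P^4$ through the relevant $P_3$ centers by the single edge $w_1w_2$, each surgery chosen so that the vertex imbalance and color count change in tandem. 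Your plan of ``extend $P^1$ and rerun Steps~2--4'' also asserts without argument that the rerun can be made to reuse at least the previously used colors so that the new leftover set stays empty; this is not obviously true, since $\mathcal{C}_{\rm bad}'$ and hence $\tilde{\mathcal{C}}$ are defined a posteriori in terms of the matching $M$ chosen in Step~3, and a rerun could produce a different $M$. The paper sidesteps all of this by performing local surgery directly on the already-fixed $P^4$ rather than rebuilding it. You should reorganize the argument around that local-surgery viewpoint and supply the missing subcases.
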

\begin{claimproof}[Proof of Claim \ref{claim4.7}]
(i)\ 
Suppose that there exists an $i_0\in \mathcal{C}\setminus {\rm col}(P^1-A)$ such that $D_{i_0}[B\setminus (X'\cup V(P^1))]\ne \emptyset$. Choose ${w_1w_2}\in E(D_{i_0}[B\setminus (X'\cup V(P^1))])$. 
Recall that each of $w_1$ and $w_2$ can be chosen outside $V(P^4)$ unless it is in  $(V(P^1)\cap B)\cup V_{\rm bad}\cup X'$, and $i_0$ can be chosen outside ${\rm col}(P^4)$ unless $i_0\in {\rm col}(M)$. 
If $i_0\in {\rm col}(M)$, then let $\Tilde{P}^4$ be a rainbow directed path obtained by deleting the edge with color $i_0$ from $P^4$ and  connecting its two endpoints by Lemma~\ref{conn}~(i). 
It is routine to check that $|B\setminus V(\Tilde{P}^4)|-|A\setminus V(\Tilde{P}^4)|=|B\setminus V({P}^4)|-|A\setminus V({P}^4)|$. Let $P^4:=\Tilde{P}^4$ if $i_0\in {\rm col}(M)$, which
reduces our proof to the case where $i_0\not\in {\rm col}(P^4)$. We consider the following several cases.

\medskip
$\bullet$ Suppose $w_1,w_2\not\in V_{\rm bad}$. Then $w_1,w_2\not\in V(P^4)$. 
Connect $w_1$ with the endpoint $v_4$ of $P^4$ by Lemma~\ref{conn} (ii). Denote the resulting rainbow directed path by $Q_1$.
\medskip

$\bullet$ Suppose $w_1\not\in V_{\rm bad}$ and $w_2\in V_{\rm bad}$. Hence $w_1\not\in V(P^4)$ and there exists a rainbow subpath $u_1P^4w_2^1w_2^2w_2$ of $P^4$ with $w_2^1\in B\setminus (V_{\rm bad}\cup X')$, $w_2^2\in A\setminus (V_{\rm bad}\cup X')$. Replace $w_2^1w_2^2w_2$ with $w_1w_2$ in $P^4$, and connect $w_1$ with the vertex in $N_{P^4}^-(w_2^1)$ by Lemma~\ref{conn} (i). Let $Q_2$ be the resulting rainbow directed path. The case for $w_1\in V_{\rm bad}$ and $w_2\not\in V_{\rm bad}$ is similar. 
\medskip

$\bullet$ Suppose $w_1,w_2\in V_{\rm bad}$. 
By rearranging, one may assume that  $w_1$ and $w_2$ are connected by a rainbow directed path $w_1w_1^1w_2^1w_2^2w_2$ in  $P^4$, where $w_1^1,w_2^2\in A\setminus (V_{\rm bad}\cup X')$ and $w_2^1\in B\setminus (V_{\rm bad}\cup X')$. Replace $w_1w_1^1w_2^1w_2^2w_2$ with $w_1w_2$ in $P^4$; and denote the resulting rainbow directed path by $Q_3$.
\medskip



In each of the above cases, we get a rainbow directed path $Q_i$ ($i\in [3]$) such that $|\Tilde{\mathcal{C}}|-\big||B\setminus V(Q_i)|-|A\setminus V(Q_i)|\big|$ is a nonnegative even integer.  It follows from Claim \ref{parity} that $\mathcal{D}$ contains  a transversal directed Hamilton cycle, a contradiction.

\medskip
(ii)\ We will only prove that $D_{i}[A\setminus  (V_{\rm bad}\cup X'),B]=\emptyset$ for all $i\in \Tilde{\mathcal{C}}$, the second statement can be proved by a similar discussion. Suppose that $D_{i_1}[A\setminus  (V_{\rm bad}\cup X'),B]\neq\emptyset$ for some $i_1\in \Tilde{\mathcal{C}}$. Choose ${z_1z_2}\in E(D_{i_1}[A\setminus  (V_{\rm bad}\cup X'),B])$. Hence  $z_1\notin V(P^4)$. Next, we consider several cases.


\medskip
$\bullet$ Suppose $z_2\not\in (V(P^1)\cap B)\cup V_{\rm bad}\cup X'$. Then $z_2\not\in V(P^4)$. Using Lemma~\ref{conn}~(i), connect  $z_2$ to the endpoint $u_1$ of $P^4$. Denote the resulting rainbow directed path by  $Q_1$, and set $\Tilde{\mathcal{C}}_1:=\Tilde{\mathcal{C}}\setminus \{i_1\}$. 

\medskip

$\bullet$ Suppose $z_2\in  V(P^1)\cap B$. 
Then $P^4$ can be written as $u_1P^4z_2'z_2P^4v_4$, where $z_2'z_2\in E(P^1)\cap E(D_{i_2})$ for some $i_2\in \mathcal{C}$. Clearly,  $i_2\in \mathcal{C}_2$ if $z_2'\in A$, and $i_2\in (\mathcal{C}_1\cup \mathcal{C}_{\rm bad}')\setminus \Tilde{\mathcal{C}}$ if $z_2'\in B$ (since ${\rm col}(P^1-A)\subseteq \mathcal{C}_1\cup \mathcal{C}_{\rm bad}'$). Notice that $V(P^1)\cap B\cap V_{\rm bad}=\emptyset$. Hence $z_2'\not\in V_{\rm bad}$. 
Apply Lemma~\ref{conn}~(i)-(ii) to connect $u_1P^1z_2'$ and  $z_1z_2P^4v_4$ into a single rainbow directed path, which we denote by  $Q_2$. Let $\Tilde{\mathcal{C}}_2:=\Tilde{\mathcal{C}}\setminus \{i_1\}$ if $z_2'\in A$, and $\Tilde{\mathcal{C}}_2:=(\Tilde{\mathcal{C}}\setminus \{i_1\})\cup \{i_2\}$ if $z_2'\in B$.

\medskip
$\bullet$ Suppose $z_2\in V_{\rm bad}\cup X'$. Then $P^4$ can be written as $u_1P^4z_2^1z_2P^4v_4$, where $z_2^1\in A\setminus V_{\rm bad}$. By Lemma~\ref{conn} (ii), we connect $u_1P^4z_2^1$ with $z_1z_2z_2^2P^4v_4$ into a single rainbow directed path, say   $Q_3$. Set $\Tilde{\mathcal{C}}_3:=\Tilde{\mathcal{C}}\setminus \{i_1\}$. 

\medskip
In each of the above cases, we get a rainbow directed path $Q_i$ ($i\in [3]$) such that $|\Tilde{\mathcal{C}}|-\big||B\setminus V(Q_i)|-|A\setminus V(Q_i)|\big|$ is a nonnegative even integer.  It follows from Claim \ref{parity} that $\mathcal{D}$ contains  a transversal directed Hamilton cycle, a contradiction. 
\end{claimproof}

If $|\Tilde{\mathcal{C}}|=0$, then $n$ is odd. In view of Claim \ref{claim4.7} (i), we know $D_i[B\setminus (X'\cup V(P^1))]=\emptyset$ for all $i\in \mathcal{C}\setminus {\rm col}(P^1-A)$. Notice that $|B\setminus (X'\cup V(P^1))|\geq \frac{n}{2}-28\delta n$ and $|{\rm col}(P^1-A)|\leq 27\delta n$. Together with Lemma \ref{Y-large} (see the end of this subsection), we know that $\mathcal{D}$ contains a transversal directed Hamilton cycle, a contradiction. 

If $|\Tilde{\mathcal{C}}|\geq 1$, then based on Claim \ref{claim4.7} (ii) and the fact that $\delta^0(\mathcal{D})\geq \frac{n}{2}$, we obtain $|A|,|B|\geq \frac{n+1}{2}$, a contradiction.

\medskip
{\bf Case 2}. $|\mathcal{C}_2|< \eta n$.
\medskip

In this case, each vertex in $Y\setminus (X\cup X_Y^1)$ belongs to $Y_i$ for at least  $(1-13\sqrt{\delta})|\mathcal{C}_1|$ colors $i\in \mathcal{C}_1$. Following the argument from Step 2 of Case 1, we know that there are $|V_{\rm bad}|$ disjoint rainbow directed $P_3$ copies, each having its center in $V_{\rm bad}$
  and both endpoints in the same part. We now  consider vertices in $X'$. 
 Based on the definition of $X'$, we move a vertex in $X'$  to $Y$ whenever it is covered by a rainbow directed  $P_3$ from $Z$ to $Y$ (in Step 2 of Case 1). 
 In order to connect those rainbow directed $P_3$ with centers in $X'$ into a single rainbow directed path, we balance the number of rainbow directed paths from $A$ to $B$ and those from $B$ to $A$. 

 \begin{claim}\label{rainbowX'}
    There exists a rainbow directed path $P^0$ with length at most $8\sqrt{\epsilon}n$ such that $X'\subseteq V(P^0)$, ${\rm col}(P^0)\subseteq \mathcal{C}_1$, and its endpoints lie outside 
$V_{\rm bad}\cup X'$. Furthermore, $P^0$ can be chosen to either start at $A$ and end at $B$,   or have both endpoints in the same part.
 \end{claim}
\begin{claimproof}[Proof of Claim \ref{rainbowX'}]
If $|X'|=0$, then by the minimum semi-degree condition, our result holds obviously.  
Assume therefore that  $|X'|\geq 1$ and, without loss of generality, that $|A\cap X'|-|B\cap X'|\geq 0$. We first select all disjoint non-trivial maximum rainbow directed paths, say $Q_1,\ldots,Q_t$, inside $\{D_i[A\cap X']:i\in \mathcal{C}_1\}$. Let $t':=|V(Q_1)|+\cdots+|V(Q_t)|$ and $Q_i:=x_i\ldots y_i$ for each $i\in [t]$. Next, we choose a maximum rainbow directed  matching, denoted by $\tilde{M}$, inside $\{D_i[A,B]:i\in \mathcal{C}_1\setminus {\rm col}(Q_1\cup Q_2\cup \ldots \cup Q_t)\}$ avoiding vertices in $\bigcup_{i\in [t]}V(Q_i-y_i)$. Partition $\tilde{M}$ {into}  $M_1\cup M_2$ such that $V(M_2)\cap A=V(\tilde{M})\cap A\cap X'$. Hence, each vertex in $V(M_2)\cap A$ or each $Q_i$ with $y_i\in V(M_2)\cap A$ can be covered by a rainbow directed path with length at most $|X'|+2$ and both endpoints in $B\setminus (V_{\rm bad}\cup X')$. Denote $\hat{X}:=(A\cap X')\setminus V\big(\bigcup_{i\in [t]}(Q_i-y_i)\cup \tilde{M}\big)$.


Assume that  $|\hat{X}|\geq 1$. We are to prove that $|E(M_1)|\geq |\hat{X}|+1$. 
Suppose for contradiction that  $|E(M_1)|\leq |\hat{X}|\leq \sqrt{\epsilon}n$. Together with the definition of $X'$, one has $B\cap X'\subseteq V(\tilde{M})\cap B$ and $\tilde{M}$ can be chosen such that each matching edge incident to a vertex in $B\cap X'$ is not incident to  vertices in $A\cap X'$. 

Choose $u_1\in \hat{X}$, $u_2\in B\setminus V(\tilde{M})$ and two colors $c_1,c_2\in \mathcal{C}_1\setminus {\rm col}(\tilde{M}\cup Q_1\cup \ldots \cup Q_t)$. We consider $N_{c_1}^+(u_1)$ and $N_{c_2}^-(u_2)$ respectively. 
It is easy to see that $u_1$ has at most $t'-t$ out-neighbors inside $A\cap X'$ and $N_{c_1}^+(u_1,B)=N_{c_1}^+(u_1,B\cap V(\tilde{M}))$. Hence 
\begin{align}\label{eq:X'1}
  \left\lceil\frac{n}{2}\right\rceil\leq |N_{c_1}^+(u_1)|\leq |A\setminus X'|+(t'-t)+|N_{c_1}^+(u_1,B\cap V(\tilde{M}))|.
\end{align}
Similarly, we have 
\begin{align}\label{eq:X'2}
  \left\lceil\frac{n}{2}\right\rceil\leq |N_{c_2}^-(u_2)|\leq |N_{c_2}^-(u_2,A\cap V(\tilde{M}))|+|B|-1.
\end{align}
Therefore, 
$$
  |N_{c_1}^+(u_1,B\cap V(\tilde{M}))|+|N_{c_2}^-(u_2,A\cap V(\tilde{M}))|\geq |\hat{X}|+|E(M_2)|+1.
$$
Notice that, if there exists an edge $w_1w_2\in E(\tilde{M})$ such that  $u_1w_1\in E(D_{c_1})$ and $w_2u_2\in E(D_{c_2})$, then we will find a large rainbow matching, a contradiction. Hence $|N_{c_1}^+(u_1,B\cap V(\tilde{M}))|+|N_{c_2}^-(u_2,A\cap V(\tilde{M}))|\leq |E(\tilde{M})|$. It follows that $|E(\tilde{M})|\geq |\hat{X}|+|E(M_2)|+1$, which implies $|E(M_1)|\geq |\hat{X}|+1$. Thus, together with Lemma~\ref{conn} (iii), the desired rainbow directed path $P^0$ can be obtained. 


Now, we consider $|\hat{X}|=0$ and $|B\cap X'|=0$. If $|E(M_1)|\geq 1$, then we are done. Otherwise, by the maximality of $\tilde{M}$ and the definition of $X'$, we deduce that $D_i[A\setminus X',B]=\emptyset$. Using a similar argument as \eqref{eq:X'1} and \eqref{eq:X'2}, there is a vertex in $A\setminus (V_{\rm bad}\cup X')$ that is adjacent to some vertex in $\{x_1,\ldots,x_t\}\cup ((A\cap X')\setminus V(Q_1\cup \ldots\cup Q_t))$. 
Together with Lemma~\ref{conn} (iii), the desired rainbow directed path can be obtained immediately.

Finally, we consider  $|\hat{X}|=0$ and $|B\cap X'|\geq 1$. In this case, the desired rainbow directed path also follows immediately from part (iii) of Lemma~\ref{conn}.  (Note that if there is a rainbow directed path with some endpoint in  $V_{\rm bad}$, then it can be extended by at most two vertices so that both of its endpoints remain in the corresponding part and are no longer in  $V_{\rm bad}$.)  
\end{claimproof}

By Lemma~\ref{claim5} and a similar discussion as Steps 1-3 in Case 1, the following hold. 
\begin{enumerate}
    \item[{\rm \bf (D1)}] In $\{D_i[A\setminus V(P^0)]\cup D_i[B\setminus V(P^0)]:i\in \mathcal{C}_1\setminus {\rm col}(P^0)\}$, there are  $|V_{\rm bad}\setminus V(P^0)|$ disjoint rainbow directed $P_3$ copies, each with its center in $V_{\rm bad}\setminus V(P^0)$ and both endpoints in the same part. Let $\mathbf{P}$ be the set of these rainbow directed $P_3$ copies.
    \item[{\rm \bf (D2)}] For colors in  $\mathcal{C}_{\rm bad}$, there exists a maximal rainbow matching, say $M$,  inside $\{D_i[A\setminus V(\mathbf{P}\cup P^0)]\cup D_i[B\setminus V(\mathbf{P}\cup P^0)]:i\in\mathcal{C}_{\rm bad}\}$ such that for each edge in $M[Y]$ with color $j$ we have $D_j[Y\setminus V(\mathbf{P}\cup P^0\cup {M})]$ contains a $2$-matching. 
    Denote $\mathcal{C}_{\rm bad}':=\mathcal{C}_{\rm bad}\setminus {\rm col}(M)$. Then $D_{j}$ is $(19\sqrt{\delta},{\rm EC}2)$-extremal for all $j\in \mathcal{C}_{\rm bad}'$.
    \item[{\rm \bf (D3)}] Let $q:=|\mathcal{C}_2\cup \mathcal{C}_{\rm bad}'|$. In the digraph collection $\{D_i^{\pm}[A,B]:i\in \mathcal{C}_2\cup \mathcal{C}_{\rm bad}')\}$, there exists a rainbow directed path $Q$ of length $q-1$ or $q$ that avoids vertices in $\mathbf{P}\cup P^0\cup M$.  
   \item[{\rm \bf (D4)}]  We can assume that a vertex $v\notin V(\mathbf{P}\cup M\cup Q)$ unless $v\in V_{\rm bad}$, and a color $c\notin {\rm col}(\mathbf{P})$ if  $c\in \mathcal{C}_1$. 
\end{enumerate}

Next, we give a claim that will be used later to obtain transversal directed Hamilton cycles. 
\begin{claim}\label{path}
    Assume that $Q_1=z_1\ldots z_s$  and $Q_2=z_t'\ldots z_1'$ are  two disjoint rainbow directed paths inside $\mathcal{D}$ such that $z_1,z_1'\in A\setminus V_{\rm bad}$, $z_s,z_t'\in B\setminus V_{\rm bad}$ and $s+t<4\eta n$. If $V_{\rm bad}\subseteq V(Q_1\cup Q_2)$ and $\mathcal{C}_2\cup \mathcal{C}_{\rm bad}\subseteq {\rm col}(Q_1\cup Q_2)$, then $\mathcal{D}$ contains a transversal directed Hamilton cycle. 
\end{claim}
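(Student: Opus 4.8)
The plan is to use $Q_1$ and $Q_2$ as the two ``crossing'' segments of the sought Hamilton cycle, and to fill in the rest by one transversal directed Hamilton path inside $A$ and one inside $B$, each using only colours of $\mathcal{C}_1$, glued to the four endpoints of $Q_1,Q_2$ by four bridge edges. Writing $A^*:=A\setminus V(Q_1\cup Q_2)$, $B^*:=B\setminus V(Q_1\cup Q_2)$ and $\mathcal{C}_1^*:=\mathcal{C}_1\setminus {\rm col}(Q_1\cup Q_2)$, the target cycle is
\[
z_1\, Q_1\, z_s\, p_B\, H_B\, q_B\, z_t'\, Q_2\, z_1'\, p_A\, H_A\, q_A\, z_1,
\]
where $H_B$ is a transversal directed Hamilton path on $B^*$ using a colour set $\mathcal{C}^B\subseteq \mathcal{C}_1$ with endpoints $p_B\in N^+_{d_1}(z_s)$ and $q_B\in N^-_{d_2}(z_t')$, $H_A$ is a transversal directed Hamilton path on $A^*$ using $\mathcal{C}^A\subseteq\mathcal{C}_1$ with endpoints $p_A\in N^+_{d_3}(z_1')$ and $q_A\in N^-_{d_4}(z_1)$, and the four bridge edges $z_sp_B,\,q_Bz_t',\,z_1'p_A,\,q_Az_1$ receive four further colours $d_1,d_2,d_3,d_4$. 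Note that, as written, $Q_1$ runs from $A$ to $B$ and $Q_2$ from $B$ to $A$, so no path is reversed.

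First I would record the bookkeeping. Since $\mathcal{C}_2\cup\mathcal{C}_{\rm bad}\subseteq {\rm col}(Q_1\cup Q_2)$ we have $\mathcal{C}_1^*=[n]\setminus {\rm col}(Q_1\cup Q_2)$, hence $|\mathcal{C}_1^*|=n-(s-1)-(t-1)$; since $V_{\rm bad}\subseteq V(Q_1\cup Q_2)$ and, by the construction of $Q_1,Q_2$, $V\setminus V(Q_1\cup Q_2)\subseteq A\cup B$, we have $A^*\cup B^*=V\setminus V(Q_1\cup Q_2)$, hence $|A^*|+|B^*|=n-s-t$. Therefore $|\mathcal{C}_1^*|=|A^*|+|B^*|+2$, and (as $|A|,|B|\ge n/3$ by construction and $s+t<4\eta n$) $|A^*|,|B^*|\ge 2\eta n$. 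Next I would pick four distinct colours $d_1,d_2,d_3,d_4\in\mathcal{C}_1^*$ for which $z_s\in B_{d_1}$, $z_t'\in B_{d_2}$, $z_1'\in A_{d_3}$ and $z_1\in A_{d_4}$: this is possible because $z_1,z_1',z_s,z_t'\notin V_{\rm bad}$, so each of them lies in the required part of all but at most $13\sqrt\delta|\hat{\mathcal{C}}|$ colours of $\mathcal{C}_1$, while only $<4\eta n$ colours are excluded by ${\rm col}(Q_1\cup Q_2)$. Using {\bf (C1)}, the bound $|Y\Delta Y_{d_i}|<2\delta n$, and a standard averaging over $e_{D_{d_i}}(B_{d_i})$ (respectively $e_{D_{d_i}}(A_{d_i})$) to pass from out-degrees to in-degrees inside a part, one then obtains $|N^+_{d_1}(z_s)\cap B^*|,\,|N^-_{d_2}(z_t')\cap B^*|,\,|N^+_{d_3}(z_1')\cap A^*|,\,|N^-_{d_4}(z_1)\cap A^*|\ge(\tfrac12-5\eta)n$. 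Finally, split $\mathcal{C}_1^*\setminus\{d_1,d_2,d_3,d_4\}$ arbitrarily into $\mathcal{C}^B$ of size $|B^*|-1$ and $\mathcal{C}^A$ of size $|A^*|-1$, which is exactly possible by the displayed identity.

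To produce $H_B$, split $B^*=W_B\cup Z_B$ with $|W_B|-|Z_B|=t_B\in\{0,1\}$ and set $T_B:=Z_B$ if $t_B=0$ and $T_B:=W_B$ if $t_B=1$. Since $|N^+_{d_1}(z_s)\cap B^*|,\,|N^-_{d_2}(z_t')\cap B^*|\ge(\tfrac12-5\eta)n$ while $|W_B|,|Z_B|\le\lceil|B^*|/2\rceil\le(\tfrac16+1)n$, the sets $W^-:=N^+_{d_1}(z_s)\cap W_B$ and $T^+:=N^-_{d_2}(z_t')\cap T_B$ both have size $\ge\eta n/8$. Then Claim~\ref{lemma4.1} with $W=Z=B$, $(W^*,Z^*)=(W_B,Z_B)$, $\mathcal{C}^*=\mathcal{C}^B$, and the sets $W^-,T^+$ yields a transversal undirected Hamilton path on $B^*$ using precisely the colours $\mathcal{C}^B$, with one endpoint in $W^-$ and the other in $T^+$; reading it from the $W^-$-endpoint to the $T^+$-endpoint gives the directed path $H_B$ (recall a $G_i$-edge means both arcs lie in $D_i$). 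Symmetrically, Claim~\ref{lemma4.1} with $W=Z=A$ and colours $\mathcal{C}^A$ yields $H_A$ on $A^*$, oriented to start in $N^+_{d_3}(z_1')$ and end in $N^-_{d_4}(z_1)$. The displayed closed walk is then a directed cycle whose vertex set is $V(Q_1)\cup V(Q_2)\cup A^*\cup B^*=V$ and whose colour set is ${\rm col}(Q_1)\cup {\rm col}(Q_2)\cup\{d_1,d_2,d_3,d_4\}\cup\mathcal{C}^A\cup\mathcal{C}^B=[n]$, a disjoint union; hence it is a transversal directed Hamilton cycle, contradicting our assumption.

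The degree estimates used to choose $d_1,\dots,d_4$ and the bridge endpoints are routine, following from {\bf (C1)}, {\bf (C3)}, the bound $|Y\Delta Y_i|<2\delta n$, the averaging remark above, and the hypothesis $z_1,z_1',z_s,z_t'\notin V_{\rm bad}$. The one place that needs care — and the main (if mild) obstacle — is coordinating the two invocations of Claim~\ref{lemma4.1}: it is precisely the identity $|\mathcal{C}_1^*|=|A^*|+|B^*|+2$ that lets us peel off four bridge colours and then split what remains into blocks of the exact sizes $|B^*|-1$ and $|A^*|-1$ required by the claim, so that no colour is left over and none is used twice.
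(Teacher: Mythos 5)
Your proof is correct and takes essentially the same approach as the paper's: both pick four bridge colours from $\mathcal{C}_1\setminus{\rm col}(Q_1\cup Q_2)$ matching the parts of the four endpoints, split the remaining $\mathcal{C}_1$-colours into blocks of sizes $|A^*|-1$ and $|B^*|-1$, invoke Claim~\ref{lemma4.1} to obtain transversal undirected Hamilton paths on equitable splits of $A^*$ and $B^*$ with prescribed endpoints, and splice everything into a transversal directed Hamilton cycle via the bookkeeping identity $|\mathcal{C}_1^*|=|A^*|+|B^*|+2$. (One minor remark: the in-degree bounds you obtain by averaging are in fact directly supplied by the EC1 characterization in Lemma~\ref{thm-single-graph}, which bounds both $d^+$ and $d^-$.)
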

\begin{claimproof}[Proof of Claim \ref{path}]
Note that 
$z_1,z_1'\in A\setminus V_{\rm bad}$ and $z_s,z_t'\in B\setminus V_{\rm bad}$. There exist $i_1,i_2,i_3,i_4\in \mathcal{C}_1\setminus {\rm col}(Q_1\cup Q_2)$ such that $z_1\in A_{i_1}$, $z_1'\in A_{i_2}$, $z_s\in B_{i_3}$ and $z_t'\in B_{i_4}$. Split $\mathcal{C}_1\setminus ({\rm col}(Q_1\cup Q_2)\cup \{i_1,i_2,i_3,i_4\})$ into two parts $\mathcal{C}_a\cup \mathcal{C}_b$, where $|\mathcal{C}_a|=|A\setminus V(Q_1\cup Q_2)|-1$ and $|\mathcal{C}_b|=|B\setminus V(Q_1\cup Q_2)|-1$. Let $I_1\cup I_2$ and $J_1\cup J_2$ be equitable partitions of $A\setminus V(Q_1\cup Q_2)$ and $B\setminus V(Q_1\cup Q_2)$ with $|I_1|\geq |I_2|$ and $|J_1|\geq |J_2|$, respectively. Define
$$
\mathcal{D}_a:=\{D_i^{\pm}[I_1,I_2]:i\in \mathcal{C}_a\}\ \ \text{and}\ \ \mathcal{D}_b:=\{D_i^{\pm}[J_1,J_2]:i\in \mathcal{C}_b\}.
$$
Notice that 
$$
|Y\setminus V(Q_1\cup Q_2)|\geq \frac{n-|X'|-r}{2}-|V(Q_1\cup Q_2)|>\frac{n}{2}-5\eta n,
$$
and for $i\in \mathcal{C}_a\cup \{i_1,i_2\}$, $j\in [2]$ and $z\in I_{3-j}\cup \{z_1,z_1'\}$, we have 
\begin{align*}
|N_{{i}}^{+}(z,I_j)|&\geq d_{i}^{+}(z,A\setminus V(Q_1\cup Q_2))-|I_{3-j}|\\
&\geq d_{i}^+(z,A_{i})-|A_1\triangle A_{i}|-|X_A\setminus X_A^1|-|X'|-|V(Q_1\cup Q_2)|-|I_{3-j}|\\
&\geq \left(\frac{1}{2}-3\sqrt{\delta}\right)n-\left(2\delta+4\delta+4\eta+\sqrt{\epsilon}\right)n-\left\lceil\frac{n+r}{4}\right\rceil >\frac{n}{4}-5\eta n.
\end{align*}
Similarly, we have $|N_{{i}}^-(z,I_j)|\geq\frac{n}{4}-5\eta n$. This  implies that
$$
  |N_{{i}}^{+}(z,I_j)\cap N_{{i}}^-(z,I_j)|\geq 2\left(\frac{n}{4}-5\eta n\right)-|I_j|\geq \frac{n}{4}-10\eta n. 
$$
Hence, for each color  $i\in \mathcal{\mathcal{C}}_a\cup \{i_1,i_2\}$, one may view $D_i^{\pm}[I_1,I_2]$ as an undirected graph $G_i[I_1,I_2]$ with minimum degree at least $\frac{n}{4}-10\eta n$, where two vertices are adjacent in $G_i[I_1,I_2]$ if and only if there are two opposite arcs between them. By Lemma~\ref{lemma4.1}, there exists a transversal undirected path $Q_a$ inside $\mathcal{D}_a$ with two endpoints $x_a\in N_{G_{i_1}}(z_1,I_1)$ and $y_a\in N_{G_{i_2}}(z_1',I)$ (here $y_a\in I_2$ if and only if $|I_1|=|I_2|$). 

Similarly, there exists a transversal undirected path $Q_b$ inside $\mathcal{D}_b$ with two endpoints $x_b\in N_{G_{i_3}}(z_s,J_1)$ and $y_b\in N_{G_{i_4}}(z_t',J)$ (here $y_b\in J_2$ if and only if $|J_1|=|J_2|$). Thus, $\mathcal{D}$ contains a transversal directed Hamilton cycle $z_1Q_1z_sx_bQ_by_bz_t'Q_2z_1'y_aQ_ax_az_1$.
\end{claimproof}

Now, we proceed by considering the value of $q$.

\medskip
{\bf Subcase 2.1.} $q$ is even  and $q\geq 2$. 
\medskip

Using Lemma~\ref{conn} (iii), we connect $P^0$ (with endpoints in the same part), all rainbow directed paths in $\mathbf{P},$ all edges in $M$ and $Q$ (from $A$ to $B$ with length $q-1$) into a single rainbow directed path $P$, whose 
length is at most $(8\sqrt{\epsilon}+3\sqrt{\delta}+16\sqrt{\delta} +\eta )n$ and  endpoints are $x\in A\setminus (V_{\rm bad}\cup X')$, $y\in B\setminus (V_{\rm bad}\cup X')$. Choose $c\in (\mathcal{C}_2\cup \mathcal{C}_{\rm bad}')\setminus {\rm col}(P)$ and let $u_qv_q$ be an edge in  $D_c[B\setminus V(P),A\setminus V(P)]$. 
Consequently, $P$ and $u_qv_q$ are two disjoint rainbow directed paths satisfying all conditions in Claim~\ref{path}. Thus,  $\mathcal{D}$ contains a transversal directed Hamilton cycle, a contradiction. 
\medskip

{\bf Subcase 2.2.}  $q$ is odd.  
\medskip

Using Lemma~\ref{conn} (iii), we  connect all rainbow paths in  $\mathbf{P},$ all edges in $M$ and $Q$ (from 
$B$ to $A$ with length $q$) into a single rainbow directed path $P$, whose 
length is at most $(3\sqrt{\delta}+16\sqrt{\delta} +\eta )n$ and endpoints are $x\in B\setminus V_{\rm bad}$, $y\in A\setminus V_{\rm bad}$. Therefore, $P$ and $P^0$ (from $A$ to $B$) are two disjoint rainbow paths satisfying all conditions in Claim  \ref{path}. It follows that  $\mathcal{D}$ contains a transversal directed Hamilton cycle, a contradiction. 

\medskip

{\bf Subcase 2.3.}  $q=0$. 
\medskip

If $|X'|\geq 2$, then choose $u\in A\cap X'$ if $|A\cap X'|>|B\cap X'|$, and choose $u\in B\cap X'$ if $|A\cap X'|=|B\cap X'|$. Delete $u$ from $X'$ when constructing $P^0$. 
Based on the definition of $X'$, there exists a rainbow directed copy of $P_3$ from $B$ to $A$  with center $u$, whose vertices are disjoint from $P^0$.  
Applying Lemma~\ref{conn} (iii) again to connect $P^0$ (from $A$ to $B$), all rainbow directed paths in $\mathbf{P}, M$ into a single rainbow path $P$, whose length is at most $(8\sqrt{\epsilon}+3\sqrt{\delta}+16\sqrt{\delta} +\eta )n$, with   endpoints in different parts. 
Thus, $P$ and $P_x$ are two disjoint rainbow paths satisfying all conditions in Claim~\ref{path}. Therefore,  $\mathcal{D}$ contains a transversal directed Hamilton cycle, a contradiction. 

If $|X'|\leq 1$, then together with the condition that $\delta^0(\mathcal{D})\geq \frac{n}{2}$, it is easy to find two disjoint rainbow edges with opposite directions between $A$ and $B$. By Claim  \ref{path} and a similar discussion as above, we know that   $\mathcal{D}$ contains a transversal directed Hamilton cycle, a contradiction. 


\medskip
{\bf Case 3.} $|\mathcal{C}_1|\geq \eta n$ and $|\mathcal{C}_2|\geq \eta n$.
\medskip

In this case, each vertex in $Y\setminus V_{\rm bad}$ lies in $Y_{i}$ for at least $(1-13\sqrt{\delta})|{\mathcal{C}_1}\cup \mathcal{C}_2|$ colors $i\in \mathcal{C}_1\cup {\mathcal{C}_2}$. 
Denote $\bigcup_{k\in[2]}(X_A^k\cup X_B^k)=:\{v_1,v_2,\ldots,v_{t}\}$. Notice that $t<\frac{1}{2}\sqrt{\delta}n$. 
By the definition of $X_Y^k$, we obtain that for each $i\in [t]$, there exist $c_{4i-3},c_{4i-2},c_{4i-1},c_{4i}\in (\mathcal{C}_1\cup \mathcal{C}_2)\setminus \{c_1,c_2,\ldots,c_{4i-5},c_{4i-4}\}$ and $v_i^1,v_i^2,v_i^3,v_i^4\in (A\cup B)\setminus (V_{\rm bad}\cup \{v_{\ell}^1,v_{\ell}^2,v_{\ell}^3,v_{\ell}^4:\ell\in [i-1]\})$ such that 
\begin{itemize}
    \item if $v_i\in X_Y^1$, then $c_{4i-4+j}\in \mathcal{C}_1$, $v_i^j\in Y\cap Y_{c_{4i-4+j}}$,  ${v_i^jv_i}\in E(D_{c_{4i-4+j}})$ for all $j\in [2]$ and ${v_iv_i^j}\in E(D_{c_{4i-4+j}})$ for all $j\in \{3,4\}$,
    \item if $v_i\in X_Y^2$, then $c_{4i-4+j}\in \mathcal{C}_2$, $v_i^j\in Z\cap Z_{c_{4i-4+j}}$,  ${v_i^jv_i}\in E(D_{c_{4i-4+j}})$ for all $j\in [2]$ and ${v_iv_i^j}\in E(D_{c_{4i-4+j}})$ for all $j\in \{3,4\}$.
\end{itemize}
Therefore, there exists a set of disjoint rainbow directed paths   $\mathbf{P}^0=\{v_i^1v_iv_i^2:i\in [t]\}$ with ${\rm col}(v_i^1v_iv_i^2)=\{c_{4i-3},c_{4i-2}\}$ for each $i\in [t]$. 

Based on the definition of $X'$, we move a vertex from  $X'$  to $Y$ whenever it is covered by a rainbow directed $P_3$ from $Z$ to $Y$ (in Step 2 of Case 1).  Avoiding all vertices and colors appearing in $\mathbf{P}^0$, we then obtain a family of disjoint rainbow directed paths $\mathbf{P^1}=\{v^1vv^2:v\in X'\}$, where for each vertex $v\in Y\cap X'$ we have  $v^1\in Z\setminus (V_{\rm bad}\cup X')$ and  $v^2\in Y\setminus (V_{\rm bad}\cup X')$. Furthermore, for every such path at least one of the colors of ${v^1v}$ and ${vv^2}$ can be chosen from both $\mathcal{C}_1$ and $\mathcal{C}_2$. Without loss of generality, assume that $|A|\leq |B|$. 

Using colors in $\mathcal{C}_1\cup \mathcal{C}_2$ and applying  Lemma~\ref{conn} (i)-(iii), we connect all rainbow directed paths in $\mathbf{P}^0\cup \mathbf{P}^1$ into a single rainbow directed path $P^1$ with endpoints  $u_1\in A\setminus V_{\rm bad}$ and $w_1\in B\setminus V_{\rm bad}$, whose length is at most $4t+4|X'|\leq 2\sqrt{\delta}n+4|X'|$.

By Lemma~\ref{claim5}, there exists a set of disjoint  rainbow directed paths  $\mathbf{P}^2=\{x_i^1x_ix_i^2:i\in [s]\}$,  each of which has colors $c_i^1,c_i^2$ for $i\in [s]$. Applying Lemma~\ref{conn}~(i)-(ii), we connect all these rainbow directed paths together with $P^1$ into a single rainbow directed path $P^2$, which has endpoints   $u_1\in A\setminus V_{\rm bad}$ and $w_2\in B\setminus V_{\rm bad}$ and length  at most $|E(P^1)|+4|X\setminus X'|+2\leq (2\sqrt{\delta}+4\sqrt{\epsilon})n+2$.

Choose a maximal rainbow {matching,} say $M$,  inside $\{D_i[A\setminus V(P^2)]\cup D_i[B\setminus V(P^2)]:i\in\mathcal{C}_{\rm bad}\}$ such that for each edge in $M[Y]$ with color $j$, we have $D_j[Y\setminus V(P^2\cup M)]$ contains a $2$-matching. 
Denote $\mathcal{C}_{\rm bad}':=\mathcal{C}_{\rm bad}\setminus {\rm col}(M)$. Note that $D_{j}$ is $(19\sqrt{\delta},{\rm EC}2)$-extremal for all $j\in \mathcal{C}_{\rm bad}'$.  Hence, $\{D_i[A\setminus V(P^2\cup M),B\setminus V(P^2\cup M)]:i\in \mathcal{C}_{\rm bad}'\}$ contains a transversal matching, say $M'$. By using Lemma~\ref{conn} (i)-(iii) to connect $P^2$ and all rainbow edges in $M\cup M'$, we obtain a  rainbow directed path $P^3$  with endpoints  $u_1\in A\setminus V_{\rm bad}$ and $w_3\in B\setminus V_{\rm bad}$, whose length is at most $|E(P^2)|+16\sqrt{\delta} n\leq (18\sqrt{\delta}+4\sqrt{\epsilon})n+2$. 

Since $u_1,w_3\notin V_{\rm bad}$, there exist $i_0,i_1\in \mathcal{C}_1\setminus {\rm col}(P^3)$ such that $u_1\in A_{i_0}$ and $w_3\in B_{i_1}$. By the construction of $P^3$, one may assume that $v\not\in V(P^3)$ unless $v\in V_{\rm bad}\cup X'$, and $c\not\in {\rm col}(P^3)$ unless $c\in \mathcal{C}_{\rm bad}$. Similar to the above two cases, we know that each digraph $D_i^{\pm}[A\setminus V(P^3), B\setminus V(P^3)]$ with $i\in \mathcal{C}_2$ can be viewed as an undirected graph with minimum degree at least $(\frac{1}{2}-45\sqrt{\delta})n$; each digraph $D_i[A\setminus V(P^3)]\cup D_i[B\setminus V(P^3)]$ with $i\in \mathcal{C}_1$ can be viewed as the union of two disjoint undirected graphs with minimum degree at least $(\frac{1}{2}-45\sqrt{\delta})n$. 
We first prove the following claim. 

\begin{claim}\label{claim4.9}
    Assume that $Q_1=z_1\ldots z_k$  and $Q_2=z_t'\ldots z_1'$ are two disjoint rainbow directed paths inside $\mathcal{D}$ such that $z_1,z_1'\in A\setminus V_{\rm bad}$, $z_k,z_t'\in B\setminus V_{\rm bad}$ and each of them has length at most $19\sqrt{\delta} n$. If $V_{\rm bad}\subseteq V(Q_1\cup Q_2)$, $\mathcal{C}_{\rm bad}\subseteq {\rm col}(Q_1\cup Q_2)$ and $|\mathcal{C}_2\setminus {\rm col}(Q_1\cup Q_2)|$ is even, then $\mathcal{D}$ contains a transversal directed Hamilton cycle. 
\end{claim}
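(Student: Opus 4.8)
The plan is to follow the construction of Subcase~3.1, the one genuinely new feature being that here $c_2:=|\mathcal{C}_2\setminus{\rm col}(Q_1\cup Q_2)|$ is \emph{even}, so a rainbow bipartite path between $A$ and $B$ using up all of $\mathcal{C}_2\setminus{\rm col}(Q_1\cup Q_2)$ has both endpoints in the same part rather than one in each. I therefore aim for a transversal directed Hamilton cycle of the form
\[
z_1\xrightarrow{\,Q_1\,}z_k\ \leadsto\ z_t'\xrightarrow{\,Q_2\,}z_1'\ \leadsto\ R\ \leadsto\ z_1,
\]
where the three arrows $\leadsto$ are $\mathcal{C}_1$-colored directed paths (the first inside $B$, the last two inside $A$) and $R$ is a rainbow bipartite path between $A$ and $B$ on all colors of $\mathcal{C}_2\setminus{\rm col}(Q_1\cup Q_2)$ with both endpoints in $A$; the three $\mathcal{C}_1$-paths are to exhaust $\mathcal{C}_1\setminus{\rm col}(Q_1\cup Q_2)$, so that together with $\mathcal{C}_{\rm bad}\subseteq{\rm col}(Q_1\cup Q_2)$ and $R$ every color is used exactly once. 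Write $a':=|A\setminus V(Q_1\cup Q_2)|$, $b':=|B\setminus V(Q_1\cup Q_2)|$ and $c_1:=|\mathcal{C}_1\setminus{\rm col}(Q_1\cup Q_2)|$. Comparing the $n$ edges and $n$ vertices of the target cycle gives $c_1+c_2=a'+b'+2$; and since a bipartite path on $c_2$ edges with both ends in $A$ spans exactly $\tfrac{c_2}{2}+1$ vertices of $A$ and $\tfrac{c_2}{2}$ of $B$, I fix $I_1\subseteq A\setminus V(Q_1\cup Q_2)$ with $|I_1|=\tfrac{c_2}{2}+1$ and $J_1\subseteq B\setminus V(Q_1\cup Q_2)$ with $|J_1|=\tfrac{c_2}{2}$ to be the span of $R$, let $I_2$ and $J_2$ be their complements inside $A\setminus V(Q_1\cup Q_2)$ and $B\setminus V(Q_1\cup Q_2)$, and ask the two $\mathcal{C}_1$-paths inside $A$ to jointly span $I_2$ and the $\mathcal{C}_1$-path inside $B$ to span $J_2$; one checks the color and vertex counts then match exactly.

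In the generic case all the relevant sizes come out right. Using $|\mathcal{C}_1|,|\mathcal{C}_2|\ge\eta n$, $|Q_1|+|Q_2|\le 38\sqrt{\delta}\,n$, the near-balance $\bigl||A|-|B|\bigr|\le 8\delta n+\sqrt{\epsilon}\,n$ and the hierarchy $\tfrac1n\ll\mu\ll\alpha\ll\gamma,\epsilon\ll\delta\ll\eta\ll1$, one gets $c_1,c_2\ge\tfrac{\eta n}{2}$, $a',b'\ge(\tfrac12-\eta)n$, $|a'-b'|$ much smaller than $c_1$, hence $\tfrac{c_2}{2}+1\le a'$, $\tfrac{c_2}{2}\le b'$ and $|I_1|,|J_1|,|I_2|,|J_2|\ge\tfrac{\eta n}{3}$. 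When in addition all four of these sets have size at least $\eta n$, the rest is routine: each $\mathcal{C}_1$-colored $D_i$ is $(\epsilon,{\rm EC}k)$-extremal for some $k\in\{1,2\}$ with $d^+(v,A_i),d^-(v,A_i)\ge(\tfrac12-3\sqrt{\delta})n$ for $v\in A_i$ (and analogously inside $B$), and each $\mathcal{C}_2$-colored $D_i$ is $(\epsilon,{\rm EC}2)$-extremal with $d^+(v,B_i),d^-(v,B_i)\ge(\tfrac12-3\sqrt{\delta})n$ for $v\in A_i$ and symmetrically, so passing to the undirected graphs $D_i^{\pm}$ and applying Claim~\ref{lemma4.1} (with $W=Z$ for the $\mathcal{C}_1$-paths and $W\ne Z$ for $R$) produces transversal undirected Hamilton paths spanning $I_2$, $J_2$ and $I_1\cup J_1$; the within-part junctions between consecutive segments are obtained greedily from the minimum semi-degree condition just as the edges $i_0,i_1,j_0,j_1$ in Subcase~3.1 (invoking Claim~\ref{conn} where a $\mathcal{C}_2$-color is convenient), and splicing gives the desired cycle.

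The hard part is the degenerate case, where one member of a complementary pair $\{I_1,I_2\}$ or $\{J_1,J_2\}$ drops below $\eta n$; note at most one member of each pair can, since $|I_1|<\eta n$ forces $c_2<2\eta n$ whereas $|I_2|<\eta n$ forces $c_2$ to lie in a narrow range near $2a'\approx n$, and similarly for $J$. If $c_2<2\eta n$, then the $\mathcal{C}_2$-part is short, and I will instead extend $Q_1$ by a greedily built $\mathcal{C}_2$-colored bipartite path using up all of $\mathcal{C}_2\setminus{\rm col}(Q_1\cup Q_2)$ (which ends back in $B$, as $c_2$ is even), and then apply Claim~\ref{path} to the extended path and $Q_2$, whose total length is still below $4\eta n$. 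The greedy extension works because, fixing the color order in advance, at each step the current vertex needs only to be compatible with the next color, which one arranges when choosing that vertex: a vertex of $A\setminus V_{\rm bad}$ lies in $A_i$ for all but at most $13\sqrt{\delta}\,n$ colors $i$, while a fixed $\mathcal{C}_2$-color forbids only $O((\epsilon+\delta)n)$ of the leftover vertices. If instead $c_2$ is so large that $|I_2|$, hence $|J_2|$, is below $\eta n$, then $c_1$ is below $5\eta n$, $R$ spans all but a few leftover vertices, and I absorb $I_2$ and $J_2$ into $R$: a leftover $A$-vertex $u$ is spliced into the long bipartite path by replacing one $A$-to-$B$ edge $v\to v'$ of color $i$ with $v\to u\to v'$, where $v\to u$ takes a fresh $\mathcal{C}_1$-color and $u\to v'$ retains color $i$ (both possible, since $u$ lies in $A_j$ for most colors $j$ and $d_i^-(v',A_i)\ge(\tfrac12-3\sqrt{\delta})n$), and symmetrically inside $B$; this reduces to a case already treated. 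Everything outside this size bookkeeping is a direct adaptation of Subcase~3.1 and of Claim~\ref{path}.
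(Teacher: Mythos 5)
Your construction is essentially the same as the paper's proof of Claim~\ref{claim4.9}: both split $A\setminus V(Q_1\cup Q_2)$ and $B\setminus V(Q_1\cup Q_2)$ into a piece of size $\approx c_2/2$ for the $\mathcal{C}_2$-colored bipartite segment and a complementary piece for the $\mathcal{C}_1$-colored within-part segments, obtain those transversal segments via Claim~\ref{lemma4.1}, and splice them to $Q_1,Q_2$ using the extremal-partition degree conditions; your balance identity $c_1+c_2=a'+b'+2$ is correct, and your only deviation from the paper (placing both endpoints of $R$ in $A$ with $|I_1|=c_2/2+1$, so that an extra $\mathcal{C}_1$-junction replaces the paper's single $\mathcal{C}_2$-junction color $\ell_4$) is cosmetic and balances identically. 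Your explicit treatment of the borderline sizes---greedily extending $Q_1$ through $\mathcal{C}_2\setminus{\rm col}(Q_1\cup Q_2)$ and invoking Claim~\ref{path} when $c_2<2\eta n$, and splicing $I_2\cup J_2$ vertex-by-vertex into $R$ with fresh $\mathcal{C}_1$-colors when $c_1$ is small---addresses a point the paper leaves implicit, since Claim~\ref{lemma4.1} as stated demands $|W^*|,|Z^*|\geq\eta n$ and this is not obviously met by $|I_1|=|J_1|=c_2/2$ when $c_2$ is only slightly above $\eta n$; both of your workarounds preserve the color/vertex count and are sound.
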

\begin{claimproof}[Proof of Claim \ref{claim4.9}]
Note that 
$z_1,z_1'\in A\setminus V_{\rm bad}$ and $z_k,z_t'\in B\setminus V_{\rm bad}$. There exist  $\ell_1,\ell_2,\ell_3\in \mathcal{C}_1\setminus {\rm col}(Q_1\cup Q_2)$ and $\ell_4\in \mathcal{C}_2\setminus {\rm col}(Q_1\cup Q_2)$ such that $z_1\in A_{\ell_1}$, $z_k\in B_{\ell_2}$, $z_1'\in A_{\ell_4}$ and $z_t'\in B_{\ell_3}$. 

Split $A\setminus V(Q_1\cup Q_2)$ into $I_1$ and $I_2$ such that $|I_1|=\frac{|\mathcal{C}_2\setminus {\rm col}(Q_1\cup Q_2)|}{2}$, and split $B\setminus V(Q_1\cup Q_2)$ into $J_1$ and $J_2$ such that $|J_1|=\frac{|\mathcal{C}_2\setminus {\rm col}(Q_1\cup Q_2)|}{2}$. Let $\mathcal{D}_1:=\{D_i
^{\pm}[I_1,J_1]:i\in \mathcal{C}_2\setminus ({\rm col}(Q_1\cup Q_2)\cup \{\ell_4\})\}$. By Lemma~\ref{lemma4.1}, $\mathcal{D}_1$ contains a transversal undirected path $Q$ with endpoints $z_1''\in N_{{\ell_4}}^+(z_1')\cap J_1$ and $y\in I_1$. Since $y\not\in V_{\rm bad}$, there exists $\ell_5\in \mathcal{C}_1\setminus ({\rm col}(Q_1\cup Q_2)\cup \{\ell_1,\ell_2,\ell_3\})$ such that $y\in A_{\ell_5}$. 

Assume that $|I_2|=k_1$ and $|J_2|=k_2$. 
We further split $I_2$ into $I_2'$ and $I_2''$ 
such that $|I_2'|=\lceil\frac{k_1}{2}\rceil$, and split $J_2$ into $J_2'$ and $J_2''$ 
such that $|J_2'|=\lceil\frac{k_2}{2}\rceil$. Split the color
set $\mathcal{C}_1\setminus ({\rm col}(Q_1\cup Q_2)\cup \{\ell_1,\ell_2,\ell_3,\ell_5\})$ into two subsets $\mathcal{C}_a$ and $\mathcal{C}_b$ such that $|\mathcal{C}_a|=k_1-1$ and $|\mathcal{C}_b|=k_2-1$. Let $$\mathcal{D}_a:=\{D_i^{\pm}[I_2',I_2'']:i\in \mathcal{C}_a\}\ \ \text{and}\ \ \mathcal{D}_b:=\{D_i^{\pm}[J_2',J_2'']:i\in \mathcal{C}_b\}.$$ 
Applying Lemma~\ref{lemma4.1} again yields that \begin{itemize}
    \item $\mathcal{D}_a$ contains  a transversal undirected path $Q_a$  with endpoints $x_a\in N_{{\ell_1}}^-(z_1)\cap I_2',\,y_a\in N_{{\ell_5}}^+(y)\cap I_2$; \item  $\mathcal{D}_b$ contains a transversal undirected path $Q_b$  with endpoints $x_b\in N_{{\ell_2}}^+(z_k)\cap J_2',\,y_b\in N_{{\ell_3}}^-(z_t')\cap J_2$.
\end{itemize} 
 Hence, $z_1Q_1z_kx_bQ_by_bz_t'Q_2z_1'z_1''Qyy_aQ_ax_az_1$ is a transversal directed Hamilton cycle in $\mathcal{D}$. 
\end{claimproof}

Assume that $|\mathcal{C}_2\setminus {\rm col}(P^3)|$ is odd. Notice that  there  exists an edge $z_1'z_2'\in E(D_{i_2}[A\setminus V(P^3),B\setminus V(P^3)])$ for some $i_2\in \mathcal{C}_2\setminus {\rm col}(P^3)$. This edge together with the path $P^3$ form two rainbow directed paths that satisfy all conditions in Claim \ref{claim4.9}. It follows that $\mathcal{D}$ contains a transversal directed  Hamilton cycle, a contradiction. Hence $|\mathcal{C}_2\setminus {\rm col}(P^3)|$ is even.

Since $\delta^0(\mathcal{D})\geq \frac{n}{2}$ and $|A|\leq |B|$, each vertex in $A$ has an in-neighbor in $B$ in each $D_i$ with $i\in \mathcal{C}_1\cup {\rm col}(M)$. Suppose that there exists an edge $y_2y_1\in E(D_{i_3}[B,A])$ for some $i_3\in \mathcal{C}_1\cup {\rm col}(M)$, where $y_1\in A\setminus (V(P^3)\cup V_{\rm bad})$ and $y_2\in B$. If $i_3\in {\rm col}(M)$, then assume ${uv}\in E(M[A])\cap E(D_{i_3})$. Applying Lemma~\ref{conn}~(i)-(ii), we successively  connect $P^2$, all rainbow edges in $M-uv$ and all rainbow edges in $M'$. This results {in} a rainbow directed path $\Tilde{P}^3$ with endpoints $\Tilde{u}_1\in A\setminus V_{\rm bad}$ and $\Tilde{w}_3\in B\setminus V_{\rm bad}$. Therefore, $P^3$ can be written as 
$uvv'\Tilde{u}_1\Tilde{P}^3\Tilde{w}_3$, where the colors of  ${vv'}$ and ${v'\Tilde{u}_1}$ are in $\mathcal{C}_2$. Since $|\mathcal{C}_2\setminus {\rm col}(P^3)|$ is even, one has $|\mathcal{C}_2\setminus {\rm col}(\Tilde{P}^3)|$ is even. Hence one may let $P^3:=\Tilde{P}^3$ if $i_3\in {\rm col}(M)$ and further assume  $i_3\not\in {\rm col}(P^3)$.

Notice that $y_2\not\in V(P^3)$ unless it is in $V_{\rm bad}\cup X'$. If $y_2\not\in V_{\rm bad}\cup X'$, then $P^3$ and $y_2y_1$ are two disjoint rainbow directed paths satisfying all conditions in Claim \ref{claim4.9}. Hence,  $\mathcal{D}$ contains a transversal Hamilton cycle, a contradiction. Therefore, $y_2\in V_{\rm bad}\cup X'$. 

Recall that $y_2^1y_2y_2^2$ is the rainbow directed path that covers $y_2$.  Assume ${y_2^1y_2}$ and ${y_2y_2^2}$ have colors $j_1$ and $j_2$ in $P^3$, respectively. Hence,  there exists a rainbow directed $\hat{P^3}$ starting at $A$ and ending   at $B$ such that $P^3$ is obtained by connecting $y_2^1y_2y_2^2$ and $\hat{P^3}$ if $y_2^1\in A$, and $P^3$ is obtained by connecting $\hat{P^3}$ and $y_2^1y_2y_2^2$ otherwise. 

If $y_2\in X'$, then by adjusting the selections of $y_2^1,y_2^2$ and $j_1,j_2$, one may get a rainbow directed path $P^3$ such that 
$|\mathcal{C}_2\setminus {\rm col}({P^3})|$ is odd. This situation therefore reduces to the case where $|\mathcal{C}_2\setminus {\rm col}({P^3})|$ is odd. 

If $y_2\in V_{\rm bad}$, then $|\mathcal{C}_2\setminus {\rm col}(\hat{P^3})|$ is even. Let 
$\hat{P}:=y_2^1y_2y_1$ if $y_2^1\in B$, and let $\hat{P}:=y'y_2^1y_2y_1$ with $y'y_2^1\in E(D_{c'})$ for unused color $c'\in \mathcal{C}_2$ and unused vertex $y'\in B$ if $y_2^1\in A$. Consequently, $\hat{P^3}$ and $\hat{P}$ are two rainbow directed paths  satisfying all conditions in Claim \ref{claim4.9}. Hence,  $\mathcal{D}$ contains a transversal directed Hamilton cycle, a contradiction.
\end{proof}


\section{Proof of Theorem \ref{theorem-EC3}}

{{In this section we present the proof of Theorem~\ref{theorem-EC3}. Since the overall strategy parallels that of Theorem~\ref{theorem-EC12}, the argument itself is relatively short; therefore we do not separate out preliminary claims but instead give a complete, self-contained proof of Theorem \ref{theorem-EC3}.}}  
\begin{proof}[Proof of Theorem \ref{theorem-EC3}]
Choose constant $\epsilon,\delta$ such that
$\frac{1}{n}\ll \epsilon\leq \delta^2\ll 1,$ 
let $\mathcal{D}=\{D_1,\ldots,D_n\}$ be a collection of digraphs on a common vertex set $V$ of size $n$ and $\delta^0(\mathcal{D})\geq \frac{n}{2}$. Suppose that $\mathcal{D}$ does not contain transversal directed Hamilton cycles. For each $i\in [m]$,  since $D_i$ is $(\epsilon,{\rm EC3})$-extremal, one may assume that $(C_i^1,C_i^2,C_i^3,C_i^4,L_i)$ is a  $\epsilon$-characteristic partition of $D_i$. 
By Lemma \ref{lemma-new1},  one has $|C_1^k\triangle C_i^k|\leq 2\delta n$ for all $k\in [4]$ and all $i\in [m]$. Expand $C_1^1\cup C_1^2\cup C_1^3\cup C_1^4$ into a partition $C^1\cup C^2\cup C^3\cup C^4$ of $V$ such that $|C^1|-|C^3|\leq 1$ and $|C^2|=|C^4|$. Hence, for $i\in[2,m]$ and $k\in[4]$, we have $|C^k\triangle C_{i}^k| \leq  2\delta n+2{\epsilon}n<3\delta n$.
  
  Now, we define the set of bad vertices. Let $(Y,Z)\in \{(C^1,C^3),(C^2,C^4)\}$. Define 
\begin{align*}
  &X:=\left\{x\in V: x\in L_i~\textrm{for at least}~ 3\sqrt{\delta}|\mathcal{C}_3|~\textrm{colors}~i\in {\mathcal{C}}_3\right\}, \\
  &X_{YZ}:=\left\{x\in (Y\cup Z)\setminus X:x\not\in Y_{i}\cup Z_i~\textrm{for at least}~ 10\sqrt{\delta}|\mathcal{C}_3|~\textrm{colors}~i\in {\mathcal{C}}_3\right\},\\
  &X_{YZ}':=\left\{x\in X_{YZ}:x\in W_{i}~\textrm{for some $W\in \{Y,Z\}$ with at least}~ \frac{3}{2}\sqrt{\delta}n~\textrm{colors}~i\in {\mathcal{C}_3}\right\}.
\end{align*}

Recall that for all 
$i\in {\mathcal{C}}_3$, we have  $|L_{i}|\leq 2{\epsilon}n$ and $|(Y_1\cup Z_1)\triangle (Y_{i}\cup Z_i)|<4\delta n$. Hence 
$$
3\sqrt{\delta}|{\mathcal{C}}_3||X|\leq 2{\epsilon}n|{\mathcal{C}}_3|\ \ \text{and}\ \ 10\sqrt{\delta}|{\mathcal{C}}_3||X_{YZ}|\leq \sum_{i\in \mathcal{C}_3}|(Y\cup Z)\setminus (Y_i\cup Z_i)|\leq (4\delta+2{\epsilon}) n|{\mathcal{C}}_3|.
$$
It follows that $|X|\leq \sqrt{\epsilon}n$ and $|X_{YZ}|\leq \frac{9}{20}\sqrt{\delta} n<\frac{1}{2}\sqrt{\delta} n$. 
 For a vertex $x\in X_{YZ}\setminus X_{YZ}'$, we know $x\in Y_{i}\cup Z_i$  for at most $3\sqrt{\delta}n$ colors $i\in {\mathcal{C}}_3$ and $x\in \bigcup_{k\in [4]}C_i^k$ for at least $(1-3\sqrt{\delta})|{\mathcal{C}}_3|$ colors $i\in {\mathcal{C}}_3$. Thus, $x\in (Y\cup Z)\setminus (Y_{i}\cup Z_i)$ for at least $(1-4\sqrt{\delta})|{\mathcal{C}}_3|$ colors $i\in \mathcal{C}_3$ and $x\in \bigcup_{k\in [4]}C_i^k\setminus (Y_i\cup Z_i)$ for at least $(1-3\sqrt{\delta})|{\mathcal{C}}_3|-3\sqrt{\delta}n\geq (1-10\sqrt{\delta})|{\mathcal{C}}_3|$ colors $i\in \mathcal{C}_3$. Hence  
\begin{align*}
 |X_{YZ}\setminus X_{YZ}'|(1-4\sqrt{\delta})|{\mathcal{C}}_3|\leq &\sum_{i\in {\mathcal{C}}_3}|(Y\cup Z)\setminus (Y_i\cup Z_i)|\\
\leq &\sum_{i\in \mathcal{C}_3}\left(|(Y\cup Z)\setminus (Y_1\cup Z_1)|+|(Y_1\cup Z_1)\triangle (Y_i\cup Z_i)|\right)\\
\leq &(4\delta +2\epsilon)n|{\mathcal{C}}_3|.
\end{align*}
It follows that $|X_{YZ}\setminus X_{YZ}'|\leq 8\delta n$. Then move vertices in $X_{YZ}\setminus X_{YZ}'$ to $V\setminus (Y\cup Z)$. Therefore, each vertex in $(Y\cup Z)\setminus (X\cup X_{YZ}')$ lies in $Y_i\cup Z_i$ for at least $(1-10\sqrt{\delta})|\mathcal{C}_3|$ colors $i\in \mathcal{C}_3$. 

Let $\{Y,Z\}\in \left\{\{C^1,C^3\},\{C^2,C^4\}\right\}$. Define 
\begin{align*}
  X_{Y}:=&\left\{x\in Y\setminus (X\cup X_{YZ}'):x\not\in Y_{i}~\textrm{for at least}~ 20\sqrt{\delta}|\mathcal{C}_3|~\textrm{colors}~i\in {\mathcal{C}}_3\right\},\\
  X_{Y}':=&\left\{x\in X_{Y}:x\in Y_{i}~\textrm{for at least}~ 6\sqrt{\delta}n~\textrm{colors}~i\in {\mathcal{C}_3}\right\}.
\end{align*}
By a similar discussion as above, we know that $|X_{Y}|<\frac{1}{2}\sqrt{\delta} n$ and each vertex of  $X_Y\setminus X_Y'$ lies in $Z_i$ for at least $(1-20\sqrt{\delta})|{\mathcal{C}}_3|$ colors $i\in \mathcal{C}_3$.  Furthermore,  $|X_{Y}\setminus X_{Y}'|\leq 4\delta n$. We then move vertices in $X_{Y}\setminus X_{Y}'$ to $Z$. Hence $\left||C^2|-|C^4|\right|\leq 24\delta n$.

 Define $V_{\rm bad}:=X\cup \big(\bigcup_{(Y,Z)\in \{(C^1,C^3),(C^2,C^4)\}} X_{YZ}'\big)\cup  \big(\bigcup_{k\in [4]}X_{C^k}'\big)$. Obviously, $|V_{\rm bad}|< (\sqrt{\epsilon}+3\sqrt{\delta})n$. Moreover, each vertex in $C^k\setminus V_{\rm bad}$ lies in $C_i^k$ for at least $(1-20\sqrt{\delta})|{\mathcal{C}}_3|$ colors $i\in {\mathcal{C}}_3$.

The subsequent result allows us to connect two disjoint  short rainbow directed paths into a single short rainbow directed path.  Its proof follows the same lines as that of Lemma~\ref{conn}, and is therefore omitted. 
\begin{claim}[Connecting tool]\label{conn2}
    Let $P=u_1u_2\ldots u_s$ and $Q=v_1v_2\ldots v_t$ be two disjoint rainbow directed paths inside $\mathcal{D}$ with $u_s\in Y\setminus V_{\rm bad}$, $v_1\in Z\setminus V_{\rm bad}$, where $Y,Z\in \{C^1,C^2,C^3,C^4\}$. Assume that $|\mathcal{C}_3\setminus {\rm col}(P\cup Q)|>21\sqrt{\delta} n$, and that for all $i\in [4]$, if $C^i$
 is a candidate set for selecting $w_1$ or $w_1'$, then $|C^i\setminus (V(P\cup Q)\cup V_{\rm bad})|>25\delta n$. 
    \begin{enumerate}
        \item[{\rm (i)}] If $(Y,Z)\in \{(C^1,C^2), (C^3,C^4)\}$, then there are two colors  $c_1,c_2\in\mathcal{C}_3\setminus {\rm col}(P\cup Q)$ and a vertex $w_1\in Y\setminus (V(P\cup Q)\cup V_{\rm bad})$ such that $u_1Pu_sw_1v_1Qv_t$ is a rainbow directed path with colors ${\rm col}(P\cup Q)\cup \{c_1,c_2\}$. A similar result holds for  $(Y,Z)\in \{(C^2,C^3), (C^4,C^1)\}$. 
        \item[{\rm (ii)}] If $(Y,Z)\in \{(C^1,C^1), (C^3,C^3)\}$, then there are two colors  $c_1,c_2\in \mathcal{C}_3\setminus {\rm col}(P\cup Q)$ and one vertex $w_1\in Y\setminus (V(P\cup Q)\cup V_{\rm bad})$ such that $u_1Pu_sw_1v_1Qv_t$ is a rainbow directed path with  colors ${\rm col}(P\cup Q)\cup \{c_1,c_2\}$. 
         \item[{\rm (iii)}]  If $\{Y,Z\}=\{C^2,C^4\}$, then there are three colors $c_1,c_2,c_3\in\mathcal{C}_3\setminus {\rm col}(P\cup Q)$ and two vertices $w_1\in Z\setminus (V(P\cup Q)\cup V_{\rm bad})$,  $w_1'\in Y\setminus (V(P\cup Q)\cup V_{\rm bad})$ such that $u_1Pu_sw_1w_1'v_1Qv_t$ is a rainbow directed path with colors ${\rm col}(P\cup Q)\cup \{c_1,c_2,c_3\}$.
         \item[{\rm (iv)}] If $(Y,Z)=(C^i,C^j)$ with $(i,j)\in\{(1,3),(1,4),(4,3)\}$, then there are two colors  $c_1,c_2\in\mathcal{C}_3\setminus {\rm col}(P\cup Q)$ and a vertex $w_1\in C^2\setminus (V(P\cup Q)\cup V_{\rm bad})$ such that $u_1Pu_sw_1v_1Qv_t$ is a rainbow directed path with colors ${\rm col}(P\cup Q)\cup \{c_1,c_2\}$. A similar result holds for $(Y,Z)=(C^i,C^j)$ with $(i,j)\in\{(3,1),(3,2),(2,1)\}$.
         \item[{\rm (v)}]  If $(Y,Z)=(C^i,C^i)$ with $i\in \{2,4\}$, then there are two colors $c_1,c_2\in\mathcal{C}_3\setminus {\rm col}(P\cup Q)$ and a vertex  $w_1\in C^{i+2}\setminus (V(P\cup Q)\cup V_{\rm bad})$ such that $u_1Pu_sw_1v_1Qv_t$ is a rainbow directed path with colors ${\rm col}(P\cup Q)\cup \{c_1,c_2\}$. 
 \end{enumerate}
\end{claim}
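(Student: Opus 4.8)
The plan is to prove Claim~\ref{conn2} by the same structural case analysis used for Claim~\ref{conn}, but now tracking the cyclic structure $C^1\to C^2\to C^3\to C^4\to C^1$ of the EC3 partition rather than the two-part structure of EC1/EC2. In each of the five cases the core observation is the same: because $u_s$ and $v_1$ lie outside $V_{\rm bad}$, each of them lies in the ``right'' cluster $C^k_i$ for at least $(1-20\sqrt\delta)|\mathcal C_3|$ colors $i\in\mathcal C_3$, and since $|\mathcal C_3\setminus{\rm col}(P\cup Q)|>21\sqrt\delta n$ there is still an unused color $c_1$ (and later $c_2$, $c_3$) from $\mathcal C_3$ witnessing this membership. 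I would then use the almost-complete-bipartite-digraph structure of EC3 digraphs given by Lemma~\ref{thm-single-graph}(iii): for $a\in C^k_i$ one has $d^+(a,C^{k+1}_i)\ge |C^{k+1}_i|-\epsilon n$, and $d^+(a,C^{k-1}_i)$ is large when $k\in\{2,4\}$, etc. Combining this with $|C^k\Delta C^k_i|<3\delta n$, $|L_i|\le 2\epsilon n$, and the hypothesis $|C^k\setminus(V(P\cup Q)\cup V_{\rm bad})|>25\delta n$, I get that the relevant out-neighborhood (or in-neighborhood, or intersection of the two) restricted to the free part of the appropriate cluster has size at least, say, $10\delta n$, which is more than enough to pick the intermediate vertices $w_1$ (and $w_1'$) avoiding all previously used vertices.

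The concrete bridge in each case follows the cyclic order. In (i), to go from $u_s\in C^1$ to $v_1\in C^2$: pick $c_1$ with $u_s\in C^1_{c_1}$ and $c_2$ with $v_1\in C^2_{c_2}$; then $u_s$ has almost all of $C^2_{c_1}$ as out-neighbors and $v_1$ has almost all of $C^1_{c_2}$ as in-neighbors, and since $C^1$ maps almost completely onto $C^2$ we choose $w_1\in C^1\cap N^+_{c_1}(u_s)\cap N^-_{c_2}(v_1)$ — wait, more carefully: $w_1$ should sit in a cluster from which one arc reaches forward into $C^2$ and one arc comes from $C^1$; taking $w_1\in C^1_{c_1}\cap C^1_{c_2}$ works because $u_s\in C^1$ maps to $C^2\supseteq$ nothing—so instead I route $u_s\to w_1$ with $w_1\in C^1$ using the within-$C^1$ completeness ($d^+(a,C^1)\ge(\zeta-\epsilon)n$ only if $\zeta$ is large, but here $C^1$ is the small clique) — hence I will actually place $w_1\in C^2$ so that $u_s\to w_1$ uses $d^+(u_s,C^2_{c_1})$ large and $w_1\to v_1$ uses $d^+(w_1,C^2)$... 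This is exactly the kind of bookkeeping that has to be matched against Figure~\ref{fig0}; the right choice of intermediate cluster in each of (i)--(v) is dictated by which of $e(C_2),e(C_4)$ and which of $e(C_1,C_3),e(C_3,C_1)$ is small, and by the forward arcs $C^k\to C^{k+1}$. Cases (iii) (crossing $C^2$ to $C^4$, which are the two ``big'' parts with no direct dense connection) requires two intermediate vertices, mirroring Claim~\ref{conn}(i), and cases (iv)--(v) handle the ``long jumps'' that need to be threaded through $C^2$ or $C^{i+2}$.

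The step I expect to be the main obstacle is verifying, uniformly across all sub-cases and sub-sub-cases, that the degree/size estimates survive the accumulated subtractions: we lose $|V_{\rm bad}|<(\sqrt\epsilon+3\sqrt\delta)n$, the path lengths $|V(P\cup Q)|$, the cluster discrepancies $|C^k\Delta C^k_i|<3\delta n$, and $|L_i|\le 2\epsilon n$, and we must still end up with a positive lower bound like $\ge 10\delta n$ or $\ge(\tfrac12-O(\sqrt\delta))n$ on the final neighborhood intersection. Since $\epsilon\ll\delta\ll\min\{\zeta,\tfrac12-\zeta-\epsilon\}$ by~\eqref{eq:parameter}, and the hypotheses of the claim explicitly reserve $|\mathcal C_3\setminus{\rm col}(P\cup Q)|>21\sqrt\delta n$ and $|C^i\setminus(\cdots)|>25\delta n$, these all go through, but the argument must be organized so that one does not have to rewrite essentially the same chain of inequalities ten times — I would state one auxiliary estimate (``for any unused $c$ with $u_s\in C^k_c$ and any target cluster $C^\ell$ reachable by a dense arc from $C^k$, $|N^+_c(u_s)\cap C^\ell\setminus(V(P\cup Q)\cup V_{\rm bad})|\ge(\tfrac12-O(\sqrt\delta))n$ when $\ell\in\{k\!-\!1,k\!+\!1\}$ appropriately, and $\ge|C^\ell|-O(\delta n)$ when $\ell=k+1$'') and then invoke it mechanically in each case. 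Because the excerpt itself says ``The proof follows a similar argument to that of Claim~\ref{conn}, and thus we omit it,'' the expected write-up is essentially a pointer to the Claim~\ref{conn} proof together with the translation table between the EC1/EC2 parts $\{A,B\}$ and the EC3 parts $\{C^1,C^2,C^3,C^4\}$; the real content is already contained in Lemma~\ref{thm-single-graph}(iii) and the definition of $V_{\rm bad}$.
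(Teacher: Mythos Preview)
Your proposal is correct and matches the paper's approach exactly: the paper itself omits the proof, saying only that it ``follows a similar argument to that of Claim~\ref{conn}'', and what you outline is precisely that translation to the EC3 partition using Lemma~\ref{thm-single-graph}(iii) and the definition of $V_{\rm bad}$.

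One small remark on your working for case~(i): there is no need for the back-and-forth about where $w_1$ should live, since the statement itself places $w_1\in Y=C^1$. The two ingredients you need are the within-$C^1$ density $d^+(a,C^1_i)\ge(\zeta-\epsilon)n$ for $a\in C^1_i$ (giving $u_sw_1$) and the forward density $d^+(a,C^2_i)\ge|C^2_i|-\epsilon n$ for $a\in C^1_i$ (giving $w_1v_1$); both come directly from Lemma~\ref{thm-single-graph}(iii), and the intersection survives after subtracting $|C^1\Delta C^1_i|<3\delta n$, $|V_{\rm bad}|$, and $|V(P\cup Q)|$ because $\epsilon\ll\delta\ll\zeta$. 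Your suggested auxiliary estimate packaging this once and invoking it mechanically in each sub-case is exactly the right way to avoid repetition.
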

Delete vertices in $X$ from $\bigcup_{k\in [4]}C^k$ and choose a vertex $x\in X$. Then for some $j\in [4]$, we have $|N_{i}^-(x,C^j\setminus V_{\rm bad})|\geq 200\delta n$ for at least $200\delta n$ colors $i\in \mathcal{C}_3$. Recall that $|X|\leq \sqrt{\epsilon}n$.  We first consider $j=1$ and do the following operations.
\begin{enumerate}
  \item[{\rm (a)}] If $|N_{i}^+(x,C^1\setminus V_{\rm bad})|\geq 200\delta n$ for at least $200\delta n$ colors $i\in \mathcal{C}_3$, then there is a rainbow directed copy of $P_3$ centered at $x$ with both endpoints in $C^1$. We then move such $x$ to $C^1$.
  \item[{\rm (b)}] Notice that each vertex in $C^1\setminus V_{\rm bad}$ has at least $|C^4\setminus V_{\rm bad}|-16\delta n$  in-neighbors inside $C^4\setminus V_{\rm bad}$ for all but at most $20\sqrt{\delta}|\mathcal{C}_3|$ digraphs $D_{\ell}$ with ${\ell}\in \mathcal{C}_3$. If $|N_{i}^+(x,C^2\setminus V_{\rm bad})|\geq 200\delta n$ for at least $200\delta n$ colors $i\in \mathcal{C}_3$, then there is a rainbow directed copy of $P_4$ centered at $x$ with endpoints in $C^4,C^2$ respectively. We then move such $x$ to $C^1$.
  
  Suppose (a)-(b) do not hold. Then for at least $(1-400\delta)|\mathcal{C}_3|$ digraphs $D_{\ell}$ with $\ell\in \mathcal{C}_3$, $x$ has at least $|C^3\cup C^4|-400\delta n-|V_{\rm bad}|$ out-neighbors in $(C^3\cup C^4)\setminus V_{\rm bad}$. Observe that every vertex in $C^3\setminus V_{\rm bad}$ has at least $|C^4\setminus V_{\rm bad}|-16\delta n$  out-neighbors inside  $C^4\setminus V_{\rm bad}$ for all but at most $20\sqrt{\delta}|\mathcal{C}_3|$ digraphs $D_{\ell}$ with ${\ell}\in \mathcal{C}_3$.
  
  \item[{\rm (c)}] If $|N_{i}^-(x,C^2\setminus V_{\rm bad})|\geq 200\delta n$ for at least $200\delta n$ colors $i\in \mathcal{C}_3$, then we can find a  rainbow directed copy of $P_3$ or $P_4$ centered at  $x$ with endpoints in $C^2,C^4$ respectively; we then move such $x$ to $C^3$. The same conclusion holds when $|N_{i}^-(x,C^3\setminus V_{\rm bad})|\geq 200\delta n$ for at least $200\delta n$ colors $i\in \mathcal{C}_3$; in that case we also move $x$ to $C^3$.
  
  \item[{\rm (d)}] Suppose (a)-(c) do not hold. Then in each $D_{\ell}$ with $\ell\in \mathcal{C}_3$, $x$ has at least $|(C^1\cup C^4)\setminus V_{\rm bad}|-400\delta n-|V_{\rm bad}|$ in-neighbors inside $(C^1\cup C^4)\setminus V_{\rm bad}$ for all but at most $(1-400\delta )|\mathcal{C}_3|$ colors $i\in \mathcal{C}_3$. We then move $x$ to $C^2$. 
\end{enumerate}

We perform the above operation successively for  $j=2,3,4$. Let $X'$ denote the subset of $X$ formed by the vertices described in item (d). Hence, for the vertices in $X\setminus X'$, there exists a set of disjoint  rainbow directed copies of  $P_3$ or $P_4$ centered at those vertices, where each copy either has both endpoints in the same part $C^k$ with $k\in \{1,3\}$, or has one endpoint {in} $C^2$ and the other in $C^4$. Moreover, $\left||C^2|-|C^4|\right|\leq 24\delta n+2\sqrt{\epsilon}n$. 

By the definition of $V_{\rm bad}\setminus X$, for vertices in  $(V_{\rm bad}\setminus X)\cap Y$, we can find a set of disjoint rainbow directed copies of $P_3$ with centers in $(V_{\rm bad}\setminus X)\cap Y$ and endpoints in $Y\setminus V_{\rm bad}$ (if $Y\in \{C^1,C^3\}$) or in $(C^2\cup C^4)\setminus (Y\cup V_{\rm bad})$ (if $Y\in \{C^2,C^4\}$). 
Denote the rainbow directed path with center $x$ obtained above by $P_x$ for each $x\in V_{\rm bad}\setminus X'$. 

In order to use the transversal blow-up lemma inside $\{D_i[C^2,C^4]:i\in \mathcal{C}_3\}$, we need to balance the sizes of $|C^2|$ and $|C^4|$. Without loss of generality, assume that $|C^2|-|C^4|=r$ with $0\leq r\leq 25{\delta} n$. We now claim that by moving vertices in $C^2\cap V_{\rm bad}$ or deleting rainbow directed paths inside $\mathcal{D}[C^2]$ and $\mathcal{D}[C^2,C^1\cup C^3]$, we can make the number of remaining vertices in $C^2$ and $C^4$ differ by $\sigma$, where $\sigma=1$ if $|C^2|+|C^4|$ is odd and $\sigma=0$ otherwise. 

Denote $s_1:=|X_{C^2C^4}'\cap C^2|$ and $s_2:=|X_{C^2}'|$. If $|C^2|-|C^4|-\sigma\leq s_1+2s_2$, then by moving   $s_1'$ vertices in 
$X_{C^2C^4}'\cap C^2$ to $C^1\cup C^3$ and $s_2'$ vertices in 
$X_{C^2}'$ to $C^4$ 
with $s_1'+2s_2'=|C^2|-|C^4|-\sigma$, we can get our desired result. Hence, it suffices to consider $|C^2|-|C^4|-\sigma>s_1+2s_2$. We first move all vertices in 
$X_{C^2C^4}'\cap C^2$ to $C^1\cup C^3$ and all  vertices in 
$X_{C^2}'$ to $C^4$. Then $(V_{\rm bad}\setminus X)\cap C^2=\emptyset$. Choose a set of disjoint maximal rainbow directed paths in $\mathcal{D}[C^2]\cup\mathcal{D}[C^2,C^1]$, denote such a set by $\{Q_1,\ldots,Q_t\}$. Assume that $\{Q_1',\ldots,Q_k'\}$ is a set of disjoint rainbow directed copies of $P_3$ in $\mathcal{D}[C^2,C^3]$ with centers in $C^2$ and endpoints in $C^3$. 

On the one hand, we consider that  $|E(Q_1)|+\cdots+|E(Q_{t})|+k\geq |C^2|-|C^4|-\sigma$. Then there exist an integer $k'\in [k]$ and a set of disjoint rainbow directed subpaths of $\{Q_1,\ldots,Q_t\}$, say $\{\tilde{Q}_1,\ldots,\tilde{Q}_{t'}\}$, such that  $|E(\tilde{Q}_1)|+\cdots+|E(\tilde{Q}_{t'})|+k'= |C^2|-|C^4|-\sigma$. That is,  $|C^4|-t'=|C^2|-(|V(Q_1')|+\cdots+|V(Q_{t'}')|)-k'-\sigma$. If the end   vertex of $\tilde{Q_i}$ with $i\in [t']$ (resp. the start vertex of $Q_i'$ with $i\in [k']$) lies in $V_{\rm bad}\setminus X'$, then we extend it by a vertex not in $V_{\rm bad}$. Applying Claim~\ref{conn2}, we then connect the modified paths $\tilde{Q}_1,\ldots,\tilde{Q}_{t'},Q_1',\ldots,Q_{k'}'$ into a single rainbow path $P^0$ with endpoints in $C^2$ and $C^4$ respectively, whose length is at most $190\delta n$. Therefore, $|C^4\setminus V(P^0)|=|C^2\setminus V(P^0)|-\sigma$, as desired. 

In each of the above cases, that is, either $|C^2|-|C^4|-\sigma\leq s_1+2s_2$, or $|C^2|-|C^4|-\sigma> s_1+2s_2$ and $|E(Q_1)|+\cdots+|E(Q_{t})|+k\geq |C^2|-|C^4|-\sigma$, the number of the remaining vertices in $C^2$ and $C^4$ are balanced. Since $|E(P^0)|\leq 190\delta n$, by avoiding vertices in $V(P^0)$,  one may choose a set of disjoint rainbow directed paths $\mathbf{P}^0=\{P_x:x\in V_{\rm bad}\setminus (X'\cup P^0)\}$. For  colors in $\mathcal{C}_{\rm bad}\setminus {\rm col}(P^0)$, we select a maximal rainbow matching $M$ inside $\mathcal{D}^{\pm}[C^2,C^4]\cup \mathcal{D}[C^1]\cup \mathcal{D}[C^3]\cup (\bigcup_{i\in [4]}\mathcal{D}[C^i,C^{i+1}])$ ensuring it avoids vertices in $V(P^0\cup \mathbf{P}^0)$. Notice that if there exists a color $c\in {\mathcal{C}_{\rm bad}}\setminus ({\rm col}(P^0\cup M))$, then $D_c^{\pm}[C^1,C^3]$ is almost an undirected complete bipartite graph and $D_c[C^2],D_c[C^4]$ are almost undirected complete graphs. Consequently, for colors in ${\mathcal{C}_{\rm bad}}\setminus ({\rm col}(P^0\cup M))$, we can  choose a set of disjoint rainbow directed paths, say $\mathbf{P}^1$, with centers in $C^1$ and endpoints inside $C^3$, ensuring that at most one color in $\mathcal{C}_{\rm bad}$ {remains}  unused.


Let $\mathbf{P}:=\mathbf{P}^0\cup \mathbf{P}^1\cup M\cup \{P^0\}$. By Claim \ref{conn2} (ii) and (v), we  connect all rainbow directed paths inside $\mathbf{P}$ with one endpoints in $C^2$ and the other in $C^4$ into $Q_0=u_0\ldots v_0$, all rainbow directed paths inside $\mathbf{P}$ with both endpoints inside $C^1$ and $C^3$ into $Q_1=u_1\ldots v_1$ and $Q_2=u_2\ldots v_2$ respectively. Since $u_0,v_0,u_1,v_2\notin V_{\rm bad}$, there are four distinct colors $i_1,i_2,i_3,i_4\in \mathcal{C}_3\setminus {\rm col}(Q_0\cup Q_1\cup Q_3)$ such that $u_0\in C_{i_1}^2$, $v_0\in C_{i_2}^4$, $u_1\in C_{i_3}^1$ and $v_2\in C_{i_4}^3$. We proceed by considering the following two cases. 

\medskip
$\bullet$ Suppose that $\sigma=0$, i.e., $|C^4\setminus V(P^0)|=|C^2\setminus V(P^0)|$. We first assume that $\mathcal{C}_{\rm bad}\setminus {\rm col}(M\cup \mathbf{P}^0)=\emptyset$. 
By applying Lemma~\ref{lemma4.1} to the digraph collection  $\{D_i[C^1]\cup D_i[C^3]:i\in \mathcal{C}_3\setminus \{i_1,i_2,i_3,i_4\}\}$, while avoiding vertices and colors in $Q_0\cup Q_1\cup Q_2$, we obtain two disjoint rainbow directed paths $P^1=x_1\ldots y_1$ and $P^2=x_2\ldots y_2$. Here $y_1\in N_{i_3}^-(u_1,C^1)$, $x_2\in N_{i_4}^+(v_2,C^3)$, $x_1\in C_{c_1}^1\setminus V_{\rm bad}$ and $y_2\in C_{c_2}^3\setminus V_{\rm bad}$ for two unused colors $c_1,c_2\in \mathcal{C}_3$. We then connect  $Q_1,Q_2$ via Claim \ref{conn2} (iv) to form a rainbow directed path $P=x_1\ldots y_2$, which satisfies $V_{\rm bad}\setminus X'\subseteq V(P)$, $\mathcal{C}_{\rm bad}\subseteq {\rm col}(P)$ and $|C^4\setminus V(P)|=|C^2\setminus V(P)|+1$.

Applying Lemma~\ref{lemma4.1} to the digraph collection $\{D_i^{\pm}[C^2\setminus V(P\cup Q_0),C^4\setminus V(P\cup Q_0)]:i\in \mathcal{C}_3\setminus ({\rm col}(P\cup Q_0)\cup \{i_1,i_2,c_1,c_2\})\}$, we obtain two disjoint rainbow directed paths $P^3=x_3\ldots y_3$ and $P^4=x_4\ldots y_4$, where $x_3\in N_{{c_1}}^+(y_2,C^4\setminus V_{\rm bad})$, $y_3\in N_{{i_1}}^-(u_0,C^4\setminus V_{\rm bad})$, $x_4\in N_{{i_2}}^+(v_0,C^2\setminus V_{\rm bad})$ and $y_4\in N_{{c_2}}^-(x_1,C^4\setminus V_{\rm bad})$, respectively. Hence $\mathcal{D}$ contains a transversal directed Hamilton cycle, a contradiction. 

Now, assume that there exists a color $c\in \mathcal{C}_{\rm bad}\setminus {\rm col}(M\cup \mathbf{P}^0)$.  Since $\delta^0(\mathcal{D})\geq \frac{n}{2}$, there exists an edge $uv\in E(D_c[C^2\setminus V(Q_0\cup Q_1\cup Q_2),C^3\cup C^4])$ if $|C^1|+|C^2|\leq |C^3|+|C^4|$, and $vu\in E(D_c[C^4\setminus V(Q_0\cup Q_1\cup Q_2),C^1\cup C^2])$ otherwise. By an argument analogous to that above, and considering whether  $v\in V_{\rm bad}$, we obtain that $\mathcal{D}$ contains a transversal directed Hamilton cycle, a contradiction.

\medskip

$\bullet$ Suppose that $\sigma=1$, i.e., $|C^4\setminus V(P^0)|=|C^2\setminus V(P^0)|-1$. 
We first consider that there exists a color $c\in \mathcal{C}_{\rm bad}\setminus {\rm col}(M\cup \mathbf{P}^0)$. Choose a directed edge, say ${xy}$, in $E(D_c[C^3\setminus V(Q_0\cup Q_1\cup Q_3),C^1\setminus V(Q_0\cup Q_1\cup Q_3)])$. 
By applying Lemma~\ref{lemma4.1} to the digraph collection  $\{D_i[C^1\setminus (V(Q_0\cup Q_1\cup Q_2)\cup \{x\})]\cup D_i[C^3\setminus (V(Q_0\cup Q_1\cup Q_2)\cup \{y\})]:i\in \mathcal{C}_3\setminus ({\rm col}(Q_0\cup Q_1\cup Q_2)\cup \{i_1,i_2,i_3,i_4,c\})\}$, we obtain two disjoint rainbow directed paths $P^1=x_1\ldots y_1$ and $P^2=x_2\ldots y_2$, where $y_1\in N_{i_3}^-(u_1,C^1)$, $x_2\in N_{i_4}^+(v_2,C^3)$, $x_1\in C_{c_1}^1\setminus V_{\rm bad}$ and $y_2\in C_{c_2}^3\setminus V_{\rm bad}$ for two unused colors $c_1,c_2\in \mathcal{C}_3$.
By sequentially connecting $Q_1,{xy},Q_3$ via Claim \ref{conn2} (iv), we obtain a rainbow directed path $P=x_1\ldots y_2$ such that $V_{\rm bad}\setminus X'\subseteq V(P)$,  $\mathcal{C}_{\rm bad}\subseteq {\rm col}(P)$ and $|C^4\setminus V(P)|=|C^2\setminus V(P)|+1$. Similar to the previous discussion, $\mathcal{D}$ contains a transversal directed Hamilton cycle, a contradiction. 

Now, assume that $\mathcal{C}_{\rm bad}\setminus {\rm col}(M\cup \mathbf{P}^0)=\emptyset$. Since $|C^2|\geq |C^4|$, either $|C^1|+|C^2|\geq |C^3|+|C^4|$ or $|C^3|+|C^2|\geq |C^1|+|C^4|$. Hence, for each $c\in \mathcal{C}_3$, there exists an edge $uv\in E(D_c[C^2\setminus V(Q_0\cup Q_1\cup Q_2),C^1\cup C^2])$ or $uv\in E(D_c[C^2\cup C^3,C^2\setminus V(Q_0\cup Q_1\cup Q_2)])$. By a similar argument as above, and considering whether the vertex  $v$ belongs to $V_{\rm bad}$ or not, we obtain that $\mathcal{D}$ contains a transversal directed Hamilton cycle, a contradiction.

\medskip

On the other hand, we consider that 
$|E(Q_1)|+\cdots+|E(Q_{t})|+k< |C^2|-|C^4|-\sigma= r-s_1-2s_2-\sigma$. In this case, we move all vertices inside $Q_1,\ldots,Q_t$, all vertices in $(V(Q_1')\cup \ldots\cup V(Q_k'))\cap C^2$ along with their in-neighbors in $C^3$ to $C^4$. Denote the resulting partition by $\tilde{C}^1\cup \tilde{C}^2\cup \tilde{C}^3\cup \tilde{C}^4$. 
Let $\tilde{\mathcal{C}}:=\mathcal{C}\setminus ({\rm col}(\bigcup_{i\in [t]}Q_i)\cup {\rm col}(\bigcup_{i\in [k]}Q_i'[C^3,C^2]))$. 
Therefore,  \allowdisplaybreaks
\begin{align*}
    &|\tilde{C}^4|
    \leq \frac{|C^2|+|C^4|-r}{2}+s_2+2(r-s_1-2s_2-\sigma)\leq \frac{|C^2|+|C^4|}{2}+50{\delta}n,  \\
&|\tilde{C}^2|\geq \frac{|C^2|+|C^4|+r}{2}-s_1-s_2-2(r-s_1-2s_2-\sigma)\geq \frac{|C^2|+|C^4|}{2}-50{\delta}n,\\
&|\tilde{\mathcal{C}}|=|\mathcal{C}|-(|E(Q_1)|+\cdots+|E(Q_{t})|)-k\geq (1-25\delta)n.
\end{align*}

By the maximality of $\{Q_1,\ldots,Q_t\}$, we know that $D_i[\tilde{C}^2]=D_i[\tilde{C}^2,\tilde{C}^1]=D_i[\tilde{C}^3,\tilde{C}^2]=\emptyset$  for all color $i\in \tilde{\mathcal{C}}$. 
It follows from  $\delta^0(\mathcal{D})\geq \left\lceil\frac{n}{2}\right\rceil$ that $\left\lceil\frac{n}{2}\right\rceil\leq|\tilde{C}^3|+|\tilde{C}^4|$ and $\left\lceil\frac{n}{2}\right\rceil\leq |\tilde{C}^1|+|\tilde{C}^4|$. Therefore, 
$$
|\tilde{C}^1|+|\tilde{C}^2|+|\tilde{C}^3|+|\tilde{C}^4|=n\leq \left\lceil\frac{n}{2}\right\rceil+\left\lceil\frac{n}{2}\right\rceil\leq |\tilde{C}^1|+|\tilde{C}^3|+2|\tilde{C}^4|,
$$
which implies that $|\tilde{C}^2|\leq |\tilde{C}^4|$. We complete our proof by using the subsequent lemma (i.e., Lemma~\ref{Y-large-2}), which leads to a contradiction by showing  that $\mathcal{D}$ contains a transversal directed Hamilton cycle.
\end{proof}
\begin{lemma}\label{Y-large-2}
Suppose $0<\frac{1}{n}\ll\delta\ll1$ and $0\leq \gamma<\delta$. Let $\mathcal{C}$ be a set of $n$ colors, and $\mathcal{D}=\{D_i:i\in \mathcal{C}\}$ be a collection of digraphs on a common vertex set $V$ of size $n$ such that  $\delta^0(\mathcal{D})\geq \left\lceil\frac{n}{2}\right\rceil$. Assume that
\begin{itemize}
  \item $C^1\cup C^2\cup C^3\cup C^4$ is a partition of $V$ with $|C^2|\geq 50\delta^{\frac{1}{4}}$, $\min\{|C^3|+|C^4|,|C^1|+|C^4|\}\geq \left\lceil\frac{n}{2}\right\rceil$ and  $|C^4|=\left\lfloor\frac{|C^2|+|C^4|+1}{2}\right\rfloor+\gamma n$,
  \item $\mathcal{C}'\cup \mathcal{C}''$ is a partition of $\mathcal{C}$ with  $|\mathcal{C}''|\leq \delta n$,
  \item $D_i[C^2]=D_i[C^2,C^1]=D_i[C^3,C^2]=\emptyset$ for all $i\in \mathcal{C}'$.
\end{itemize}
Then $\mathcal{D}$ contains a transversal directed Hamilton cycle.
\end{lemma}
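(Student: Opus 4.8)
The plan is to mirror the proof of Lemma~\ref{Y-large}, with the part $C^2$ playing the role of the structurally restricted part $B$, but now against a four‑part (rather than two‑part) target pattern. First I would pin down the sizes: writing $c_k=|C^k|$, the identity $\sum_k c_k=n$ together with $c_3+c_4\ge\lceil n/2\rceil$, $c_1+c_4\ge\lceil n/2\rceil$ and $c_4=\lfloor(c_2+c_4+1)/2\rfloor+\gamma n$ forces $c_1=\lceil n/2\rceil-c_2\pm O(\gamma n)$, $c_3=\lceil n/2\rceil-c_2\pm O(\gamma n)$, $c_4=c_2\pm O(\gamma n)$, so that $|C^1\cup C^4|$, $|C^3\cup C^4|$, $|C^1\cup C^2|$, $|C^2\cup C^3|$ are all $n/2\pm O(\gamma n)$. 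Feeding these into the three emptiness conditions and $\delta^0(\mathcal D)\ge\lceil n/2\rceil$, I would deduce that for every $i\in\mathcal C'$ and every $v\in C^2$ one has $N_i^+(v)\subseteq C^3\cup C^4$ with $v$ out‑adjacent to all but $O(\gamma n)$ vertices of each of $C^3$ and $C^4$, and dually $N_i^-(v)\subseteq C^1\cup C^4$ with $v$ in‑adjacent to all but $O(\gamma n)$ of each of $C^1$ and $C^4$. I would also record that, still for $i\in\mathcal C'$, $C^1$‑vertices have $\ge\lceil n/2\rceil$ in‑neighbours inside $C^1\cup C^3\cup C^4$ and $C^3$‑vertices have $\ge\lceil n/2\rceil$ out‑neighbours inside $C^1\cup C^3\cup C^4$; whenever such a set misses almost all of $C^3$ (resp.\ of $C^1$), the complementary near‑completeness inside $C^1\cup C^4$ (resp.\ inside $C^3\cup C^4$) holds, which is what will eventually feed Claim~\ref{lemma4.1}.

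\textbf{Bad vertices and the colours of $\mathcal C''$.} Next I would define $V_{\rm bad}$ to consist of the vertices whose degrees deviate from the picture above for at least $\delta^{1/4}|\mathcal C'|$ colours $i\in\mathcal C'$, and bound $|V_{\rm bad}|=O(\sqrt\delta\,n)$ by the same double‑counting as in~\eqref{eq:Y}. Using Lemma~\ref{claim4.2} I would cover all but $O(\gamma n)$ of any large bad set by disjoint rainbow stars/$P_3$'s, and cover the rest of $V_{\rm bad}$ by short disjoint rainbow $P_3$'s or $P_4$'s whose endpoints sit in good positions, all using colours from $\mathcal C'$. For $\mathcal C''$ I would take a maximal rainbow matching $M$ on $\mathcal C''$‑colours with edges of a prescribed shape inside $C^1\cup C^3\cup C^4$ (e.g.\ $C^3\to C^4$, $C^4\to C^1$, or inside $C^1$ or $C^3$); maximality together with $\delta^0\ge\lceil n/2\rceil$ forces each unused $\mathcal C''$‑colour to satisfy the same three emptiness conditions on the leftover of $C^1\cup C^3\cup C^4$, so that from then on it may be treated like a colour of $\mathcal C'$.

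\textbf{Assembly.} Using the connecting tool Claim~\ref{conn2} I would splice all the short rainbow paths above, together with $M$, into a bounded number of rainbow paths $Q$ with endpoints in good positions, chosen so that the leftover vertex set is balanced for the bipartition dictated by the $4$‑periodic pattern $C^1C^2C^3C^4$. Then I would (i) inject the leftover $C^2$‑vertices into distinct leftover $C^4$‑vertices — possible since $c_2\le c_4=c_2+O(\gamma n)$ and the $C^2$–$C^4$ bipartite graphs are near‑complete for $\mathcal C'$ — routing each pair as a single unit $C^4\!\to\!C^2$, and (ii) apply the transversal blow‑up lemma (Claim~\ref{lemma4.1}) inside the resulting balanced near‑complete bipartite collection (formed from $C^1$, $C^3$, the leftover $C^4$ and these units) to embed a transversal Hamilton path joining the free ends of the $Q$'s, closing up the Hamilton cycle. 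A short subcase analysis — according to which of the ``flexible'' edge sets $C^1\!\to\!C^3$, $C^3\!\to\!C^1$, $C^1\!\to\!C^4$, $C^4\!\to\!C^1$, \ldots\ are dense for almost all of $\mathcal C'$ — fixes which balanced bipartite collection is actually used, invoking the conditional near‑completeness from the first step; finally the parity of $|\mathcal C'\setminus\mathrm{col}(\text{used})|$ against $n$ decides whether one extra rainbow edge must be added, handled exactly as in Claim~\ref{parity}. (Here the conclusion is the desired transversal directed Hamilton cycle itself.)

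\textbf{Main obstacle.} The hard part is the balance bookkeeping. Unlike Lemma~\ref{Y-large}, where the Hamilton cycle is a near‑perfect $A$–$B$ alternation and ``balanced'' just means $|A|=|B|$, here the idealized cycle is $4$‑periodic and the four parts are not mutually balanced, so after preprocessing the leftover vertices must be split \emph{exactly} evenly into the two classes fed to Claim~\ref{lemma4.1}; making this work needs the $O(\gamma n)$‑tight size estimates and a precise accounting of how many vertices of each $C^k$ are consumed by the paths $Q$, by $M$, by the $C^2$–$C^4$ injection, and by the parity correction. A secondary obstacle is that a $\mathcal C''$‑edge (or a bad vertex) can force the cycle to deviate locally from the $4$‑periodic pattern — for instance a $\mathcal C''$‑edge joining two $C^2$‑vertices, or a bad $C^4$‑vertex that cannot follow a $C^3$‑vertex — and absorbing each such deviation into the preprocessing paths while keeping the global balance is precisely what makes the shape‑constrained maximal matching and the rainbow stars of Lemma~\ref{claim4.2} necessary.
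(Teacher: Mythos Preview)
Your high-level strategy of paralleling Lemma~\ref{Y-large} --- localise bad vertices by double counting, absorb them via Lemma~\ref{claim4.2}, dispose of $\mathcal{C}''$ by a maximal matching, then assemble with Claim~\ref{lemma4.1} --- matches the paper's approach. However, two specific points diverge from the paper and cause real problems.

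First, the ``$4$-periodic pattern $C^1C^2C^3C^4$'' is not the right backbone. The size estimates you derive give $|C^1|\approx|C^3|\approx \frac{n}{2}-|C^2|$ and $|C^4|\approx|C^2|$, so a $4$-periodic cycle is impossible unless $|C^2|\approx n/4$. What the paper actually uses (leaning on the surrounding EC3 context via the sentence ``Claim~\ref{lemma4.1} and Claim~\ref{conn2} hold by setting $V_{\rm bad}:=Y$ and $\mathcal{C}_3:=\mathcal{C}'$'') is an alternating $C^2$--$C^4$ bipartite spine together with internal traversals of $C^1$ and of $C^3$, glued by the $C^k\to C^{k+1}$ edges. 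Consequently the only balance that matters is $|C^2|$ against $|C^4|$, not the four-way split you set up as the ``main obstacle''; once the bad vertices are absorbed, Claim~\ref{lemma4.1} is applied to the $C^2$--$C^4$ alternation and separately inside $C^1$ and $C^3$, exactly as in the proof of Theorem~\ref{theorem-EC3} preceding the lemma.

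Second, your assertion that ``the $C^2$--$C^4$ bipartite graphs are near-complete for $\mathcal{C}'$'' is only half true: it holds from the $C^2$ side (forced by the emptiness conditions plus tight degree), but a $C^4$-vertex is unconstrained and may have all its neighbours in $C^1\cup C^3$. This is precisely why the paper localises its bad sets $Y_1,Y_2$ \emph{inside $C^4$} rather than over all of $V$, and why its case split is on $|Y_1|$ versus $\gamma_1 n$ and $|Y_2|$ versus $\gamma_2 n$. When one of $|Y_1|,|Y_2|$ falls below the corresponding $\gamma_k n$ (the paper's Case~2), Lemma~\ref{claim4.2} no longer supplies enough absorbers, and the paper instead runs a maximal rainbow-path extension inside $C^4$ together with auxiliary edges into $C^1$ and $C^3$, finishing with the $I_1,I_2$ index-set contradiction exactly as in Subcase~2.2 of Lemma~\ref{Y-large}. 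Your proposed subcase analysis (``according to which flexible edge sets are dense'') does not capture this mechanism, and without it the argument has a genuine gap in the small-$Y$ regime.
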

\begin{proof}
It is easy to see that $|C^3|+|C^4|+|C^1|+|C^4|=n+2\gamma n+\sigma$ and $\max\{|C^3|+|C^4|,|C^1|+|C^4|\}\leq \left\lceil\frac{n}{2}\right\rceil+2\gamma n+\sigma$, where $\sigma=1$ if $|C^2|+|C^4|$ is odd and $\sigma=0$ otherwise. Without loss of generality, assume that $|C^3|+|C^4|=\left\lceil\frac{n}{2}\right\rceil+\gamma_1 n$ and $|C^1|+|C^4|=\left\lceil\frac{n}{2}\right\rceil+\gamma_2 n$ with $2\left\lceil\frac{n}{2}\right\rceil+\gamma_1 n+\gamma_2 n=n+2\gamma n+\sigma$.
 Define $Y_1:=\{v\in C^4: d_{i}^+(v,C^2)\leq (1-\delta^{\frac{1}{4}})|C^2|\ \textrm{for at least}\ \delta^{\frac{1}{4}}|\mathcal{C}'|\ \textrm{colors}\  i\in \mathcal{C}'\}$, $Y_2:=\{v\in C^4: d_{i}^-(v,C^2)\leq (1-\delta^{\frac{1}{4}})|C^2|\ \textrm{for at least}\ \delta^{\frac{1}{4}}|\mathcal{C}'|\ \textrm{colors}\  i\in \mathcal{C}'\}$, and $Y:=Y_1\cup Y_2$.  
    It is routine to check that 
\begin{align}\notag
    \left\lceil\frac{n}{2}\right\rceil|C^2||\mathcal{C}'|&\leq \sum_{i\in \mathcal{C}'}|E({D_i}[C^2,C^3\cup C^4])|=\sum_{i\in \mathcal{C}'}|E({D_i}[C^2,C^3])|+\sum_{i\in \mathcal{C}'}|E({D_i}[C^2,C^4])|\\\notag
    &\leq |C^2||C^3||\mathcal{C}'|+ |Y_2|(1-\delta^{\frac{1}{4}})|C^2|\delta^{\frac{1}{4}}|\mathcal{C}'|+|Y_2||C^2|(1-\delta^{\frac{1}{4}})|\mathcal{C}'|
    +(|C^4|-|Y_2|)|C^2||\mathcal{C}'|\\\notag
    &=(|C^4|-\delta^{\frac{1}{2}}|Y_2|+|C^3|)|C^2||\mathcal{C}'|.
\end{align}
Thus, $|Y_2|\leq 3\sqrt{\delta}n$. Similarly, $|Y_1|\leq 3\sqrt{\delta}n$. {{Notice that Lemma~\ref{lemma4.1} and Claim \ref{conn2} hold by setting $V_{\rm bad}:=Y$ and $\mathcal{C}_3:=\mathcal{C}'$.}}

For a vertex $v\in Y_1\setminus Y_2$ (resp. $v\in Y_2\setminus Y_1$), we have $d_{i}^-(v,C^2)\geq (1-\delta^{\frac{1}{4}})|C^2|$ (resp. $d_{i}^+(v,C^2)\geq (1-\delta^{\frac{1}{4}})|C^2|$) for at least $(1-\delta^{\frac{1}{4}})|\mathcal{C}'|$ colors $i\in \mathcal{C}'$. For a vertex $y\in Y_2$,  one has $d_i^-(y,C^1\cup C^4)\geq 40\sqrt{\delta}n$ 
for at least ${\delta}^{\frac{1}{4}}|\mathcal{C}'|-1$ colors  $i\in \mathcal{C}'$. Otherwise, by the definition of $Y_2$, there exists a color $c\in \mathcal{C}'$ such that $d_{c}^-(y,C^2)\leq (1-\delta^{\frac{1}{4}})|C^2|$ and $d_{c}^-(y,C^1\cup C^4)\leq 40\sqrt{\delta}n$. Together with $|C^2|\geq 50{\delta}^{\frac{1}{4}}n$, one has 
$$
d_{c}^-(y)\leq 40\sqrt{\delta}n+(1-\delta^{\frac{1}{4}})|C^2|+|C^3|\leq \left\lceil\frac{n}{2}\right\rceil-\delta^{\frac{1}{4}}|C^2|+40\sqrt{\delta}n< \left\lceil\frac{n}{2}\right\rceil,
$$
a contradiction. Similarly, each vertex in $Y_1$ has at least $40\sqrt{\delta}n$ out-neighbors inside $C^3\cup C^4$ for at least ${\delta}^{\frac{1}{4}}|\mathcal{C}'|-1$ digraphs $D_i$ with $i\in \mathcal{C}'$.

Since $\delta^0(\mathcal{D})\geq \frac{n}{2}$, we have $|E(D_i[C^2,C^3\cup C^4])|,|E(D_i[C^1\cup C^4,C^2])| \geq |C^2|\left\lceil\frac{n}{2}\right\rceil$ for each $i\in \mathcal{C}'$. This implies that for each $i\in \mathcal{C}'$, there are at most $|C^2|(\left\lceil\frac{n}{2}\right\rceil+\gamma_1 n)-|C^2|\left\lceil\frac{n}{2}\right\rceil=|C^2|\gamma_1 n$ non-edges in $D_i[C^2,C^3\cup C^4]$ and at most $|C^2|\gamma_2 n$ non-edges in $D_i[C^1\cup C^4,C^2]$. Therefore, 
$$
\sum_{i\in \mathcal{C}'}|E(D_i[Y_1,C^2])|\geq (|Y_1|-\gamma_1 n)|C^2||\mathcal{C}'|\ \ \text{and}\ \ \sum_{i\in \mathcal{C}'}|E(D_i[C^2,Y_2])|\geq (|Y_2|-\gamma_2 n)|C^2||\mathcal{C}'|.
$$
We proceed {with the} proof by considering the values of $|Y_1|$ and $|Y_2|$.

\medskip
{\bf Case 1.} $|Y_1|\geq \gamma_1 n,\ |Y_2|\geq \gamma_2 n$.
\medskip

In view of Lemma \ref{claim4.2}, there exists a subset $Y_1'\subseteq Y_1$ with size $\gamma_1 n$ such that vertices in  $Y_1\setminus Y_1'$ can be covered by  disjoint rainbow directed $P_3$ copies inside $\{D_i[Y_1,C^2]:i\in \mathcal{C}'\}$ with centers in $Y_1\setminus Y_1'$; and a subset $Y_2'\subseteq Y_2$ with size $\gamma_2 n$ such that vertices in $Y_2\setminus Y_2'$ can be covered by  disjoint rainbow directed $P_3$ copies inside $\{D_i[C^2,Y_2]:i\in \mathcal{C}'\}$ with centers in $Y_2\setminus Y_2'$. 
Therefore, by using colors in $\mathcal{C}'$,  
\begin{itemize}
  \item vertices in $(Y_1\cup Y_2)\setminus (Y_1'\cup Y_2')$ can be covered by a set of disjoint rainbow directed $P_3$ copies with centers in $(Y_1\cup Y_2)\setminus (Y_1'\cup Y_2')$ and endpoints in $C^2$,
  \item  vertices in $Y_2'\setminus Y_1'$ can be covered by a set of disjoint rainbow directed $P_3$ or $P_4$ copies with centers in  $Y_2'\setminus Y_1'$, starting at $C^4$ and ending  at $C^2$,
  \item vertices in $Y_1'\setminus Y_2'$ can be covered by a set of disjoint rainbow directed $P_3$ or $P_4$ copies with centers in $Y_1'\setminus Y_2'$, starting at $C^2$ and ending at 
 $C^4$,
  \item vertices in $Y_1'\cap Y_2'$ can be covered by a set of disjoint rainbow directed $P_3$, $P_4$ or $P_5$ copies with centers in $Y_1'\cap  Y_2'$ and endpoints in $C^4$.
\end{itemize}
For each $y\in Y$, denote the rainbow directed path with center $y$ by $P_y=y^1*y*y^2$ with colors in $\mathcal{C}_3$. Let $\mathbf{P}:=\{P_y:y\in Y\}$. 

It is routine to verify that there exists a rainbow matching inside $D_i^{\pm}[C^2\setminus V(\mathbf{P}),C^4\setminus V(\mathbf{P})]\cup D_i[C^1]\cup D_i[C^3]\cup (\bigcup_{k\in [4]}D_i[C^k,C^{k+1}]): i\in \mathcal{C}''\}$, say $M$. For all but at most one unused {color}  in $\mathcal{C}''$, we can find a rainbow directed copy of $P_3$ with center in $Y$ and endpoints in $Z$, where $\{Y,Z\}=\{C^1,C^3\}$. If some color $c\in \mathcal{C}''$ remains unused, then $D_c[C^1,C^3]$ is nearly a complete bipartite graph and $D_c[C^2],D_c[C^4]$ are almost complete graphs.  Then, by a similar argument to that in Theorem \ref{Y-large}, $\mathcal{D}$ contains a transversal directed Hamilton cycle.

\medskip
{\bf Case 2.} At most one of $|Y_1|\geq \gamma_1n$ and $|Y_2|\geq \gamma_2n$ holds.
\medskip

We only consider the case that $|Y_1|\geq \gamma_1 n$ and $|Y_2|<\gamma_2 n$; other cases can be discussed similarly. Notice that vertices in $Y_1$ can be covered by a set of disjoint rainbow directed paths with centers in $Y_1$ and ending in   $C^4\setminus (Y_1\cup Y_2)$. In particular, those rainbow directed paths can be chosen such that none of them  contains edges inside $\mathcal{D}[C^4,C^3]$ or $\mathcal{D}[C^1,C^4]$. Furthermore, vertices in $Y_2\setminus Y_1$ can be covered by a set of disjoint rainbow directed $P_3$ copies with centers in $Y_2\setminus Y_1$,  starting at  $C^4\setminus Y$, and using colors in $\mathcal{C}'$.

In the digraph collection $\mathcal{D}[C^4\setminus Y]$, extend those rainbow directed  paths or choose other disjoint rainbow directed paths into a set of disjoint maximal rainbow directed  paths. Let $\mathbf{P}:=\{Q_1,Q_2,\ldots,Q_t\}$ be a set consisting of all disjoint rainbow paths in the above, each of which has at least one endpoint in $C^4\setminus Y$ and length $s_i\ (1\leq i\leq t)$ inside $C^4$. Let $\{e_1,e_2,\ldots,e_r\}$ be a set consisting of all disjoint rainbow edges in $\mathcal{D}[C^4\setminus Y,C^3]$, and let $\{\tilde{Q}_1,\ldots,\tilde{Q}_k\}$ be a set of disjoint maximal rainbow directed  $P_3$ in $\mathcal{D}^{\pm}[C^4\setminus Y,C^1]$ with centers in $C^4$ and endpoints in $C^1$, each of which is chosen while avoiding previously used vertices and colors.

\medskip
{\bf Subcase 2.1.} $|C^2|\geq |C^4|-k-r-(s_1+\cdots+s_t)-\sigma$. 
\medskip

In this subcase, there exist two integers $r'\in [r]$, $k'\in [k]$ and a set of disjoint rainbow directed paths $\mathbf{P}'=\{{Q}_1',{Q}_2',\ldots,{Q}_{\ell}'\}$ such that $|C^4\setminus V(\mathbf{P}')|-r'-k'-\sigma=|C^2|-\ell$, where $Y_1'\cup Y_2\subseteq V(\mathbf{P}')$ and the endpoints of $\tilde{Q}_i$ are not in $Y$ for all $i\in [\ell]$. 
By Claim \ref{conn2}, we connect all the rainbow directed paths in $\mathbf{P}'$ into a single rainbow path $P^1$, whose endpoints lie in different parts. 
Since $|C^4|-|C^2|=2\gamma n+\sigma$, we have $\ell\leq |E(Q_1')|+\cdots+|E(Q_{\ell}')|= 2\gamma n+\sigma$. Therefore, 
$|E(P^1)|\leq 6\gamma n+3\sigma$ and $|C^4\setminus V(P^1)|-\sigma=|C^2\setminus V(P^1)|$. 
By a similar discussion as Subcase 2.1 of Theorem \ref{Y-large}, $\mathcal{D}$ contains a transversal directed Hamilton cycle.

\medskip
{\bf Subcase 2.2.} $|C^2|<|C^4|-k-r-(s_1+\cdots+s_t)-\sigma$. 
\medskip


In this subcase, $\sum_{i=1}^t s_i+k+r<2\gamma n$. 
Assume that $V(Q_i[C^4]):=\{v_{s_1+\cdots+s_{i-1}+i},\ldots,v_{s_1+\cdots+s_{i}+i}\}$ for each $i\in [t]$. Let $w$ be an unused vertex in $C^4$ and $c_1,c_2$ be two unused colors in $\mathcal{C}$. 
By the maximality of $\mathbf{P}$, in $D_{c_1}$ (resp. $D_{c_2}$), $w$ cannot be adjacent to the starting  vertices of $Q_i[C^4]$ (resp. cannot be adjacent from the ending vertices of  $Q_i[C^4]$) for all $i\in [t]$. 

Define
$$
  I_1:=\{i:v_i\in N_{{c_1}}^-(w)\}\ \ \text{and}\  \ 
  I_2:=\{i:v_{i+1}\in N_{{c_2}}^+(w)\}.
$$  
Hence $I_1,I_2\subseteq \bigcup_{i\in [t]}\left[\sum_{j\in [i-1]}s_{j}+i,\sum_{j\in [i]}s_j+i-1\right]$ with $s_0=0$. 
By the maximality of $\mathbf{P}$, we know that  
\begin{itemize}
  \item $I_1\cap I_2= \emptyset$, which implies $|I_1|+|I_2|\leq s_1+\cdots+s_t$,
  \item each vertex in $N_{c_1}^+(w,C^3)$ must be an end vertex of some $e_i$ for $i\in [r]$ (hence   $|N_{c_1}^+(w,C^3)|\leq r$), 
  \item each vertex in $N_{c_2}^-(w,C^1)$ must be a start vertex of some $\tilde{Q}_i$ for $i\in [k]$ (hence $|N_{c_2}^-(w,C^1)|\leq k$). 
\end{itemize}
Therefore, 
\begin{align*}
  n\leq & d_{D_{c_1}}^+(w)+d_{D_{c_2}}^-(w)\leq s_1+\cdots+s_t+r+|C^3|+|C^2|+k+|C^1|+|C^2|\\
  \leq & 2\gamma n-1+n-|C^4|+|C^2|=n+2\gamma n-1-(2\gamma n+\sigma)=n-1-\sigma,
\end{align*} 
a contradiction. 
\end{proof}

\section{Concluding remarks}
In this paper, we establish a transversal analogue of Ghouila-Houri's theorem \cite{Houri} (Theorem~\ref{Ghouila-Houri}), thereby resolving a problem proposed by Chakraborti, Kim, Lee, and Seo \cite{2023Tournament}  for all sufficiently large $n$. As a consequence, we recover the transversal version of Dirac's theorem, previously obtained by Joos and Kim~\cite{2021jooskim}.

An \textit{anti-directed cycle}  is a digraph in which the underlying graph forms a cycle, and no pair of consecutive edges forms a directed path. DeBiasio and Molla \cite{ADHC} proved {that the} anti-directed Hamilton cycle is guaranteed to appear in a digraph $D$ if $\delta^0(D) \geq \frac{n}{2}+1$. 
DeBiasio, K\"uhn, Molla, Osthus, and Taylor \cite{AOHC} showed that for sufficiently large $n$, every $n$-vertex digraph $D$ with $\delta^0 (D)\geq \frac{n}{2}$ contains every orientation of a Hamilton cycle except, possibly, the anti-directed one. It would be interesting to generalize the above two results to {a} transversal version.

An \textit{oriented graph} 
is a digraph with no cycle of length two. Keevash, K\"uhn, and Osthus \cite{2009KeevashOriented} proved the oriented version of Dirac's theorem, which states that every $n$-vertex digraph with minimum semi-degree at least $\frac{3n-4}{8}$ contains a directed Hamilton cycle. It would be interesting to consider a transversal version of this theorem. 

\begin{question}\label{dirac-oriented}
    Let $\mathcal{D} = \left \{ D_{1}, \ldots , D_{n}\right \}$ be a collection of oriented graphs with common vertex set $V$ of size $n.$ If $\delta^0 (\mathcal{D})\geq \lceil\frac{3n-4}{8}\rceil$, does $\mathcal{D}$ contain a transversal directed  Hamilton cycle?
\end{question}
We remark that our main technique used in this paper cannot be directly generalized to the above oriented case. This is because the absorbing structure we used does not exist in the above oriented graph collections since their semi-degree is much lower than $\frac{n}{2}$. To overcome this difficulty, one has to introduce new stable conditions and build an absorption structure using Lemma \ref{LEMMA:directed-k-graph-matching}. 
It seems that  new ideas are  needed to prove the tight bound in Question \ref{dirac-oriented}.     

\section{Acknowledgment}
We would like to express our gratitude to the anonymous reviewers for their 
valuable comments that greatly improved the presentation of this paper.

\bibliographystyle{abbrv}
\bibliography{Digraph}

\begin{appendices}
\section{Regularity for digraph collections}
In this section, we prove the regularity lemma for digraph collections (i.e., Lemma \ref{regularity-lemma}). Before proceeding with the proof, we first make some preparations, including some necessary definitions and key theorems.

A \textit{$k$-uniform hypergraph} (or \textit{$k$-graph}) $\mathscr{F}=(V,E)$ consists of a vertex set $V$ and an edge set $E$ which is a family of $k$-element subsets of $V$, i.e., $E\subseteq\binom Vk.$ For a vertex $v\in V$, the degree of $v$ in $\mathscr{F}$, denoted by $\deg_{\mathscr{F}}(v)$, is the number of edges containing $v$. Given a $k$-graph $\mathscr{F}$, and $k$ disjoint sets $V_1, V_2,\ldots, V_k \subseteq V(\mathscr{F})$, define 
$\mathscr{F}[V_1, V_2,\ldots, V_k]$ to be the subhypergraph of $\mathscr{F}$ with edge set $\left\{\{v_1,v_2,\ldots,v_k\}\in E(\mathscr{F}):v_i\in V_i\ {\text{for all}}\ i\in [k]\right\}$. We call
\[
d_{\mathscr{F}}(V_1, V_2, \ldots, V_k) := \frac{e(\mathscr{F}[V_1, V_2,\ldots, V_k])}{|V_1||V_2|\ldots|V_k|}
\]
the {\em density} of the tuple $(V_1, V_2, \ldots, V_k)$ in $\mathscr{F}$. If $d_{\mathscr{F}}(V_1, V_2, \ldots, V_k)=0$, then we say that $(V_1, V_2, \ldots, V_k)$ is empty in $\mathscr{F}$. Given $\epsilon > 0$ and $d\in [0,1)$, we say that $(V_1, V_2, \ldots, V_k)$ is $\epsilon$-{\em regular} in $\mathscr{F}$
if for all $i\in [k]$, whenever $V_i'\subseteq V_i$ with $|V_i'|\geq \epsilon|V_i|$ one has  
\begin{center}
$|d_{\mathscr{F}}(V_1', V_2',\ldots, V_k')-d_{\mathscr{F}}(V_1, V_2,\ldots , V_k)|\leq \epsilon$, 
\end{center}
and the tuple is $(\epsilon, d)$-{\em regular} if it is $\epsilon$-{\em regular} and $d_{\mathscr{F}}(V_1, V_2, \ldots, V_k)\geq d$. The following result was proved by Chung \cite{chung1991regularity}, and its proof follows the approach of the original Regularity Lemma for graphs \cite{szemeredi1975regular}.

\begin{theorem}[Weak hypergraph regularity lemma]\label{Weak hypergraph regularity lemma-chung}
For all integers $k\geq 2$, $L_0\geq 
1$, and every $\epsilon > 0$ there exists $N = N(\epsilon, L_0, k)$ such that if $\mathscr{F}$ is a $k$-graph on $n \geq N$ vertices, then $V(\mathscr{F})$ has a partition $V_0,V_1,\ldots, V_L$ such that the following hold:\
\begin{itemize}
    \item $L_0\leq L\leq N$ and $|V_0|\leq \epsilon n$,
    \item $|V_1|=\cdots=|V_L|$,
    \item for all but at most $\epsilon \binom{L}{k}$ $k$-tuples $\{i_1, \ldots, i_k\}\in \binom{[L]}{k}$, we have that $(V_{i_1}, \ldots, V_{i_k})$ is $\epsilon$-regular in $\mathscr{F}$.
\end{itemize}
\end{theorem}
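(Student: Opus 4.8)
The plan is to prove Theorem~\ref{Weak hypergraph regularity lemma-chung} by the classical \emph{energy-increment} (index-increment) method of Szemer\'edi \cite{szemeredi1975regular}, carried out verbatim in the $k$-uniform setting. Throughout, by a \emph{partition} I mean a partition $\mathcal{P}=\{V_0,V_1,\dots,V_L\}$ of $V(\mathscr{F})$ in which $|V_0|\le \epsilon n$ and $|V_1|=\dots=|V_L|$; write $w_i:=|V_i|/n$, so $w_i\ge (1-\epsilon)/L$. \textbf{Step 1 (Index).} For such a $\mathcal{P}$ set
\[
q(\mathcal{P}):=\sum_{\{i_1,\dots,i_k\}\in \binom{[L]}{k}} w_{i_1}\cdots w_{i_k}\, d_{\mathscr{F}}(V_{i_1},\dots,V_{i_k})^2 ,
\]
so that $0\le q(\mathcal{P})\le 1$ since densities lie in $[0,1]$ and $\sum w_{i_1}\cdots w_{i_k}\le 1$. \textbf{Step 2 (Monotonicity).} If $\mathcal{P}'$ refines $\mathcal{P}$ with $V_0'\supseteq V_0$, then $q(\mathcal{P}')\ge q(\mathcal{P})$: each density $d_{\mathscr{F}}(V_{i_1},\dots,V_{i_k})$ is the weighted average of the densities of the sub-cells of $\mathcal{P}'$ inside it, and the defect form of Cauchy--Schwarz (equivalently, for a convex combination $d=\sum_j\lambda_j d_j$ one has $\sum_j\lambda_j d_j^2=d^2+\sum_j\lambda_j(d_j-d)^2$) shows the corresponding contribution to $q$ cannot decrease; singling out one sub-cell of relative weight $\mu^\ast$ and density $d^\ast$ gives the quantitative gain $\ge w_{i_1}\cdots w_{i_k}\,\mu^\ast(d^\ast-d)^2$ for that tuple.

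\textbf{Step 3 (Increment).} Suppose $\mathcal{P}$ has more than $\epsilon\binom{L}{k}$ non-$\epsilon$-regular $k$-tuples. For each such tuple fix witnessing subsets $V_{i_j}'\subseteq V_{i_j}$ with $|V_{i_j}'|\ge \epsilon|V_{i_j}|$ and $|d_{\mathscr{F}}(V_{i_1}',\dots,V_{i_k}')-d_{\mathscr{F}}(V_{i_1},\dots,V_{i_k})|>\epsilon$. Intersecting each $V_i$ with all witness subsets through it cuts $V_i$ into at most $2^{\binom{L-1}{k-1}}$ pieces; let $\mathcal{Q}$ be the common refinement. Applying the quantitative bound of Step~2 with $\mu^\ast\ge \epsilon^k$ and $(d^\ast-d)^2>\epsilon^2$, each irregular tuple contributes a gain $\ge w_{i_1}\cdots w_{i_k}\,\epsilon^{k+2}\ge ((1-\epsilon)/L)^k\epsilon^{k+2}$; summing over the $>\epsilon\binom{L}{k}$ of them and using $\binom{L}{k}\ge L^k/(2\,k!)$ for $L$ large,
\[
q(\mathcal{Q})\ \ge\ q(\mathcal{P})+\frac{(1-\epsilon)^k\epsilon^{k+3}}{2\,k!}\ =:\ q(\mathcal{P})+\delta_0,\qquad \delta_0=\delta_0(\epsilon,k)>0.
\]
Finally re-equitablize: set $t_0:=\lceil 1/\delta_0\rceil$, chop each part of $\mathcal{Q}$ into blocks of common size $m':=\lfloor \epsilon n/(2 t_0\, L\, 2^{\binom{L-1}{k-1}})\rfloor$, and move all leftover vertices into the exceptional set. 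By Step~2 this is a further refinement, so the index does not drop; the number of equal parts becomes at most $(1-\epsilon)n/m'$ (a function of $L,\epsilon,k$), and the exceptional set grows by at most $L\,2^{\binom{L-1}{k-1}}m'\le \epsilon n/(2t_0)$.

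\textbf{Step 4 (Iteration).} Start from any equipartition into $L_0$ parts with $|V_0|<L_0$. While the current partition is irregular, apply Step~3; each application raises $q$ by $\delta_0$, and since $0\le q\le 1$ this occurs at most $t_0$ times, after which at most $\epsilon\binom{L}{k}$ of the $k$-tuples are irregular. Over these $\le t_0$ steps $|V_0|$ grows by at most $t_0\cdot\epsilon n/(2t_0)=\epsilon n/2$, so $|V_0|\le \epsilon n$ throughout (for $n$ large), while $L$ is bounded by an iterated-exponential function of $\epsilon, L_0, k$; this bound is the desired $N=N(\epsilon,L_0,k)$, and $L\ge L_0$ since we only ever refine.

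\textbf{Main obstacle.} The substance lies entirely in the bookkeeping of Steps~3--4: simultaneously maintaining equitability of $V_1,\dots,V_L$, keeping $|V_0|\le \epsilon n$ across all iterations, and controlling the tower-type growth of $L$. The analytic core --- the defect Cauchy--Schwarz estimate producing the fixed index gain $\delta_0$ --- is the same one-line inequality as in the graph case; the only genuinely $k$-uniform point is that a single common refinement $\mathcal{Q}$ must locally regularize \emph{every} irregular tuple at once, which is why each part is split into up to $2^{\binom{L-1}{k-1}}$ pieces.
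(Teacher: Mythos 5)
The paper does not prove this statement itself: it quotes it as a known result of Chung \cite{chung1991regularity} and remarks only that the proof follows Szemer\'edi's original energy-increment argument, which is exactly the route you take. Your sketch is the standard and correct adaptation of that argument to $k$-graphs (index function, defect Cauchy--Schwarz gain $\delta_0(\epsilon,k)$ per irregular tuple, common refinement by witness sets, re-equitablization, and the $t_0=\lceil 1/\delta_0\rceil$ iteration bound), with the remaining work being the bookkeeping you already identify.
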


Given two partitions $V_0,V_1,\ldots,V_k$  and $U_1,\ldots,U_{\ell}$ of a vertex set, we say that $V_0,V_1,\ldots,V_k$ \textit{refines} $U_1,\ldots,U_{\ell}$ if for all $V_i$ with $i\in [k]$ there is $U_j$ for some $j\in [\ell]$ that contains $V_i$. 
For some integer $s>0$, let $\mathscr{F}_1, \mathscr{F}_2,\ldots, \mathscr{F}_s$ be $s$ $K$-partite $k$-graphs sharing a common vertex set $V:=V_1\cup \ldots \cup V_K$. As a consequence of the proof of Theorem \ref{Weak hypergraph regularity lemma-chung}, we can deduce the following corollary. 

\begin{corollary}\label{Weak hypergraph regularity lemma-chung-s-graphs}
For all integers $K\geq k\geq 2$, $s\geq 1$, $L_0\geq 
1$, and every $\epsilon > 0$, there exists $N = N(\epsilon, s, L_0, K, k)$ such that if $\mathscr{F}_1, \mathscr{F}_2,\ldots, \mathscr{F}_s$ are $s$ $K$-partite $k$-graphs sharing a common vertex set $V:=V_1\cup \ldots \cup V_K$ of size $n \geq N$, then $V$ can be refined into a partition $U_0,U_1,\ldots, U_L$ satisfying the following properties:
\begin{itemize}
    \item $L_0\leq L\leq N$ and $|U_0|\leq \epsilon n$,
    \item $|U_1|=\cdots=|U_L|$,
    \item for each $i\in [s]$ and for all but at most $\epsilon \binom{L}{k}$ $k$-tuples $\{i_1, \ldots, i_k\}\in \binom{[L]}{k}$, we have that $(U_{i_1}, \ldots, U_{i_k})$ is $\epsilon$-regular in $\mathscr{F}_i$.
\end{itemize}
\end{corollary}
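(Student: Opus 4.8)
\textbf{Proof proposal for Corollary~\ref{Weak hypergraph regularity lemma-chung-s-graphs}.}
The plan is to derive the corollary from Theorem~\ref{Weak hypergraph regularity lemma-chung} by a standard ``simultaneous regularization'' argument: apply the weak hypergraph regularity lemma to each of the $s$ hypergraphs in turn, repeatedly refining the partition, and control the number of parts so that the final partition still has all the required properties with respect to every $\mathscr{F}_i$. First I would set up the iteration. Start with the trivial partition of $V$ into the parts $V_1,\dots,V_K$ (plus an empty exceptional set); this is the initial common partition. Then, treating the current partition as a given ground partition, apply Theorem~\ref{Weak hypergraph regularity lemma-chung} (or rather the version of its proof that respects an initial partition — this is where one uses that $\mathscr{F}_1$ is $K$-partite and that we have already fixed $V_1,\dots,V_K$) to $\mathscr{F}_1$ with parameter $\epsilon_1$ to be chosen, obtaining a refinement in which all but an $\epsilon_1$-fraction of $k$-tuples are $\epsilon_1$-regular in $\mathscr{F}_1$. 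Feed this refined partition in as the ground partition for $\mathscr{F}_2$, apply the lemma again, and so on through $\mathscr{F}_s$.

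The key point is the choice of parameters and the bookkeeping on the exceptional set. Because each application of the lemma is to a ground partition of bounded size, the number of parts stays bounded after all $s$ applications: if $N_i$ bounds the number of parts after step $i$, then $N_{i+1}\le N(\epsilon_{i+1}, N_i, K, k)$, so $N_s$ depends only on $\epsilon_1,\dots,\epsilon_s, L_0, K, k, s$, and one sets the final bound $N$ accordingly. The exceptional set can accumulate at each step, so I would run step $i$ with tolerance $\epsilon_i := \epsilon/(2^i s)$ (or any summable choice), guaranteeing the total exceptional mass is at most $\sum_i |U_0^{(i)}| \le \epsilon n$ and that in the final partition, for each $i\in[s]$, the fraction of non-$\epsilon$-regular $k$-tuples with respect to $\mathscr{F}_i$ is at most $\epsilon$ — here one uses the standard fact that $\epsilon'$-regularity of a tuple is inherited (with a slightly worse constant) by sub-tuples coming from a refinement, provided the parts of the refinement are not too small relative to the original parts, which is arranged by making $\epsilon_{i+1}$ small compared with $1/N_i$. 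Equitability (all non-exceptional parts of equal size) is obtained at the very last step, or by a final clean-up splitting of parts into equal pieces and dumping remainders into $U_0$, which only enlarges $U_0$ by a negligible amount.

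The main obstacle, and the only genuinely non-routine ingredient, is the inheritance of regularity under refinement: when we refine the partition while processing $\mathscr{F}_{i+1}$, we must ensure that the $k$-tuples that were $\epsilon_i$-regular in $\mathscr{F}_i$ are still (approximately) regular in $\mathscr{F}_i$ with respect to the finer partition. This is handled by the well-known lemma that if $(W_1,\dots,W_k)$ is $\epsilon'$-regular in a $k$-graph and $W_j'\subseteq W_j$ with $|W_j'|\ge \beta |W_j|$, then $(W_1',\dots,W_k')$ is $\max\{2\epsilon', \epsilon'/\beta^{?}\}$-regular — more precisely one shows the density cannot move by more than $O(\epsilon'/\beta)$ and sub-subsets of relative size $\ge \epsilon'/\beta$ inside $W_j'$ are of relative size $\ge \epsilon'$ inside $W_j$. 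Choosing the chain $\epsilon_1 \gg 1/N_1 \gg \epsilon_2 \gg 1/N_2 \gg \cdots \gg \epsilon_s$ makes all these error terms summable and below $\epsilon$ at the end. Everything else — the vertex-count arithmetic for equitability, absorbing small remainder sets into $U_0$, and checking the final bounds $L_0 \le L \le N$ — is routine. Since the statement asserts this ``can be deduced'' from the proof of Theorem~\ref{Weak hypergraph regularity lemma-chung}, I would present the argument at the level of detail above and relegate the inheritance lemma to a one-line citation of the classical regularity-method literature.
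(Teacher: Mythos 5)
Your overall plan—apply the single-hypergraph lemma $s$ times in succession, refining the partition each time, and use a slicing/inheritance lemma to argue that earlier regularity survives later refinements—has a genuine gap at the inheritance step, and I don't think it can be patched by a parameter choice. The slicing lemma for (weak) regularity only preserves regularity when the sub-parts have relative size at least $\epsilon'$ inside the original parts: if $(W_1,\dots,W_k)$ is $\epsilon'$-regular and $W_j'\subseteq W_j$ with $|W_j'|\geq\alpha|W_j|$, you get $\max\{2\epsilon',\epsilon'/\alpha\}$-regularity of $(W_1',\dots,W_k')$, and this is vacuous unless $\alpha\geq\epsilon'$. In your iteration the relevant $\alpha$ is roughly $N_i/N_{i+1}$, where $N_{i+1}$ is the number of parts after regularizing $\mathscr{F}_{i+1}$ with parameter $\epsilon_{i+1}$ starting from $N_i$ parts. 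But the regularity lemma's bound $N_{i+1}=N(\epsilon_{i+1},N_i,k)$ grows tower-like in $1/\epsilon_{i+1}$, so $N_i/N_{i+1}$ is far below $\epsilon_i$, and the inheritance lemma gives nothing. Your remark that this is ``arranged by making $\epsilon_{i+1}$ small compared with $1/N_i$'' is pointing the wrong way: shrinking $\epsilon_{i+1}$ makes the refinement finer, i.e.\ the sub-parts smaller relative to the old parts, which makes the problem worse, not better. There is no choice of chain $\epsilon_1\gg\cdots\gg\epsilon_s$ that forces $N_i/N_{i+1}\geq\epsilon_i$ while still achieving regularity for $\mathscr{F}_{i+1}$.

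The reason the paper phrases the corollary as a consequence of the \emph{proof} of Theorem~\ref{Weak hypergraph regularity lemma-chung} (rather than of its statement) is precisely to avoid this: the intended argument is a simultaneous energy-increment, not iterated applications of the black-boxed lemma. One tracks the total mean-square density $\sum_{i\in[s]}q_i(\mathcal{P})$, starts from the initial partition $V_1,\dots,V_K$ (so the $K$-partite structure is respected automatically), and at each step refines with respect to whichever $\mathscr{F}_i$ currently has too many irregular $k$-tuples, gaining $\epsilon^{O(1)}$ in total energy. Since the total energy is bounded by $s$, the process halts after $O(s\epsilon^{-O(1)})$ rounds with a bounded number of parts, and a final equitizing step moves a negligible remainder into $U_0$. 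That is the argument you should make; the iterative-plus-slicing route does not go through.
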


The degree form of the weak hypergraph regularity lemma \cite{townsend2016extremal} is proved in the same way as the original
degree form of the regularity lemma, which in fact can be derived from Theorem \ref{Weak hypergraph regularity lemma-chung} via some cleaning.

\begin{theorem}[Degree form of the weak hypergraph regularity lemma, \cite{townsend2016extremal}]\label{THEOREM:Degree form of the weak hypergraph regularity lemma} 
For all integers $k\geq 2$, $L_0 \geq 1$ and every $\epsilon> 0$, there exists $N=N(\epsilon, L_0, k)$ such that for every $d \in [0, 1)$ and for every $k$-graph $\mathscr{F}$ on $n \geq N$ vertices, there exists a partition of $V(\mathscr{F})$ into $V_0, V_1, \ldots , V_L$ and a spanning subhypergraph $\mathscr{F}'$ of $\mathscr{F}$ such that the following properties hold:
\begin{itemize}
    \item  $L_0 \leq L \leq N$ and $|V_0| \leq \epsilon n$,
    \item $|V_1| = \cdots = |V_L|$,
    \item ${\rm deg}_{\mathscr{F}'}(v) \ge {\rm deg}_{\mathscr{F}}(v) - (d + \epsilon)n^{k-1}$
for all  $v\in V(\mathscr{F})$,
    \item every edge of $\mathscr{F}'$ with more than one vertex in a single cluster $V_i$ for some $i \in [L]$ has at least one vertex in $V_0$,
    \item for all $k$-tuples $\{i_1,i_2,\ldots,i_k\}\in \binom{[L]}{k}$, we have that $(V_{i_1}, V_{i_2},\ldots, V_{i_k})$ is either empty or $(\epsilon, d)$-regular in $\mathscr{F}'$.  
\end{itemize}
\end{theorem}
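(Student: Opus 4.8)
The plan is to derive Theorem~\ref{THEOREM:Degree form of the weak hypergraph regularity lemma} from the plain weak hypergraph regularity lemma (Theorem~\ref{Weak hypergraph regularity lemma-chung}) by the standard purification (``cleaning'') procedure, adapted to $k$-uniform hypergraphs. Fix $k$, $L_0$ and $\epsilon$, choose an auxiliary constant $\epsilon_1=\epsilon_1(\epsilon,k)$ with $\epsilon_1\ll\epsilon,1/k$, and put $L_1:=\max\{2L_0,\lceil 2/\epsilon_1\rceil\}$. First I would apply Theorem~\ref{Weak hypergraph regularity lemma-chung} to $\mathscr{F}$ with parameters $\epsilon_1$, $L_1$, $k$, obtaining, once $n$ is at least the resulting threshold $N=N(\epsilon_1,L_1,k)$ (which we may enlarge to absorb lower-order errors), a partition $U_0,U_1,\ldots,U_L$ with $L_1\le L\le N$, $|U_0|\le\epsilon_1 n$, equal clusters $U_1,\dots,U_L$ of common size $m$, and at most $\epsilon_1\binom{L}{k}$ irregular $k$-tuples. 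Since $L\ge L_1\ge 2/\epsilon_1$ forces $m=(n-|U_0|)/L\le\epsilon_1 n$, the clusters are automatically small, which is what makes the ``non-crossing'' degree loss negligible. Call a $k$-tuple of clusters \emph{bad} if it is $\epsilon_1$-irregular in $\mathscr{F}$ or has $\mathscr{F}$-density below $d+\epsilon_1$, and \emph{good} otherwise.

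Next I would do the cleaning in two moves. In Move~one, move into the exceptional set every cluster $U_i$ contained in more than $\sqrt{\epsilon_1}\binom{L-1}{k-1}$ \emph{irregular} tuples, together with every vertex $v$ that is \emph{atypical} --- has degree exceeding $(d+2\epsilon_1)m^{k-1}$ into the product of the other parts --- for more than $\sqrt{\epsilon_1}\binom{L-1}{k-1}$ \emph{regular bad} tuples through its own cluster. A double-counting argument, using the bound $\epsilon_1\binom{L}{k}$ on the number of irregular tuples and the fact that a fixed $\epsilon_1$-regular tuple of density $\rho$ has at most $\epsilon_1 m$ vertices of degree above $(\rho+\epsilon_1)m^{k-1}$ into the other parts, shows that Move~one removes only $O(\sqrt{\epsilon_1}n)$ vertices; after further equalising cluster sizes (pushing a few more vertices into the exceptional set), all surviving clusters $V_1,\dots,V_{L'}$ have a common size $m'\ge(1-O(\sqrt{\epsilon_1}))m$, with $L_0\le L'\le L\le N$ and total exceptional set $V_0$ of size at most $\epsilon n$. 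In Move~two, define $\mathscr{F}'$ by deleting from $\mathscr{F}$ exactly those edges that miss $V_0$ and are either non-crossing (have two vertices in a common cluster $V_i$) or crossing but lying in a cluster-tuple that was declared bad; every edge meeting $V_0$ is retained.

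Then I would verify the five conclusions. The bounds on $L'$, $|V_0|$, and cluster equality are built in. For the structural property: an edge of $\mathscr{F}'$ with two vertices in some $V_i$ must meet $V_0$, since otherwise it would have been deleted in Move~two. For the regularity property: a good tuple loses no edges, so on the slightly shrunk final clusters it is still $2\epsilon_1$-regular (hence $\epsilon$-regular) of density at least $d+\epsilon_1-\epsilon_1=d$, while every bad tuple is emptied. The main work --- and the step I expect to be the real obstacle --- is the degree bound $\deg_{\mathscr{F}'}(v)\ge\deg_{\mathscr{F}}(v)-(d+\epsilon)n^{k-1}$ for \emph{every} vertex, including those in $V_0$: one cannot simply sweep a vertex of large loss into $V_0$ to fix it. The resolution is the observation that the deletion rule removes only edges that miss $V_0$, so vertices of $V_0$ suffer \emph{no} loss at all; and for $v\notin V_0$ one sums four contributions: non-crossing edges through $v$ (at most $\tfrac{3}{2}\epsilon_1 n^{k-1}$, using $m'\le m\le\epsilon_1 n$ and $L'm'\le n$); edges in irregular tuples through $v$ (at most $\sqrt{\epsilon_1}\binom{L-1}{k-1}m^{k-1}\le\sqrt{\epsilon_1}n^{k-1}$, since $v$'s cluster survived Move~one); edges through $v$ in sparse regular tuples for which $v$ is atypical (at most $\sqrt{\epsilon_1}n^{k-1}$, likewise); and edges through $v$ in sparse regular tuples for which $v$ is typical (at most $(d+2\epsilon_1)\binom{L-1}{k-1}m^{k-1}\le(d+2\epsilon_1)n^{k-1}$). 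These total at most $(d+\epsilon)n^{k-1}$ once $\epsilon_1$ is chosen small enough in terms of $\epsilon$ and $k$, completing the argument.
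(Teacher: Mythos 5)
Your proof is correct and is precisely the argument the paper alludes to: the paper does not prove this theorem itself but cites \cite{townsend2016extremal} and remarks that it follows from Theorem \ref{Weak hypergraph regularity lemma-chung} ``via some cleaning,'' which is exactly your purification procedure, and all the quantitative steps (the double counts bounding the discarded clusters and atypical vertices, the four-way split of the degree loss, and the observation that $V_0$-vertices lose nothing because only edges missing $V_0$ are deleted) check out. The only point to tighten is the equalisation step: a single cluster can in principle lose most of its vertices to the atypical set, so before trimming to a common size $m'$ you should first discard into $V_0$ the few clusters that lose more than, say, $\epsilon_1^{1/4}m$ vertices — an averaging argument shows this costs only $O(\epsilon_1^{1/4}n)$ additional exceptional vertices, and nothing downstream requires more than $m'\ge m/2$.
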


In fact, from the proof of Theorem \ref{THEOREM:Degree form of the weak hypergraph regularity lemma} in \cite{townsend2016extremal} and Corollary \ref{Weak hypergraph regularity lemma-chung-s-graphs} we can derive the following stronger version of the conclusion.

\begin{theorem}\label{THEOREM:strong-Degree form of the weak hypergraph regularity lemma} 
For all integers $K\geq k\geq 2$, $s\geq1$, $L_0\geq 
1$, and every $\epsilon > 0$, there exists $N = N(\epsilon, s, L_0, K, k)$ such that, for any $s$ $K$-partite $k$-graphs $\mathscr{F}_1, \mathscr{F}_2,\ldots, \mathscr{F}_s$ sharing a common vertex set $V:=V_1\cup \ldots \cup V_K$ of size $n \geq N$, we can find a refinement partition $U_0,U_1,\ldots, U_L$ of $V$, and a spanning subhypergraph $\mathscr{F}_i'\subseteq \mathscr{F}_i$ for each $i\in [s]$ such that the following properties hold:
\begin{itemize}
    \item  $L_0 \leq L \leq N$ and $|U_0| \leq \epsilon n$,
    \item $|U_1| = \cdots = |U_L|$,
    \item for each $i\in [s]$ and each $v\in V$, ${\rm deg}_{\mathscr{F}'_i}(v) \ge {\rm deg}_{\mathscr{F}_i}(v) - (d + \epsilon)n^{k-1}$,
    \item for each $i\in [s]$, every edge of $\mathscr{F}_i'$ with more than one vertex in a single cluster $V_j$ for some $j \in [L]$ has at least one vertex in $V_0$,
    \item for each $i\in [s]$ and all $k$-tuples $\{i_1,i_2,\ldots,i_k\}\in \binom{[L]}{k}$, we have that $(U_{i_1}, U_{i_2},\ldots, U_{i_k})$ is either empty or $(\epsilon, d)$-regular in $\mathscr{F}'_i$.  
\end{itemize}
\end{theorem}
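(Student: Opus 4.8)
The plan is to upgrade, essentially verbatim, the proof of Theorem~\ref{THEOREM:Degree form of the weak hypergraph regularity lemma} given in \cite{townsend2016extremal} from one hypergraph to $s$ hypergraphs; the only structural change is that the opening application of the plain weak regularity lemma (Theorem~\ref{Weak hypergraph regularity lemma-chung}) is replaced by an application of its simultaneous, partition-refining form, Corollary~\ref{Weak hypergraph regularity lemma-chung-s-graphs}. Fix the parameter $d\in[0,1)$ from the statement and an auxiliary constant $\epsilon'$ with $0<\epsilon'\ll \epsilon$, small relative to $\epsilon,s,K,k$; this is harmless precisely because $s,K,k$ are fixed and are allowed to enter the final threshold $N$. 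Apply Corollary~\ref{Weak hypergraph regularity lemma-chung-s-graphs} to $\mathscr{F}_1,\dots,\mathscr{F}_s$ with parameter $\epsilon'$ to obtain a partition $W_0,W_1,\dots,W_{L'}$ of $V$ which refines $V_1\cup\cdots\cup V_K$, has $L_0\le L'\le N'$, $|W_0|\le \epsilon' n$, equal-sized clusters $|W_1|=\cdots=|W_{L'}|=:m$, and such that for \emph{every} $i\in[s]$ all but at most $\epsilon'\binom{L'}{k}$ of the $k$-tuples of clusters are $\epsilon'$-regular in $\mathscr{F}_i$.

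Next I would run the standard cleaning procedure, but with a twist forced by having several hypergraphs: all exceptional objects must be quarantined into $W_0$ \emph{before} any edge is deleted, so that trimming clusters for the sake of one $\mathscr{F}_i$ cannot retroactively destroy regularity for another. Thus, for each $i\in[s]$ one identifies the clusters that lie in abnormally many $k$-tuples which are irregular in $\mathscr{F}_i$, the clusters that lie in abnormally many $k$-tuples of density $<d$ in $\mathscr{F}_i$, and (together with the clusters containing too large a fraction of them) the vertices whose degree into some low-density $k$-tuple of $\mathscr{F}_i$ much exceeds the typical value $d m^{k-1}$; each of these families has size $O(\sqrt{\epsilon'}\,n)$ by a routine counting argument from $\epsilon'$-regularity and the bound $\epsilon'\binom{L'}{k}$ on the number of irregular tuples. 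Set $U_0$ to be $W_0$ together with the union of all these families over all $i$; since $s$ is fixed, $|U_0|\le \epsilon' n+O(s\sqrt{\epsilon'}\,n)\le\epsilon n$ for $\epsilon'$ small enough. Discarding empty clusters and sweeping leftover vertices into $U_0$ to equalise the rest yields the partition $U_0,U_1,\dots,U_L$, which still refines $V_1\cup\cdots\cup V_K$; by the standard robustness of regularity under deletion of an $o(1)$-fraction of vertices (the slicing lemma), every surviving $k$-tuple that was $\epsilon'$-regular in some $\mathscr{F}_i$ is still $\epsilon$-regular there with density $\ge d$, the small parameter drifts being absorbed into the gap between $\epsilon'$ and $\epsilon$.

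Finally, for each $i\in[s]$ I would obtain $\mathscr{F}_i'\subseteq\mathscr{F}_i$ by deleting every edge meeting $U_0$, every edge with two vertices in a common cluster (hence in a common $V_c$; this clause is vacuous for $K$-partite $k$-graphs but is kept to match the statement), every edge inside a surviving $k$-tuple that is not $\epsilon$-regular in $\mathscr{F}_i$, and every edge inside a surviving $k$-tuple $T$ with $d_{\mathscr{F}_i}(T)<d$. Then every nonempty $k$-tuple of $\mathscr{F}_i'$ is $(\epsilon,d)$-regular in $\mathscr{F}_i'$, and the verification of $\deg_{\mathscr{F}_i'}(v)\ge\deg_{\mathscr{F}_i}(v)-(d+\epsilon)n^{k-1}$ for every $v$ is word for word that of \cite{townsend2016extremal} applied to the fixed index $i$: the loss at $v$ through $U_0$ and through repeated clusters is $O\big((|U_0|+m)\,n^{k-2}\big)\le\epsilon n^{k-1}$; the loss to $\mathscr{F}_i$-irregular tuples is $\le\epsilon n^{k-1}$ because $v$'s cluster survived the quarantine; and the loss to low-density tuples is $\le(d+\epsilon)n^{k-1}$ because $v$ is not heavy and each surviving low-density tuple contributes at most $dm^{k-1}$ to $\deg_{\mathscr{F}_i}(v)$ up to an $\epsilon'$-error. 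Since these bounds hold simultaneously for all $i\in[s]$, the theorem follows.

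I do not expect a genuine obstacle: the analytic content is already in \cite{townsend2016extremal}, and Corollary~\ref{Weak hypergraph regularity lemma-chung-s-graphs} supplies exactly the one common refinement needed to run that argument once for all $s$ hypergraphs at the same time. The only point demanding care — the ``hard'' part, such as it is — is the bookkeeping of the second paragraph: one must perform every move into $U_0$ strictly before any edge deletion (so that cluster shrinking does not invalidate regularity for another $\mathscr{F}_j$), and one must check that the number of such moves, which grows linearly with $s$, still fits inside the budget $\epsilon n$. Both are automatic once $\epsilon'$ is chosen small enough relative to $\epsilon$, $s$, $K$ and $k$, which is legitimate because these are fixed constants feeding into $N$.
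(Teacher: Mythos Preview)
Your approach is exactly the paper's: the paper gives no detailed proof but simply remarks that the result follows by combining Corollary~\ref{Weak hypergraph regularity lemma-chung-s-graphs} with the cleaning argument of Theorem~\ref{THEOREM:Degree form of the weak hypergraph regularity lemma} applied to all $s$ hypergraphs at once, which is precisely what you outline.

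One slip in your expansion: you should \emph{not} delete every edge meeting $U_0$. Doing so would set $\deg_{\mathscr{F}_i'}(v)=0$ for every $v\in U_0$, and since such $v$ may have $\deg_{\mathscr{F}_i}(v)$ of order $n^{k-1}$, the degree bound in the third bullet would fail for them; your own loss estimate $O\big((|U_0|+m)n^{k-2}\big)$ is only valid for $v\notin U_0$. Property~(iv) is phrased precisely to allow edges touching $U_0$ to survive, and in the standard cleaning of \cite{townsend2016extremal} that you invoke they are indeed kept. With this correction (retain edges incident to $U_0$; delete only edges lying entirely in $U_1\cup\cdots\cup U_L$ that hit an irregular or low-density $k$-tuple, or have two vertices in one cluster), the rest of your argument goes through.
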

Since the proof of Theorem \ref{THEOREM:strong-Degree form of the weak hypergraph regularity lemma} closely follows that of Theorem \ref{THEOREM:Degree form of the weak hypergraph regularity lemma}, requiring only a repeated application of the cleaning operation on $s$ hypergraphs, we omit it here for brevity.

Let $\mathcal{D}=\{D_i:i\in \mathcal{C}\}$ be a collection of digraphs on a common vertex set $[n]$.  Based on $\mathcal{D}$, we construct the auxiliary $4$-graph $\mathscr{H}$ as follows. {The vertex set is  $V(\mathscr{H})=[n]\cup \mathcal{C} \cup S_1\cup S_2$, where $S_1$ and $S_2$ are new sets with $|S_1|=|S_2|=n$, and the four sets $[n]$, $\mathcal{C}$, $S_1$, $S_2$ are pairwise disjoint.} The edge set of $\mathscr{H}$ is defined by  
\begin{align*}
&{E(\mathscr{H})=\left\{\{i,j,c,x\}:i,j\in [n],\ c\in \mathcal{C},\ ij\in E(D_c);\ x\in S_1\ \text{if}\ i<j\ \text{and}\ x\in S_2\ \text{otherwise}\right\}}.
\end{align*}

Now, we prove Lemma \ref{regularity-lemma}.
\begin{proof}[Proof of Lemma \ref{regularity-lemma}]
By increasing $L_0$ and decreasing $\epsilon$, $\delta$ as necessary, we may assume that $0< \frac{1}{L_0} \ll \epsilon \ll \delta \ll 1$. Let $L_1:=\frac{24L_0}{\delta}$. 
Choose further positive constants $\epsilon', \alpha$ satisfying $\frac{1}{L_1}\ll \epsilon' \ll  \alpha\ll \epsilon \ll \delta \ll 1$.
Let $n_1$ be derived from Theorem \ref{THEOREM:Degree form of the weak hypergraph regularity lemma}  by applying it with the parameters $(\epsilon', L_1, 4)$. Next, let $n_2$ be obtained from Theorem \ref{THEOREM:strong-Degree form of the weak hypergraph regularity lemma} using the parameters $(\epsilon', 2, L_0, n_1, 3)$. Define $n_0:=\frac{4n_2}{\alpha}$. By increasing $n_0$ if necessary, we may assume that $ L_1\ll n_1\ll n_2\ll n_0$.  Altogether,
\[
0<\frac{1}{n_0}\ll\frac{1}{n_2}\ll\frac{1}{n_1}\ll \frac{1}{L_1}\ll \epsilon' \ll  \alpha\ll \epsilon \ll \delta \ll 1.
\]

Let $n \geq n_0$ be an integer and $\mathcal{D}$ be a digraph collection on vertex set $V:=[n]$ and with color set $\mathcal{C}$, where $\delta n\leq |\mathcal{C}|\leq \frac{n}{\delta}$. Let $d \in [0, 1)$ and let $\mathscr{H}$ be the auxiliary $4$-graph obtained from $\mathcal{D}$ with vertex set $U:=V\cup \mathcal{C}\cup S_1\cup S_2$. Theorem \ref{THEOREM:Degree form of the weak hypergraph regularity lemma} implies that there is a partition $U_0, U_1, \ldots, U_K$ of $U$ and a spanning subhypergraph $\mathscr{H}_0$ of $\mathscr{H}$ such that
\begin{itemize}
    \item [{\rm (a)}] $L_1 \leq K \leq n_1$ and $|U_0| \leq \epsilon' |U|$,
    \item [{\rm(b)}]$|U_1| = \cdots = |U_K| =: m'$,
    \item [{\rm(c)}]${\rm deg}_{\mathscr{H}_0}(v) \ge {\rm deg}_{\mathscr{H}}(v) - (d + \epsilon')|U|^3$
for all  $v\in U$,
    \item [{\rm(d)}] every edge of $\mathscr{H}_0$ with more than one vertex in a single cluster $U_i$ for some $i \in [K]$ has at least one vertex in $U_0$,
    \item [{\rm(e)}] for all $4$-tuples $\{i_1, i_2, i_3, i_4\}\in \binom{[K]}{4}$, we have that $(U_{i_1}, U_{i_2}, U_{i_3}, U_{i_4})$ is either empty or $(\epsilon', d)$-regular in $\mathscr{H}_0$.  
\end{itemize}


Delete all edges $e\in E(\mathscr{H}_0)$ satisfying $|e\cap U_i|\geq 2$ for some $i\in [K]$ and $e\cap(\mathcal{C}\cap U_0)=\emptyset$. Denote the resulting spanning subhypergraph by $\mathscr{H}'$. Note that for every $v \in U$, the number of edges incident to $v$ that are deleted is fewer than
\[
\epsilon'|U|^3+Km'^2|U|\leq\epsilon'|U|^3+m'|U|^2\leq\epsilon'|U|^3+\frac{|U|^3}{L_1}\leq 2\epsilon' |U|^3. 
\]
Combining $(c)$, for each $v\in U$ we have
\begin{align}\notag
{\rm deg}_{\mathscr{H}'}(v) \ge {\rm deg}_{\mathscr{H}}(v) - (d +3 \epsilon')|U|^3.
\end{align}
Partition each cluster $U_i$, $i\in [K]$, into $\frac{1}{\alpha}$ subclusters of size at most $m_0$ so that all but at most $4$ subclusters of $U_i$ have size exactly $m_0$ and the property that they lie entirely within one of $V$, $\mathcal{C}$, $S_1$ and $S_2$. 
If a subcluster does not have this property, then add it to $U_0$. The new exceptional set has size at most
$|U_0| + 4\alpha m'K$, and let $V_0$ be its intersection with $V$, $\mathcal{C}_0$ be its intersection with $\mathcal{C}$. Relabel the subclusters so that those which are subsets of $V$ are $V_1,\ldots , V_L$ and those which are subsets of $\mathcal{C}$ are $\mathcal{C}_1,\ldots ,\mathcal{C}_M$. For each $c\in \mathcal{C}$, let $D_c'$ be the spanning sub-digraph of $D_c$ with vertex set $V=:[n]$ and edge set 
\[
\left\{uv :u<v,\ x\in S_1\ \text{and}\ \{u,v,c,x\} \in \mathscr{H}' \right\}\cup\left\{vu :u<v,\ y\in S_2\ \text{and}\ \{u,v,c,y\} \in \mathscr{H}' \right\}.
\]
We claim that the properties of Lemma \ref{regularity-lemma} (i)-(iv) are satisfied. 

\medskip
\textbf{Property (i)}. Note that $m' K\leq |U|$. Since $\epsilon'\ll \epsilon$, one has 
\begin{align}\label{ALIGN:CON1}
    |V_0|+|\mathcal{C}_0|\leq |U_0|+4\alpha m' K\leq (\epsilon'+4\alpha)|U|= (\epsilon'+4\alpha)\left(3n+\frac{n}{\delta}\right)\leq \frac{\epsilon n}{2}.
\end{align}
Also, 
\begin{align*}
 L+M\leq \frac{2K}{\alpha}\leq \frac{2n_1}{\alpha}\leq \frac{n_0}{2},   
\end{align*}
proving the required upper bound for both $L$ and $M$. Furthermore, $m_0(L+M)\leq n+|\mathcal{C}|$ and 
\[
\frac{L_1}{6}\leq \frac{3n+|\mathcal{C}|}{6m_0}\leq \frac{n+|\mathcal{C}|}{2m_0}\leq \frac{n+|\mathcal{C}|-\epsilon n}{m_0}\leq L+M,
\]
so
\begin{align}\label{ALIGN:CON21}
L= \frac{n-|V_0|}{m_0}\geq \frac{(n-|V_0|)(L+M)}{n+|\mathcal{C}|}\geq \frac{(n-|V_0|)L_1}{6(n+\frac{n}{\delta})}\geq \frac{\delta L_1}{24}=L_0, 
\end{align}
and similarly for $M$.  

\medskip
\textbf{Property (ii)}. It follows by construction.
\medskip

\textbf{Property (iii)}. By the construction of $\mathscr{H}'$, for all $v\in V$ we have 
$$\sum_{c\in \mathcal{C}}\left(d^+_{D_c'}(v)+ d^-_{D_c'}(v)  \right)\geq \frac{{\rm deg}_{\mathscr{H'}}(v)}{n},$$  and $e(D_c')\geq \frac{{\rm deg}_{\mathscr{H}'}(c)}{n}$ for all $c\in \mathcal{C}$. Furthermore,
\begin{align}\label{ALIGN-d+Epn^3}
    (d+3\epsilon')(3n+|\mathcal{C}|)^3\leq \left(\frac{8d}{\delta^3}+\frac{24\epsilon'}{\delta^3}\right)n^3\leq \left(\frac{8d}{\delta^3} +\frac{\epsilon}{4}\right)n^3.
\end{align}
Notice that $\sum_{c\in \mathcal{C}}\left(d^+_{D_c}(v)+ d^-_{D_c}(v)  \right)= \frac{{\rm deg}_{\mathscr{H}}(v)}{n}$
for all $v\in V$. Thus,
\begin{align}\label{ALIGN:CON3}
    \sum_{c\in \mathcal{C}}d^+_{D_c'}(v)\geq&  \frac{{\rm deg}_{\mathscr{H'}}(v)}{n}-\sum_{c\in \mathcal{C}}d^-_{D_c'}(v)  \notag\\
    \geq&  \frac{{\rm deg}_{\mathscr{H}}(v)}{n}-\sum_{c\in \mathcal{C}}d^-_{D_c}(v)-\left(\frac{8d}{\delta^3} +\frac{\epsilon}{4}\right)n^2  \notag\\
    =& \sum_{c\in \mathcal{C}}d^+_{D_c}(v)-\left(\frac{8d}{\delta^3} +\frac{\epsilon}{4}\right)n^2
\end{align}
for all $v\in V$, and similarly for $\sum_{c\in \mathcal{C}}d^-_{D_c'}(v)$.
Additionally, observe that $e(D_c)= \frac{{\rm deg}_{\mathscr{H}}(c)}{n}$ for each  $c\in \mathcal{C}$. It follows that for each $c\in \mathcal{C}$,  
\begin{align}\label{ALIGN:CON31}
    e(D_c')\geq \frac{{\rm deg}_{\mathscr{H}'}(c)}{n}\geq \frac{{\rm deg}_{\mathscr{H}}(c)}{n}-\left(\frac{8d}{\delta^3} +\frac{\epsilon}{4}\right)n^2= e(D_c)-\left(\frac{8d}{\delta^3} +\frac{\epsilon}{4}\right)n^2.
\end{align}

\medskip
\textbf{Property (iv).} 
 It follows by construction.
 \medskip
 

\textbf{Property (v).} 
Suppose that $(V_0\cup \mathcal{C}_0, V_1, \ldots, V_L, \mathcal{C}_1,\ldots, \mathcal{C}_M)$ does not satisfy property (v), we are to refine it and get a desired partition. Now, define the following auxiliary $3$-graphs $\mathscr{H}_1$ and $\mathscr{H}_2$ on the common vertex set $U':=([n]\cup \mathcal{C})\setminus (V_0\cup \mathcal{C}_0)$ and 
\begin{center}
    $E(\mathscr{H}_1)= \left\{\{u, v, c\} : uv\in D'_c[V_i, V_j], u, v \in V\setminus V_0, c\in \mathcal{C}\setminus \mathcal{C}_0, 1\leq i<j\leq L  \right\}$,
    $E(\mathscr{H}_2)=  \left\{\{u, v, c\} : uv\in D'_c[V_j, V_i], u, v \in V\setminus V_0, c\in \mathcal{C}\setminus \mathcal{C}_0, 1\leq i<j\leq L   \right\}$.
\end{center} 
Notice that the partition $P=(V_1, \ldots, V_L, \mathcal{C}_1, \ldots, \mathcal{C}_M)$ {can} be viewed as two distinct partitions  $P_1$, $P_2$ for the  hypergraphs$\mathscr{H}_1$, $\mathscr{H}_2$ (both partitions divide $V$ and $\mathcal{C}$ in the same way,  but the sets of edges among the partition sets are different). By increasing $n_2$, we may assume that $n_2\gg N(\epsilon', 2, L_0, L+M, 3)$ (which is defined in Theorem \ref{THEOREM:strong-Degree form of the weak hypergraph regularity lemma}). Consequently, $n_0\gg N(\epsilon', 2, L_0, L+M, 3)$ as well. Applying Theorem~\ref{THEOREM:strong-Degree form of the weak hypergraph regularity lemma}, we obtain a refinement \(U_0',U_1',\dots,U_{K'}'\) of the partition \(P\) together with spanning subhypergraphs \(\mathscr{H}_1'\subseteq\mathscr{H}_1\) and \(\mathscr{H}_2'\subseteq\mathscr{H}_2\) that satisfy the following: 
\begin{itemize}
    \item [(a')]  $L_0 \leq K' \leq n_2$ and $|U_0'| \leq \epsilon' |U'|$,
    \item [(b')] $|U'_1| = \cdots = |U'_{K'}|$,
    \item [(c')]for each $i\in [2]$ and each $v\in V\setminus V_0$, ${\rm deg}_{\mathscr{H}'_i}(v) \ge {\rm deg}_{\mathscr{H}_i}(v) - (d + \epsilon')|U'|^{2}$,
    \item [(d')] for each $i\in [2]$, every edge of $\mathscr{H}_i'$ with more than one vertex in a single cluster $U'_j$ for some $j \in [K']$ has at least one vertex in $U'_0$,
    \item [(e')] for each $i\in [2]$ and all $3$-tuples $\{i_1,i_2, i_3\}\in \binom{[K']}{3}$, we have that $(U_{i_1}', U_{i_2}',U_{i_3}')$ is either empty or $(\epsilon', d)$-regular in $\mathscr{H}'_i$.  
\end{itemize}
Following the same operation as earlier, let $V'_0:=V\cap U_0'$ and $\mathcal{C}'_0:=\mathcal{C}\cap U_0'$. Relabel the subclusters such that those which are subsets of $V$ are $V'_1,\ldots , V'_{L'}$ and those which are subsets of $\mathcal{C}$ are $\mathcal{C}_1',\ldots ,\mathcal{C}_{M'}'$. For each $c\in \mathcal{C}\setminus (\mathcal{C}_0\cup \mathcal{C}_0')$, let $D_c''$ be the spanning sub-digraph of $D_c$ with vertex set $V$ and edge set 
\[
\left\{uv : \{u,v,c\} \in \mathscr{H}_1'\,\,\text{or}\,\,\{u,v,c\} \in \mathscr{H}_2'  \right\}\cup E(D_c'^{\pm}[V_0,V\setminus (V_0\cup V_0')]).
\]
Let $D_i^*:=D_i'$ for $i\in \mathcal{C}_0\cup \mathcal{C}_0'$ and $D_i^*:=D_i''$ for $i\in \mathcal{C}\setminus (\mathcal{C}_0\cup \mathcal{C}_0')$. Let
$$P^*:=(V_0\cup \mathcal{C}_0\cup V_0'\cup \mathcal{C}_0', V_1',\ldots, V_{L'}', \mathcal{C}_1'\ldots, \mathcal{C}_{M'}')$$
be a partition of $V\cup \mathcal{C}$. 
Next, we will re-verify that $P^*$ and digraph collection $\mathcal{D}^*:=\{D_i^* : i\in \mathcal{C}\}$ satisfies (i)-(v) of Lemma \ref{regularity-lemma}. By the construction of $\mathscr{H}_1$ and $\mathscr{H}_2$, it is easy to see that $P^*$ and $\mathcal{D^*}$ satisfy the property (v). Thus, we only need to re-verify properties (i)-(iv).

\medskip
\textbf{Property (i).} By \eqref{ALIGN:CON1}, \eqref{ALIGN:CON21} and (a'), we have
\[|V_0|+|\mathcal{C}_0|+|V_0'|+|\mathcal{C}_0'|\leq \frac{\epsilon n}{2}+\epsilon' |U'|\leq \epsilon n,\]
and
\[
L_0\leq L \leq L' \leq n_2 \leq  n_0\ \ \text{and}\ \ L_0\leq M \leq M' \leq n_2 \leq  n_0.
\]
\medskip

\textbf{Property (ii).} It follows from (b') that $|V_1'|=\cdots=|V_{L'}'|=|\mathcal{C}_1'|=\cdots=|\mathcal{C}_{M'}'|$, as required.
\medskip

\textbf{Property (iii).} For each vertex $v\in V$ and each color $c\in \mathcal{C}$, by the construction of $D_{c}^*$, (c'), \eqref{ALIGN-d+Epn^3}, \eqref{ALIGN:CON3} and \eqref{ALIGN:CON31}, we have 
\[
 e(D_c^*)\geq e(D_c)-\left(\frac{8d}{\delta^3} +\frac{\epsilon}{4}\right)n^2-2\left(\frac{8d}{\delta^3} +\frac{\epsilon}{4}\right)n^2-\frac{4\epsilon' n^2}{\delta}\geq e(D_c)-\left(\frac{24d}{\delta^3} +\epsilon\right)n^2
\]
and
\[
\sum_{c\in \mathcal{C}}d^+_{D_c^*}(v)\geq \sum_{c\in \mathcal{C}}d^+_{D_c}(v)-\left(\frac{8d}{\delta^3} +\frac{\epsilon}{4}\right)n^2-2\left(\frac{8d}{\delta^3} +\frac{\epsilon}{4}\right)n^2-\frac{2\epsilon' n^2}{\delta^2}\geq \sum_{c\in \mathcal{C}}d^+_{D_c}(v)-\left(\frac{24d}{\delta^3} +\epsilon\right)n^2.
\]
Analogously, the same inequality holds for $\sum_{c\in \mathcal{C}}d^-_{D_c^*}(v)$. 
\medskip

\textbf{Property (iv).} Notice that $\mathscr{H}_1$ and $\mathscr{H}_2$ are $(L+M)$-partite hypergraphs. Thus, there is no edge of $\mathscr{H}_i'$ with more than one vertex in a single cluster $U'_j$ for some $j\in [K']$. This implies that $P^*$ and $\mathcal{D}^*$ satisfy (iv).

This completes the proof of Lemma \ref{regularity-lemma}.
\end{proof}

\section{Transversal perfect matching in bipartite graph  collections}
\subsection{Proof of Lemma \ref{LEMMA:MATCHING-CONS}}
In this subsection we prove Lemma \ref{LEMMA:MATCHING-CONS}, which provides a characteristic partition of each balanced bipartite graph whose minimum degree is just below $\frac{n}{2}$, allowing a tiny fraction of vertices that may have even smaller degrees.
\begin{proof}[Proof of Lemma \ref{LEMMA:MATCHING-CONS}]
We first add vertices with  degree at most $\left(\frac{1}{2}-\epsilon^5\right)n$ to $C_1\cup C_2$. Let $\mu =\epsilon^5$. Since $G$ is $\epsilon$-extremal, there are $X\subseteq V_1,\,Y\subseteq V_2$ each of size at least $\left(\frac{1}{2}-\mu\right)n$ such that $e_G(X, Y) \leq  \mu n^2$. Thus, it is easy to see that for all but at most $\sqrt{\mu}n$ vertices $v$ in $X$ we have $d_G(v, Y) \leq \sqrt{\mu}n$. Similarly, for all but at most $\sqrt{\mu}n$ vertices $v$ in $Y$ we have $d_G(v, X) \leq \sqrt{\mu}n$. We add these exceptional vertices from $X$ and from $Y$ to $C_1$ and $C_2$, respectively. Now, 
 the new sets $X$ and $Y$ each has size at least 
 $\left(\frac{1}{2}-\mu \right)n-\sqrt{\mu}n\geq \left(\frac{1}{2}-2\sqrt{\mu}\right)n.$ 
 Choose $X_1\subseteq X, Y_1\subseteq Y$ such that $|X_1|=|Y_1|=\left(\frac{1}{2}-2\sqrt{\mu}\right)n$. 
 
 Notice that $|C_i|\leq (d+\sqrt{\mu})n\leq 2\sqrt{\mu}n$ by $d\ll \epsilon$. Thus, we can choose $Z_1\subseteq V_1\setminus (X_1\cup C_1)$ and $Z_2\subseteq V_2\setminus (Y_1\cup C_2)$ such that $|Z_1|=|Z_2|=(\frac{1}{2}-2\sqrt{\mu})n$. Recall that $d_G(v, Y_1)\leq \sqrt{\mu}n$ for each vertex $v\in X_1$. This implies that for each $v\in X_1$, one has 
 $$d_G(v, Z_2)\geq \left(\frac{1}{2}-\mu-5\sqrt{\mu}\right)n-|C_2| \geq \left(\frac{1}{2}-8\sqrt{\mu}\right)n.$$
Similarly, $d_G(v,  Z_1) \geq \left(\frac{1}{2}-8\sqrt{\mu}\right)n$ for each $v \in Y_1$. 

Since the number of edges between $X_1$ and $Z_2$ is at least $(\frac{1}{2}-2\sqrt{\mu})n\times \left(\frac{1}{2}-8\sqrt{\mu}\right)n$ and $|Z_2|=(\frac{1}{2}-2\sqrt{\mu})n$, there are at most $50\mu^{\frac{1}{4}}n$ vertices $v$ in $Z_2$ such that $d_G(v, X_1)\leq (\frac{1}{2}-10\mu^{\frac{1}{4}})n$. 
Similarly, for all but at most $50\mu^{\frac{1}{4}}n$ vertices $v$ in $Z_1$ we have $d_G(v, Y_1) \geq (\frac{1}{2}-10\mu^{\frac{1}{4}})n$. Thus, we can choose $A_1\subseteq X_1$, $B_1\subseteq Z_1$, $A_2\subseteq Y_1$, $B_2\subseteq Z_2$ such that $|A_1|=|B_1|=|A_2|=|B_2|=(\frac{1}{2}-\epsilon)n$ and $d_G(v, X_i)\geq (\frac{1}{2}-2\epsilon)n$ for $v\in X_{3-i}$ and $X\in \{A, B\}$, $i\in [2]$. Let $C_1=V_1\setminus (A_1\cup B_1)$ and $C_2=V_2\setminus (A_2\cup B_2)$. Hence, $(A_1, B_1, C_1, A_2, B_2, C_2)$ is the desired partition of $G$. 
\end{proof}

\subsection{Proof of Theorem \ref{stable-matching}}
In this subsection we prove Theorem \ref{stable-matching}, giving a sufficient condition for the existence of a transversal perfect matching in bipartite graph collections. Before proceeding with the proof, we first present some key lemmas that will be used in the proof. 

Assume that $\mathcal{G}=\{G_1,\ldots,G_n\}$ is a collection of bipartite graphs on a common vertex partition $V_1\cup V_2$ with $|V_1|=|V_2|=n$.  For $\mu >0$, we say that $\mathcal{G}$ is  $\mu$-{\em nice} if for every $A\subseteq V_1$ and $B\subseteq V_2$ of size $\lfloor \frac{n}{2} \rfloor$, we have $e_{\mathcal{G}}(A, B):=\sum_{i\in [n]}e_{G_i}(A,B) \geq \mu n^3$.

\begin{lemma}\label{LEMMA:stable-nice-matching}
    Suppose that $0< \frac{1}{n}\ll\mu'\ll \alpha \ll  \gamma, {\epsilon}\ll \delta\ll 1$. Let $\mathcal{G}=\{G_1,\ldots,G_n\}$ be a collection of bipartite graphs on a common vertex partition $V=V_1\cup V_2$ with $|V_1|=|V_2|=n$. If $\mathcal{G}$ is $(\gamma, \alpha, \epsilon, \delta)$-stable, then $\mathcal{G}$ is $\mu'$-nice. 
\end{lemma}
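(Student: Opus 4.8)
\textbf{Proof plan for Lemma \ref{LEMMA:stable-nice-matching}.}

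The plan is to reduce the ``$\mu'$-nice'' conclusion for the whole collection $\mathcal{G}$ to the two constituent cases of stability and handle each separately. First I would fix $A \subseteq V_1$, $B \subseteq V_2$ with $|A| = |B| = \lfloor n/2 \rfloor$ and show $e_{\mathcal{G}}(A,B) \geq \mu' n^3$ by a counting argument over the colors. The key observation is that the averaged edge count $e_{\mathcal{G}}(A,B) = \sum_{i \in [n]} e_{G_i}(A,B)$ only needs a positive fraction of colors $i$ to contribute a positive fraction $\Theta(n^2)$ of edges each; the degree hypotheses (only $dn$ low-degree vertices per graph, only $dn$ bad colors per vertex) let us clean up so that essentially all the degree-type contributions survive for most colors.

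In the \emph{strongly stable} case, $G_i$ is $\alpha$-nice for at least $\gamma n$ colors $i \in [n]$. For each such color, $\alpha$-niceness applied to the pair $(A,B)$ — which have size $\lfloor n/2 \rfloor \geq (\frac12 - \alpha)n$ once $n$ is large — gives $e_{G_i}(A,B) \geq \alpha n^2$ directly. Summing over the $\geq \gamma n$ nice colors yields $e_{\mathcal{G}}(A,B) \geq \gamma \alpha n^3$, and we take $\mu' \leq \gamma\alpha$ (consistent with the hierarchy $\mu' \ll \alpha \ll \gamma$). This case is essentially immediate from the definition.

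The \emph{weakly stable} case is the main obstacle. Here we only know $e(C_{\mathcal{G}}^{\epsilon,\delta}) \geq \delta n^2$, i.e., there are $\geq \delta n^2$ ordered pairs $(i,j)$ of $\epsilon$-extremal, $\delta$-crossing graphs; in particular at least $\delta n$ colors are $\epsilon$-extremal. The plan is to use the cross-graph structure to show that most extremal graphs in $\mathcal{G}$ cannot all have their ``sparse'' side ($A_\ell \times B_\ell$ or $A_\ell \times$ the complementary side, depending on which of $e(A_1,B_2), e(A_2,B_1)$ is small in Lemma \ref{LEMMA:MATCHING-CONS}) aligned with the \emph{same} pair $(A,B)$: crossing means the characteristic partitions of $G_i$ and $G_j$ are genuinely different (both $|A_1^i \Delta A_1^j| \geq \delta n$ and $|A_1^i \Delta B_1^j| \geq \delta n$), so at most one of them can have $A_1$ within edit distance $O(\delta n)$ of a fixed set of size $\lfloor n/2\rfloor$. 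Concretely: a vertex $x \in V_1$ of degree $\geq (\frac12 - \mu)n$ into $V_2$ in $G_i$, living on the ``dense side'' of $G_i$'s partition, sends $\Omega(n)$ edges into any half-sized $B$ unless $B$ avoids that dense side — but by the crossing condition, for most pairs of extremal colors the dense sides are spread out, so for a fixed $B$ at most a $\delta$-fraction of extremal colors can have $B$ (almost) disjoint from their dense side. This means for $\Omega(\delta n)$ colors $i$, a positive fraction of $A$ lies on a part of $G_i$'s partition with large degree into the corresponding part meeting $B$, giving $e_{G_i}(A,B) = \Omega(\delta n^2)$; summing gives $e_{\mathcal{G}}(A,B) = \Omega(\delta^2 n^3) \geq \mu' n^3$ for $\mu' \ll \delta$. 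Throughout, I would invoke Lemma \ref{LEMMA:MATCHING-CONS} to supply the characteristic partition $(A_1,B_1,A_2,B_2,C_1,C_2)$ of each $\epsilon$-extremal $G_i$ with its degree property (ii) and sparseness property (iii), and use the Observation-style bookkeeping that crossing sets have large pairwise intersections with appropriate parts. The delicate point — and where I would spend the most care — is making the ``at most one of a crossing pair aligns with $(A,B)$'' dichotomy quantitative and uniform enough to conclude that the number of ``good'' colors is $\Omega(\delta n)$ rather than merely nonzero, which requires a double-counting over the $\geq \delta n^2$ edges of the cross graph together with the edit-distance triangle inequality $|A_1^i \Delta A_1^j| \leq |A_1^i \Delta A| + |A \Delta A_1^j|$.
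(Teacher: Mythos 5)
Your proposal is correct and follows essentially the same route as the paper's proof, just stated in the contrapositive direction: the paper assumes $\mathcal{G}$ is not $\mu'$-nice and derives in turn that it is not $(\gamma,\alpha)$-strongly stable (because most colors $i$ have small $e_{G_i}(A,B)$, hence are not nice) and then not $(\epsilon,\delta)$-weakly stable (because Lemma \ref{LEMMA:MATCHING-CONS} forces the characteristic partitions of almost all those colors to align with the fixed pair $(A,B)$, killing crossing pairs), while you run the two cases forward. In both the strongly stable case (sum $\alpha$-niceness over $\gamma n$ colors) and the weakly stable case (triangle-inequality argument that aligned colors form an independent set in $C_{\mathcal G}^{\epsilon,\delta}$, then double-count $e(C_{\mathcal G}^{\epsilon,\delta})\geq\delta n^2$ to extract $\Omega(\delta n)$ misaligned extremal colors each contributing $\Omega(\delta^2 n^2)$ edges), the key inputs — Lemma \ref{LEMMA:MATCHING-CONS} and the crossing/alignment bookkeeping — are exactly the ones the paper uses.
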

\begin{proof}
    For the sake of contradiction, suppose  that $\mathcal{G}$ is not $\mu'$-nice. Then there are sets $A\subseteq V_1$ and $B\subseteq V_2$, each of size $\lfloor \frac{n}{2} \rfloor$, such that $e_{\mathcal{G}}(A, B)\leq \mu' n^3$. Therefore, for all but at most $\sqrt{\mu'}n$ colors $i\in [n]$, we have $e_{G_i}(A, B)< \sqrt{\mu'} n^2$, implying that such  $G_i$ is not $\sqrt{\mu'}$-nice. This further implies that $\mathcal{G}$ is not $(1-\sqrt{\mu'}, \sqrt{\mu'})$-{strongly} stable and hence not $(\gamma, \alpha)$-{strongly} stable, since $\mu'\ll \gamma, \alpha$. 
    
    Without loss of generality, assume that for each $i\in [(1-\sqrt{\mu'})n]$, $G_i$ is not $\sqrt{\mu'}$-nice. Since $\epsilon > \mu'^{1/10}$, $G_i$ is $\epsilon$-extremal for all $i\in [(1-\sqrt{\mu'})n]$. By Lemma \ref{LEMMA:MATCHING-CONS}, the graph $G_i$ has a characteristic partition, denoted as $(A_1^i, B_1^i, C_1^i, A_2^i, B_2^i, C_2^i)$. It follows that, after possible relabeling $A_1^i, B_1^i, A_2^i, B_2^i$, for all but at most $\delta^2 n$ colors $i\in [(1-\sqrt{\mu'})n]$ we have 
    $$|A\triangle A_1^i|\leq \frac{\delta n}{2}\ \ \text{and}\ \  |B\triangle B_2^i|\leq \frac{\delta n}{2}.$$
     Otherwise, $e_{\mathcal{G}}(A,B)\geq \frac{\delta^3 n^3}{8}> \mu' n^3$ by $\mu'\ll\epsilon \ll \delta$, 
     a contradiction. Delete such colors from $[(1-\sqrt{\mu'})n]$ and let $\mathcal{C}$ be the remaining color set. Thus, for every $i, j\in \mathcal{C}$ the graphs $G_i$ and $G_j$ are not $\delta$-crossing. Therefore, 
$$
e(C_{\mathcal{G}}^{\epsilon, \delta})\leq \sqrt{\mu'}n^2+\delta^2 n^2<\delta n^2\  \text{since}\ \mu'\ll \delta.
$$ 
Thus, $\mathcal{G}$ is not $(\epsilon, \delta)$-weakly stable, which contradicts the assumption that $\mathcal{G}$ is $(\gamma, \alpha, \epsilon, \delta)$-stable.
\end{proof}

The following result shows that if a bipartite graph collection $\mathcal{G}$ is $\mu$-nice under the minimum degree constraint, then we can find a large rainbow matching in $\mathcal{G}$.
 
\begin{lemma}\label{LEMMA:N-2-MATCHING}
Let $0 < \frac{1}{n} \ll d \ll \mu \ll 1$. Suppose that $\mathcal{G}=\{G_1,\ldots,G_n\}$ is a collection of bipartite graphs on a common vertex partition $V=V_1\cup V_2$ with $|V_1|=|V_2|=n$, and for each vertex $v \in V$ we have $d_{G_i}(v)\geq \left(\frac{1}{2}-\mu\right)n$ for all but at most $dn$ colors $i \in[n]$. If $\mathcal{G}$ is $6\mu$-nice, then $\mathcal{G}$ contains a rainbow matching $\mathcal{M}$ with $e(\mathcal{M}) \geq n-dn-2$. 

\end{lemma}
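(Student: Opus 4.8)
\textbf{Proof proposal for Lemma \ref{LEMMA:N-2-MATCHING}.} The plan is to build the rainbow matching greedily, maintaining at every step the ability to extend, and to use the $6\mu$-niceness together with the degree condition to guarantee extendability until at most $dn+2$ colors remain unused. Suppose we have already constructed a rainbow matching $\mathcal{M}=\{e_1,\dots,e_t\}$ using colors $i_1,\dots,i_t$, with $t\le n-dn-3$, and we wish to extend it. Write $W_1:=V_1\setminus V(\mathcal{M})$ and $W_2:=V_2\setminus V(\mathcal{M})$, so $|W_1|=|W_2|=n-t\ge dn+3$. There are at least $n-t\ge dn+3$ unused colors; I first want to find one unused color $c$ such that at least, say, $\frac{3}{4}(n-t)$ vertices of $W_1$ satisfy $d_{G_c}(v)\ge(\frac12-\mu)n$, and similarly for $W_2$. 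This is possible because for each fixed vertex $v$ the set of ``bad'' colors has size at most $dn$, so by averaging the number of (color, vertex) bad pairs with $v\in W_1\cup W_2$ is at most $2n\cdot dn$, and since there are at least $n-t$ unused colors we can discard the few unused colors that are bad for too many vertices of $W_1\cup W_2$ — formally, at most $\frac{2dn^2}{(n-t)/4}\le 8dn$ unused colors are bad for more than $(n-t)/4$ vertices of $W_1$ or of $W_2$, and since $n-t> 8dn$ (as $d\ll\mu\ll 1$, so certainly $dn+3> 8dn$ fails — I will need $n-t$ larger; in fact we only extend while $t\le n-dn-3$, and then $n-t\ge dn+3$, which is not bigger than $8dn$, so this particular averaging is too weak and I will instead argue differently below).

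The cleaner route avoids this counting pitfall: extend while the number of unused colors is larger than $dn+2$, i.e. while $t\le n-dn-3$, but select the extending edge using niceness directly rather than first purifying a color. Fix any unused color $c$. Let $G_c^-$ be the spanning subgraph of $G_c$ obtained by deleting all vertices $v$ with $d_{G_c}(v)<(\frac12-\mu)n$; by hypothesis there are at most... — no, the degree hypothesis is ``for each vertex $v$, $d_{G_i}(v)\ge(\frac12-\mu)n$ for all but at most $dn$ colors $i$'', which does not bound the number of low-degree vertices in a fixed $G_c$. So instead I will use niceness globally: consider the collection $\mathcal{G}'=\{G_i[W_1,W_2]:i\text{ unused}\}$. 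For any $A\subseteq W_1$, $B\subseteq W_2$ with $|A|=|B|=\lfloor n/2\rfloor$ — but $|W_1|=n-t$ may be smaller than $\lfloor n/2\rfloor$, so this does not directly apply either. The right statement to extract from $6\mu$-niceness is: for \emph{any} $A\subseteq V_1$, $B\subseteq V_2$ with $|A|,|B|\ge\lfloor n/2\rfloor$ (just take subsets of size exactly $\lfloor n/2\rfloor$), $e_{\mathcal{G}}(A,B)\ge 6\mu n^3$, hence there are at least $6\mu n$ colors $i$ with $e_{G_i}(A,B)\ge 6\mu n^2$, and for any single such color $G_i[A,B]$ contains a matching of size at least... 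I claim a single edge suffices, and we just need one unused such color and one edge avoiding $V(\mathcal{M})$.

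Here is the step that makes it work. Since $t\le n-dn-3< n$, we have $|W_1|,|W_2|\ge dn+3\ge 3$. Pick any unused color $c$. If $G_c[W_1,W_2]$ has an edge, we extend $\mathcal{M}$ by it and continue. Suppose for contradiction that for \emph{every} unused color $c$, $G_c[W_1,W_2]=\emptyset$. The number of unused colors is $n-t\ge dn+3$. Now choose $A\subseteq V_1$ with $W_1\subseteq A$ and $|A|=\lfloor n/2\rfloor$ (possible since $|W_1|=n-t$ and we may assume $n-t\le\lfloor n/2\rfloor$ — if $n-t>\lfloor n/2\rfloor$ then $t<n/2$, a case I handle separately below by noting $6\mu$-niceness already forces many edges), and similarly $B\subseteq V_2$ with $W_2\subseteq B$, $|B|=\lfloor n/2\rfloor$. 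For each unused color $c$, $e_{G_c}(A,B)=e_{G_c}(A\setminus W_1,B)+e_{G_c}(W_1,B\setminus W_2)\le 2tn\le 2n^2$ trivially, which is not small enough. So emptiness of $G_c[W_1,W_2]$ alone is not contradictory via counting on $A,B$ of size $n/2$. Instead, the obstruction must be resolved by the degree condition on $W_1\cup W_2$ directly: each vertex $v\in W_1$ has, for all but $dn$ colors, $d_{G_i}(v)\ge(\frac12-\mu)n$; summing over the $n-t\ge dn+3$ unused colors, $v$ has $\ge(\frac12-\mu)n$ neighbors in $V_2$ for at least $n-t-dn\ge 3$ unused colors. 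Among these $\ge(\frac12-\mu)n$ neighbors, at most $t=n-(n-t)$ lie in $V(\mathcal{M})$, so at least $(\frac12-\mu)n-t$ lie in $W_2$. This is positive precisely when $t<(\frac12-\mu)n$. Thus as long as $t<(\frac12-\mu)n$, extension is possible, and in fact once $t\ge(\frac12-\mu)n$ we have $|W_1|=|W_2|=n-t\le(\frac12+\mu)n$ and we switch to the niceness argument: pick $A,B$ of size $\lfloor n/2\rfloor$ containing $W_1,W_2$; since $|W_1|,|W_2|\ge(\frac12-\mu)n$ roughly half of $A,B$, $6\mu$-niceness gives $e_{\mathcal{G}}(A,B)\ge 6\mu n^3$, and the edges incident to $V(\mathcal{M})$ (i.e. within $A\setminus W_1$ or $B\setminus W_2$) number at most $2\cdot\frac{3\mu n}{?}$... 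Let me state the clean dichotomy in the writeup: \textbf{Case 1: $t<\frac{n}{3}$.} Then for the chosen purified unused color the degree bound gives $\ge(\frac12-\mu)n-t\ge(\frac16-\mu)n>0$ neighbors in $W_2$ for some vertex of $W_1$, so we extend. \textbf{Case 2: $\frac{n}{3}\le t\le n-dn-3$.} Then $|W_1|=|W_2|=n-t$ with $dn+3\le n-t\le\frac{2n}{3}$. Among the $n-t\ge dn+3$ unused colors, at least two, say $c_1,c_2$, satisfy that the number of $v\in W_1\cup W_2$ with $d_{G_{c_j}}(v)<(\frac12-\mu)n$ is small — obtained by averaging: $\sum_{v\in W_1\cup W_2}\#\{\text{bad colors for }v\}\le 2(n-t)\cdot dn$, so at most $\frac{2(n-t)dn}{(n-t)/10}=20dn$ unused colors are bad for more than $\frac{n-t}{10}$ vertices; since $n-t\ge dn+3$... again $dn+3$ vs $20dn$: this fails for small $d$ unless $n-t\gg dn$. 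Since in Case 2 $n-t\ge n/3\gg 20dn$, the averaging \emph{does} work here. So fix a good unused color $c$ with at most $\frac{n-t}{10}$ low-degree vertices in $W_1\cup W_2$; let $W_1'\subseteq W_1$, $W_2'\subseteq W_2$ be the high-degree vertices, $|W_j'|\ge\frac{9(n-t)}{10}$. Each $v\in W_1'$ has $\ge(\frac12-\mu)n$ neighbors in $V_2$, of which at most $t=n-(n-t)$ are in $V(\mathcal{M})$ and at most $\frac{n-t}{10}$ are the excluded low-degree vertices of $W_2$, leaving $\ge(\frac12-\mu)n-t+(n-t)-\frac{n-t}{10}=(\frac12-\mu)n-\frac{n-t}{10}\ge(\frac12-\mu)n-\frac{n}{15}>0$ neighbors in $W_2'$. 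Hence $G_c[W_1',W_2']$ has an edge and we extend.

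\textbf{Conclusion.} Iterating, the greedy process only stops when $n-t\le dn+2$, i.e. $t\ge n-dn-2$, which yields a rainbow matching $\mathcal{M}$ with $e(\mathcal{M})\ge n-dn-2$, as claimed. The \textbf{main obstacle} is the bookkeeping in the ``endgame'' regime where the unused color set has shrunk to size comparable with $dn$: there the naive averaging over unused colors is too lossy, and one must exploit that until $t$ reaches $n-dn-2$ we still have $n-t\ge dn+3$, combined with the fact that for $t\le n-dn-3<n$ at least one unused color is ``good'' and every high-degree vertex of $W_1$ retains a neighbor in $W_2$ because $(\frac12-\mu)n$ comfortably exceeds the number of forbidden vertices. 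I will also need to double-check that the two cases cover all $t\le n-dn-3$ and that the constant $6\mu$ in the niceness hypothesis (rather than being invoked for the matching edge directly) is in fact only needed to absorb the slack $(\frac12-\mu)n$ vs $t$ near $t\approx n/2$ — in the argument above I managed with the degree condition alone, so $6\mu$-niceness may only be needed to seed the process or to handle a boundary overlap between Cases 1 and 2; I will verify this and fold niceness in where the pure degree count is tight (around $t$ close to $n/2$, where $(\frac12-\mu)n - \frac{n-t}{10}$ could dip, though since $n-t\le 2n/3$ here it stays $\ge(\frac12-\mu)n-\frac{n}{15}>\frac{2n}{5}>0$, so in fact niceness is not strictly needed and can be dropped from the proof, only kept in the statement for uniformity with how the lemma is applied).
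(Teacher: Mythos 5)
Your greedy-extension strategy has a genuine gap in the ``endgame'' regime $t \gtrsim (\tfrac12-\mu)n$, and the arithmetic in Case~2 that makes it look like it works there is wrong.

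In Case~2 you compute the number of neighbours of a high-degree vertex $v\in W_1'$ landing in $W_2'$ as
\[
\left(\tfrac12-\mu\right)n - t + (n-t) - \tfrac{n-t}{10} = \left(\tfrac12-\mu\right)n - \tfrac{n-t}{10},
\]
but the $+(n-t)$ term has no justification (you are only removing neighbours, never adding them), and even with it the displayed equality is false unless $t=n/2$. The correct count is $\left(\tfrac12-\mu\right)n - t - \tfrac{n-t}{10}$, which is positive only when $t < \tfrac{10}{9}\left(\tfrac{2}{5}-\mu\right)n \approx 0.44n$. Case~2 is supposed to run up to $t = n-dn-3 \approx n$, so for $t$ between roughly $(\tfrac12-\mu)n$ and $n-dn-3$ a vertex $v\in W_1$ can have \emph{all} of its $\geq(\tfrac12-\mu)n$ neighbours (in every unused colour) inside $V(\mathcal{M})\cap V_2$, and no greedy extension is possible. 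Your own hedging at the end (``this particular averaging is too weak,'' ``I will verify this'') correctly flags the problem, but the final conclusion that the cases cover everything is an artifact of the dropped sign.

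The closing claim that $6\mu$-niceness ``is not strictly needed and can be dropped'' is also false; without it the lemma itself fails. Take $\nu:=\sqrt{d\mu}$ (so $d \ll \nu \ll \mu$), split $V_1 = A_1\cup B_1$, $V_2=A_2\cup B_2$ with $|A_1|=|B_2|=(\tfrac12+\nu)n$ and $|B_1|=|A_2|=(\tfrac12-\nu)n$, and let every $G_i$ be $K_{A_1,A_2}\cup K_{B_1,B_2}$. Every vertex has degree $\geq(\tfrac12-\nu)n\geq(\tfrac12-\mu)n$ in every colour, so the degree hypothesis holds with room to spare. Since all $G_i$ coincide, the largest rainbow matching has size exactly the matching number of $G$, which is $(1-2\nu)n < n-dn-2$ because $2\nu=2\sqrt{d\mu}\gg d$. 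And indeed $e_{\mathcal{G}}(A,B)\approx \nu n^3 \ll 6\mu n^3$ for suitable $A,B$, so this collection is not $6\mu$-nice. This is exactly the extremal configuration the niceness hypothesis is designed to rule out.

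The paper's proof is structurally different and designed precisely to handle the regime your greedy argument cannot reach: it starts from a \emph{maximal} rainbow matching $\mathcal{M}$, assumes $e(\mathcal{M})\leq n-dn-3$, and uses the maximality (no length-one and no length-three augmenting paths) to force $N_1\cap N_2^+=\varnothing$ and $N_2\cap N_1^+=\varnothing$, hence $e(\mathcal{M})\geq |N_1|+|N_2|\geq(1-2\mu)n$. It then performs a rotation (swap out the edge $ww^+$ coloured $c$, swap in $v_1w$ coloured $c_1$) to show that for each colour $c$ used on $\mathcal{M}[N_1,N_1^+]\cup\mathcal{M}[N_2,N_2^+]$ one has $e_{G_c}(N_1^+,N_2^+)\leq 4\mu n^2$, and concludes $e_{\mathcal{G}}(N_1^+,N_2^+)<6\mu n^3$, contradicting niceness. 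A greedy extension argument never performs such an exchange, so it cannot reproduce this. To repair your proposal you would either need to import the paper's rotation step for the endgame, or replace Case~2 for large $t$ with a direct appeal to niceness that produces an augmenting path; the degree bound alone is provably insufficient.
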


\begin{proof}
Let $\mathcal{M}$ be a maximum rainbow matching in $\mathcal{G}$, and suppose for a contradiction that $e(\mathcal{M})\leq n-dn-3$. Since for each vertex $v \in V$ we have $d_{G_i}(v)\geq \left(\frac{1}{2}-\mu\right)n$ for all but at most $dn$ colors $i \in[n]$, there are vertices $v_1\in V_1\setminus V(\mathcal{M}), v_2\in V_2\setminus V(\mathcal{M})$ and distinct colors $c_1, c_2\notin {\rm col}(\mathcal{M})$ 
such that $d_{G_{c_{\ell}}}(v_{\ell})\geq \left(\frac{1}{2}-\mu\right)n$ for $\ell\in \{1, 2\}$. Put $N_{\ell}:=N_{G_{c_{\ell}}}(v_{\ell})$. Thus, $|N_{\ell}|\geq \left(\frac{1}{2}-\mu\right)n$. Let $\mathcal{C} := [n] \setminus ({\rm col}(\mathcal{M}) \cup \{c_1, c_2\})$. 
For $x \in V(\mathcal{M})$, let $x^+$ denote the unique neighbour of $x$ in $\mathcal{M}$, and for $A \subseteq V(\mathcal{M})$, let $A^+ := \{x^+ : x \in A\}$. 

By the maximality of $\mathcal{M}$, we have 
$N_1\cap N_2^+=\emptyset,\, N_2\cap N_1^+=\emptyset$ and $N_1,N_2\subseteq V(\mathcal{M})$. 
This implies that
$$e(\mathcal{M})\geq |N_1|+|N_2|\geq 2\left(\frac{1}{2}-\mu\right)n=n-2\mu n.$$
Moreover, for each $c\in \mathcal{C}$, we have $e_{G_c}(N_1^+, N_2^+)=0$ by the maximality of $\mathcal{M}$. Thus, it suffices to show that for all $c\in {\rm col}(\mathcal{M}[N_1, N_1^+]\cup \mathcal{M}[N_2, N_2^+])$, we have $e_{G_c}(N_1^+, N_2^+)\leq 4\mu n^2$. If this holds, then  
$$e_{\mathcal{G}}(N_1^+, N_2^+)\leq 4\mu n^2(n-2\mu n)+2\mu n^3 < 6\mu n^3,$$
contradicting the assumption that $\mathcal{G}$ is  $6\mu$-nice. Next, we consider the case $c\in {\rm col}(\mathcal{M}[N_1, N_1^+])$;  the proof for the case $c\in {\rm col}(\mathcal{M}[N_2, N_2^+])$ is similar and is omitted.

Let $c\in {\rm col}(\mathcal{M}[N_1, N_1^+])$ and $w\in N_1$ such that ${\rm col}(ww^+)=c$. Now let $\mathcal{M}' = \mathcal{M} - w w^+ + v_1w$ be a rainbow matching with ${\rm col}(\mathcal{M}') = ({\rm col}(\mathcal{M}) \setminus \{c\})\cup \{c_1\}$. Clearly, $\mathcal{M}'$ is a maximum rainbow matching. 
Notice that there exists a color $c_3\in \mathcal{C}\setminus \{c_1,c_2\}$ such that $d_{G_{c_3}}(w^+)\geq \left(\frac{1}{2}-\mu\right)n$. Let $N_3:=N_{G_{c_3}}(w^+)$. Consider the new vertex pair $(w^+, v_2)$ and color pair $(c_3, c_2)$. By the maximality of $\mathcal{M}'$, we have $N_3^+\cap N_2=\emptyset$. Recall that $N_1^+\cap N_2=\emptyset$ and $|N_i|\geq \left(\frac{1}{2}-\mu\right)n$ for each $i\in [3]$. Thus, 
\begin{equation}\label{N3deltaN1}
    |N_3^+\triangle N_1^+|= |N_3^+\cup N_1^+|-|N_3^+\cap N_1^+|\leq (n-|N_2|)-(|N_1|+|N_3|-(n-|N_2|))\leq 4\mu n.
\end{equation}
On the other hand, by the maximality of $\mathcal{M}'$, we have $e_{G_c}(N_3^+, N_2^+)=0$. Thus, together with \eqref{N3deltaN1}, we have $e_{G_{c}}(N_1^+, N_2^+)\leq 4\mu n^2$, as desired.
\end{proof}

Let $n$ be a positive integer and let $d, \epsilon>0$ such that $0<\frac{1}{n}\ll d\ll \epsilon\leq 1$. Consider a bipartite graph collection  $\mathcal{G}=\{G_1,\ldots,G_n\}$ on a common vertex partition $V=V_1\cup V_2$ with $|V_1|=|V_2|=n$ such that $d_{G}(x)\geq (\frac{1}{2}-\epsilon^5)n$ for all but at most $dn$ vertices $x\in V$. Lemma \ref{LEMMA:MATCHING-CONS} implies that for each $i\in [n]$ if $G_i$ is $\epsilon$-extremal, then we can fix a characteristic partition
\begin{center}
 $(A_1^i,B_1^i,C_1^i,A_2^i,B_2^i,C_2^i)$
of $G_i$.   
\end{center}
A vertex $v\in V$ is called $i$-{\em good} if either $G_i$ is $\epsilon$-extremal and $v\in V\setminus (C^1_i\cup C^2_i)$ or $G_i$ is not $\epsilon$-extremal and $d_{G_i}(v)\geq (\frac{1}{2}-\epsilon^3)n$. Note that for each $i\in [n]$ and $j\in\{1,2\}$, there are at most $2\epsilon n$ vertices in $V_j$ that are not $i$-good.
 
\begin{definition}[absorbing edge, absorbing matching]  Let $\mathcal{G}=\{G_1,\ldots,G_n\}$ be a collection of bipartite graphs on a common vertex partition $V=V_1\cup V_2$ with $|V_1|=|V_2|=n$. 
Given two vertices $u\in V_1$ and $v\in V_2$, an edge $ e=w_1w_2$ with $u, v \not\in \{w_1,w_2\}$ is called a $c$-\textit{absorbing edge} of $(u, v)$ if $c \in L(w_1v)$ and ${\rm col}(w_1w_2) \in L(uw_2)$ (see Figure \ref{absorbing-edge}).

Given $\delta, \delta', \gamma, \gamma'\ge 0$, a rainbow matching $\mathcal{M}$ is an \textit{absorbing matching} with parameters $(\delta, \delta', \gamma, \gamma')$ if $|E(\mathcal{M})| \leq \gamma n$ and there exists a color set $\mathcal{C}$ of size at least $\delta n$ such that given any color $c \in \mathcal{C}$ and any $c$-good vertex $v\in V_2$, for all but at most $\delta' n$ vertices $u \in V_1$, there are at least $\gamma' n$ $c$-absorbing edges of $(u, v)$ within $\mathcal{M}$.
\end{definition}

\begin{figure}[htbp]
    \centering
\tikzset{every picture/.style={line width=0.75pt}} 


\begin{tikzpicture}[x=0.75pt,y=0.75pt,yscale=-1.1,xscale=1.1]

\draw [color={rgb, 255:red, 144; green, 19; blue, 254 }  ,draw opacity=1 ]   (207,152.9) -- (293.78,153.06) ;
\draw [color={rgb, 255:red, 144; green, 19; blue, 254 }  ,draw opacity=1 ]   (293.78,153.06) -- (293.78,80.06) ;
\draw [color={rgb, 255:red, 201; green, 208; blue, 2 }  ,draw opacity=1 ][fill={rgb, 255:red, 248; green, 231; blue, 28 }  ,fill opacity=1 ]   (206.78,80.06) -- (207,152.9) ;
\draw  [fill={rgb, 255:red, 0; green, 0; blue, 0 }  ,fill opacity=1 ] (204,152.9) .. controls (204,151.25) and (205.34,149.9) .. (207,149.9) .. controls (208.66,149.9) and (210,151.25) .. (210,152.9) .. controls (210,154.56) and (208.66,155.9) .. (207,155.9) .. controls (205.34,155.9) and (204,154.56) .. (204,152.9) -- cycle ;
\draw  [fill={rgb, 255:red, 0; green, 0; blue, 0 }  ,fill opacity=1 ] (290.78,153.06) .. controls (290.78,151.4) and (292.12,150.06) .. (293.78,150.06) .. controls (295.43,150.06) and (296.78,151.4) .. (296.78,153.06) .. controls (296.78,154.71) and (295.43,156.06) .. (293.78,156.06) .. controls (292.12,156.06) and (290.78,154.71) .. (290.78,153.06) -- cycle ;
\draw  [fill={rgb, 255:red, 0; green, 0; blue, 0 }  ,fill opacity=1 ] (203.78,80.06) .. controls (203.78,78.4) and (205.12,77.06) .. (206.78,77.06) .. controls (208.43,77.06) and (209.78,78.4) .. (209.78,80.06) .. controls (209.78,81.71) and (208.43,83.06) .. (206.78,83.06) .. controls (205.12,83.06) and (203.78,81.71) .. (203.78,80.06) -- cycle ;
\draw  [fill={rgb, 255:red, 0; green, 0; blue, 0 }  ,fill opacity=1 ] (290.78,80.06) .. controls (290.78,78.4) and (292.12,77.06) .. (293.78,77.06) .. controls (295.43,77.06) and (296.78,78.4) .. (296.78,80.06) .. controls (296.78,81.71) and (295.43,83.06) .. (293.78,83.06) .. controls (292.12,83.06) and (290.78,81.71) .. (290.78,80.06) -- cycle ;

\draw (288,62) node [anchor=north west][inner sep=0.75pt]   [align=left] {$u$};
\draw (201,62) node [anchor=north west][inner sep=0.75pt]   [align=left] {$v$};
\draw (199,163) node [anchor=north west][inner sep=0.75pt]   [align=left] {$w_1$};
\draw (288,165) node [anchor=north west][inner sep=0.75pt]   [align=left] {$w_2$};
\draw (195,110) node [anchor=north west][inner sep=0.75pt]   [align=left] {$c$};

\end{tikzpicture}
\caption{$w_1w_2$ is a $c$-absorbing edge of $(u, v)$.}
\label{absorbing-edge}
\end{figure}
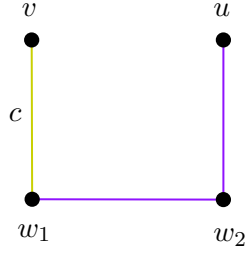

 The following two lemmas establish the existence of absorbing matchings when $\mathcal{G}$ is stable.  
 We first consider the case that $\mathcal{G}$ is strongly stable.

\begin{lemma}\label{strongly-matching}
    Let $0<\frac{1}{n}\ll d\ll \lambda ,\mu\ll \gamma,\alpha\ll 1$. Suppose that $\mathcal{G}=\{G_1,\ldots,G_n\}$ is a collection of bipartite graphs on a common vertex partition $V=V_1\cup V_2$ with $|V_1|=|V_2|=n$, satisfying the following conditions:
   \begin{itemize}
    \item for every $i\in [n]$, $d_{G_i}(x)\geq \left(\frac{1}{2}-\mu\right)n$ for all but at most $dn$ vertices $x\in V_1\cup V_2$, 
    \item for every $x\in V_1\cup V_2$, $d_{G_i}(x)\geq \left(\frac{1}{2}-\mu\right)n$ for all but at most $dn$ colors $i\in [n]$.
\end{itemize}
If $\mathcal{G}$ is $(\gamma,\alpha)$-strongly stable, then there exists an absorbing matching with parameters $(1,0,\lambda,\lambda^2)$.
\end{lemma}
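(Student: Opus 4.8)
\textbf{Proof plan for Lemma~\ref{strongly-matching}.}
The plan is to mimic the structure of the proof of Lemma~\ref{strongly}, replacing the directed $4$-graph machinery with a directed (or ordinary) $2$-graph machinery, since now the absorbing gadget is a single edge $w_1w_2$ rather than a path on four vertices. First I would use that $\mathcal{G}$ is $(\gamma,\alpha)$-strongly stable to fix $\gamma n$ colors, say $G_1,\dots,G_{\gamma n}$, each of which is $\alpha$-nice; I will only use these colors to build the gadgets. Assume $2\mid \lambda n$ and partition the color set $[\tfrac{\lambda n}{2}]$ into consecutive pairs $\{1,2\},\{3,4\},\dots$, and for each index $i\in I:=[\tfrac{\lambda n}{4}]$ let $\mathcal{F}_i$ be the ``$2$-graph'' on $V$ whose edges are pairs $(w_1,w_2)$ with $w_1\in V_1$, $w_2\in V_2$ (or the appropriate bipartite encoding) and $w_1w_2\in G_{2i}$. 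The minimum degree hypothesis (for all but $dn$ vertices) guarantees $|E(\mathcal{F}_i)|\ge (\tfrac12-\mu)n\cdot n - dn\cdot n \ge \tfrac{n^2}{4}$, so each $\mathcal{F}_i$ is ``dense enough'' for Lemma~\ref{LEMMA:directed-k-graph-matching} with $k=2$.

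Next, for a color $c\in[n]$ and two vertices $v\in V_2$, $u\in V_1$ (not required to be $c$-good), define $Z_i(c,uv)$ to be the set of pairs $(w_1,w_2)$ such that $w_1w_2$ is a $c$-absorbing edge of $(u,v)$ and $w_1w_2\in G_{2i}$, i.e.\ $c\in L(w_1v)$ and $2i\in L(uw_2)$. Using that $G_c$ and $G_{2i}$ together with the degree conditions give many choices: there are at least $(\tfrac12-\mu)n$ vertices $w_1\in N_{G_c}(v)$, and for each such $w_1$, since $w_1\in V_1$ and $u\in V_1$, I pair $w_1$ with $w_2\in N_{G_{2i}}(w_1)\cap N_{G_{2i}}(u)$; I want this intersection large. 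Here is exactly where $\alpha$-niceness of $G_{2i}$ (with $2i\le\gamma n$, after relabeling so the nice colors land on the even indices in $[\tfrac{\lambda n}{2}]$ — this is easy to arrange since $\lambda\ll\gamma$) is used: $e_{G_{2i}}(N_{G_c}(v),\ \text{large subset of }V_1)\ge\alpha n^2$, which produces $\ge \alpha n^2$ choices of $(w_1,w_2)$. Hence $|Z_i(c,uv)|\ge \alpha n^2 / 2 \ge \tfrac{\alpha}{5}n^2$. Now apply Lemma~\ref{LEMMA:directed-k-graph-matching} with $k=2$, $t=|I|$, $\epsilon=\tfrac{\alpha}{5}$, $\mathbf{H}=\{\mathcal{F}_i\}_{i\in I}$ and $\mathbf{Z}=\{Z(c,uv):=\bigcup_{i\in I}Z_i(c,uv): c\in[n], u\in V_1, v\in V_2\}$ (here $m\le n^{3}$), obtaining a rainbow matching $M$ in $\mathbf{H}$ of size $\ge(1-\tfrac{\alpha^2}{100})\tfrac{\lambda n}{4}$ with $|E(Z(c,uv))\cap E(M)|\ge \tfrac{\alpha^2\lambda n}{c'}$ for every $c,u,v$. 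Translating back, $M$ is a rainbow matching in $\mathcal{G}$ of size at most $\tfrac{\lambda n}{4}\le\lambda n$, and for every color $c$, every vertex $v\in V_2$ and every vertex $u\in V_1$, there are $\ge\Omega(\lambda n)$ edges of $M$ that are $c$-absorbing edges of $(u,v)$. Note no additional ``connecting'' step is needed (unlike Lemma~\ref{strongly}), because a matching does not need to be stitched into a cycle — this actually makes the argument shorter. Taking $\mathcal{C}:=[n]$ (size $n\ge 1\cdot n$, so $\delta=1$), $\delta'=0$, and checking the parameters gives an absorbing matching with parameters $(1,0,\tfrac{\lambda n/4}{n},\tfrac{\alpha^2\lambda}{c'})$, hence with parameters $(1,0,\lambda,\lambda^2)$ since $\lambda\ll\alpha$.

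The main obstacle, and the point requiring the most care, is the bookkeeping around which colors are ``nice'' versus which are used as the $\mathcal{F}_i$-colors: I need the $\gamma n$ nice colors to be available as the second coordinate $2i$ of the gadget edges while $c$ ranges over all of $[n]$, and I must ensure the $dn$ exceptional low-degree vertices/colors do not spoil the density bounds $|E(\mathcal{F}_i)|\ge\tfrac{n^2}{4}$ and $|Z_i(c,uv)|\ge\tfrac{\alpha}{5}n^2$ — this is fine since $d\ll\lambda,\mu,\alpha$, but it needs to be stated. A secondary subtlety is the precise bipartite encoding so that Lemma~\ref{LEMMA:directed-k-graph-matching} (stated for directed $k$-graphs on a common vertex set) applies cleanly; one takes the common vertex set to be $V_1\cup V_2$ and records edges as ordered pairs $(w_1,w_2)\in V_1\times V_2$, which is a legitimate subset of $V^2$. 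Everything else is a routine adaptation of the already-written proof of Lemma~\ref{strongly} with $k$ lowered from $4$ to $2$.
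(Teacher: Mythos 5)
Your proposal is essentially the paper's proof and would work after cleaning up two small slips. The paper's proof does exactly what you describe: since $\lambda\ll\gamma$, it defines $\mathcal{F}_i:=\{(u,v)\in V_1\times V_2 : i\in L(uv)\}$ directly for $i\in[\lambda n]$ (all of which are $\alpha$-nice colors), bounds $|\mathcal{F}_i|\geq\tfrac{n^2}{4}$ using the degree hypothesis, lower-bounds $|Z_i(c,uv)|$ by $e(G_i[N_{G_i}(u),N_{G_c}(v)])\geq\tfrac{\alpha n^2}{2}$ via $\alpha$-niceness for all but $dn$ indices $i$, and applies Lemma~\ref{LEMMA:directed-k-graph-matching} with $t=\lambda n$, $\epsilon=\tfrac{\alpha}{2}$ to extract the absorbing matching. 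You correctly observed that no stitching/connecting step is needed because the output is already a matching.

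Two corrections. First, your pairing $\{1,2\},\{3,4\},\dots$ with $i\in[\tfrac{\lambda n}{4}]$ and $\mathcal{F}_i$ keyed on $G_{2i}$ is an unneeded artifact carried over from Lemma~\ref{strongly}, where the gadget was a $3$-edge path and hence consumed three colors; here the gadget is a single edge and consumes one color, so one should index directly over $i\in[\lambda n]$ as the paper does. Your version is merely wasteful, not wrong. Second, and more importantly, your parenthetical ``(not required to be $c$-good)'' applied to $v$ is incorrect: you immediately need $|N_{G_c}(v)|\geq(\tfrac12-\mu)n$ to feed into the $\alpha$-niceness estimate $e_{G_{2i}}(N_{G_c}(v),\cdot)\geq\alpha n^2$, and for an arbitrary $v$ the degree hypothesis only controls $d_{G_c}(v)$ for all but $dn$ colors $c$, so a fixed $(c,v)$ pair may have $N_{G_c}(v)$ small. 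The definition of absorbing matching asks the guarantee only for $c$-good vertices $v$, and $c$-goodness ensures $d_{G_c}(v)$ is large; you must keep that restriction (while $u\in V_1$ may indeed be arbitrary, which is what makes $\delta'=0$ achievable, since the $dn$ exceptional gadget colors for $u$ are absorbed into the $(1-\tfrac{\alpha^2}{4})t$ error of Lemma~\ref{LEMMA:directed-k-graph-matching}). Also note that the large subset you pass to niceness lives in $V_2$, not $V_1$.
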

\begin{proof}
Without loss of generality, assume that $G_1,\ldots,G_{\gamma n}$ are $\alpha$-nice. For each $i\in[\lambda n]$, define 
$$
\mathcal{F}_i:=\left\{(u,v)\in V_1\times V_2 : i\in L(uv)\right\}.
$$
The degree condition of $G_i$ implies that 
$|\mathcal{F}_i|=e(G_i)\geq (n-dn) \left(\frac{1}{2}-\mu\right)n> \frac{1}{4}n^2.$ Fix a color $c\in [n]$ 
and two vertices $u\in V_1, v\in V_2$ such that $v$ is $c$-good, let
$$Z_i(c, uv):=\big\{ (w,w')\in \mathcal{F}_i : ww' \,\, \text{is a}\,\,c\text{-absorbing edge of}\,\, (u, v)\big\}\ \ \text{and}\ \ Z(c,uv):=\bigcup_{i\in [\lambda n]}Z_i(c,uv).$$
Since $G_{i}$ is $\alpha$-nice, $e(G_i[N_{G_i}(u), N_{G_c}(v)])\geq \frac{\alpha n^2}{2}$ for all but at most $dn$ indices $i
\in [\gamma n]$. This means that for all but at most $dn$ indices $i\in [\gamma n]$, we have $|Z_i(c,uv)|\geq \frac{\alpha n^2}{2}$. 
Thus, we have $|Z(c,uv)\cap \mathcal{F}_i|\geq \frac{\alpha n^2}{2}$ for at least $(\lambda -2d)n$ indices $i\in [\lambda n]$. Applying Lemma \ref{LEMMA:directed-k-graph-matching} with 
$$\mathbf{H}:=\{\mathcal{F}_i:i\in [\lambda n]\},\,\,\,
\mathbf{Z}:=\{Z(c,uv) : c\in [n],(u, v)\in V_1\times V_2\,\,\text{and}\,\, v\,\,\text{is}\,\,c\textrm{-good}\},$$ 
and parameters $t:=\lambda n$, $\epsilon:=\frac{\alpha}{2}$, to obtain a rainbow matching $\mathcal{M}$ inside $\mathbf{H}$ of size at least $(1-\frac{\alpha^2}{16})\lambda n$ such that $|E(Z(c,uv))\cap E(\mathcal{M})|\geq \frac{\alpha^2\lambda n}{16}$ for all $Z(c,uv)\in \mathbf{Z}$. Therefore, there exists a subset $I\subseteq [\lambda n]$ of size at least $(1-\frac{\alpha^2}{16})\lambda n$ such that for each $i\in I$, there is an edge $w_iw'_i$ in $\mathcal{M}$ with $i\in L(w_iw'_i)$. For every $c\in [n]$ and $(u, v)\in V_1\times V_2$ satisfying $v$ is $c$-good, there are at least $\frac{\alpha^2\lambda n}{16}$ matching edges in $\mathcal{M}$ which are $c$-absorbing edges of $(u,v)$.
By construction, $\mathcal{M}$ is an absorbing matching with parameters $(1,0,\lambda,\frac{\alpha^2\lambda}{16})$ and hence with parameters $(1,0,\lambda,\lambda^2)$ since $\lambda\ll \alpha$.
\end{proof}

Next, we demonstrate that an absorbing matching can be found when $\mathcal{G}$ is weakly stable.

\begin{lemma}\label{weakly-matching}
    Let $0<\frac{1}{n}\ll d\ll \lambda,\mu\ll \epsilon\ll \delta <1$. Suppose that $\mathcal{G}=\{G_1,\ldots,G_n\}$ is a collection of bipartite graphs on a common vertex partition $V=V_1\cup V_2$ with $|V_1|=|V_2|=n$ such that the following conditions hold:
   \begin{itemize}
    \item for every $i\in [n]$, $d_{G_i}(x)\geq \left(\frac{1}{2}-\mu\right)n$ for all but at most $dn$ vertices $x\in V_1\cup V_2$, 
    \item for every $x\in V_1\cup V_2$, $d_{G_i}(x)\geq \left(\frac{1}{2}-\mu\right)n$ for all but at most $dn$ colors $i\in [n]$.
\end{itemize}If $\mathcal{G}$ is $(\epsilon,\delta)$-weakly stable, then there exists an absorbing matching with parameters $(\frac{\delta}{2},\sqrt{\epsilon},\lambda,\lambda^2)$.
\end{lemma}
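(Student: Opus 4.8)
The plan is to follow the template of the proof of Lemma~\ref{weakly}, which is in fact simpler here because the absorbing structure is a matching, so no connecting step is needed. Since $\mathcal{G}$ is $(\epsilon,\delta)$-weakly stable, $e(C_{\mathcal{G}}^{\epsilon,\delta})\geq \delta n^2$, so $C_{\mathcal{G}}^{\epsilon,\delta}$ contains a subgraph $H$ with $|V(H)|\geq \delta n$ and $\delta(H)\geq \delta n$. For each $i\in V(H)$ the graph $G_i$ is $\epsilon$-extremal, hence by Lemma~\ref{LEMMA:MATCHING-CONS} (applicable since the degree hypotheses and $\mu\ll\epsilon$ give $d_{G_i}(x)\geq(\frac12-\epsilon^5)n$ for all but at most $dn$ vertices $x$) it has a characteristic partition $(A_1^i,B_1^i,C_1^i,A_2^i,B_2^i,C_2^i)$, and at most $2\epsilon n$ vertices of $V_1$ (and of $V_2$) are not $i$-good. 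As in Lemma~\ref{strongly-matching}, for $i\in V(H)$ let $\mathcal{F}_i:=\{(w_1,w_2)\in V_1\times V_2:i\in L(w_1w_2)\}$ be the associated directed $2$-graph, so that $|E(\mathcal{F}_i)|=e(G_i)\geq\frac14 n^2$.

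The main step is an absorbing claim parallel to Case~1 of Claim~\ref{weakly-absorbing}: if $ij\in E(H)$, $u\in V_1$ is $j$-good and $v\in V_2$ is $i$-good, then the set $Z_j(i,uv)$ of pairs $(w_1,w_2)$ with $w_1w_2\in E(G_j)$ and $w_1w_2$ an $i$-absorbing edge of $(u,v)$ satisfies $|Z_j(i,uv)|\geq 2^{-7}\delta n^2$. Indeed, a short computation from the $\delta$-crossing condition shows $|X_1^i\cap Y_1^j|\geq\frac{\delta n}{4}$ for all $X,Y\in\{A,B\}$. Since $v$ is $i$-good, Lemma~\ref{LEMMA:MATCHING-CONS}(ii) gives $d_{G_i}(v,X_1^i)\geq(\frac12-2\epsilon)n$ for some $X\in\{A,B\}$, so $N_{G_i}(v)$ meets whichever of $A_1^j,B_1^j$ contains $u$ in at least $\frac{\delta n}{5}$ vertices, yielding that many candidates for $w_1$ lying in $u$'s part. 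Fixing such a $w_1$, both $u$ and $w_1$ have, by Lemma~\ref{LEMMA:MATCHING-CONS}(ii), at least $(\frac12-2\epsilon)n$ neighbours in the corresponding part of $V_2$, hence at least $(\frac12-3\epsilon)n$ common neighbours $w_2$ there, and each such pair gives an $i$-absorbing edge $w_1w_2\in E(G_j)$ of $(u,v)$; multiplying gives the bound. I expect this case analysis, although routine, to be the main obstacle; everything else is bookkeeping.

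With the claim in hand, set $\mathcal{C}_v:=\{i\in[n]:v\text{ is }i\text{-good}\}$ for $v\in V$, and for $i\in V(H)$ set $T_i:=\{u\in V_1:|N_H(i)\setminus\mathcal{C}_u|\geq\frac{\delta n}{2}\}$. Counting non-good vertices over the neighbours of $i$ gives $|T_i|\cdot\frac{\delta n}{2}\leq 2\epsilon n^2$, so $|T_i|\leq\frac{\sqrt{\epsilon}}{2}n$, $|\overline{T_i}|\geq(1-\frac{\sqrt{\epsilon}}{2})n$, and $|N_H(i)\cap\mathcal{C}_u|\geq\frac{\delta n}{2}$ for every $u\in\overline{T_i}$. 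Sampling colours from $V(H)$ independently with probability $\kappa:=\frac{\lambda}{14}$ yields, by Chernoff's bound, a set $\mathcal{U}$ with $|\mathcal{U}|=:t\in[\frac{\kappa|V(H)|}{2},2\kappa|V(H)|]$ such that $|N_H(i,\mathcal{U})\cap\mathcal{C}_u|\geq\frac{\delta\kappa n}{4}$ for all $i\in V(H)$ and $u\in\overline{T_i}$; fix such a $\mathcal{U}$ and put $\overline{\mathcal{U}}:=V(H)\setminus\mathcal{U}$, so $|\overline{\mathcal{U}}|\geq\frac{\delta n}{2}$.

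Finally, relabel $\mathcal{U}$ as $[t]$ and apply Lemma~\ref{LEMMA:directed-k-graph-matching} with $k=2$, $\mathbf{H}=\{\mathcal{F}_j:j\in[t]\}$, density parameter a small multiple of $\delta$, and $\mathbf{Z}=\{Z(i,uv):=\bigcup_{j\in\mathcal{C}_u\cap N_H(i,\mathcal{U})}Z_j(i,uv)\}$ ranging over all $i\in\overline{\mathcal{U}}$, $u\in\overline{T_i}$ and $i$-good $v\in V_2$; the hypotheses hold because each $|Z_j(i,uv)|\geq 2^{-7}\delta n^2$, there are at least $\frac{\delta t}{8}$ valid indices $j$ for each such triple, and $\frac{t}{n}\leq 2\kappa\ll\delta$ since $\lambda\ll\delta$. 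This produces a rainbow matching $M$ in $\mathbf{H}$ with $|E(M)|\leq t\leq\lambda n$ and $|E(Z(i,uv))\cap E(M)|$ at least a positive constant times $\delta^2 t$, hence at least $\lambda^2 n$ (again using $\lambda\ll\delta$), for every such triple. Since the colours of $M$ lie in $\mathcal{U}$, which is disjoint from $\overline{\mathcal{U}}$, it follows that for every $i\in\overline{\mathcal{U}}$, every $i$-good $v\in V_2$ and all but at most $\sqrt{\epsilon}n$ vertices $u\in V_1$, there are at least $\lambda^2 n$ edges of $M$ that are $i$-absorbing edges of $(u,v)$. As $|\overline{\mathcal{U}}|\geq\frac{\delta n}{2}$, this shows $M$ is an absorbing matching with parameters $(\frac{\delta}{2},\sqrt{\epsilon},\lambda,\lambda^2)$, completing the proof.
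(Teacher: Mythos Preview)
Your proposal is correct and follows essentially the same approach as the paper's proof: the same crossing claim (the paper obtains the slightly stronger bound $2^{-4}\delta n^2$ by choosing $w_2$ before $w_1$, but your order works equally well), the same definition of $T_i$ and random sampling of $\mathcal{U}$, and the same application of Lemma~\ref{LEMMA:directed-k-graph-matching}. The only differences are cosmetic constants and your restriction of $T_i$ to $V_1$, which is in fact cleaner since that is all that is needed.
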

\begin{proof}
For
each $i\in [n]$, we define 
$$
\mathcal{F}_i:=\{(u,v)\in V_1\times V_2 : i\in L(uv)\}.
$$
Then $|\mathcal{F}_i|> \frac{n^2}{4}$. We first prove the following claim, which only requires the degree condition on $\mathcal{G}$.

\begin{claim}\label{weakly-absorbing-matching}
Assume $ij\in E(C_{\mathcal{G}}^{\epsilon, \delta})$. Let $u\in V_1$ be a $j$-good vertex and $v\in V_2$ be an $i$-good vertex.
Define 
$$Z_j(i,uv):=\{(w,w')\in \mathcal{F}_{j} :  ww'\,\, \text{is an}\,\, i\text{-absorbing edge
of}\,\, (u,v)\}.$$
Then $|Z_j(i,uv)|\geq 2^{-4}\delta n^2$.
\end{claim}

\begin{proof}[Proof of Claim \ref{weakly-absorbing-matching}]
For each $\ell\in [n]$, if $G_{\ell}$ is $\epsilon$-extremal, then by Lemma \ref{LEMMA:MATCHING-CONS}, there is a characteristic partition $(A_1^{\ell},B_1^{\ell},C_1^{\ell},A_2^{\ell},B_2^{\ell},C_2^{\ell})$ corresponding to $G_{\ell}$. Fix any such indices $i,j$ and vertices $u,v$.  Since $v$ is $i$-good, we have 
$$d_{G_i}(v,A^i_1)\geq \left(\frac{1}{2}-2\epsilon\right)n\ \ \text{or}\ \  d_{G_i}(v,B^i_1)\geq \left(\frac{1}{2}-2\epsilon\right)n.$$ Assume, without loss of generality, that the former case holds (the latter case is analogous). Since  $G_i$ and $G_j$ are $\delta$-crossing and $\epsilon\ll \delta$, one has 
 $$|A^i_1\cap  A^j_1|\geq \frac{\delta n}{2},\ |A^i_1\cap B^j_1|\geq \frac{\delta n}{2},\  |A_1^j\cap B_1^i|\geq \frac{\delta n}{2},\ \ \text{and}\ \ |B_1^i\cap B_1^j|\geq \frac{\delta n}{2}.$$
Combining $|A^i_1\cap  A^j_1|\geq \frac{\delta n}{2}$ with  $d_{G_i}(v,A^i_1)\geq (\frac{1}{2}-2\epsilon)n$, we obtain
$$d_{G_i}(v,A^j_1)\geq |A^i_1\cap  A^j_1|-\left(|A^i_1|-d_{G_i}(v,A^i_1)\right)\geq \frac{\delta n}{2}-\epsilon n\geq \frac{\delta n}{3}.$$
Similarly, we have $d_{G_i}(v,B^j_1)\geq\frac{\delta n}{3}$. Recall that $u\in V_1$ and $u$ is $j$-good, which implies that $u\in X^j_1$ for some $X\in \{A,B\}$. Since $d_{G_j}(u, X_2^j)\geq (\frac{1}{2}-2\epsilon)n$, there are at least $(\frac{1}{2}-2\epsilon)n$ choices for $w'\in X_2^j$.
Notice that in $G_j$, every vertex in $X^j_2$ is adjacent to all but at most $\epsilon n$ vertices in $X^j_1$. Fixing $w'\in X_2^j$, and using $d_{G_i}(v,X^j_1)\geq\frac{\delta n}{3}$, we conclude that there are at least $\frac{\delta n}{4}$ choices for $w\in X^j_1$. Thus, 
$$|Z_j(i, uv)|\geq \frac{\delta n}{4}\left(\frac{1}{2}-2\epsilon\right)n\geq 2^{-4}\delta n^2,$$
as desired.
\end{proof}






For any vertex $v\in V$, let
$$
\mathcal{C}_v:=\{i\in [n]:v\ \text{is}\ i\text{-good}\}.
$$
Since $e(C_{\mathcal{G}}^{\epsilon,\delta})\geq \delta n^2$, there exists a subgraph $H$ of $C_{\mathcal{G}}^{\epsilon,\delta}$ such that $|V(H)|\geq \delta n$ and $\delta(H)\geq \delta n$. For each $i\in V(H)$, define the set
$$
T_i:=\left\{v\in V: |N_H(i)\setminus \mathcal{C}_v|\geq \frac{\delta n}{2}\right\}.
$$

Observe that there are at most $4\epsilon n^2$ pairs $(u, i)\in V\times [n]$ such that $u$ is not $i$-good. This implies that
$\frac{\delta n}{2}|T_i|\leq 4\epsilon n^2$. Hence, by $\epsilon\ll \delta$, we have
$|T_i|\leq \frac{\sqrt{\epsilon}n}{2}$. 

Let $\overline{T_i}:=V\setminus T_i$. Clearly, $|\overline{T_i}|\geq 2n-\frac{\sqrt{\epsilon}n}{2}$.  For each $i\in V(H)$ and $u\in \overline{T_i}$, we have $|\mathcal{C}_u\cap N_H(i)|\geq \frac{\delta n}{2}$. 
Now we independently and randomly select vertices from $V(H)$ with probability $\kappa:=\frac{\lambda}{14}$ to
obtain a set $\mathcal{U}$ of colors. A Chernoff bound implies that, with high probability,
\begin{enumerate}
    \item[{\rm (i)}] $\frac{\kappa |V(H)|}{2}\leq t:=|\mathcal{U}|\leq 2\kappa |V(H)|$,
    \item[{\rm (ii)}] for every $i\in V(H)$ and $u\in \overline{T_i}$, we have $|\mathcal{C}_u\cap N_H(i,\mathcal{U})|\geq \frac{\delta \kappa n}{4}$.
\end{enumerate}
Fix such a set $\mathcal{U}$ and let $\overline{\mathcal{U}}:=V(H)\setminus \mathcal{U}$. By relabeling colors, one may assume $\mathcal{U}=:[t]$. Clearly,  $$|\overline{\mathcal{U}}|=|V(H)|-|\mathcal{U}|\geq \frac{\delta n}{2},\ \  \text{and}\ \ \delta(H[\overline{\mathcal{U}}])\geq \delta(H)-|\mathcal{U}|\geq\frac{\delta n}{2}.$$ 
Define the multi-graph collection 
$$
  \mathbf{Z}:=\left\{Z(i,uv):=\bigcup_{j\in \mathcal{C}_u\cap N_H(i,\mathcal{U})}Z_j(i,uv):i\in \overline{\mathcal{U}}, u\in \overline{T_i}\ \text{and}\ v\ \text{is}\ i\text{-good}\right\}.
$$
Given $i$ and $u$, the number of choices for $j$ is at least $\frac{\delta\kappa n}{4}$ by (ii). Since $ij\in E(H)$, $u$ is $j$-good and $v$ is $i$-good, Claim \ref{weakly-absorbing-matching} implies that there are at least $2^{-4}\delta n^2$ $i$-absorbing edges of $(u,v)$ whose ordered vertex set is in $\mathcal{F}_{j}$. Thus, $|Z(i, uv)\cap \mathcal{F}_j|\geq 2^{-4}\delta n^2$ for at least $\frac{\delta\kappa n}{4}\geq 2^{-3}t$ indices $j\in [t]$. Applying Lemma \ref{LEMMA:directed-k-graph-matching} with 
\begin{center}
$\mathbf{H}:=\{\mathcal{F}_j : j\in [t]\}$, $\mathbf{Z}$ and $\epsilon:=2^{-4}\delta$,
\end{center}
we can find a rainbow matching $\mathcal{M}$ in $\mathbf{H}$ of size at least $(1-2^{-10}\delta^2)t$ such that $|E(Z(i,uv))\cap E(\mathcal{M})|\geq 2^{-10}\delta^2t$ for all $Z(i,uv)\in \mathbf{Z}$. That is, there is $I\subseteq [t]$ with $|I|\geq (1-2^{-10}\delta^2)t$ such that for each $j\in I$ there is an edge $e_j\in E(\mathcal{M})$ with $j\in L(e_j)$. For every $i\in \overline{\mathcal{U}}$, $u\in \overline{T_i}$ and $v\in V_2$ is $i$-good, there are at least $2^{-10}\delta^2 t$ indices $j\in I$ such that $e_j$ is an $i$-absorbing edge of $(u,v)$.
Thus, $\mathcal{M}$ is an absorbing matching with parameters $(\frac{\delta}{2}, \sqrt{\epsilon}, (1-2^{-10}\delta^2)t, 2^{-10}\delta^2 t)$ and hence with $(\frac{\delta}{2},\sqrt{\epsilon},\lambda,\lambda^2)$.
\end{proof}

Now, we are ready to prove Theorem \ref{stable-matching}.
\begin{proof}[Proof of Theorem \ref{stable-matching}]
Choose additional parameters $\beta,\lambda$ and $\mu'$ such that 
$$
  1<\frac{1}{n}\ll d\ll \mu\ll \beta\ll\lambda\ll \mu'\ll \alpha\ll \gamma,\epsilon\ll \delta\ll1,
$$
where the previous lemmas in this section hold with these suitable parameters.

By Lemma \ref{strongly-matching} and Lemma \ref{weakly-matching}, $\mathcal{G}$ contains an absorbing matching $\mathcal{M}$ with parameters $(\frac{\delta}{2},\sqrt{\epsilon},$ $\lambda,\lambda^2)$. For any color $c\in [n]$ and vertices $u \in V_1$, $v\in V_2$, we define the triple $(c,u,v)$ as absorbable if there are at least $\lambda^2n$ disjoint $c$-absorbing edges of $(u,v)$ inside $\mathcal{M}$. 
By the definition of absorbing matching, one has $|\mathcal{M}|\leq \lambda n$. Furthermore, there exists a color set $\mathcal{C}\subseteq [n]\setminus {\rm col}(\mathcal{M})$ of size at least $\frac{\delta n}{2}$ such that, for any given color $c\in \mathcal{C}$ and $c$-good vertex $v\in V_2$, the triple $(c,u,v)$ is absorbable for all but at most $\sqrt{\epsilon}n$ vertices $u\in V_1$.

\begin{claim}\label{claim:totally-absorbable-matching}
  There exists an integer $r$ with $(2-2^{-10})\beta n \leq r \leq 2\beta n$. For each $i \in [r]$, there is a tuple $q_i:=(c_i^1, c_i^2, w_i^1, w_i^{1'}, w_i^2, w_i^{2'})$ where $c_i^1, c_i^2 \in \mathcal{C}$ are distinct colors, $w_i^1, w_i^{2} \in V_1$ and $w_i^{1'}, w_i^{2'} \in V_2$ are distinct vertices, satisfying the  following properties:
  \begin{enumerate}
    \item[{\rm (i)}] $(c_i^1, w_i^1, w_i^{1'})$ and $(c_i^2, w_i^2, w_i^{2'})$ are absorbable for all $i \in [r]$,
    \item[{\rm (ii)}] for every pair $(u, v) \in V_1 \times V_2$ and every color $c \in [n]$, there are at least $2^{-20}\beta n$ indices $i \in [r]$ such that $c_i^1 \in L(uw_i^{1'})$, $c_i^2 \in L(vw_i^{2})$ and $c \in L(w_i^1w_i^{2'})$.
  \end{enumerate}
\end{claim}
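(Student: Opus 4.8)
\textbf{Proof proposal for Claim \ref{claim:totally-absorbable-matching}.}

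The plan is to mimic exactly the strategy used in the proof of Claim \ref{claim:totally-absorbable} (the directed analogue) but one dimension lower, replacing absorbing paths of length four by absorbing edges and applying Lemma \ref{LEMMA:directed-k-graph-matching} with $k=2$ (so $\mathbf H$ and $\mathbf Z$ will be collections of directed $2$-graphs). First I would set up the ``single-tuple count''. For every pair of colors $(b_1,b_2)\in\mathcal C^2$, every pair $(u,v)\in V_1\times V_2$ and every color $c\in\mathcal C$, I would let $S(b_1,b_2,u,v,c)$ be the set of ordered pairs $(w^1,w^{1'})\in V_1\times V_2$ (the ``first half'' of the tuple $q_i$) for which $b_1\in L(uw^{1'})$, $c\in L(w^1\cdot)$ in the appropriate slot, and $(b_1,w^1,w^{1'})$ is absorbable, while simultaneously there are enough completions $(w^2,w^{2'})$ with $b_2\in L(vw^2)$, $(b_2,w^2,w^{2'})$ absorbable, and $c\in L(w^1w^{2'})$. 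Concretely I would count in two stages: first pick $w^{1'}\in N_{G_{b_1}}(u)$ that is $b_1$-good (at least $(\tfrac12-\mu)n-2\epsilon n-dn$ choices, since all but $\le dn$ colors give $u$ large degree and only $\le 2\epsilon n$ vertices of $V_2$ fail to be $b_1$-good), and then, using property (a)-type statement for $\mathcal{M}$, note all but $\sqrt\epsilon n$ such $w^{1'}$ give $(b_1,w^1,w^{1'})$ absorbable for suitably chosen $w^1$; a symmetric count for $w^2,w^{2'}$ and a final intersection argument handles $c\in L(w^1w^{2'})$. This should yield $|S(b_1,b_2,u,v,c)|\ge 2^{-8}n^2$ or so (the exact constant is immaterial, any $\ge \epsilon' n^2$ with $\epsilon'$ absolute works).

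Next I would set up the rainbow-matching machinery. Fix $2\beta n$ disjoint color pairs $\{c_i^1,c_i^2:i\in[2\beta n]\}\subseteq\mathcal C$ (possible since $|\mathcal C|\ge\tfrac{\delta n}{2}\gg 4\beta n$). For each $i\in[2\beta n]$ define the directed $2$-graph $\mathcal F_i$ on $V$ (identifying $V_1\cup V_2$ with the common vertex set as in the construction of $B_D$) whose edge set is $\{(w^1,w^{1'}):(c_i^1,w^1,w^{1'})\text{ absorbable}\}$ intersected with whatever additional constraint records the companion pair $(c_i^2,w^2,w^{2'})$; more cleanly, I would follow the directed-case template and let $\mathcal F_i$ record the pair $(w^1,w^{1'})$ together with the information that $(c_i^1,c_i^2,w^1,w^{1'},w^2,w^{2'})$ is ``totally absorbable'' for some completion, exactly paralleling the definition of $\mathcal F_i$ in the proof of Claim \ref{claim:totally-absorbable}. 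Then set $\mathbf H:=\{\mathcal F_i:i\in[2\beta n]\}$ and
$$
\mathbf Z:=\Big\{S(u,v,c):=\bigcup_{i\in[2\beta n]}S(c_i^1,c_i^2,u,v,c):(u,v)\in V_1\times V_2,\ c\in[n]\Big\}.
$$
The single-tuple bound gives $|E(S(u,v,c))\cap E(\mathcal F_i)|\ge\epsilon' n^2$ for every $i$, so all hypotheses of Lemma \ref{LEMMA:directed-k-graph-matching} hold with $k=2$, $t=2\beta n$, $\gamma=2\beta$ (note $t=\gamma n$), and $\epsilon=\epsilon'$; note $m:=|\mathbf Z|\le n^3\le n^C$ for $C=3$. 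The lemma produces a rainbow matching $M$ in $\mathbf H$ of size $\ge(1-\tfrac{\epsilon'^2}{4})\cdot2\beta n\ge(2-2^{-10})\beta n$ with $|E(S(u,v,c))\cap E(M)|\ge\tfrac{\epsilon'^2}{4}\cdot2\beta n\ge 2^{-20}\beta n$ for all $(u,v,c)$. Reading off the chosen pairs $(w_i^1,w_i^{1'})$ for $i$ in the index set of $M$, together with their recorded companions $(w_i^2,w_i^{2'})$ and the colors $c_i^1,c_i^2$, gives the desired family $\{q_i:i\in[r]\}$: property (i) is the ``absorbable'' content encoded in $\mathcal F_i$, and property (ii) is precisely the conclusion $|E(S(u,v,c))\cap E(M)|\ge 2^{-20}\beta n$.

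The main obstacle, as in the directed case, is bookkeeping the six-coordinate tuple inside a two-uniform framework: Lemma \ref{LEMMA:directed-k-graph-matching} only controls $2$-tuples, so I must encode the extra four coordinates $(w^2,w^{2'},c_i^2$ and the absorbability of that block$)$ as a \emph{property} of the chosen $2$-tuple $(w_i^1,w_i^{1'})$ rather than as new coordinates — exactly the device used for the directed $4$-graphs in Claim \ref{claim:totally-absorbable}, where the path endpoints carry the ``totally absorbable'' label. Making sure disjointness of the $q_i$ (vertices and colors) is inherited from the rainbowness and matching property of $M$, and that the completions $(w_i^2,w_i^{2'})$ can be chosen disjointly across $i$ (this needs a short greedy argument using that only $\le 6r\le 12\beta n\ll n$ vertices and $\le 2r$ colors are ever forbidden, while each $S$-set has $\ge\epsilon' n^2$ elements), is the only genuinely new wrinkle; everything else is a routine transcription of the length-four argument to length one.
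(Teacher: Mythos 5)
Your proposal has a genuine gap. You propose applying Lemma \ref{LEMMA:directed-k-graph-matching} with $k=2$, encoding the companion pair $(w_i^2,w_i^{2'})$ as a \emph{property} of the chosen $2$-tuple $(w_i^1,w_i^{1'})$. But the paper works with directed $4$-graphs: it sets
\[
\mathcal F_i:=\{(w_i^1,w_i^{1'},w_i^2,w_i^{2'})\in V_1\times V_2\times V_1\times V_2 : (c_i^1,w_i^1,w_i^{1'}),\,(c_i^2,w_i^2,w_i^{2'})\ \text{absorbable}\},
\]
so the rainbow matching in $\mathbf H$ directly hands you vertex-disjoint $4$-tuples that already lie in $S(u,v,c)$ for many triples. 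You appear to have copied the $k=2$ choice from the proof of Claim~\ref{claim:totally-absorbable}, where the tuple $(c_i,c_i',v_i,v_i')$ has only two vertex coordinates; here the tuple $q_i$ has four vertex coordinates, and the analogy breaks exactly at that point.

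The reason this is not just a ``wrinkle'' to fix by a greedy disjointness pass: property~(ii) must hold \emph{simultaneously} for all $\Theta(n^3)$ triples $(u,v,c)$, and the conditions $c_i^2\in L(vw_i^2)$ and $c\in L(w_i^1 w_i^{2'})$ constrain the completion $(w_i^2,w_i^{2'})$ through $v$ and $c$. Your $S(b_1,b_2,u,v,c)$ only records that, for each fixed triple, many completions \emph{exist}; different triples require different completions. Once you commit $(w_i^2,w_i^{2'})$ to a single value in a post-hoc greedy step, the guarantee that $2^{-20}\beta n$ indices serve a given $(u,v,c)$ evaporates --- each index now serves roughly a quarter of all $(v,c)$ pairs, and your proposal gives no argument that the ``served'' sets cover every triple with the required multiplicity. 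A probabilistic or double-counting argument might rescue this, but the proposal as stated does not supply one; it only addresses vertex/color disjointness, which is the easy part. The clean fix is simply to run the lemma with $k=4$ as the paper does, which builds the completions into the matching from the start.

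Two minor points. First, you say $m\le n^3$ and take $C=3$; the actual bound is $m=n|V_1||V_2|=n^3$, so this is fine, but note the $k$ in the lemma must then be $4$, not $2$. Second, your single-tuple count $|S|\gtrsim n^2$ should become $|S|\gtrsim n^4$ once the completions are genuine coordinates --- the paper shows $|S(c_i^1,c_i^2,u,v,c)|\ge 2^{-7}n^4$ by choosing $w_i^{1'},w_i^{2'},w_i^1,w_i^2$ in turn, which is the four-stage count you sketched but applied to the $4$-tuple, not the $2$-tuple.
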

\begin{proof}[Proof of Claim \ref{claim:totally-absorbable-matching}]
For every two distinct  colors $c_i^1, c_i^2\in \mathcal{C}$, and vertex pair $(u, v)\in V_1\times V_2$ such that $d_{G_{c_i^1}}(u)\geq \left(\frac{1}{2}-\mu\right) n$ and $d_{G_{c_i^2}}(v)\geq \left(\frac{1}{2}-\mu\right) n$, and for every color $c\in [n]$, let $S(c_i^1, c_i^2, u, v, c)$ be the collection of tuples 
$$(w_i^1, w_i^{1'}, w_i^2, w_i^{2'})\in V_1\times V_2 \times V_1\times V_2$$
that satisfy the following properties: $(c_i^1, w_i^1, w_i^{1'})$ and $(c_i^2, w_i^2, w_i^{2'})$ are absorbable, $c_i^1 \in L(uw_i^{1'})$, $c_i^2 \in L(vw_i^{2})$, and $c \in L(w_i^1w_i^{2'})$. We will prove that $|S(c_i^1,c_i^2,u,v,c)|\geq 2^{-7}n^4$. For this, we count the number of possible choices for $w_i^{1'},\,w_i^{2'},\,w_i^1$ and $w_i^2$ in turn. 

$\bullet$ Recall that all but at most $2\epsilon n$ vertices in $V_2$ are $c_i^1$-good and $d_{G_{c_i^1}}(u)\geq \left(\frac{1}{2}-\mu\right)n$. Therefore, there are at least $
   \left(\frac{1}{2}-\mu\right) n-2\epsilon n-1\geq \frac{n}{3} 
$ 
choices for $w_i^{1'}$ such that $w_i^{1'}$ is $c_i^1$-good and $c_i^1\in L(uw_i^{1'})$.

$\bullet$ For vertices in $V_2$, there are at most $2\epsilon n+dn$ vertices $w$ that are either not $c_i^2$-good or $d_{G_c}(w)< \left(\frac{1}{2}-\mu\right)n$. Thus, we have at least 
$
n-2\epsilon n-dn-2\geq \frac{n}{2}
$ 
choices for $w_i^{2'}$ such that $w_i^{2'}$ is $c_i^2$-good and $d_{G_c}(w_i^{2'})\geq \left(\frac{1}{2}-\mu\right)n$.

$\bullet$ Fix $w_i^{1'}$ and $w_i^{2'}$. Since $w_i^{1'}$ is $c_i^1$-good and $c_i^1\in \mathcal{C}$, we obtain that $(c_i^1, w, w_i^{1'})$ is absorbable for all but at most $\sqrt{\epsilon}n$ vertices $w\in V_1$. Recall that $d_{G_c}(w_i^{2'})\geq\left(\frac{1}{2}-\mu\right)n$. Therefore, there are at least 
$
\left(\frac{1}{2}-\mu\right) n- \sqrt{\epsilon}n-1\geq \frac{n}{3}
$ 
choices for $w_i^{1}$ such that $(c_i^1, w_i^1, w_i^{1'})$ is absorbable and $c\in L(w_i^1w_i^{2'})$.

$\bullet$  Similarly, since $d_{G_{c_i^2}}(v)\geq \left(\frac{1}{2}-\mu\right)n$, $c_i^2\in \mathcal{C}$ and $w_i^{2'}$ is $c_i^2$-good, there are at least 
$
\left(\frac{1}{2}-\mu\right) n-\sqrt{\epsilon}n-2\geq \frac{n}{3}
$ 
choices for $w_i^{2}$ such that $c_i^2\in L(vw_i^2)$ and $(c_i^2, w_i^2, w_i^{2'})$ is absorbable.

Therefore, $|S(c_i^1, c_i^2, u, v, c)|\geq 2^{-7}n^4$, as desired.

Let $\{c_i^1,c_i^2:i\in [2\beta n]\}\subseteq \mathcal{C}$ be a set of distinct colors. 
Define
$$\mathcal{F}_i:=\{(w_i^1, w_i^{1'}, w_i^2, w_i^{2'})\in V_1\times V_2 \times V_1\times V_2 : (c_i^1, w_i^1, w_i^{1'})~ \text{and}~(c_i^2, w_i^2, w_i^{2'})~\text{are absorbable}\}.$$
Since for each color $c\in \mathcal{C}$, there are at most $2\epsilon n$ vertices in each part that are not $c$-good. Hence, there exist at least 
$(n-2\epsilon n)(n-\sqrt{\epsilon}n)\geq \frac{n^2}{2}$ 
vertex pairs $(x, y)\in V_1\times V_2$ such that $(c, x, y)$ is absorbable. Thus, $|\mathcal{F}_i|\geq 2^{-4}n^4$. 
Let $\mathbf{H}:=\{\mathcal{F}_i : i\in [2\beta n]\}$ be the collection of directed 4-graphs. 

For a vertex pair $(u, v)\in V_1\times V_2$, let 
$$I_{u,v}:=\left\{i\in [2\beta n] : d_{G_{c_i^1}}(u)\geq  \left(\frac{1}{2}-\mu\right) n\ \  \text{and}\ \  d_{G_{c_i^2}}(v)\geq  \left(\frac{1}{2}-\mu\right) n\right\}.$$
Define the collection of directed multi-4-graphs as follows:
$$
  \mathbf{Z}:=\left\{S(u,v,c):=\bigcup_{i\in I_{u, v}}S(c_i^1, c_i^2, u, v, c) : (u,v)\in V_1\times V_2,\, c\in [n]\right\}.
$$
Since for each vertex $w\in V$,  there are at most $dn$ indices $i\in [n]$ such that $d_{G_i}(w)<\left(\frac{1}{2}-\mu\right) n$. Hence  for every $S:=S(u, v, c)\in \mathbf{Z}$, we have $|E(S)\cap E(\mathcal{F}_i)|\geq 2^{-7}n^4$ for all but at most $2dn$ indices $i\in [2\beta n]$. By applying Lemma \ref{LEMMA:directed-k-graph-matching} with $t:=2\beta n$ and $\epsilon:=2^{-7}$,  we deduce that there is a rainbow matching $M$ in $\mathbf{H}$ of size at least $(2-2^{-10})\beta n$ (and at most $2\beta n$) and $|E(S)\cap E(M)|\geq 2^{-20}\beta n$ for all $S\in \mathbf{Z}$.
\end{proof}

Let 
\begin{align*}
  &V_{{\rm abs}}:=\bigcup_{i\in [r]}\{w_i^1,w_i^{1'}, w_i^2, w_i^{2'}\},\ \ \mathcal{C}_{{\rm abs}}:=\bigcup_{i\in [r]}\{c_i^1, c_i^2\},\ \
  \mathcal{C}_{{\rm rem}}:=[n]\setminus ({\rm col}(\mathcal{M})\cup \mathcal{C}_{{\rm abs}}), \\
 &U:=V\setminus (V(\mathcal{M})\cup V_{{\rm abs}}),\ \ \mathcal{J}:=\{J_i:i\in \mathcal{C}_{{\rm rem}}\}\ \text{where} \ J_i=G_i[U].
\end{align*}
Note that $|[n]\setminus \mathcal{C}_{{\rm rem}}|=\frac{|V\setminus U|}{2}\leq \lambda n+4\beta n$. 
For each vertex $v\in U$, for all but at most $dn$ colors $c\in \mathcal{C}_{\rm rem}$ we have 
$$d_{J_c}(v)\geq \left(\frac{1}{2}-\mu\right) n-\lambda n-4\beta n\geq \frac{|U|}{4}-\mu' \frac{|U|}{12}.$$
Since $\beta ,\lambda, \mu'\ll \gamma, \alpha, \epsilon, \delta$, it is easy to see that $\mathcal{J}$ is $(\frac{\gamma}{2}, \frac{\alpha}{2}, 2\epsilon, \frac{\delta}{2})$-stable. By Lemmas  \ref{LEMMA:stable-nice-matching} and \ref{LEMMA:N-2-MATCHING}, we deduce that $\mathcal{J}$ is $\mu'$-nice and there exists a rainbow matching $\mathcal{M}_1$ in $\mathcal{J}$ with size at least $|\mathcal{C}_{{\rm rem}}|-dn-2$. 
Assume that 
    $$(U\setminus V(\mathcal{M}_1))\cap V_1=\{u_1,\ldots,u_t\},\,\, (U\setminus V(\mathcal{M}_1))\cap V_2=\{v_1,\ldots,v_t\}, \,\,\mathcal{C}_{\rm rem}\setminus {\rm col}(\mathcal{M}_1)=\{c_1,\ldots, c_t\}.$$

Do the following for each 
$(u_i,v_i,c_i)$ with  $i=1,2,\ldots,t$ in turn. Choose an unused 6-tuple $q_{j_i}=(c_{j_i}^1, c_{j_i}^2, w_{j_i}^1, w_{j_i}^{1'}, w_{j_i}^2, w_{j_i}^{2'})$ for some $j_i \in [r]$, such that $c_{j_i}^1 \in L(u_i w_{j_i}^{1'})$, $c_{j_i}^2 \in L(v_i w_{j_i}^{2})$, and $c_i \in L(w_{j_i}^1 w_{j_i}^{2'})$. This is possible since Claim \ref{claim:totally-absorbable-matching} implies that there are at least $2^{-20}\beta n$ choices for $q_{j_i}$, of which at most $dn+2$ have been used. Thus, all vertices in $U\setminus V(\mathcal{M}_1)$ and all colors in $\mathcal{C}_{\rm rem}\setminus {\rm col}(\mathcal{M}_1)$ are contained in a rainbow matching of $\mathcal{G}$. 

At the end of this process, there remains a set $I\subseteq [r]$ such that the $q_i$ with {$i\in I$} are precisely the 6-tuples that were not chosen. Recall that $(c_i^1, w_i^1, w_i^{1'})$ and $(c_i^2, w_i^2, w_i^{2'})$ are absorbable for each $i\in I$. It follows that there are at least $\lambda^2n$ disjoint $c_i^j$-absorbing edges of $(w_i^j, w_i^{j'})$ inside $\mathcal{M}$ for $i\in I$ and $j\in [2]$. Together with $\beta\ll \lambda$, we obtain a transversal perfect matching inside $\mathcal{G}$.
\end{proof}

\end{appendices}
\end{document}